\documentclass[11pt,reqno]{amsart}

\usepackage[T1]{fontenc}
\usepackage{lmodern}
\usepackage{microtype}
\usepackage[a4paper,margin=31mm]{geometry}
\usepackage{amsmath,amssymb,mathtools,mathrsfs}
\usepackage{booktabs,tabularx,array}
\usepackage[shortlabels]{enumitem}
\usepackage{xcolor}
\usepackage{hyperref}

\newtheorem{theorem}{Theorem}[section]
\newtheorem{proposition}[theorem]{Proposition}
\newtheorem{lemma}[theorem]{Lemma}
\newtheorem{corollary}[theorem]{Corollary}
\theoremstyle{definition}
\newtheorem{example}[theorem]{Example}

\theoremstyle{remark}
\newtheorem{remark}[theorem]{Remark}

\newcommand{\cO}{\mathcal O}
\newcommand{\cF}{\mathcal F}

\newcommand{\End}{\operatorname{End}}
\newcommand{\cEnd}{\operatorname{{\mathcal{E}}nd}}
\newcommand{\Spec}{\operatorname{Spec}}

\newcommand{\rk}{\operatorname{rk}}

\newcommand{\Ext}{\operatorname{Ext}}
\newcommand{\Sym}{\operatorname{Sym}}

\newcommand{\bP}{\mathbb P}

\newcommand{\bQ}{\mathbb Q}
\newcommand{\bZ}{\mathbb Z}

\newcommand{\bC}{\mathbb C}

\newcommand{\cA}{\mathscr A}
\newcommand{\cB}{\mathscr B}
\newcommand{\cE}{\mathscr E}
\newcommand{\cH}{\mathscr H}
\newcommand{\cX}{\mathscr X}

\DeclareMathOperator{\SL}{SL}
\DeclareMathOperator{\Sp}{Sp}
\DeclareMathOperator{\GL}{GL}
\DeclareMathOperator{\GSp}{GSp}
\DeclareMathOperator{\Gr}{Gr}
\DeclareMathOperator{\SO}{SO}
\DeclareMathOperator{\PU}{PU}
\DeclareMathOperator{\PGL}{PGL}

\DeclareMathOperator{\Hol}{Hol}

\begin{document}
	
	\markboth {\rm }{}

	\title{Stability of vector bundles and reduction mod $p$}
	\author{Adrian Langer}

	\email{alan@mimuw.edu.pl}
	\address{University of Warsaw, Institute of Mathematics,
		ul.\ Banacha 2, 02-097 Warszawa, Poland}
	
	\date{\today}

\begin{abstract}
Let $E$ be a vector bundle on a normal complex projective variety. 
We prove that slope stability of $\Sym^2E$ and Zariski density of the
reductions of $E$ modulo $p$ that are not strongly geometrically stable imply that the bundle $E$ is projectively unitary flat.
This generalizes Shepherd--Barron's result from rank $2$ to arbitrary rank. We also show that 
if $E$ has rank $\le 3$ we can replace stability of  $\Sym^2E$ by stability of $E$
but we cannot do so if $E$ has rank $4$. We also generalize these results to reflexive sheaves on
some mildly singular normal complex projective varieties.
\end{abstract}

\maketitle

\section*{Introduction}

Let $X$ be a smooth complex projective variety of dimension $n\ge 2$ and let $H$ be an ample line bundle on $X$.
Let $E$ be a vector bundle on $X$. Throughout the paper, stability, polystability and semistability are understood in the slope
sense with respect to the indicated polarization.

By the Kobayashi--Hitchin correspondence, $H$-polystability of $E$ 
is equivalent to the existence of a Hermite--Einstein metric. This result, known as the Donaldson--Uhlenbeck--Yau theorem (see \cite{ssb-Donaldson1985,ssb-UhlenbeckYau1986}), connects algebraic geometry with differential geometry. In this paper we are interested in arithmetic properties of stable vector bundles, i.e., in behaviour of stability under reductions modulo prime numbers. Here our knowledge is very limited and essentially the only known result concerns the rank two case. Namely, Shepherd--Barron  in \cite[Theorem 5]{ShepherdBarron1998} proved a striking result relating behaviour of reductions of a rank two stable vector bundle to its flatness.
More precisely, he proved that if $E$ is semistable and a dense set of reductions of $E$ is not strongly semistable then $E$ is not only stable but also irreducible projectively unitary flat, i.e., the projective bundle $\bP(E)$ arises from an irreducible projective unitary representation $\pi _1^{\mathrm {top}}(X^{\mathrm{an}} )\to \PU (2)$.

In \cite{LangerPositivitySemistability} the author proposed a natural conjectural generalization of this result to higher rank. Below we show that the original form of \cite[Conjecture 6.1]{LangerPositivitySemistability} fails but a corrected version of the conjecture holds in all ranks. The basic idea behind this failure is that arithmetic properties of a stable vector bundle are controlled not only by flatness but also by the holonomy group. 

To describe the  precise results, let us relax the condition on $E$ and assume that it is a reflexive sheaf.
Let us fix an integral
finitely generated $\mathbb Z$-subalgebra $R\subset\mathbb C$, put
$S=\Spec R$, and choose a model
\[
(\mathscr X,\mathscr H,\mathscr E)\longrightarrow S
\]
of $(X,H,E)$, with $\mathscr X/S$ smooth projective, $\mathscr H$ relatively ample, and  $\mathscr E$ an $S$-flat family of reflexive sheaves.
If $s\in S$ is a closed point then the residue field $\kappa (s)$ is finite so we can consider the absolute Frobenius morphism $F_s$ on the fiber $\cX_s$ over $s$.

Assume that $E$ is $H$-semistable and consider the set $\Sigma ^{\mathrm{nss}}(\mathscr E/S)$ of those closed points  $s\in S$ for which $\mathscr E_s$ is not strongly $\cH_s$-semistable (i.e., some Frobenius pullback of $\cE_s$ is not $\cH_s$-semistable).
We say that \emph{ $\Sigma ^{\mathrm{nss}}(E)$ is dense} if $\Sigma ^{\mathrm{nss}}(\cE/S)$ is dense in $S$ for some model
$(\mathscr X,\mathscr H,\mathscr E)/S$ (or equivalently for all models $(\mathscr X,\mathscr H,\mathscr E)/S$).

Similarly, consider the set $\Sigma ^{\mathrm{ns}}(\cE/S)$ of those closed points  $s\in S$ for which $\mathscr E_s$ is not strongly geometrically $\cH_s$-stable (i.e., some Frobenius pullback of $\cE_s$ is not geometrically $\cH_s$-stable). As above we say that \emph{ $\Sigma ^{\mathrm{ns}}(E)$ is dense} if $\Sigma ^{\mathrm{ns}}(\cE/S)$ is dense in $S$ for some model
$(\mathscr X,\mathscr H,\mathscr E)/S$ (or equivalently for all models $(\mathscr X,\mathscr H,\mathscr E)/S$).

The main aim of the paper is to prove the following theorem:

\begin{theorem}	\label{main1}
	Let \(E\) be an \(H\)-stable reflexive sheaf of rank \(r\geq2\) on a smooth polarized complex projective variety of dimension at least two. Suppose that \(\Sigma^{\mathrm{ns}}(E)\) is dense and that either \(r\leq3\) or \((\operatorname{Sym}^{2}E)^{**}\) is \(H\)-stable. Then \(E\) is an irreducible projectively unitary flat vector bundle.	
\end{theorem}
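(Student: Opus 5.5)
We plan to reduce the theorem to a statement about the holonomy of the Hermite--Einstein connection and then use the arithmetic hypothesis to show that the projectivized bundle has vanishing discriminant.

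\textbf{Step 1: analytic side.} Since $E$ is $H$-stable, by the Donaldson--Uhlenbeck--Yau theorem (in the Bando--Siu form for reflexive sheaves) $E$ carries an admissible Hermite--Einstein metric. Let $G\subset \mathrm{U}(r)$ be the restricted holonomy group of the associated connection on the locus where $E$ is locally free. Stability of $E$ makes $G$ act irreducibly on $\bC^r$. The hypothesis that $(\Sym^2E)^{**}$ is stable (automatic for $r\le 3$ once we rule out the orthogonal and symplectic cases below) says that $G$ acts irreducibly on $\Sym^2\bC^r$. This excludes $G\subset \mathrm{O}(r)\cdot S^1$, and more generally any proper subgroup whose complexification preserves a quadric.

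\textbf{Step 2: arithmetic side.} Take $s\in\Sigma^{\mathrm{ns}}(\cE/S)$. Then some Frobenius pullback $F_s^{m*}\cE_s$ is not geometrically stable. Using Bogomolov-type inequalities in characteristic $p$ with the standard bound on instability of Frobenius pullbacks, and passing to a dense set of such $s$, we can produce either a destabilizing or a splitting subsheaf. Following Shepherd--Barron's approach, the key consequence should be that a suitable invariant bounded in terms of the residue characteristic must vanish in characteristic $0$. Concretely, we aim to show that
\[
\Delta(E)\cdot H^{n-2}=\bigl(2rc_2(E)-(r-1)c_1(E)^2\bigr)\cdot H^{n-2}=0.
\]
For this we would use the inequality $\Delta(F^*\cE_s)\cdot H^{n-2}=p^2\Delta(\cE_s)\cdot H^{n-2}$, compare it with the bound on $\Delta$ for non-stable sheaves in terms of the slopes of the destabilizing pieces, and show that the contribution from strictly semistable but not stable Frobenius pullbacks forces $\Delta\le 0$ as $p\to\infty$ along the dense set. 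Once $\Delta(E)\cdot H^{n-2}=0$ for a stable reflexive sheaf, the known characterization (Simpson, Bando--Siu, and the author's earlier work for reflexive sheaves) gives that $E$ is locally free and projectively flat, with an irreducible unitary projective representation.

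\textbf{Main obstacle.} The hard case is when the Frobenius pullbacks stay semistable but fail to be geometrically stable. Here the naive Bogomolov argument gives no strict gain. We expect to use the holonomy: a Jordan--H\"older filtration of $F^{m*}\cE_s$ descends to a reduction of the structure group of $\cE_s$. Its characteristic-zero shadow, obtained via density and a specialization argument on the Galois closure of the holonomy, would be a reduction of $G$ to a subgroup preserving a tensor in $\Sym^2$, or imprimitive on $\bC^r$. Stability of $\Sym^2E$ excludes the former. For the latter, imprimitivity comes from a finite étale cover, and pulling back to it gives the split case, where the inequality argument works. For $r\le 3$ we would check by hand, from the list of irreducible subgroups of $\mathrm{U}(2)$ and $\mathrm{U}(3)$, that the orthogonal case either falls into the projectively flat case or cannot occur. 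We expect this group-theoretic passage from non-stability mod $p$ to a reduction of holonomy to be the main difficulty.
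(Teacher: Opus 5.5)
\textbf{There is a genuine gap, in Step~2.} Step~2 carries the whole proof, and it has no mechanism that can force $\Delta(E)H^{n-2}=0$.

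Combining $\Delta(F^{m*}\cE_{\bar s})=q^2\Delta(\cE_{\bar s})$ with Bogomolov-type inequalities only bounds the discriminant from \emph{below}. Expand $\Delta$ of $F^{m*}\cE_{\bar s}$ along its Harder--Narasimhan filtration and apply the Hodge index theorem. The cross terms are controlled by the spread of slopes, which the standard Frobenius-instability bounds control. So you get at best $\Delta(E)H^{n-2}\geq -C/p^2$, which is just Bogomolov's inequality in the limit. An upper bound would need upper bounds on the discriminants of the Harder--Narasimhan factors, and these do not exist.

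More decisively, you treat the case where $F^{m*}\cE_{\bar s}$ is not semistable as the easy one, without using the hypothesis on $\Sym^2E$ or on $r$. Any such argument proves something false. The bundle of Proposition~\ref{prop:rank-four-counterexample} is $H$-stable of rank four with $\int_X\Delta(E)>0$, yet its first Frobenius pullback is not semistable for all large $p\equiv2\pmod5$. So the holonomy hypothesis has to enter precisely in the non-semistable case, and your plan gives it no way to do so.

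\textbf{The proposed holonomy repair also fails.} A Jordan--H\"older or Harder--Narasimhan filtration of $F^{m*}\cE_{\bar s}$ never descends to the stable sheaf $\cE_{\bar s}$: a descended subsheaf would have slope at least $\mu(\cE_{\bar s})$. Proposition~\ref{prop:Cartier-descent} says exactly that the second fundamental forms for $\nabla_{\sf can}$ are nonzero. So there is no reduction of structure group in characteristic $p$, and nothing to specialize back to the characteristic-zero holonomy.

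For $r\le3$, the orthogonal case cannot be dismissed by group theory either. If $W$ is stable of rank two with $\det W\simeq\cO_X$, holonomy $\SL_2$ and $c_2(W)>0$, then $\Sym^2W$ is stable of rank three with holonomy $\SO_3$ and $\Delta>0$. Only the arithmetic hypothesis excludes it. Also, the restricted holonomy of the Hermite--Einstein connection need not act irreducibly; only the full holonomy does.

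\textbf{The missing idea} is to turn a destabilizing rank-$i$ subsheaf $\mathcal F\subset F^{m*}\cE_{\bar s}$ into an effective divisor on a homogeneous bundle, and to play this against characteristic-zero positivity.
\begin{enumerate}
\item By Lemma~\ref{ssb:lem-holonomy-square-dichotomy}, either the projective holonomy is finite (and you are done) or $D=\SL(V)$ or $\Sp(V,b)$.
\item On the corresponding ordinary or isotropic Grassmann bundle $\pi_i:Y_i\to X$, pulling back by the relative Frobenius gives an effective divisor of class $q\theta_{i,\bar s}-\pi_{i,\bar s}^*\bigl(c_1(\det\mathcal F)-\tfrac{qi}{r}c_1(E_{\bar s})\bigr)$.
\item If no normalized Pl\"ucker class $\theta_i$ were pseudoeffective, BDPP would give movable classes $\beta_i$ with $\theta_i\cdot\beta_i<0$ and $(\pi_i)_*\beta_i\equiv H$. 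These specialize to every fiber and contradict the divisor above. Hence some $\widehat L_i$ is pseudoeffective (Proposition~\ref{ssb:prop-grassmann-pseudoeffectivity}).
\item Stability of all the Cartan-power bundles $(\pi_i)_*\widehat L_i^{\otimes m}$, together with a restricted-base-locus argument, upgrades pseudoeffectivity to nefness. This is exactly where $\Sym^{[2]}$-stability is used, and where the rank-four example fails.
\item The formula $(\pi_i)_*\widehat\xi_i^{d_i+2}=-\kappa_i\Delta(E)$ with $\kappa_i>0$, combined with Bogomolov, then forces $\Delta=0$ on surfaces.
\item Higher dimensions follow by restricting to variable complete-intersection surfaces, which preserves density of the bad locus.
\item For $r\le3$, the paper instead uses closed-orbit bundles in $\bP(E)$ and $\bP(E^*)$, including conic bundles. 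It uses that a non-stable Frobenius pullback yields a rank-one subsheaf of $F^{m*}E$ or $F^{m*}E^*$ of large slope. The imprimitive case is handled separately.
\end{enumerate}
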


We  extend Theorem~\ref{main1} to singular varieties.
For vector bundles, the corresponding conclusions hold on
arbitrary normal complex projective varieties
(see Theorems~\ref{ssb:cor-higher-dimensional-normal}
and~\ref{cor:rank-at-most-three}).
For reflexive sheaves on normal projective varieties with quotient
singularities in codimension two, we obtain local freeness on the
regular locus and vanishing of the discriminant. If the variety
is klt, the restriction to the regular locus is also irreducible
projectively unitary flat
(see Theorems~\ref{ssb:cor-higher-dimensional-reflexive}
and~\ref{cor:rank-at-most-three-normal}).

We also prove a higher-rank analogue of the differential-form statement in \cite[Theorem~5, (2)]{ShepherdBarron1998}, relating the arithmetic hypotheses above to the existence of line
subsheaves of exterior powers of the cotangent bundle with
nonnegative or positive degree. We state this complementary
result separately in Theorem~\ref{thm:higher-rank-SB}.
\medskip

Let us recall that a vector bundle $E$ is called \emph{pseudoeffective} if $\cO_{\bP (E)}(1)$ is pseudoeffective.
A related consequence of dense failure of strong geometric stability is the following positivity statement.

\begin{theorem}	\label{main-pseudoeffective}
	Let \(E\) be an \(H\)-stable vector bundle of rank \(r\geq2\) on a smooth complex projective variety \(X\) of dimension \(d\geq2\). Let \(\pi_i:\mathbb P_X(\wedge^iE)\to X \) denote the canonical projections.
	If	\(\Sigma^{\mathrm{ns}} (E)\) is dense then \( \mathcal O_{\mathbb P(\wedge^iE)}(r) \otimes\pi_i^*(\det E)^{-i}\) is pseudoeffective for some \(1\leq i<r.\)
	In particular, if \(\det E\) is pseudoeffective, then \(\bigwedge^iE\) is pseudoeffective for some \(1\le i<r\).
\end{theorem}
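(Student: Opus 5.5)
The plan is to work on the reductions $\cX_s$ for $s$ in the dense set $\Sigma^{\mathrm{ns}}(\cE/S)$, obtain a destabilizing subsheaf of some Frobenius pullback, turn it into a line subbundle of an exterior power of $E$ with controlled degree, and then pass back to characteristic zero via a limiting argument for pseudoeffectivity. First shrink $S$ so that $\cE_s$ is a vector bundle and $\cE_s$ is $\cH_s$-stable for all closed $s$; openness of stability gives this, and we may even keep geometric stability of $\cE_s$ itself. For $s\in\Sigma^{\mathrm{ns}}$ choose the minimal $m=m(s)\ge 1$ such that $(F_s^m)^*\cE_s$ is not geometrically stable. After a finite extension of $\kappa(s)$ this yields a saturated subsheaf $G_s\subset (F_s^{m})^*\cE_s$ of rank $i$, with $1\le i<r$, and
\[
\mu(G_s)\ge \mu\bigl((F_s^m)^*\cE_s\bigr),\quad\text{i.e.}\quad r\deg G_s - i\, p^m\deg \cE_s\ge 0 .
\]
Its determinant gives a rank one subsheaf $\det G_s\hookrightarrow (F_s^m)^*\wedge^i\cE_s$. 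Equivalently, we get a section of $(F^m)^*$ of the quotient $\pi_i^*$-direction, namely a rational section $\sigma_s$ of $\bP(\wedge^i\cE_s)$ over $X_s^{(p^{-m})}$ (a multisection of degree $p^m$ over $\cX_s$, by purely inseparable base change). Passing to a subsequence of the dense set, we may assume $i$ is constant.

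Next, interpret this in terms of $L_i:=\cO_{\bP(\wedge^iE)}(r)\otimes\pi_i^*(\det E)^{-i}$. The quotient $(F^m)^*\wedge^i\cE_s\to(\det G_s)^\vee$-direction, i.e. the inclusion $\det G_s\subset$, corresponds to a subvariety $Z_s\subset\bP(\wedge^i\cE_s)$, the image of the section over the Frobenius twist, which is finite of degree $p^m$ over $\cX_s$ (purely inseparable). Then $\cO(1)|_{Z_s}$ pulls back to $(\det G_s)^{-1}$ twisted appropriately; the inequality above says that $L_i\cdot Z_s\cdot \pi_i^*H^{d-1}$, suitably normalized by $p^m$, is nonpositive. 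I actually want the opposite: a family of effective cycles on which $L_i$ has nonnegative intersection against enough movable classes. So instead use the dual viewpoint. By Boucksom--Demailly--Păun--Peternell, pseudoeffectivity of $L_i$ is equivalent to $L_i\cdot C\ge 0$ for all movable curves $C$; its failure would be an open condition on the model. Hence it is more natural to prove the contrapositive: assume $L_i$ is not pseudoeffective for every $i$. Then there is a movable class, given by complete intersections of a birational model, with $L_i\cdot C<0$. This spreads out and gives $L_{i,s}\cdot C_s<0$ for almost all $s$. Then show it forces strong geometric stability of $\cE_s$. The point is that a destabilizing $G_s$ as above produces the curve $Z_s\cap\pi_i^{-1}(C_s')$, and the sign conventions must be chosen so that it contradicts negativity. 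This requires the quotient formulation, $\wedge^i$ of the dual or the quotient $(F^m)^*\cE_s\to Q_s$ of rank $r-i$, to get the correct sign. I expect to use $\wedge^{r-i}$ and relabel $i$.

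The main obstacle is that the Frobenius twist makes the relevant degrees grow like $p^m$, and the instability at level $m$ need not be uniform. We need the inequality to hold for nef/movable test classes that do not depend on $s$, not merely for $H^{d-1}$. I would handle this by exploiting $\bQ$-twisted classes and bounding the maximal destabilizing slopes via the Langer-type bound $\mu_{\max}((F^m)^*\cE_s)-\mu_{\min}\le C\,p^m$, uniformly in $s$. I would also use the bundle $\bP(\wedge^iE)$ so that pseudoeffectivity, which is a closed cone condition, can be tested in the limit over the dense set of primes. Finally, the "in particular" clause is immediate: $\cO(r)=L_i\otimes\pi_i^*(\det E)^i$ is a sum of pseudoeffective classes, so $\wedge^iE$ is pseudoeffective.
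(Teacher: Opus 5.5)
Your strategy is the paper's: argue by contradiction, take a movable witness $\beta$ with $\theta_i\cdot\beta<0$ from BDPP duality (here $\theta_i=\tfrac1r c_1(L_i)$ in your notation), spread it out, and contradict it at one bad fiber. However, the decisive step is missing.

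First, the correct object at a bad fiber is a divisor, not the multisection $Z_s$. Put $q=p^m$ and $M=\det G_s$. The composite $M\hookrightarrow F^{m*}\bigwedge^i\cE_s\hookrightarrow\Sym^q\bigwedge^i\cE_s$ is a nonzero section of $\cO(q)\otimes\pi_i^*M^{-1}$, i.e.\ an effective divisor of class $q\theta_{i,s}-\pi_i^*\bigl(c_1(M)-\tfrac{qi}{r}c_1(\cE_s)\bigr)$. No passage to $\bigwedge^{r-i}$ is needed. A section of the projective bundle coming from a quotient only bounds the degree of $L_i$ from above along a subvariety, which says nothing about pseudoeffectivity.

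Pairing the divisor with $\beta_s$ gives $-q\delta-\bigl(c_1(M)-\tfrac{qi}{r}c_1(\cE_s)\bigr)\cdot\pi_{i*}\beta_s$. The destabilizing inequality fixes the sign of the second term only when $\pi_{i*}\beta_s$ is a multiple of $H_s^{d-1}$. For the arbitrary movable class that BDPP hands you, this term has no sign and is of order $q$. A uniform bound of type $\mu_{\max}-\mu_{\min}\le Cp^m$ would at best bound it by $C'q$, with nothing forcing $C'<\delta$, and rescaling $\beta$ does not change the ratio.

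The missing idea is Lemma~\ref{lem:movable-class-over-polarization}: if $\theta_i$ is not pseudoeffective, the witness can be chosen with $\pi_{i*}\beta\equiv H^{d-1}$ exactly. This is not formal. It uses that $\pi_{i*}\cO(a)=\Sym^a\bigwedge^iE$, of slope $\tfrac{ai}{r}c_1(E)$, is semistable for every movable class in an open neighborhood $U$ of $H^{d-1}$. That follows from openness of stability and semistability of tensor powers, after Greb--Kebekus--Peternell. The argument then runs as follows.
\begin{itemize}
\item For every pseudoeffective $a\theta_i+\pi_i^*D$ and every $\alpha\in U$ one gets $D\cdot\alpha\geq0$, hence a norm bound $\|D\|\le C\,D\cdot H^{d-1}$.
\item Combined with closedness of the pseudoeffective cone and non-pseudoeffectivity of $\theta_i$, this yields a threshold $D\cdot H^{d-1}\ge\varepsilon a$.
\item So the class with $\pi_{i*}\beta=H^{d-1}$ and $\theta_i\cdot\beta=-\delta$, $0<\delta<\varepsilon$, lies in the interior of the dual cone, hence is a positive rational combination of strongly movable classes.
\end{itemize}
Semistability with respect to $H$ alone would not exclude pseudoeffective classes $\theta_i+\pi_i^*D_n$ with $D_n\cdot H^{d-1}\to0$ and $D_n$ unbounded.

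You also need the relation $(\pi_{i,\bar s})_*\beta_{\bar s}\equiv H_{\bar s}^{d-1}$ on the special fiber, tested against divisors such as $c_1(\det G_s)$ that need not lift to characteristic zero. The paper obtains this from the vanishing of the Betti, hence $\ell$-adic, class of a numerically trivial one-cycle (Lemma~\ref{lem:specialization-movable-witnesses}). Your deduction of the ``in particular'' clause is fine.
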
 

Assume for simplicity that $E$ is locally free and $\det E$ is
trivial. If all symmetric powers of $E$ are $H$-stable and
$E$ is pseudoeffective, then
\cite[Theorem~1.1]{HoringPeternell2019} implies that $E$ is
numerically flat. Since $E$ is $H$-stable, it is therefore
unitary flat. However, under the assumptions of
Theorem~\ref{main-pseudoeffective}, even if $\det E$ is trivial,
$E$ need not be pseudoeffective
(see Proposition~\ref{prop:rank-six-counterexample}).
Although this direct argument does not work in general, we use
a similar but more elaborate strategy to prove Theorem~\ref{main1}.

More precisely, first we consider the surface case and we prove a criterion relating projective unitary flatness of $E$ to pseudoeffectivity or nefness of some standard line bundles on various fibrations associated to homogeneous spaces of the holonomy group of $E$ (see Theorem \ref{ssb:thm-holonomy-orbit-positivity}). This is the most technical part and one of the main novelties  of the paper. It also allows us to strengthen \cite[Theorem~1.1]{HoringPeternell2019} by requiring stability
only of the second reflexive symmetric power (see Corollary~\ref{ssb:cor-horing-peternell}).
Stability of the second symmetric power gives a dichotomy: either the projective holonomy is finite, in which case $E$ is already projectively unitary flat, or the connected derived holonomy is $\SL$ or $\Sp$ (see Lemmas~\ref{ssb:lem-holonomy-square-dichotomy} and~\ref{ssb:lem-finite-projective-holonomy}). In the latter case, density of $\Sigma^{\mathrm{ns}}(E)$ implies that at least one of the above line bundles is pseudoeffective (see Proposition \ref{ssb:prop-grassmann-pseudoeffectivity}). This result is analogous to Theorem \ref{main-pseudoeffective}, but it is not implied by this theorem.
In ranks two and three, a Frobenius pullback which is not
geometrically stable yields a line subsheaf of sufficiently
large slope, after possibly dualizing. This allows the same
positivity strategy to apply without a symmetric-power stability
assumption, still assuming that $\Sigma^{\mathrm{ns}}(E)$ is dense.

The relation of pseudoeffectivity and nefness in Theorem  \ref{ssb:thm-holonomy-orbit-positivity} follows from Lemma~\ref{ssb:lem-pseff-nef} and uses ideas coming from earlier papers. In particular, a similar strategy was used in the proofs of \cite[Theorem 1.1]{HoringPeternell2019} and \cite[Theorem 6.1]{Druel2018}.
One of the main points in this strategy is the use of ideas underlying Mumford's example  \cite[Chapter I, Example 10.6]{Hartshorne1970} of a nef line bundle which is ample on every proper subvariety. This example was  revisited in various cases in \cite[Theorem 6.1]{Subramanian1989}, \cite[Theorem 3.1]{Mehta-Subramanian1995}, \cite[Theorem 6.1]{Biswas-Parameswaran-Subramanian2006} and \cite[Proposition 1.3]{HoringPeternell2019}. Then as in \cite{Druel2018} and \cite{HoringPeternell2019} we use the restricted base locus of some standard line bundles on some  fibrations associated to $E$ (in the above papers this was used only for $\cO _{\bP(E)}(1)$, or a certain line bundle on the full flag variety in \cite{Biswas-Parameswaran-Subramanian2006}).

A geometric application is a flatness criterion for orthogonal
bundles. For an $H$-stable vector bundle $E$ of rank at least three
with trivial determinant and full special orthogonal holonomy
on a smooth complex projective surface,
Corollary~\ref{cor:orthogonal-flatness-criterion} identifies unitary
flatness with pseudoeffectivity, or equivalently nefness, of the
relative anticanonical bundle of the associated quadric bundle
of isotropic quotients. This detects flatness even though
pseudoeffectivity of $E$ itself is automatic.

\medskip

The following examples show the necessity of the assumptions in
Theorem~\ref{main1} and the limitations of the conclusion of
Theorem~\ref{main-pseudoeffective}.

\begin{theorem}\label{thm:rank-four-counterexample}
	For each of the following three cases, there exist a smooth polarized complex projective surface $(X,H)$
	and a vector bundle $E$ of rank $r$, defined over
	$\mathbb Q$, such that \( \int_X\Delta(E)>0\) and
	\begin{enumerate}
		\item $r=3$ and $E$ is strictly $H$-semistable, or
		\item $r=4$, $E$ is $H$-stable and pseudoeffective, with
		$\det E\simeq\mathcal O_X$, but
		$\operatorname{Sym}^mE$ is not $H$-stable for any $m\geq2$, or	
		\item $r=6$, $E$ is $H$-stable, with
		$\det E\simeq\mathcal O_X$, and it is not pseudoeffective,
		whereas $\bigwedge^3E$ is pseudoeffective.
	\end{enumerate}
	After spreading out, for every sufficiently large prime
	$p\equiv2\pmod5$, the reductions $E_{p}$ are $H_p$-semistable,
	and geometrically $H_p$-stable if $r=4,6$, whereas
	$F_{X_p}^*E_{p}$ is not $H_p$-semistable.
	In particular, each $\Sigma^{\mathrm{nss}}(E)$ contains a set
	of primes of Dirichlet density $1/4$, and none of the bundles
	$E$ is projectively flat.
\end{theorem}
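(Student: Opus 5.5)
The plan is to build all three examples from a single source of arithmetic instability: a smooth curve $C$ over $\mathbb Q$ together with a stable bundle $V$ on $C$ whose Frobenius pullbacks are destabilized for a positive-density set of primes. The natural candidate is the Fermat-type quintic curve, or a quotient of it by $\mu_5$, for which the Hasse--Witt matrix vanishes in a controlled way for $p\equiv2\pmod5$. The order of such $p$ modulo $5$ is $4$, so these primes have Dirichlet density $1/4$. On such a curve one expects, following the classical examples of Gieseker and Raynaud type, that the bundle $B=F_*\mathcal O$ or the syzygy bundle of the canonical system has $F^*$ non-semistable exactly for these primes. I would take $V$ to be a rank-two stable bundle on $C$, defined over $\mathbb Q$, for which $F_{C_p}^*V_p$ is unstable for all large $p\equiv2\pmod 5$. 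For example, $V$ could be the kernel of the evaluation map of a base-point-free pencil on a curve of genus $2$, or the syzygy bundle $\Omega_{\mathbb P^1}(1)$ pulled back to $C$. The instability would be detected by a nonzero map $F^*V\to L$ coming from the vanishing of a Cartier-operator entry.

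Next I would pass to a surface. Take $X=C\times C'$, or better $X=C\times D$ with $D$ another curve, with polarization $H$ a product class. Then set $E=p_1^*V\otimes p_2^*W$ or $E=p_1^*V\oplus\cdots$, adjusted to each case.

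For case (1), take $E=p_1^*V\oplus p_2^*M$ with $M$ a line bundle chosen so that $\mu_H$ agrees. Then $E$ is strictly semistable, and $\int\Delta(E)>0$ comes from the cross term $c_1(p_1^*V)\cdot c_1(p_2^*M)$. Frobenius pullback destabilizes the $V$-summand.

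For case (2), take $E=p_1^*V\otimes p_2^*W$ with $W$ a stable rank-two bundle on $D$, normalized so that $\det E\simeq\mathcal O_X$. Tensor products of stable bundles on the factors are stable for product polarizations, because $\mathfrak{sl}_2\times\mathfrak{sl}_2$-type holonomy acts irreducibly. Since the holonomy is $\SO(4)$-like, $\Sym^2E\supset\mathcal O$ via the induced symmetric form, so $\Sym^mE$ is never stable for $m\ge2$. Pseudoeffectivity comes from $E$ being self-dual orthogonal with trivial determinant, which makes the isotropic quadric bundle available. Geometric stability of $E_p$ follows by openness of stability. Non-semistability of $F^*E_p$ follows from $F^*V_p\otimes F^*W_p\supset L\otimes F^*W_p$, which has positive slope.

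For case (3), take $E=p_1^*\Sym^0\ldots$. More concretely, I would use $E=p_1^*V\otimes p_2^*U$ with $U$ stable of rank $3$, or $E=\bigwedge^2$ of a rank-four bundle. Non-pseudoeffectivity would be checked by restricting to fibres $C\times\{pt\}$, where $E$ restricts to copies of $V$ of degree zero. For pseudoeffectivity of $\bigwedge^3E$, use its decomposition, which contains $p_1^*\det V^{\otimes\ldots}$-type summands with sections.

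The remaining claims are then formal. First, $\int\Delta>0$ is a direct Chern class computation on the product. Second, $E$ is not projectively flat, because projective flatness forces $\Delta=0$.

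The main obstacle is the arithmetic input: producing $V$ over $\mathbb Q$ and proving that $F^*V_p$ is unstable precisely when $p\equiv 2\pmod5$. I would handle this through an explicit computation of the Cartier operator on the quintic Fermat quotient via Jacobi sums, showing that the relevant Hasse--Witt entry vanishes for these primes.
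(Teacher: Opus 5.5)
Your constructions for (2) and (3) fail at the decisive point. On a product $C\times D$, any bundle pulled back from one factor has $c_2=0$ and $c_1^2=0$, hence zero discriminant. Since $\Delta(F\otimes G)=(\rk G)^2\Delta(F)+(\rk F)^2\Delta(G)$, every $E=p_1^*V\otimes p_2^*W$ (or $p_1^*V\otimes p_2^*U$) has $\int_X\Delta(E)=0$, whatever the degrees. In fact $V$ and $W$ are projectively unitary flat by Narasimhan--Seshadri, so $E$ is projectively unitary flat. That is exactly what the theorem must rule out, so such an $E$ is no counterexample at all. With $\det E\simeq\mathcal O_X$ it is even unitary flat, hence nef and pseudoeffective, so (3) fails too.

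Your alternative $E=\bigwedge^2W'$ for a rank four $W'$ does not help. Triviality of $\det E=(\det W')^{3}$ makes $\det W'$ torsion, and the symmetric pairing $\bigwedge^2W'\otimes\bigwedge^2W'\to\det W'$ then gives sections of $\Sym^{6}(\bigwedge^2W')$, so $E$ is pseudoeffective. Restricting to fibres $C\times\{pt\}$ cannot detect non-pseudoeffectivity either, since those restrictions are semistable of degree zero, hence nef.

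In (1), the cross term $c_1(p_1^*V)\cdot c_1(p_2^*M)$ vanishes as soon as $\deg V=0$, because equal slopes then force $\deg M=0$; your normalization in (2) and (3) requires $\deg V=0$. You would have to twist $V$ to nonzero degree, or, as the paper does, use $A\otimes B^{-1}$, which has $H$-slope zero but $(a-b)^2=-10$. Also, your candidates for $V$ (the kernel of a pencil, $\Omega_{\mathbb P^1}(1)$) have rank one. The paper's arithmetic input is Brenner's syzygy bundle of $(x^2,y^2,z^2)$ on the Fermat quintic, for which $\mathcal O(1)\subset F^*V_p$ is the Harder--Narasimhan filtration when $p\equiv2\pmod5$, by Brenner--Kaid.

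The missing idea is that one tensor factor must carry genuine curvature. The paper takes $X=C\times\mathbb P^1$ and $G=p_1^*V$, which is flat with holonomy $\SL_2$. The other factor is $F=f^*S$, respectively $F=f^*T$, pulled back along a finite morphism $f\colon X\to\mathbb P^2$ with $f^*\mathcal O(1)\simeq H$ and $\deg f=10$. Here $S$ and $T$ are stable syzygy bundles on $\mathbb P^2$ with trivial determinant, $c_2>0$, and holonomy $\SL_2$, respectively $\SL_3$. Then $\int_X\Delta(F\otimes G)=(\rk G)^2\int_X\Delta(F)>0$.

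Stability of $E=F\otimes G$ is then no longer formal. Lemma~\ref{lem:counterexamples-tensor-holonomy} shows that the closed joint holonomy of $F\oplus G$ is $\operatorname{SU}(r)\times\operatorname{SU}(2)$. This holds because $G$ is flat, while the restricted holonomy of $F$ is a nonzero ideal of the simple algebra $\mathfrak{su}(r)$, nonzero since $c_2(F)>0$.

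Non-pseudoeffectivity in rank six also needs a real argument. The paper combines the Cauchy decomposition of $\Sym^n(F\otimes G)$ with the uniform embeddings $\Sym^dG\hookrightarrow\mathcal O_X(3H)^{\oplus N_d}$. It then applies the uniform Schur vanishing of Corollary~\ref{cor:rank-six-uniform-schur-vanishing}. That vanishing comes from non-pseudoeffectivity of every nonzero class $a\ell_1+b\ell_2$ on the flag bundle of $F$ (Lemma~\ref{lem:rank-three-flag-non-psef}), which in turn rests on the closed-orbit criterion of Theorem~\ref{ssb:thm-holonomy-orbit-positivity}.
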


Item~(1) shows that the low-rank statement in
Theorem~\ref{main1} cannot be extended from stable to semistable
bundles. This contrasts with rank two, where, after a suitable
choice of model and shrinking its base, all reductions of a
strictly semistable vector bundle are strongly semistable.
The construction is given in
Proposition~\ref{prop:semistable-rank-three-syzygy}.

Item~(2) shows that the rank bound in Theorem~\ref{main1} is
sharp: even dense failure of strong semistability does not force
a stable rank four bundle to be projectively flat.
It also shows that even for vector bundles stability of
$\operatorname{Sym}^{2}E$ in Theorem~\ref{main1} cannot be
replaced by stability of $E$.
The example is constructed in
Proposition~\ref{prop:rank-four-counterexample}.
Its pseudoeffectivity follows from Lemma~\ref{lem:symmetric-powers-rank-four-example}.

Item~(3) shows that the conclusion of
Theorem~\ref{main-pseudoeffective} cannot be strengthened to
pseudoeffectivity of $E$ itself, even for stable bundles with
trivial determinant (see Proposition~\ref{prop:rank-six-counterexample}).

All three examples are constructed on the product of the Fermat
plane quintic $C$ and $\mathbb P^1$, using a rank two
vector bundle on $C$ constructed by Brenner in \cite{Brenner-Miyaoka}.
The rank four and rank six examples are tensor products of its
pullback with pullbacks of stable bundles of ranks two and three,
respectively, from $\mathbb P^2$.

\medskip

Let $E$ be a  reflexive sheaf on $(X,H)$. If $\Sigma^{\mathrm{ns}}(E)$
is not dense, then, for every model
$(\mathscr X,\mathscr H,\mathscr E)\to S$, we may replace $S$
by a nonempty open subset so that $\mathscr E_s$ is strongly
geometrically $\cH_s$-stable for every closed point $s\in S$.
We then say that \emph{almost all reductions of $E$ are strongly
	geometrically stable}.

The local-freeness and flatness conclusions have the following surprising
consequences for reductions.

\begin{corollary}
	\begin{enumerate}
		\item Let $(X,H)$ be a smooth polarized complex projective variety
		of dimension at least two, and let $E$ be a reflexive sheaf of
		rank $r\geq2$ such that $(\operatorname{Sym}^{2}E)^{**}$ is $H$-stable.
		If $E$ is not locally free, then almost all reductions of $E$ are strongly
		geometrically stable.
		
		\item Let $(X,H)$ be a normal polarized complex projective variety
		of dimension at least two with trivial étale fundamental group.
		If $E$ is a vector bundle of rank $r\geq2$ such that
		$\operatorname{Sym}^2E$ is $H$-stable, then
		almost all reductions of $E$ are strongly
		geometrically stable.
	\end{enumerate}
\end{corollary}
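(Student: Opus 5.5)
Both parts are proved by contraposition from Theorem~\ref{main1} and its normal-variety analogue (Theorem~\ref{ssb:cor-higher-dimensional-normal}). First I check that the stability hypothesis on $(\Sym^2E)^{**}$ implies $H$-stability of $E$. If $F\subset E$ were a destabilizing saturated subsheaf with $0<\rk F<r$, then $(\Sym^2F)^{**}\to(\Sym^2E)^{**}$ would be a subsheaf of the same slope $2\mu(F)\ge 2\mu(E)$, and of strictly smaller rank. This contradicts stability. So the hypotheses of Theorem~\ref{main1} on $E$ are met, except possibly density of $\Sigma^{\mathrm{ns}}(E)$.

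For (1), suppose $\Sigma^{\mathrm{ns}}(E)$ is dense. Then Theorem~\ref{main1} applies, in its $(\Sym^2E)^{**}$-stable branch, which covers all $r\ge2$. It shows that $E$ is an irreducible projectively unitary flat vector bundle, in particular locally free. This contradicts the assumption that $E$ is not locally free. Hence $\Sigma^{\mathrm{ns}}(E)$ is not dense. By the remark preceding the corollary, after shrinking $S$ every closed fibre $\cE_s$ is strongly geometrically $\cH_s$-stable, i.e. almost all reductions are strongly geometrically stable.

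For (2), again suppose $\Sigma^{\mathrm{ns}}(E)$ is dense. Theorem~\ref{ssb:cor-higher-dimensional-normal} is the vector bundle version on arbitrary normal projective varieties. It gives that $\bP(E)$ comes from an irreducible representation $\rho:\pi_1^{\mathrm{top}}(X^{\mathrm{an}})\to\PU(r)$. The key step is then to show that $\rho$ has finite image. Let $\Gamma$ be the image. It is a finitely generated subgroup of $\PU(r)\subset\PGL_r(\bC)$, which is a linear group, so by Malcev's theorem $\Gamma$ is residually finite. Every finite quotient of $\Gamma$ gives a finite quotient of $\pi_1^{\mathrm{top}}(X^{\mathrm{an}})$, hence a finite étale cover of $X$ by Riemann existence. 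Since the étale fundamental group is trivial, all these quotients are trivial. Residual finiteness then forces $\Gamma=1$.

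Now $\rho$ is trivial and irreducible. A trivial representation into $\PU(r)$ cannot be irreducible when $r\ge2$, since every line in $\bC^r$ is invariant. This contradiction shows $\Sigma^{\mathrm{ns}}(E)$ is not dense. (Equivalently, $\bP(E)$ would be trivial, so $E\cong L^{\oplus r}$ for a line bundle $L$, and then $\Sym^2E$ is not stable.) We conclude as in (1).

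The main obstacle is the passage in (2) from trivial étale fundamental group to triviality of the projective unitary representation. This needs residual finiteness of finitely generated linear groups. It also needs the identification of finite quotients of $\pi_1^{\mathrm{top}}$ with finite étale covers for normal varieties, which holds since $\pi_1^{\mathrm{et}}$ is the profinite completion of $\pi_1^{\mathrm{top}}(X^{\mathrm{an}})$. The remaining steps are direct applications of the earlier theorems.
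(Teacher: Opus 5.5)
Your proposal is correct and follows the paper's own argument. For part (1), you note as in Remark~\ref{stability-of-Sym-gives-usual} that stability of $(\Sym^2E)^{**}$ gives stability of $E$, and then the local-freeness conclusion of Theorem~\ref{main1} contradicts the assumption that $E$ is not locally free. For part (2), you combine Theorem~\ref{ssb:cor-higher-dimensional-normal} with Malcev's residual finiteness and the fact that $\pi_1^{\mathrm{et}}$ is the profinite completion of $\pi_1^{\mathrm{top}}(X^{\mathrm{an}})$, which forces the projective unitary monodromy to be trivial and hence reducible for $r\ge 2$; this is exactly the justification the paper sketches right after the corollary.
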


For item~(2), note that the image of a projective unitary
representation of $\pi_1^{\mathrm{top}}(X^{\mathrm{an}})$ is a finitely
generated linear group and hence is residually finite by Malcev's
theorem. Triviality of the \'etale fundamental group therefore
forces this image to be trivial, contradicting irreducibility
in rank at least two.

In ranks two and three, both assertions remain valid with
$H$-stability of $E$ in place of stability of its second symmetric
power (see Theorem~\ref{main1} and Theorem~\ref{cor:rank-at-most-three}).

\medskip

Finally, Section~\ref{sec:double-covers} gives counterexamples to
\cite[Conjecture~7.7]{LangerPositivitySemistability}.
We construct non-torsion line bundles on elliptic curves and
geometrically stable rank two bundles with trivial determinant
on genus-two curves whose reductions are \'etale trivializable
at every sufficiently large prime, although the complex bundles
are not \'etale trivializable. Thus even \'etale triviality of
all sufficiently large reductions does not force finite
unitary monodromy in characteristic zero.
\medskip

\emph{A brief history of the paper.}
The project began many years ago, when the author combined his
earlier proof of Bogomolov's inequality for Higgs sheaves
\cite[Theorem~4.2]{LangerHiggsOld} with the analysis of rank~$3$
systems of Hodge bundles in \cite{LangerSimpsonRankThree}
to prove the rank~$3$ case of Theorem~\ref{main1}.
The project then remained dormant owing to a lack of progress
in higher rank.

The author's first attempt to obtain a proof of the (unpublished) rank~$3$
result from ChatGPT Astra was unsuccessful. After he supplied
the main idea, however, a few further prompts enabled him to
reconstruct his original proof. Subsequently, when asked to prove
the author's conjecture
\cite[Conjecture~6.1]{LangerPositivitySemistability},
ChatGPT produced a counterexample already in rank~$4$
(see Theorem~\ref{thm:rank-four-counterexample}(2)).
This led to the research presented in this paper.
Eventually, the original proofs in ranks~$2$ and~$3$ were
rewritten using the methods developed for the higher-rank case.

\medskip

The paper is organized as follows. Section~1 collects preliminary
results. Sections~2 and~3 discuss unitary flatness and algebraic
holonomy. In Sections~4 and~5 we develop the positivity results
for normalized exterior powers and homogeneous bundles associated
with holonomy groups. Section~6 treats the low-rank cases.
The arithmetic surface theorem is proved in Section~7 and extended
to vector bundles and reflexive sheaves on normal projective
varieties in Sections~8 and~9, respectively. Section~10 contains
the differential-form analogue of Shepherd--Barron's theorem.
Section~11 provides counterexamples illustrating the necessity
of the assumptions in our main results, whereas Section~12
constructs bundles with infinite monodromy whose reductions
are \'etale trivializable at all sufficiently large primes.
Finally, Appendix A establishes restriction theorems with
prescribed basepoints and Appendix B contains some push-forward formulas for flag varieties. 

\subsection*{Notation}

All varieties are geometrically integral. A torsion free (reflexive) sheaf on $X$ stands for a coherent
torsion free (respectively, reflexive) $\cO_X$-module.

If $X$ is a normal variety and $E$ is a coherent $\cO_X$-module of rank $r>0$ then we set
\[
\Sym^{[m]}E:=(\Sym^mE)^{**} ,\quad{\bigwedge}^{[i]}E:=({\bigwedge}^iE)^{**},\quad \hbox{and} \quad \det E:=({\bigwedge} ^rE)^{**}.
\]
If $f: Y\to X$ is a morphism from a normal variety $Y$ then we set 
$f^{[*]}E:=(f^*E)^{**}.$
If $X$ is a normal variety of characteristic $p$ then we write 
\(
F_{X}^{[e]}E:=((F_{X}^e)^*E)^{**},
\)
where $F_X$ is the absolute Frobenius morphism.
For a torsion-free sheaf $E$
of rank $r$ on a smooth projective variety we set
\begin{equation*}
	\Delta(E):=2r c_2(E)-(r-1)c_1(E)^2.
\end{equation*} 
We also use the same formula for reflexive sheaves on normal projective varieties with quotient singularities in codimension $2$. In this case Chern classes denote orbifold Chern classes on the large open subset on which $X$ has quotient singularities.

Let \(X\) be an \(S\)-scheme and let \(T\to S\) be a morphism. We write
$X_T:=X\times_S T$
and, if $p_X\colon X_T\to X$ is the projection and \( E\) is
an \(\mathcal O_X\)-module, we set
$ E_T:=p_X^* E.$

\section{Preliminaries}

\subsection{Slope semistability and stability}

Let $X$ be a normal projective variety defined over a field $k$. Let us set $d=\dim X$ and 
let $H$ be an ample (or, more generally, nef) line bundle on $X$.
For a torsion-free sheaf $E$ of rank $r>0$, we define the \emph{$H$-slope} of $E$ by
\[
\mu_H(E):=\frac{c_1(E)\cdot H^{d-1}}{r}.
\]

We say that $E$ is \emph{(slope) $H$-semistable} if for every nonzero $F\subset E$ we have $\mu_H(F) \le \mu_H(E)$.
We say that $E$ is \emph{(slope) $H$-stable} if for every $F\subset E$ of rank $0<\rk F<\rk E$
we have $\mu_H(F) < \mu_H(E)$. $E$ is called \emph{geometrically $H$-stable} if $E_{\bar k}$ is $H_{\bar k}$-stable on $X_{\bar k}$. We say that $E$ is \emph{(slope) $H$-polystable} if it is a direct sum of slope $H$-stable sheaves of the same $H$-slope.

\subsection{Grothendieck's boundedness theorem}

Let $X$ be a scheme of finite  type over some noetherian scheme $S$.
Let $\mathscr B$ be a set of coherent sheaves on the fibers of $X\to S$. We say that $\mathscr B$ is \emph{bounded} 
if there exists a scheme $T$ of finite type over $S$ and a coherent sheaf $\cF$ on $X_T/T$ such that every sheaf $\cE\in \mathscr B$ is isomorphic to the restriction of $\cF$ to one of the fibers of $X_T\to T$. 

For a coherent sheaf \(Q\) of dimension \(d\) on a noetherian scheme,
let \(T_{<d}(Q)\subset Q\)
denote the maximal coherent subsheaf whose support has dimension strictly
smaller than \(d\), and set
\[
Q_{(d)}:=Q/T_{<d}(Q).
\]
Then \(Q_{(d)}\) is pure of dimension \(d\).

The following is the form of Grothendieck's boundedness lemma that we
will use; see \cite[Lemme~2.6]{Grothendieck-FGA-IV}.

\begin{theorem}
	\label{thm:Grothendieck-boundedness}
	Let	\(	f\colon\mathcal X\longrightarrow S	\)
	be a projective morphism of noetherian schemes, let
	\(\mathcal O_{\mathcal X}(1)\) be relatively very ample, and let
	\(\mathcal F\) be a coherent sheaf on \(\mathcal X\). Fix an integer
	\(d\geq1\), and let \(\mathscr B\) be a collection of coherent quotients \(	\mathcal F_{\bar s}\twoheadrightarrow Q	\)
	on geometric fibers of \(f\), such that
	\(
	\dim\operatorname{Supp}Q=d
	\)
	for every \(Q\in\mathscr B\). Write the Hilbert polynomial of \(Q\) as
	\[
	P_Q(n):=\chi\bigl(Q(n)\bigr)
	=a_d(Q)n^d+a_{d-1}(Q)n^{d-1}+\cdots+a_0(Q).
	\]
	If the numbers \(a_d(Q)\) are bounded above as \(Q\) varies in
	\(\mathscr B\), then the numbers \(a_{d-1}(Q)\) are bounded below.
	If, in addition, the numbers \(a_{d-1}(Q)\) are bounded above, then
	the sheaves
	\(
	\bigl\{Q_{(d)}\mid Q\in\mathscr B\bigr\}
	\)
	form a bounded set over \(S\).
\end{theorem}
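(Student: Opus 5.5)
The plan is to reduce to the case where $\mathcal F$ is replaced by a fixed flat, well-behaved family and the quotients are controlled by the Hilbert polynomial. This is Grothendieck's classical argument.

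First, by noetherian induction on $S$ and generic flatness, we may assume $S$ is affine, integral and $\mathcal F$ is $S$-flat. Each stratum is of finite type, so boundedness over each stratum gives boundedness over $S$. The bounds on coefficients are uniform in the same way.

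Next, replace $\mathcal F$ by a surjection $\mathcal O_{\mathcal X}(-m)^{\oplus N}\twoheadrightarrow\mathcal F$, which exists since $\mathcal O(1)$ is relatively very ample and $S$ is noetherian. Every $Q$ is then a quotient of $\mathcal O_{\mathbb P^M_{\bar s}}(-m)^{\oplus N}$, after embedding $\mathcal X\subset\mathbb P^M_S$. So we reduce to quotients of a fixed trivial-type bundle on projective space.

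For the lower bound on $a_{d-1}(Q)$, I would argue by induction on $d$ via generic hyperplane sections. Choose a hyperplane $h$ on $\mathbb P^M_{\bar s}$ that is $Q$-regular, i.e.\ avoids associated points of $Q$ (possible after extending the field if needed). Then $0\to Q(-1)\to Q\to Q|_h\to0$ is exact. Hence $P_{Q|_h}(n)=P_Q(n)-P_Q(n-1)$, with leading coefficient $d\,a_d(Q)$ and next coefficient expressible through $a_{d-1}(Q)$ and $a_d(Q)$.

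This gives the base case. For $d=1$, $Q_{(1)}$ is a pure one-dimensional quotient of $\mathcal O(-m)^N$ with bounded multiplicity $a_1$. Its restriction to the one-dimensional support shows $a_0$ is bounded below, since $Q_{(1)}$ is generated by sections of $\mathcal O(-m)^N$ restricted to a curve of bounded degree. Alternatively, project generically to $\mathbb P^1$ and use that the pushforward of a quotient of $\mathcal O(-m)^N$ splits into line bundles of degrees $\ge -m\cdot(\text{bounded})$.

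In general, I would use the same projection trick. Take a generic linear projection $\pi\colon\operatorname{Supp}Q\to\mathbb P^d$, finite, so that $\pi_*Q_{(d)}$ is torsion-free of rank $a_d\cdot d!$. It is a quotient of $\pi_*\mathcal O(-m)^N$ restricted to the support, whose slope is bounded below. The lower bound on $a_{d-1}$ then follows from the slope of a torsion-free quotient of a sheaf generated in bounded degree.

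For boundedness of $\{Q_{(d)}\}$, given both bounds, I would apply the Mumford–Kleiman regularity criterion. With $a_d,a_{d-1}$ bounded, the pure sheaves $Q_{(d)}$ have bounded multiplicity and bounded first "slope". Using the restriction sequence and induction on $d$ (restrictions to general hyperplanes are again quotients of the same type, with bounded top two coefficients), one obtains bounds on $h^0(Q_{(d)}|_{h^{\cap i}})$. Purity enters here: the torsion of $Q_{(d)}|_h$ is harmless after passing to $(Q|_h)_{(d-1)}$, and $h^0$ of a pure sheaf of dimension $d$ is bounded by $a_d$ and $a_{d-1}$ via Simpson's/Le Potier's estimate $h^0\le \mathrm{rk}\cdot\binom{\mu_{\max}+C}{d}$. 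Here $\mu_{\max}$ is bounded because any subsheaf of $Q_{(d)}$ has slope at most that of $Q_{(d)}$ plus a bound coming from lower-bounded quotient slopes, that is, from the first part applied to quotients of $Q_{(d)}$. Uniform regularity then gives that all $Q_{(d)}$ are quotients of a fixed $\mathcal O(-n_0)^{N'}$ with finitely many Hilbert polynomials. These are parametrized by a relative Quot scheme of finite type over $S$, which gives boundedness.

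I expect the main obstacle to be the $h^0$ estimate, i.e.\ controlling $\mu_{\max}(Q_{(d)})$ uniformly. I would try to derive it from the lower bound for quotients, applied to the quotient $Q_{(d)}/G$ by a destabilizing subsheaf $G$: since $a_{d-1}(Q_{(d)}/G)$ is bounded below, $a_{d-1}(G)$ is bounded above.
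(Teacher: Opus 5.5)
The paper does not prove this statement; it quotes it from \cite[Lemme~2.6]{Grothendieck-FGA-IV}. Your sketch therefore has to stand on its own, and its boundedness half has a genuine gap. A Mumford--Kleiman regularity bound needs control of the \emph{whole} Hilbert polynomial of $Q_{(d)}$. If the general hyperplane sections of a sheaf $F$ are $m_1$-regular, Mumford's argument only makes $F$ $(m_1+h^1(F(m_1-1)))$-regular, and $h^1(F(m_1-1))=h^0(F(m_1-1))-\chi(F(m_1-1))$. So besides $h^0$-bounds you need a \emph{lower} bound on $\chi(Q_{(d)}(m_1-1))$. Induction on $d$ pins down every coefficient of $P_{Q_{(d)}}$ except the constant term, and nothing in your argument bounds that term from below. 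In the second half you use the quotient structure only through the bound on $\hat\mu_{\max}$, and that is too weak. Consider ideal sheaves $I_Z\subset\mathcal O_{\mathbb P^2}$ of zero-dimensional subschemes of length $\ell\to\infty$. They are pure with fixed $a_2,a_1$ and fixed $\hat\mu_{\max}$. They have $h^0\le 1$ on $\mathbb P^2$, on a general line and at a general point, and they restrict to $\mathcal O_L$ on general lines. Yet they form an unbounded family. So ``uniform regularity then gives finitely many Hilbert polynomials'' cannot be obtained from the inputs you list, and as written it is circular. What excludes these $I_Z$ is global generation of $I_Z(m)$, which forces $\ell\le m^2$, and the quotient structure must be used again at this point.

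One way to do this: let $K$ be the kernel of $\mathcal O(-m)^{\oplus N}\to Q_{(d)}$ on $\mathbb P^M$ ($M\ge 2$). For a general hyperplane $h$, $Q_{(d)}|_h$ is pure and $K|_h$ is the kernel of $\mathcal O_h(-m)^{\oplus N}\to Q_{(d)}|_h$. By induction these quotients form a bounded family, so the $K|_h$ are uniformly $n_1$-regular. Mumford's argument then gives $H^i(K(n))=0$ for $i\ge 2$ and $n\ge n_1-i$. Since $H^i(Q_{(d)}(n))\hookrightarrow H^{i+1}(K(n))$ for $i\ge1$ and $n\ge m$, the sheaves $Q_{(d)}$ are uniformly regular. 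Finiteness of their Hilbert polynomials follows, and Le Potier--Simpson is not needed.

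The same trick repairs the base case $d=1$ of the lower bound, which you only assert. There $Q_{(1)}|_h$ is a quotient of length $a_1$, so $H^1(Q_{(1)}(n))=0$ for a bounded $n$, whence $a_0(Q)\ge a_0(Q_{(1)})\ge -a_1 n$. Your proposed splitting bound for $\pi_*Q_{(1)}$ on $\mathbb P^1$ cannot come from $m$ alone, because pushforward does not preserve generation by global sections. For a plane curve $Z$ of degree $e$ and $Q=\mathcal O_Z$ (so $m=0$), a general projection gives $\pi_*\mathcal O_Z\simeq\bigoplus_{k=0}^{e-1}\mathcal O_{\mathbb P^1}(-k)$. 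A bound involving the degree of the support does hold, but it requires an argument.
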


\subsection{Cartier's descent}

In this subsection, let $X$ be a smooth projective variety over a perfect
field of characteristic $p>0$, and fix an ample divisor $H$ on $X$.
All stability conditions are taken with respect to $H$.

Let us recall that if  $E$ is a coherent sheaf on $X$ then $F_X^*E$ admits
a canonical integrable connection $\nabla _{\sf can}$.
This connection controls descent of subsheaves of $F_X^*E$ to  subsheaves of $E$ (see \cite[Theorem 5.1]{Katz1970}) and it
can be used to show the following result (see \cite[Corollary 2.4]{Langer2004}
for (1); (2) can be proven in the same way).

\begin{proposition}\label{prop:Cartier-descent}
	\begin{enumerate}
		\item Let $E$ be a semistable torsion free sheaf  such that $F_X^*E$ is not semistable.
		Let $0=E_0\subset E_1\subset \dots  \subset E_m=F_X^*E$ be the Harder--Narasimhan filtration. 
		Then for $0<i<m$ the $\cO_X$-homomorphisms $E_i\to  (F_X^*E/E_i) \otimes \Omega _X$ 
		induced by $\nabla _{\sf can}$ are nontrivial. 
		\item Let $E$ be a stable torsion free sheaf  such that $F_X^*E$ is semistable but not stable.
		Let $0=E_0\subset E_1\subset \dots  \subset E_m=F_X^*E$ be a Jordan--H\"older filtration. 
		Then for $0<i<m$ the $\cO_X$-homomorphisms $E_i\to  (F_X^*E/E_i) \otimes \Omega _X$ 
		induced by $\nabla _{\sf can}$ are nontrivial. 
	\end{enumerate}
\end{proposition}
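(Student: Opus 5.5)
We prove (2) by reducing it to (1) and to Katz's correspondence between subsheaves of $F_X^*E$ stable under $\nabla_{\sf can}$ and subsheaves of $E$ (\cite[Theorem 5.1]{Katz1970}). Write $\nabla=\nabla_{\sf can}$. Because $X$ is smooth, the subsheaves $E_i\subset F_X^*E$ may be taken saturated, so each $F_X^*E/E_i$ is torsion free. The composite
\[
\theta_i\colon E_i\hookrightarrow F_X^*E\xrightarrow{\ \nabla\ } F_X^*E\otimes\Omega_X\to (F_X^*E/E_i)\otimes\Omega_X
\]
is $\cO_X$-linear. Indeed, for a local function $f$ and a section $s$ of $E_i$, the Leibniz rule gives $\nabla(fs)=f\nabla s+s\otimes df$. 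The second term dies in the quotient because $s$ lies in $E_i$.

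Suppose, for a contradiction, that $\theta_i=0$ for some $0<i<m$. Then $E_i$ is a $\nabla$-invariant subsheaf of $F_X^*E$. The connection $\nabla$ has $p$-curvature zero, and the restriction of $\nabla$ to $E_i$ inherits this. Hence, by Cartier descent (Katz's theorem), there is a subsheaf $G\subset E$ with $F_X^*G=E_i$ inside $F_X^*E$. The correspondence requires saturated, equivalently torsion-free-quotient, invariant subsheaves; this holds here because $E_i$ is saturated and the quotient $F_X^*E/E_i$ then carries an induced connection. We also have $0<\rk G=\rk E_i<\rk E$.

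Now compare slopes. For the absolute Frobenius, $c_1(F_X^*G)=p\,c_1(G)$, so $\mu_H(F_X^*G)=p\,\mu_H(G)$, and similarly $\mu_H(F_X^*E)=p\,\mu_H(E)$.
\begin{itemize}
\item In case (2), $E_i$ is a term of a Jordan--H\"older filtration of the semistable sheaf $F_X^*E$. Its graded pieces have slope $\mu_H(F_X^*E)$, so $\mu_H(E_i)=\mu_H(F_X^*E)$. This gives $\mu_H(G)=\mu_H(E)$, contradicting stability of $E$.
\item In case (1), $E_i$ is a Harder--Narasimhan term with $i<m$, so $\mu_H(E_i)>\mu_H(F_X^*E)$. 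This gives $\mu_H(G)>\mu_H(E)$, contradicting semistability of $E$.
\end{itemize}

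The only delicate point is the descent step. One must make sure the invariant subsheaf is saturated so that Katz's theorem applies, which is why we used that $E_i$ is saturated. One also needs the normalization in the Jordan--H\"older filtration that every $E_i$ has slope exactly $\mu_H(F_X^*E)$. Beyond this, the argument is identical to that of \cite[Corollary 2.4]{Langer2004}, with the strict slope inequality of the Harder--Narasimhan filtration replaced by the slope equality of the Jordan--H\"older filtration together with stability in place of semistability.
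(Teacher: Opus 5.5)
Your proof is correct and is the same argument the paper relies on: Katz's Cartier descent \cite[Theorem 5.1]{Katz1970} applied to a $\nabla_{\sf can}$-invariant filtration term, as in \cite[Corollary 2.4]{Langer2004}. For~(2), the Jordan--H\"older slope equality together with stability of $E$ takes the place of the Harder--Narasimhan strict inequality together with semistability of $E$. One inessential correction: saturation is not needed for Katz's equivalence, since every $\nabla_{\sf can}$-invariant subsheaf inherits $p$-curvature zero and descends, though assuming it does no harm here.
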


\subsection{Openness of stability on surfaces}

The following lemma is a special case of the well-known openness of stability in the ample cone (see \cite[Proposition 4.17]{GKPT2019} for a more general, very related statement).  For the convenience 
of the reader we give a simple proof in the case that we need it. 

\begin{lemma}\label{ssb:lem-nearby-polarizations}
	Let $H$ be a nef Cartier divisor on a smooth projective surface $X$,
	and let $E$ be an $H$-stable torsion-free sheaf of rank $r$.
	Let $B$ be a Cartier divisor such that $bH-B$ is ample for some
	positive integer $b$. Then $P=nH-B$ is ample and $E$ is
	$P$-stable for all sufficiently large integers $n$.
\end{lemma}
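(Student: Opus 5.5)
First, ampleness of $P=nH-B$. Write $nH-B=(bH-B)+(n-b)H$. For $n\ge b$ this is the sum of an ample divisor and a nef divisor, hence ample.

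For stability, I argue by contradiction. Suppose there are infinitely many $n$, say a sequence $n_k\to\infty$, for which $E$ is not $P_{n_k}$-stable. For each such $n_k$ choose a saturated subsheaf $F_k\subset E$ with $0<\rk F_k<r$ and $\mu_{P_{n_k}}(F_k)\ge\mu_{P_{n_k}}(E)$. Put
\[
D_k:=r\,c_1(F_k)-\rk(F_k)\,c_1(E).
\]
The destabilizing condition then reads $D_k\cdot(n_kH-B)\ge 0$. On the other hand, $H$-stability gives $D_k\cdot H<0$. Since $D_k\cdot H$ is an integer, in fact $D_k\cdot H\le -1$. Combining the two,
\[
D_k\cdot B\le n_k\,(D_k\cdot H)\le -n_k,
\]
so $D_k\cdot B\to-\infty$.

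It remains to show that $D_k\cdot B$ is bounded below; this is where the work lies. First I normalize $B$. Replacing $B$ by $B-bH$ shifts $P$ to $(n+b)H-(bH-B)$ and does not change the asymptotic statement. So I may assume $-B=:A$ is ample, with $P=nH+A$.

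Next I bound the slopes of subsheaves. For any fixed ample $A'$, the slopes $\mu_{A'}(F)$ of subsheaves $F\subset E$ are bounded above by $\mu_{\max,A'}(E)$. The convex combination $P_n=nH+A$ is also handled: there is a bound $\mu_{P_n}(F)\le n\mu_{\max,H}(E)+\mu_{\max,A}(E)$. This follows because the $H$-degree of $F$ is at most $\rk F\cdot\mu_{\max,H}(E)$, and likewise for the $A$-degree. Hence $c_1(F_k)\cdot A$ is bounded above, and so $D_k\cdot A$ is bounded above. Since $A=-B$, this means $D_k\cdot B$ is bounded below.

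This contradicts $D_k\cdot B\to-\infty$. Therefore $E$ is $P$-stable for all sufficiently large $n$.
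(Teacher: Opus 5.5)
Your proposal is correct and follows essentially the paper's argument: integrality gives $D\cdot H\le -1$ for every intermediate-rank subsheaf, and a uniform upper bound on $D\cdot(bH-B)$ over all subsheaves then forces stability for large $n$. The paper obtains that bound explicitly from an embedding $E\hookrightarrow\mathcal O_X(mA)^{\oplus a}$ and the determinant map and states a direct threshold $n>b+K$, whereas you cite finiteness of $\mu_{\max,A}(E)$ and argue by contradiction. Your normalization formula has a sign slip: it should read $nH-(B-bH)=nH+(bH-B)$. This is only a reindexing of $n$ by $b$, so it does not affect the argument.
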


\begin{proof}
	Set $A=bH-B$. Since $A$ is ample, choose an embedding
	$E\hookrightarrow\mathcal O_X(mA)^{\oplus a}$ with $m>0$.
	Since $H$ is nef, the divisor
	$nH-B=(n-b)H+A$ is ample for every $n\geq b$.
	
	For a saturated subsheaf $F\subset E$ of rank $0<j<r$, set
	\(
	\Theta_F=r\,c_1(F)-j\,c_1(E).
	\)
	A nonzero component of the determinant map shows that
	$jmA-c_1(F)$ is effective. Consequently,
	\[
	\Theta_F\cdot A
	\leq j(rmA-c_1(E))\cdot A.
	\]
	The right-hand side is bounded above by a constant $K\geq0$
	independent of $F$. Since $E$ is $H$-stable and $H$ is Cartier,
	we have $\Theta_F\cdot H\leq-1$. Thus, for $n>b+K$,
	\[
	\Theta_F\cdot(nH-B)
	=(n-b)\Theta_F\cdot H+\Theta_F\cdot A
	\leq -(n-b)+K<0.
	\]
	This proves $P$-stability of $E$ for such $n$.
\end{proof}

\begin{lemma}\label{ssb:lem-birational-transfer}
	Let $f:Y\to X$ be a projective birational morphism from a smooth
	projective surface to a normal projective surface over an
	algebraically closed field. Let $H$ be an ample Cartier divisor
	on $X$, and let $D$ be an effective $f$-exceptional divisor.
	Fix a positive integer $N$ such that
	\(	P=Nf^*H-D	\)
	is ample.
	
	Let $W$ be a vector bundle on $X$, and let $F\subset W$ be a
	saturated subsheaf with $0<\rk F<\rk W$. Denote by
	$\widetilde F$ the saturation of the image of $f^*F$ in $f^*W$.
	Then
	\[
	\mu_P(\widetilde F)-\mu_P(f^*W)
	\geq
	N\bigl(\mu_H(F)-\mu_H(W)\bigr).
	\]
	Consequently, if $W$ is not $H$-stable, respectively not
	$H$-semistable, then $f^*W$ is not $P$-stable, respectively
	not $P$-semistable. In positive characteristic, the same
	implications hold for strong stability and strong semistability.
\end{lemma}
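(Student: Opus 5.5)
The plan is to compare degrees of first Chern classes on $Y$ and on $X$ using the projection formula and the negativity of exceptional curves. Put $j=\rk F$, $r=\rk W$ and $\Theta=r\,c_1(F)-j\,c_1(W)$. Then $\mu_H(F)-\mu_H(W)=\Theta\cdot H/(rj)$, where intersection numbers on the normal surface $X$ are taken in the Mumford sense, or simply for Weil divisors against the Cartier divisor $H$. On $Y$ set $\widetilde\Theta=r\,c_1(\widetilde F)-j\,c_1(f^*W)$. Since $W$ is locally free, $c_1(f^*W)=f^*c_1(W)$. The image of $f^*F$ in $f^*W$ is isomorphic to $f^*F$ modulo torsion. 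Over $Y\setminus\operatorname{Exc}(f)$, which maps isomorphically onto $X$ minus finitely many points, the sheaves $\widetilde F$ and $F$ agree. Hence $c_1(\widetilde F)=f^{-1}_*c_1(F)+E_F$, where $f^{-1}_*c_1(F)$ denotes the strict transform of a Weil divisor representing $c_1(F)$ and $E_F$ is some $f$-exceptional divisor.

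The key step is to show that $\widetilde\Theta-f^*\Theta$ is an exceptional divisor whose pairing with $-D$ is nonnegative. Here $f^*$ of a Weil divisor means the Mumford pullback, which is numerically trivial on exceptional curves. I would use the description $\det\widetilde F\subset\bigwedge^j f^*W$ as a saturated line subbundle, that is, the reflexive hull of the image of $f^*\bigl(\bigwedge^jF\bigr)$, compared with $f^*(\det F)^{**}$. The cleanest route is to compute degrees directly:
\[
\widetilde\Theta\cdot P=N\,\widetilde\Theta\cdot f^*H-\widetilde\Theta\cdot D .
\]
For the first term the projection formula gives $\widetilde\Theta\cdot f^*H=f_*\widetilde\Theta\cdot H=\Theta\cdot H$, because $f_*c_1(\widetilde F)=c_1(F)$. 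For the second term, note that the classes $r\,c_1(\widetilde F)-j\,c_1(f^*W)$ and $f^*\Theta$ differ by an exceptional class $\Gamma$, and $f^*\Theta\cdot D=0$. So I need $-\Gamma\cdot D\ge 0$.

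Here I expect the main obstacle: controlling the sign of $\Gamma$. Working locally near an exceptional fiber, I would show that the saturation $\widetilde F$ contains $f^*F/\mathrm{tors}$, and that its determinant is the saturation of $f^*\det F\to\bigwedge^jf^*W$. Comparing with $f^*$ of the reflexive $\det F$, I would then argue that $c_1(\widetilde F)-f^*c_1(F)$ is \emph{anti}-effective, or at least $f$-nef up to sign. This should follow from the negativity lemma: the divisor $c_1(\widetilde F)-f^*c_1(F)$ is exceptional, and on each exceptional curve $C$ we have $c_1(\widetilde F)\cdot C\le c_1(f^*W)\cdot C\cdot j/r$. The reason is that restricted to $C$ the bundle $f^*W|_C$ is trivial, $\widetilde F|_C$ is generically a subsheaf of a trivial bundle, and hence $\deg\le 0$. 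Thus $\widetilde\Theta\cdot C\le 0$ for every exceptional curve, so $\Gamma$ is $f$-nef. By the negativity lemma $\Gamma$ is anti-effective, that is, $-\Gamma\ge 0$. Then $\Gamma\cdot D$ has to be bounded using $D\ge 0$ exceptional; the pairing $-\Gamma\cdot D$ is nonnegative since $\Gamma\cdot D=\Gamma\cdot(D)$ with $\Gamma$ $f$-nef and $D$ effective, giving $\Gamma\cdot D\ge 0$. Getting this sign to come out as $\widetilde\Theta\cdot D\le 0$ is the delicate point, and I would verify it via exactly this nefness: $\widetilde\Theta\cdot D=\Gamma\cdot D$, and $\Gamma\cdot D\le0$ requires pairing $f$-nef against the negative-definite form. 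Concretely, I would write $-\Gamma=\sum a_iC_i$ with $a_i\ge0$ and use $D=\sum d_iC_i$ with $d_i\ge 0$.

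Given the inequality $\widetilde\Theta\cdot P\ge N\,\Theta\cdot H$, dividing by $rj$ and noting that $\mu_P(\widetilde F)-\mu_P(f^*W)=\widetilde\Theta\cdot P/(rj)$ yields the claimed bound. The consequences are then immediate. A destabilizing $F$ gives $\widetilde F$ with $\mu_P(\widetilde F)\ge\mu_P(f^*W)$, with strict inequality in the non-semistable case. In characteristic $p$, apply this to $(F_X^e)^*W$, using $f\circ F_Y^e=F_X^e\circ f$, so that $(F_Y^e)^*f^*W=f^*(F_X^e)^*W$.
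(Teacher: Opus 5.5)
There is a gap at the decisive step, although you already have the ingredient needed to close it.

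Your set-up and your per-curve estimate are right. Since $\widetilde F$ is saturated in $f^*W$ on the smooth surface $Y$, it is a subbundle away from finitely many points. So for every integral exceptional curve $C$ the map $\det\widetilde F|_C\to\bigwedge^j f^*W|_C\simeq\mathcal O_C^{\oplus\binom rj}$ is generically nonzero. After passing to the normalization of $C$ this gives $c_1(\widetilde F)\cdot C\le 0$ and $c_1(f^*W)\cdot C=0$, hence $\widetilde\Theta\cdot C\le 0$. The computation $\widetilde\Theta\cdot f^*H=\Theta\cdot H$ is also fine.

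The gap is the step you flag as delicate: you never correctly derive $\widetilde\Theta\cdot D\le 0$. Since $f^*\Theta\cdot C=0$, you have $\Gamma\cdot C=\widetilde\Theta\cdot C\le 0$. So $\Gamma$ is $f$-\emph{anti}-nef, not $f$-nef as you write. Taking your label at face value gives $\Gamma\cdot D\ge 0$, which is the reverse of what you need; the text in fact asserts both this and $-\Gamma\cdot D\ge 0$. The negativity lemma also points the other way: $\Gamma\cdot C_i\le 0$ for all exceptional $C_i$ makes $\Gamma$ effective, not anti-effective. Your fallback, writing $-\Gamma=\sum a_iC_i$ with $a_i\ge 0$ and pairing with $D=\sum d_iC_i$, cannot settle the sign anyway. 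On exceptional curves $C_i^2<0$ but $C_i\cdot C_k\ge 0$ for $i\ne k$. For example, in an $A_2$ configuration $C_1\cdot C_2=1$ while $C_1\cdot C_1=-2$. So knowing only that two exceptional divisors are effective does not determine the sign of their product. The sign is carried by the numbers $\Gamma\cdot C_i$, not by the coefficients of $\Gamma$.

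The repair is to drop $\Gamma$, the Mumford pullback and the negativity lemma entirely. Write $D=\sum_i d_iC_i$ with $d_i\ge 0$. Linearity and your per-curve estimate give $\widetilde\Theta\cdot D=\sum_i d_i\,\widetilde\Theta\cdot C_i\le 0$. Hence $\widetilde\Theta\cdot P=N\,\Theta\cdot H-\widetilde\Theta\cdot D\ge N\,\Theta\cdot H$, and dividing by $rj$ gives the inequality. This is exactly the paper's argument, phrased there as $c_1(\widetilde F)\cdot D\le 0$ and $c_1(f^*W)\cdot D=0$. With this fix, the stated consequences and the Frobenius compatibility $(F_Y^e)^*f^*W\simeq f^*(F_X^e)^*W$ go through as you describe.
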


\begin{proof}
	Put $h=f^*H$ and $j=\rk F$.
	Since $f$ is an isomorphism over a big open subset of $X$,
	the projection formula gives
	\[
	c_1(\widetilde F)\cdot h=c_1(F)\cdot H,
	\qquad
	c_1(f^*W)\cdot h=c_1(W)\cdot H.
	\]
	
	Let $C\subset Y$ be an integral curve contracted by $f$.
	The restriction $(f^*W)|_C$ is trivial. Since $\widetilde F$
	is saturated, its determinant map
	\(
	\det\widetilde F\to\bigwedge\nolimits^j f^*W
	\)
	is nonzero at the generic point of $C$ after restriction to $C$.
	Pulling back to the normalization of $C$ therefore gives a
	nonzero morphism from a line bundle to a trivial bundle. Hence
	\[
	c_1(\widetilde F)\cdot C\leq0,
	\qquad
	c_1(f^*W)\cdot C=0.
	\]
	As $D$ is effective and exceptional, it follows that
	\[
	c_1(\widetilde F)\cdot D\leq0,
	\qquad
	c_1(f^*W)\cdot D=0.
	\]
	Consequently,
	\[
		\mu_P(\widetilde F)-\mu_P(f^*W)
		=
		N\bigl(\mu_H(F)-\mu_H(W)\bigr)
		-\frac{c_1(\widetilde F)\cdot D}{j}
		\geq
		N\bigl(\mu_H(F)-\mu_H(W)\bigr).
	\]
	This proves the assertions about stability and semistability.
	
	In characteristic $p>0$, apply the same argument to
	$(F_X^e)^*W$ for every $e\geq0$, using
	\[
	f^*(F_X^e)^*W\simeq(F_Y^e)^*f^*W.
	\]
\end{proof}

\subsection{Highest weights, Cartan powers, and Schur modules}\label{subsection:Cartan}

Let $G$ be a connected complex reductive group. Fix a maximal
torus $T$ and a Borel subgroup $B$ corresponding to the negative
roots. For a dominant weight $\lambda\in X^*(T)$, denote by
$L(\lambda)$ the irreducible $G$-module of highest weight $\lambda$.
Its $m$th \emph{Cartan power} is $L(m\lambda)$, the unique
irreducible summand of $\Sym^mL(\lambda)$ containing the $m$th
power of a highest-weight vector. For a principal $G$-bundle
$\mathcal P$ and the associated vector bundle $E=\mathcal P\times^G L(\lambda)$, the associated
Cartan-power bundle is $\mathcal P\times^G L(m\lambda)$.

For a partition $\mu$ of length at most $r:=\rk E$, we write
$\mathbb S_\mu E$ for the bundle associated to the irreducible
polynomial $\GL(r)$-module of highest weight $\mu$ 
(see \cite[Chapter~2]{Weyman-Syzygies}). Thus
\[
\mathbb S_{(m)}E=\Sym^mE,\qquad
\mathbb S_{(1^i)}E={\bigwedge}^iE.
\]
The rectangular Schur module $\mathbb S_{(m^i)}V$ is the $m$th
Cartan power of $\bigwedge^iV$ for $\GL(V)$, and for $\rk E=2$,
\[
\mathbb S_{(a,b)}E
=\Sym^{a-b}E\otimes(\det E)^{\otimes b}.
\]
For $G=\Sp(V)$, the fundamental module $L(\omega_i)$ is instead
the primitive part of $\bigwedge^iV$; its $m$th Cartan power is
$L(m\omega_i)$.

Put $\mathcal L(\lambda)=G\times^B\mathbb C_\lambda$, with
$(gb,z)\sim(g,\lambda(b)z)$. The Borel--Weil--Bott theorem gives 
\[
H^0(G/B,\mathcal L(\lambda))\simeq L(\lambda),
\qquad
H^q(G/B,\mathcal L(\lambda))=0\quad \text{ for } q>0,
\]
for every dominant weight $\lambda$.
The same statements hold for
$\mathcal L_P(\lambda)=G\times^P\mathbb C_\lambda$ on $G/P$
whenever $P\supseteq B$ and $\lambda$ extends to a character
of $P$ (see, e.g., \cite[Part~II, Sections~2, 4 and~5]{ssb-Jantzen2003}).

\section{Unitary and projectively unitary flat bundles}

Throughout this section, varieties and morphisms are algebraic over
$\mathbb C$, and vector bundles and coherent sheaves are algebraic,
unless explicitly called holomorphic or analytic. We write
$X^{\mathrm{an}}$, $E^{\mathrm{an}}$ and $f^{\mathrm{an}}$ for
analytifications. Fundamental groups and local systems are taken in the
classical topology of the indicated analytic spaces.

A \emph{unitary flat structure} on a vector bundle $E$ on $X$ is a
complex local system $\mathbb V$ on $X^{\mathrm{an}}$ with unitary
monodromy, together with a holomorphic isomorphism
\[
\mathbb V\otimes_{\mathbb C}\mathcal O_{X^{\mathrm{an}}}
\simeq E^{\mathrm{an}}.
\]
Equivalently, $E^{\mathrm{an}}$ admits holomorphic local frames with
locally constant unitary transition matrices. A rank $r$ vector bundle $E$ is
\emph{projectively unitary flat} if its analytic projectivization is
induced by a representation
\[
\rho:\pi_1^{\mathrm{top}}(X^{\mathrm{an}})
\longrightarrow\operatorname{PU}(r).
\]
Thus $\mathbb P(E)^{\mathrm{an}}$ is holomorphically isomorphic to
$(\widetilde{X^{\mathrm{an}}}\times\mathbb P^{r-1})/
\pi_1^{\mathrm{top}}(X^{\mathrm{an}})$, where
$\widetilde{X^{\mathrm{an}}}$ is the universal cover and the action on
the second factor is given by $\rho$. The structure is
\emph{irreducible} if the image of $\rho$ preserves no nonempty proper
projective linear subspace. These notions concern analytic flat
structures; algebraicity of a connection is established separately
whenever it is needed below.

\subsection{Holomorphic sections, descent and extension}

We first record a consequence of degeneration of the Hodge--de Rham
spectral sequence with unitary coefficients.

\begin{lemma}\label{ssb:cor-unitary-horizontal}
	Let $X$ be a smooth complex projective variety. Every holomorphic
	section of a unitary flat holomorphic vector bundle on
	$X^{\mathrm{an}}$ is horizontal. Moreover, every holomorphic
	morphism between such bundles is horizontal.
\end{lemma}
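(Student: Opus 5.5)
The plan is to reduce both assertions to the statement that a holomorphic section of a unitary flat bundle is parallel, and to prove that via Hodge theory with unitary coefficients.

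\emph{Reduction.} Let $\mathbb V$ be a unitary local system on $X^{\mathrm{an}}$ and $V=\mathbb V\otimes\mathcal O_{X^{\mathrm{an}}}$ the associated flat bundle, with flat connection $D=\nabla$. Write $D=D'+\bar\partial$ for its decomposition into $(1,0)$ and $(0,1)$ parts. The morphism assertion follows from the section assertion applied to $\mathcal{H}om(V_1,V_2)\simeq V_1^\vee\otimes V_2$. This bundle is unitary flat, being attached to the local system $\mathbb V_1^\vee\otimes\mathbb V_2$, whose monodromy is unitary for the induced Hermitian metric. Its induced connection is such that a section is horizontal exactly when the corresponding morphism intertwines the connections, which is what "horizontal morphism" means. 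So it suffices to treat a holomorphic section $s\in H^0(X^{\mathrm{an}},V)$.

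\emph{Key step.} The unitary structure gives a flat Hermitian metric $h$, so $D$ is the Chern connection of $(V,h)$. By GAGA and compactness, $X^{\mathrm{an}}$ is a compact Kähler manifold. Because $h$ is flat, the Kähler identities hold for $D'$ and $\bar\partial$ with vanishing curvature, giving $\Delta_D=2\Delta_{\bar\partial}=2\Delta_{D'}$. A holomorphic section $s$ satisfies $\bar\partial s=0$, and $\bar\partial^*s=0$ trivially in degree $0$, so $s$ is $\Delta_{\bar\partial}$-harmonic. Hence $\Delta_D s=0$, and integrating $\langle\Delta_D s,s\rangle$ over the compact manifold gives $\|Ds\|^2=0$, i.e.\ $Ds=0$. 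This is exactly the degeneration at $E_1$ of the Hodge--de Rham spectral sequence for $\mathbb V$ in degree $0$, which identifies $H^0(X^{\mathrm{an}},V)$ with $H^0(X^{\mathrm{an}},\mathbb V)$.

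\emph{A direct alternative.} One can avoid the Kähler identities. Since $h$ is flat, $|s|_h^2$ satisfies
\[
\partial\bar\partial|s|^2_h=\langle D's,D's\rangle,
\]
up to sign conventions, and this form is semipositive. Wedging with $\omega^{n-1}$ and integrating, Stokes' theorem gives $\int|D's|^2\omega^n=0$, so $D's=0$. Together with $\bar\partial s=0$ this gives $Ds=0$. Equivalently, $|s|^2$ is plurisubharmonic on a compact manifold, hence constant, which forces $D's=0$.

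\emph{Main point to check.} The only delicate step is the Bochner-type identity: the curvature term vanishes precisely because the metric is flat, and this is the only place unitarity is used. Everything else is formal once we fix what "horizontal morphism" means, namely a morphism compatible with the flat connections, or equivalently a horizontal section of $\mathcal{H}om$.
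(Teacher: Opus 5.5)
Your proof is correct, and your reduction of the morphism statement to sections of the unitary flat bundle $\mathcal{H}om(V_1,V_2)$ is exactly what the paper does. For sections, however, you take a different route.

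The paper does not argue directly. It quotes Timmerscheidt's theorem that the analytic Hodge--de Rham spectral sequence
$E_1^{p,q}=H^q(X^{\mathrm{an}},\Omega^p_{X^{\mathrm{an}}}\otimes\mathcal V)\Rightarrow H^{p+q}(X^{\mathrm{an}},\mathbb V)$
degenerates at $E_1$, and reads off $d_1^{0,0}=H^0(\nabla)=0$.

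Your first argument, via the K\"ahler identities and $\Delta_D=2\Delta_{\bar\partial}$, is the harmonic-theory mechanism behind that degeneration, specialized to degree zero. It proves the cited input rather than quoting it.

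Your second argument is genuinely more elementary. For a holomorphic section of a flat Hermitian bundle you have $\sqrt{-1}\,\partial\bar\partial|s|_h^2=\sqrt{-1}\,h(D's,D's)$, so $|s|_h^2$ is a smooth plurisubharmonic function. By the maximum principle it is locally constant, and this forces $D's=0$. This uses neither a K\"ahler metric nor any Hodge theory, so it proves the lemma on an arbitrary compact complex manifold. Only your Stokes variant needs $d\omega^{n-1}=0$.

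What the paper's citation buys is brevity and placement in the standard framework of Hodge theory with unitary coefficients. Since only the degree-zero differential is needed here, your Bochner argument gets the same conclusion with less machinery.

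A minor correction: compactness and the K\"ahler property of $X^{\mathrm{an}}$ come from projectivity, via the restricted Fubini--Study form, not from GAGA.
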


\begin{proof}
	Let $(\mathcal V,\nabla)$ be such a holomorphic bundle, and put
	$\mathbb V=\ker\nabla$. By
	\cite[Theorem~7.1(a)]{ssb-Timmerscheidt1987}, the analytic spectral
	sequence
	\[
	E_1^{p,q}=H^q(X^{\mathrm{an}},
	\Omega_{X^{\mathrm{an}}}^p\otimes\mathcal V)
	\Longrightarrow H^{p+q}(X^{\mathrm{an}},\mathbb V)
	\]
	degenerates at $E_1$. In particular,
	\[
	d_1^{0,0}=H^0(\nabla):H^0(X^{\mathrm{an}},\mathcal V)
	\longrightarrow H^0(X^{\mathrm{an}},
	\Omega_{X^{\mathrm{an}}}^1\otimes\mathcal V)
	\]
	vanishes. The assertion about morphisms follows by applying this
	to the induced unitary flat holomorphic Hom bundle.
\end{proof}

\begin{lemma}\label{ssb:lem-projective-flat-algebra}
	A vector bundle $E$ on a normal complex variety $X$ is projectively
	unitary flat if and only if $\cEnd(E)^{\mathrm{an}}$
	admits a unitary flat structure for which multiplication and unit
	are horizontal.
\end{lemma}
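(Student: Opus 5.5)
For the forward direction, assume $\bP(E)^{\mathrm{an}}$ is induced by $\rho:\pi_1^{\mathrm{top}}(X^{\mathrm{an}})\to\PU(r)$. The adjoint action of $\GL(r)$ on $\End(\bC^r)$ factors through $\PGL(r)$. Hence $\PU(r)$ acts on the matrix algebra $M_r(\bC)$ by algebra automorphisms, and these are unitary for the Hilbert--Schmidt inner product. Set $\mathbb A=\widetilde{X^{\mathrm{an}}}\times^{\rho}M_r(\bC)$. This is a unitary local system of algebras, since its multiplication and unit are $\PU(r)$-invariant.

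I will then identify $\mathbb A\otimes\cO_{X^{\mathrm{an}}}$ with $\cEnd(E)^{\mathrm{an}}$ as sheaves of algebras. Locally on a contractible open $U$, the isomorphism $\bP(E)|_U\simeq U\times\bP^{r-1}$ coming from the flat structure is a holomorphic family of projective frames. Lifting it to a local frame of $E|_U$ is possible, and any two lifts differ by a scalar function. Conjugation by a scalar is trivial, so the induced isomorphism $\cEnd(E)|_U\simeq U\times M_r(\bC)$ is canonical. On overlaps, these isomorphisms differ by the locally constant elements $\rho(\gamma)$ acting by conjugation. This gives the required unitary flat structure, and multiplication and unit are horizontal by construction.

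For the converse, let $\mathbb A=\ker\nabla\subset\cEnd(E)^{\mathrm{an}}$ be the flat subsheaf. Because multiplication and unit are horizontal, $\mathbb A$ is a locally constant sheaf of unital $\bC$-algebras. Its stalks are isomorphic to $M_r(\bC)$, since $\mathbb A_x\otimes\bC=\cEnd(E)_x\otimes\kappa(x)\simeq M_r(\bC)$. So $\mathbb A$ is given by a representation $\pi_1\to\operatorname{Aut}(M_r(\bC))$. By Skolem--Noether, $\operatorname{Aut}(M_r(\bC))=\PGL(r)$. The monodromy also preserves a hermitian inner product on $M_r(\bC)$, so its image lies in a compact subgroup of $\PGL(r)$. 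Every compact subgroup of $\PGL(r)$ is conjugate into $\PU(r)$. After conjugating, we obtain $\rho:\pi_1\to\PU(r)$.

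It remains to recover $\bP(E)$ from this data. Over a contractible $U$, choose a flat algebra isomorphism $\mathbb A|_U\simeq M_r(\bC)$. The flat idempotent $e_{11}$ then gives a holomorphic rank one idempotent of $\cEnd(E)|_U$, whose image is a line subbundle $L\subset E|_U$. The flat matrix units give a trivialization $E|_U\simeq L^{\oplus r}$, and hence a trivialization $\bP(E)|_U\simeq U\times\bP^{r-1}$. On overlaps, these trivializations differ by constant elements of $\PGL(r)$, namely the transition automorphisms of $\mathbb A$. Thus $\bP(E)^{\mathrm{an}}$ is the bundle associated to $\rho$.

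Normality of $X$ plays no role in either direction, since everything is local, analytic and topological. I expect the main obstacle to be only the bookkeeping: showing that the local idempotent constructions glue compatibly with the Skolem--Noether identification. That compatibility follows because algebra automorphisms are inner up to scalars.
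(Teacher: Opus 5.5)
Your proposal is correct and follows essentially the same route as the paper: the forward direction uses the adjoint action of $\PU(r)$ on $\End(\bC^r)$. The converse uses Skolem--Noether, compactness of the monodromy, conjugation into $\PU(r)$, and the identification of algebra frames of $\cEnd(E)^{\mathrm{an}}$ with projective frames of $E^{\mathrm{an}}$; you make that last identification explicit through the flat matrix units, which the paper states in one line. The only step you leave implicit, and which the paper states, is that the algebra automorphism group is closed in $\GL(\End(\bC^r))$. This is what makes its intersection with the unitary group of the Hermitian form compact.
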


\begin{proof}
	The forward implication follows from the adjoint representation
	of $\operatorname{PU}(r)$ on $\End (\mathbb C^r)$.
	Conversely, a flat structure as in the statement has monodromy in
	\[
	\operatorname{Aut}_{\mathbb C\text{-alg}}
	\bigl(\End (\mathbb C^r)\bigr)
	\simeq\operatorname{PGL}_r(\mathbb C).
	\]
	Its closure is compact, since the structure is unitary and the
	algebra automorphism group is closed in
	$\operatorname{GL}(\End (\mathbb C^r))$.
	Every compact subgroup of $\operatorname{PGL}_r(\mathbb C)$ is
	conjugate to a subgroup of $\operatorname{PU}(r)$. The resulting
	holomorphic projective bundle is $\mathbb P(E)^{\mathrm{an}}$,
	because the algebra-frame bundle of $\cEnd(E)^{\mathrm{an}}$
	is the projective-frame bundle of $E^{\mathrm{an}}$.
\end{proof}

\begin{lemma}\label{ssb:lem-unitary-descent}
	Let $f:Y\to X$ be a projective resolution of a normal complex
	algebraic variety, and let $E$ be an algebraic vector bundle on $X$.
	Every unitary flat structure on $(f^*E)^{\mathrm{an}}$ descends to
	a unitary flat structure on $E^{\mathrm{an}}$.
\end{lemma}

\begin{proof}
	Put $g=f^{\mathrm{an}}$. Fix $x\in X^{\mathrm{an}}$ and an analytic
	open neighbourhood $W$ on which $E^{\mathrm{an}}$ is holomorphically
	trivial. The analytic fiber $K=g^{-1}(x)$ is compact and connected.
	A holomorphic frame of $E^{\mathrm{an}}|_W$ pulls back to a frame on
	$K$. For every irreducible component $Z$ of the reduced algebraic
	fiber $f^{-1}(x)$, choose a projective resolution
	$\widetilde Z\to Z$. The pullback of this frame to the
	analytification of the smooth projective variety $\widetilde Z$
	is horizontal by Lemma~\ref{ssb:cor-unitary-horizontal}.
	Since these resolutions are proper and surjective, local constancy
	of the frame descends to each component of $K$. The resulting
	frames agree at intersections of components, being restrictions
	of the same frame on $K$. Thus they trivialize the local system on
	the underlying topological space of $K$.
	
	By triangulation of analytic sets, $K$ has a sufficiently small
	analytic open neighbourhood $N\subset g^{-1}(W)$ which deformation
	retracts onto $K$. The local system is trivial on $N$. Properness
	of $g$ allows us to shrink $W$ so that $g^{-1}(W)\subset N$.
	Express a unitary parallel frame on $g^{-1}(W)$ in the pullback of
	the chosen holomorphic frame of $E^{\mathrm{an}}|_W$. Its matrix
	and inverse are holomorphic and descend by
	\(
	g_*\mathcal O_{Y^{\mathrm{an}}}
	=\mathcal O_{X^{\mathrm{an}}}.
	\)
	They give a holomorphic frame on $W$. The transition matrices of
	the frames obtained in this way are locally constant and unitary,
	as can be checked after pullback by $g$. They define the required
	unitary flat structure on $E^{\mathrm{an}}$.
\end{proof}

\begin{lemma}\label{ssb:lem-unitary-extension}
	Let $X$ be a normal complex algebraic variety and $E$ an algebraic
	vector bundle on $X$. Every unitary flat structure on
	$(E|_{X_{\mathrm{reg}}})^{\mathrm{an}}$ extends uniquely to a unitary
	flat structure on $E^{\mathrm{an}}$.
\end{lemma}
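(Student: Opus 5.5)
Uniqueness is immediate. Two unitary flat structures on $E^{\mathrm{an}}$ are two flat connections, or local systems, inside the same holomorphic bundle. If they agree on $X_{\mathrm{reg}}^{\mathrm{an}}$, which is dense, they agree everywhere: a locally constant frame of one local system, written in the other, has holomorphic transition matrices that are locally constant on the dense open set, hence locally constant. So the content is existence. For existence I would reduce to Lemma~\ref{ssb:lem-unitary-descent}.

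Choose a projective resolution $f:Y\to X$ which is an isomorphism over $X_{\mathrm{reg}}$. Put $U=f^{-1}(X_{\mathrm{reg}})\simeq X_{\mathrm{reg}}$. The bundle $(f^*E)^{\mathrm{an}}$ then carries a unitary flat structure on $U^{\mathrm{an}}$, and it suffices to extend it over all of $Y^{\mathrm{an}}$. Lemma~\ref{ssb:lem-unitary-descent} then gives a unitary flat structure on $E^{\mathrm{an}}$. It restricts to the given one on $X_{\mathrm{reg}}$, by the uniqueness argument above applied on $U$.

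The key step is extending across the exceptional locus $Y\setminus U$ inside the smooth manifold $Y^{\mathrm{an}}$. Since $Y$ is smooth, I would first extend across codimension one. Near a general point of an exceptional divisor $D$, take a polydisc chart $\Delta^{n}$ with $D=\{z_1=0\}$. The local system on $\Delta^*\times\Delta^{n-1}$ has monodromy a unitary matrix $T$. In a flat frame the connection form, written in a holomorphic frame of $f^*E$, is a holomorphic matrix $A$ of $1$-forms on the punctured polydisc. Boundedness with respect to a fixed smooth metric on $f^*E$ is the key input. The flat unitary metric $h$ and a smooth metric $h_0$ on $f^*E$ near $D$ are compared by a plurisubharmonic argument: $\log|s|_h$ is psh for holomorphic sections $s$ of the dual bundle as well as of $f^*E$. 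Moreover, $f^*E$ is trivial on a neighbourhood of the compact fibres, since it is pulled back from $X$.

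This is where I expect the real work. I would first try the following concrete route, which avoids metrics. Near $x\in X$, $E$ is trivial, so $f^*E$ is trivialized by the pullback frame $e$ on $f^{-1}(W)$. Flat sections on $U\cap f^{-1}(W)$ are written in $e$ as the matrix functions $M$ from Lemma~\ref{ssb:lem-unitary-descent}. The $h$-norms of the frame $e$ are bounded near $K=f^{-1}(x)$, because the entries of $h$ in the frame $e$ are functions $g^{-1}(W)\cap U\to$ positive matrices that descend to $X_{\mathrm{reg}}\cap W$. Boundedness is what I must prove, and I would deduce it from normality of $X$ as follows. The local fundamental group of $X_{\mathrm{reg}}$ near $x$ acts unitarily, so the monodromy is compact and hence admits invariant bounded frames. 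The function $\operatorname{tr}(h)$ is then psh, and it extends across the codimension $\ge2$ singular set of $X$ by the Grauert--Remmert extension theorem, giving local boundedness. A bounded flat unitary frame, holomorphic in $e$, has bounded holomorphic coefficients, which extend across $X\setminus X_{\mathrm{reg}}$ by Riemann extension. The same holds for its inverse, since $\det$ has bounded modulus and is nonvanishing by unitarity. The monodromy is therefore locally trivial near singular points, because flat frames extend continuously, and the extended frames give local constant unitary transitions. This actually bypasses $Y$ entirely. The main obstacle is the boundedness and the triviality of local monodromy around $X_{\mathrm{sing}}$. I would handle it by choosing a neighbourhood $W$ with $W\cap X_{\mathrm{reg}}$ connected, which holds by normality, and using that a globally defined holomorphic bounded frame forces trivial monodromy.
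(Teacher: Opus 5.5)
Your uniqueness argument is fine, and your boundedness step is essentially the paper's. For holomorphic sections $s$ of $E^{\mathrm{an}}$ and of its dual, $|s|_h^2$ is psh on $W\cap X_{\mathrm{reg}}$ and extends across the codimension-two set $X_{\mathrm{sing}}$. The paper does this with a finite projection to a polydisc plus Hartogs for psh functions; Grauert--Remmert on the normal space would do as well. This gives $C^{-1}I\le h\le CI$ in a holomorphic frame $e$. The lower bound must come from the dual frame: your remark that $\det$ has bounded modulus and is nonvanishing by unitarity gives no lower bound on $|\det|$ near $X_{\mathrm{sing}}$.

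\textbf{The gap.} You never prove the step you yourself flag, namely that the unitary local monodromy around $X_{\mathrm{sing}}$ is trivial, and your bypass of $Y$ is circular.
\begin{itemize}
\item A flat frame is only a multivalued object on $W\cap X_{\mathrm{reg}}$ until triviality is known, so Riemann extension cannot be applied to its coefficients.
\item Connectedness of $W\cap X_{\mathrm{reg}}$ says nothing about its fundamental group.
\item The claim that ``a globally defined bounded holomorphic frame forces trivial monodromy'' is exactly what has to be proved, and it is not formal. The local fundamental group of $X_{\mathrm{reg}}$ at a normal singularity (e.g.\ the vertex of a cone over an elliptic curve) can be infinite, with unitary representations of infinite image. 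So you cannot pass to a finite cover branched over $X_{\mathrm{sing}}$ and evaluate the frame at the point over $x$.
\end{itemize}
Your first route has the same hole: you stop before showing that the exceptional monodromy $T$ is trivial, which is where the real work lies.

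\textbf{How the paper closes it.} It works on a log resolution $f\colon Y\to X$, in two steps.
\begin{enumerate}
\item Near a general point of an exceptional divisor, with transverse coordinate $t$, take a multivalued parallel eigenvector $v$ of the unitary local monodromy. Its eigenvalue is $\exp(2\pi i\alpha)$ with $0\le\alpha<1$, and normalize $|v|_h=1$.
\begin{itemize}
\item The lower bound on $h$ shows that the single-valued section $t^{-\alpha}v$ has coefficients $O(|t|^{-\alpha})$ in the frame $f^*e$. Hence it has no negative Laurent terms and extends across $t=0$.
\item If $\alpha>0$, then $v=t^{\alpha}\cdot t^{-\alpha}v$ tends to $0$ in that frame. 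This contradicts $|v|_h=1$ together with the upper bound on $h$.
\item So every exceptional meridian has trivial monodromy, and the local system extends across the simple normal crossings exceptional divisor. Riemann extension on the smooth $Y$, applied to the bounded parallel frames, identifies its bundle with $(f^*E)^{\mathrm{an}}$.
\end{itemize}
\item Loops inside the compact fibre $f^{-1}(x)$ may still carry monodromy. Lemma~\ref{ssb:lem-unitary-descent} removes this: $f^*e$ is holomorphic on resolutions of the fibre components, and holomorphic sections of unitary flat bundles on compact K\"ahler manifolds are horizontal.
\end{enumerate}
Your first route becomes complete once this eigenvector argument is inserted, since you already planned to invoke Lemma~\ref{ssb:lem-unitary-descent}. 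A route avoiding $Y$ would have to supply substitutes for both steps.
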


\begin{proof}
	Let $h$ be a parallel Hermitian metric on 
	$(E|_{X_{\mathrm{reg}}})^{\mathrm{an}}$. We first prove local
	boundedness of $h$ and $h^{-1}$ in a holomorphic frame of
	$E^{\mathrm{an}}$. For a local holomorphic section $s$ of
	$E^{\mathrm{an}}$ or its dual, its logarithmic squared norm is
	plurisubharmonic on $(X_{\mathrm{reg}})^{\mathrm{an}}$.
	Choose a finite holomorphic local projection $\pi:W\to B$, where
	$W\subset X^{\mathrm{an}}$ is an analytic open neighbourhood and
	$B$ is a polydisc. Outside the analytic set
	$Z=\pi(W\cap(X_{\mathrm{sing}})^{\mathrm{an}})$, the maximum of
	this function over the finite fibers of $\pi$ is plurisubharmonic.
	This is immediate away from the branch locus and extends across
	it by local boundedness there. Since $Z$ has codimension at least
	two, the Hartogs extension theorem for plurisubharmonic functions
	gives local boundedness above also near $Z$. Applying this to a
	holomorphic frame and its dual gives, on a smaller neighbourhood,
	\begin{equation}\label{ssb:eq-bounded-unitary-metric}
		C^{-1}I\leq h\leq CI.
	\end{equation}
	
	Take an algebraic projective log resolution $f:Y\to X$ which is
	an isomorphism over $X_{\mathrm{reg}}$, and put $g=f^{\mathrm{an}}$.
	The pullback metric satisfies the same local bounds in the
	pullback holomorphic frames of
	$(f^*E)^{\mathrm{an}}=g^*E^{\mathrm{an}}$.
	Near a smooth point of an exceptional component in $Y^{\mathrm{an}}$,
	let $t$ be a transverse holomorphic coordinate and let $v$ be a
	multivalued parallel eigenvector for the unitary local monodromy,
	with eigenvalue $\exp(2\pi i\alpha)$, $0\leq\alpha<1$. Normalize
	its flat norm to be one. On a transverse punctured disc, the
	section $t^{-\alpha}v$ is single-valued holomorphic and has growth
	$O(|t|^{-\alpha})$ in a holomorphic frame of $(f^*E)^{\mathrm{an}}$
	by \eqref{ssb:eq-bounded-unitary-metric}. Its Laurent expansion
	therefore has no negative terms, so it extends across the puncture.
	If $\alpha>0$, then $v$ tends to zero in that frame, contradicting
	its unit flat norm and the upper bound for $h$. Thus every
	exceptional meridian has trivial monodromy.
	
	The local system consequently extends across the analytification
	of the simple normal crossings exceptional divisor. Its parallel
	frames and their inverses are bounded in holomorphic frames of
	$(f^*E)^{\mathrm{an}}$, again by
	\eqref{ssb:eq-bounded-unitary-metric}. The removable singularity
	theorem for holomorphic functions therefore identifies its
	holomorphic bundle with $(f^*E)^{\mathrm{an}}$. This gives a
	unitary flat structure on all of $(f^*E)^{\mathrm{an}}$, which
	descends to $E^{\mathrm{an}}$ by
	Lemma~\ref{ssb:lem-unitary-descent}. The extension is unique because
	the holomorphic identifications between its local parallel frames
	are determined on the dense regular locus.
\end{proof}

In Lemmas~\ref{ssb:lem-unitary-descent} and
\ref{ssb:lem-unitary-extension}, parallel holomorphic tensor morphisms
descend and extend as well. Together with
Lemma~\ref{ssb:lem-projective-flat-algebra}, this applies in particular
to projective unitary flat structures via the multiplication and unit
on $\cEnd(E)^{\mathrm{an}}$.

\subsection{Characterizations on smooth projective varieties}

A vector bundle $E$ on a projective variety $X$ is
\emph{universally semistable} if, for every morphism $f:Y\to X$ from
a smooth projective variety, $f^*E$ is slope semistable with respect
to every ample polarization on $Y$.

The following characterization is a consequence of the
Donaldson--Uhlenbeck--Yau theorem; see
\cite[Chapter~IV, Theorem~4.1]{Nakayama2004}. For its positive
characteristic analogue, see \cite[Theorem~C]{ssb-FulgerLanger2022}.

\begin{theorem}\label{thm:FUY}
	Let $X$ be a smooth complex projective variety of dimension
	$d\geq2$, and let $E$ be a reflexive sheaf of rank $r$ on $X$.
	The following conditions are equivalent:
	\begin{enumerate}
		\item $E$ is locally free and
		$\mathcal O_{\mathbb P(E)}(r)\otimes\pi^*(\det E)^{-1}$ is nef,
		where $\pi:\mathbb P(E)\to X$ parametrizes one-dimensional
		quotients.
		\item $E$ is locally free and $\cEnd (E)$ is nef.
		\item $E$ is $H$-semistable and
		$\Delta(E) H^{d-2}=0$ for some ample line bundle $H$.
		\item $E$ is $H$-semistable and
		$\Delta(E) H^{d-2}=0$ for every ample line bundle $H$.
		\item $E$ is locally free and universally semistable.
		\item $E$ has a filtration by subbundles
		$0=E_0\subset E_1\subset\cdots\subset E_m=E$ whose quotients
		$E^i=E_i/E_{i-1}$ are projectively unitary flat and have the
		same slope with respect to every ample polarization on $X$.
	\end{enumerate}
\end{theorem}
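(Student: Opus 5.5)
The plan is to prove the equivalences by running the cycle (6)$\Rightarrow$(5)$\Rightarrow$(1)$\Leftrightarrow$(2)$\Rightarrow$(4)$\Rightarrow$(3)$\Rightarrow$(6). Most of these steps are formal; the substantive input is the Donaldson--Uhlenbeck--Yau theorem combined with Bando--Siu, as in \cite[Chapter~IV, Theorem~4.1]{Nakayama2004}.

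\textbf{(6)$\Rightarrow$(5).} Pullbacks of projectively unitary flat bundles are projectively unitary flat, so they are polystable for every polarization, since they carry a projectively flat Hermite--Einstein metric. The quotients $f^*E^i$ have first Chern classes with $c_1(E^i)/\rk E^i$ independent of $i$. Indeed, the slopes agree for every ample class, the classes are numerically proportional, and this is preserved under pullback. Hence $f^*E$ is an iterated extension of semistable sheaves of equal slope, and so it is semistable.

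\textbf{(5)$\Rightarrow$(1) and (1)$\Leftrightarrow$(2).} For (5)$\Rightarrow$(1), restrict to curves $C\to\bP(E)$. Semistability of $E|_{C}$ on the normalized curve makes every quotient line bundle $L$ of $E|_C$ satisfy $\deg L\ge \mu(E|_C)$. This gives nefness of $\cO_{\bP(E)}(r)\otimes\pi^*(\det E)^{-1}$ by Miyaoka's criterion. For (1)$\Leftrightarrow$(2), the normalized bundle in (1) is the twisted line bundle $\cO(1)$ of the $\bQ$-twisted bundle $E\langle -\tfrac1r\det E\rangle$. Nefness of that twisted bundle is equivalent to nefness of its dual twist as well, because the determinant is numerically trivial. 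Then $\cEnd(E)=E\langle-\tfrac1r\det\rangle\otimes E^*\langle\tfrac1r\det\rangle$ is nef. Conversely, $E\langle-\tfrac1r\det\rangle$ is a quotient of a tensor power built from $\cEnd(E)$ twisted appropriately; alternatively, one can use that nefness of $\cEnd E$ forces semistability on curves.

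\textbf{(2)$\Rightarrow$(4) and (4)$\Rightarrow$(3).} Nefness of the normalized twist $E'$ of (1), with numerically trivial determinant, makes $E'$ numerically flat. Then $E'$ is semistable for every $H$, and the Chern class inequalities for nef bundles (Demailly--Peternell--Schneider) give $c_2(E')\cdot H^{d-2}=0$, which is $\Delta(E)H^{d-2}=0$. The step (4)$\Rightarrow$(3) is trivial.

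\textbf{(3)$\Rightarrow$(6), the main obstacle.} Take a Jordan--Hölder filtration, using reflexive hulls. By the Bogomolov inequality with equality, in the form of \cite[Chapter~IV]{Nakayama2004}, each graded piece has $\Delta H^{d-2}=0$ and $c_1$ proportional to that of $E$ numerically. This relies on the Hodge index theorem, applied to the splitting of the discriminant $\Delta(E)=\sum\Delta(E^i)\cdot(\text{positive weights})-(\text{Hodge-negative terms})$. Then Bando--Siu's theorem applied to the stable reflexive pieces with vanishing discriminant shows that they are locally free and projectively Hermite--Einstein flat, hence projectively unitary flat. Finally, one shows that the filtration is by subbundles: the extensions of locally free flat pieces are reflexive with the correct Chern classes, which forces local freeness, for instance by the $c_2$ comparison of $E$ with $E^{**}$ at the singular points. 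The numerical proportionality of the $c_1(E^i)$ gives equal slopes for every polarization.
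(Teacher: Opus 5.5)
Your cycle is sound up to the last step. The steps (6)$\Rightarrow$(5), (5)$\Rightarrow$(1), (1)$\Leftrightarrow$(2), (2)$\Rightarrow$(4) and (4)$\Rightarrow$(3) go through as you sketch them: numerical proportionality of the $c_1(E^i)/\rk E^i$ via Hodge index, testing nefness on normalized curves, semistability on curves versus nefness of $\cEnd$, and numerical flatness of the normalized twist with the Demailly--Peternell--Schneider inequalities. The paper itself gives no argument here and simply cites Nakayama, Chapter~IV, Theorem~4.1.

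\textbf{The gap is at the end of (3)$\Rightarrow$(6).} You never actually prove that $E$ is locally free, or that the filtration is by subbundles. The Jordan--H\"older factors $G_i=E_i/E_{i-1}$ are only torsion free. With $T_i:=G_i^{**}/G_i$ and $[T_i]_2$ its codimension-two cycle, one has
\[
\Delta(G_i)=\Delta(G_i^{**})+2r_i[T_i]_2 .
\]
So Bogomolov with equality gives three things: $\Delta(G_i^{**})H^{d-2}=0$, $[T_i]_2=0$, and $\operatorname{codim}\operatorname{Supp}T_i\geq3$. Bando--Siu then makes $G_i^{**}$ locally free and projectively flat, but nothing yet shows $T_i=0$.

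Your proposed mechanism does not close this. ``Extensions of locally free flat pieces'' presupposes that the $G_i$ themselves are locally free. A ``$c_2$ comparison of $E$ with $E^{**}$'' is empty, since $E$ is reflexive and so $E=E^{**}$. More fundamentally, for $d\geq3$ a reflexive sheaf can fail to be locally free along a codimension-three set. Such a failure is invisible to $c_2\cdot H^{d-2}$, so no codimension-two Chern class computation can force local freeness. On a surface the issue disappears, which may be why it looks harmless.

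\textbf{The missing idea is an Ext-vanishing plus reflexivity argument, by induction on $i$.}
\begin{enumerate}
\item Each $E_i$ is saturated in the reflexive sheaf $E$, hence reflexive. In particular $E_1=G_1$ is locally free by Bando--Siu.
\item Suppose $E_{i-1}$ is locally free. Since $X$ is smooth and $\operatorname{codim}\operatorname{Supp}T_i\geq3$, we have $\mathcal{E}xt^q(T_i,E_{i-1})=0$ for $q\leq2$. Hence $\Ext^q(T_i,E_{i-1})=0$ for $q\leq2$.
\item It follows that $\Ext^1(G_i^{**},E_{i-1})\to\Ext^1(G_i,E_{i-1})$ is an isomorphism.
\item Lift the class of $0\to E_{i-1}\to E_i\to G_i\to0$. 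This gives a locally free extension $E'$ of $G_i^{**}$ by $E_{i-1}$, containing $E_i$ with $E'/E_i\simeq T_i$.
\item Two reflexive sheaves that agree off a closed set of codimension at least two coincide. Hence $T_i=0$, and $E_i=E'$ is locally free.
\end{enumerate}
Thus every $G_i$ is locally free, so every $E_i$ and every $E/E_i$ is locally free. This gives local freeness of $E$ and the subbundle filtration with projectively unitary flat quotients required in (6).
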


\begin{corollary}\label{cor:DUY}
	Let $X$ be a smooth complex projective variety of dimension
	$d\geq2$, and let $E$ be a reflexive sheaf on $X$.
	The following conditions are equivalent, both in the polystable
	case and in the stable case indicated in parentheses:
	\begin{enumerate}
		\item $E$ is $H$-polystable (respectively, $H$-stable) and
		$\Delta(E) H^{d-2}=0$ for some ample line bundle $H$.
		\item $E$ is $H$-polystable (respectively, $H$-stable) and
		$\Delta(E) H^{d-2}=0$ for every ample line bundle $H$.
		\item $E$ is locally free and projectively unitary flat
		(respectively, irreducible projectively unitary flat).
		\item $E^{\mathrm{an}}$ is locally free and admits a
		Hermite--Einstein metric, with respect to a K\"ahler form
		representing an ample class, whose Chern connection has vanishing
		trace-free curvature (respectively, is also irreducible).
	\end{enumerate}
\end{corollary}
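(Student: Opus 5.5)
The corollary reduces to Theorem~\ref{thm:FUY} together with the Donaldson--Uhlenbeck--Yau theorem. The stable variants follow from the polystable ones by tracking irreducibility. The plan is to prove (1)$\Rightarrow$(3)$\Rightarrow$(4)$\Rightarrow$(2)$\Rightarrow$(1); the last implication is trivial.

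\emph{(1)$\Rightarrow$(3).} A polystable sheaf is semistable, so Theorem~\ref{thm:FUY}, (3)$\Rightarrow$(6), shows that $E$ is locally free. It also gives a filtration by subbundles whose quotients are projectively unitary flat of equal slope. To obtain a single projectively unitary flat structure, I would write $E=\bigoplus E_j$ with the $E_j$ stable of slope $\mu_H(E)$. The Bogomolov inequality gives $\Delta(E_j)H^{d-2}\ge0$. Using the standard identity
\[
\frac{\Delta(E)}{r}=\sum_j\frac{\Delta(E_j)}{r_j}-\frac1r\sum_{j<k}r_jr_k\Bigl(\frac{c_1(E_j)}{r_j}-\frac{c_1(E_k)}{r_k}\Bigr)^2 ,
\]
the Hodge index theorem makes the cross terms, paired with $H^{d-2}$, nonpositive, because the differences of normalized $c_1$ are $H$-orthogonal to $H^{d-1}$. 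Hence each $\Delta(E_j)H^{d-2}=0$, and every cross term vanishes as well. This forces $c_1(E_j)/r_j-c_1(E_k)/r_k$ to be numerically trivial, since it is primitive with zero square. Each $E_j$ is then projectively unitary flat by Theorem~\ref{thm:FUY}, and the twisted bundles $E_j\otimes(\det E_j)^{-1/r_j}$ all carry the same projective twist. This yields a representation of $\pi_1$ into a block-diagonal subgroup of $\PU(r)$ inducing $\bP(E)$. I expect this combination step to be the most delicate point. A cleaner alternative is to invoke DUY directly: a Hermite--Einstein metric $h$ satisfies the Lübke-type identity
\[
\int \bigl|F_h^{\perp}\bigr|^2\,\omega^{d}\;\propto\;\Delta(E)H^{d-2},
\]
so vanishing of the discriminant gives (4), and hence (3), at once. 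I would take that route for (1)$\Rightarrow$(4) and deduce (3) from it.

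\emph{(4)$\Rightarrow$(3) and (3)$\Rightarrow$(4).} If the Chern connection has vanishing trace-free curvature, its induced connection on $\cEnd(E)^{\mathrm{an}}$ is flat and unitary and preserves multiplication and unit. Lemma~\ref{ssb:lem-projective-flat-algebra} then shows that $E$ is projectively unitary flat. Conversely, a $\PU(r)$-representation gives a flat Hermitian metric on $\bP(E)$. Lifting it to $E$ and twisting by a Hermite--Einstein metric on $\det E$ (correcting the scalar part) produces a Hermite--Einstein metric with zero trace-free curvature.

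\emph{(3)$\Rightarrow$(2).} The Chern--Weil representative of $\Delta(E)$ is built from the trace-free curvature, so $\Delta(E)$ vanishes in $H^4(X,\bQ)$, hence against every $H^{d-2}$. Polystability follows from the existence of the Hermite--Einstein metric (the easy direction of Kobayashi--Hitchin), and it holds with respect to every ample class. Alternatively, one can note that the unitary representation on $\cEnd E$ is semisimple and that flat summands have equal slopes for any polarization.

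\emph{Stable case.} By Lemma~\ref{ssb:cor-unitary-horizontal}, holomorphic subbundles of a unitary flat bundle are horizontal. Hence a decomposition $E=\bigoplus E_j$ into direct summands corresponds to invariant subspaces of the representation, and $E$ is stable exactly when the projective representation is irreducible. Here the relevant subsheaves are the polystable summands, which are saturated of the same slope.
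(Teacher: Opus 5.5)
Your proposal is correct and follows the route the paper intends. The paper gives no separate proof: it treats the corollary as a direct consequence of Theorem~\ref{thm:FUY} (for local freeness) together with the Donaldson--Uhlenbeck--Yau theorem and L\"ubke's identity. One correction is needed: Lemma~\ref{ssb:cor-unitary-horizontal} does not make arbitrary holomorphic subbundles of a unitary flat bundle horizontal, since such a bundle can contain non-flat subbundles of smaller slope. It does apply to the idempotent projections of a direct-sum decomposition, which are holomorphic sections of the unitary flat bundle $\cEnd(E)$, and that is all your stable-case argument needs.
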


\subsection{Reflexive sheaves on singular projective varieties}

For reflexive sheaves on normal varieties with quotient singularities
in codimension two, Chern classes are understood as rational orbifold
Chern classes. The notation $\int_X\Delta(E)=0$ means vanishing of the
entire numerical discriminant form: if $d=\dim X$, then
\[
\Delta(E) L_1\cdots L_{d-2}=0
\]
for all Cartier divisors $L_1,\ldots,L_{d-2}$. For vector bundles,
these Chern classes agree with the usual ones.

\begin{theorem}\label{general-local-freeness}
	Let $X$ be a normal complex projective variety of dimension
	$d\geq2$ with quotient singularities in codimension two.
	Let $E$ be a reflexive sheaf on $X$ that is $H$-semistable and
	satisfies $\Delta(E) H^{d-2}=0$ for some ample line bundle
	$H$. Then $E|_{X_{\mathrm{reg}}}$ is locally free,
	$\cEnd (E)|_{X_{\mathrm{reg}}}$ is nef, and
	$\int_X\Delta(E)=0$.
\end{theorem}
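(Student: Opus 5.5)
The plan is to reduce to the surface case by passing to a resolution of a general complete intersection surface, and then use the orbifold Bogomolov--Gieseker theory available in codimension two. The local freeness on $X_{\mathrm{reg}}$ should come from the known results on semistable sheaves with vanishing discriminant on varieties with quotient singularities in codimension two; the nefness of $\cEnd(E)$ and the vanishing of $\int_X\Delta(E)$ should come from restricting to smooth complete intersection surfaces in $X_{\mathrm{reg}}$ and applying Theorem~\ref{thm:FUY}.

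First fix $m\gg0$ and a general complete intersection surface $S=D_1\cap\dots\cap D_{d-2}$ with $D_i\in|mH|$. Since $X$ has quotient singularities in codimension two, $S$ has only quotient singularities and meets $X_{\mathrm{sing}}$ in finitely many points. By the Mehta--Ramanathan (or Flenner) restriction theorem, $E|_S$ is reflexive and semistable, and $\Delta(E|_S)=m^{d-2}\Delta(E)H^{d-2}=0$ as orbifold classes. On $S$, take a local orbifold chart (or a Galois cover $\widetilde S\to S$ quasi-étale in the orbifold sense) on which $E$ pulls back to a reflexive, hence locally free, sheaf on a smooth surface that is semistable with $\Delta=0$. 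Then Theorem~\ref{thm:FUY} on $\widetilde S$ shows the pullback is projectively unitary flat with a filtration by such bundles. Consequently $E$ corresponds near each singular point to a flat orbifold structure, and $E$ is locally free on $S_{\mathrm{reg}}$. By the semicontinuity argument of Simpson type one then gets that the subset of $X$ where $E$ fails to be locally free has no divisorial or codimension-two components in $X_{\mathrm{reg}}$ meeting a general $S$. Higher codimension loci in $X_{\mathrm{reg}}$ are handled by induction, cutting with general hypersurfaces through a given point. This uses restriction through prescribed points, as in Appendix~A, and the fact that a reflexive sheaf on a smooth threefold whose restrictions to general surfaces through a point are flat must be locally free there (the Hartogs-type extension of the flat structure, as in Lemma~\ref{ssb:lem-unitary-extension}).

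Next, for nefness of $\cEnd(E)|_{X_{\mathrm{reg}}}$: any curve $C\subset X_{\mathrm{reg}}$ lies on a complete intersection surface in $X$ smooth along $C$. Alternatively, pass to a resolution $f\colon Y\to X$ that is an isomorphism over $X_{\mathrm{reg}}$ and use Theorem~\ref{thm:FUY}(5) on $Y$ applied to $f^{[*]}E$ near $C$ after a local argument. Universal semistability then gives nefness. Finally, vanishing of the full discriminant form follows since $\Delta(E)$ pairs to zero with $H^{d-2}$ and $\cEnd(E)$ is numerically flat on the big open set. Bogomolov--Gieseker for every ample class, together with the Hodge index argument (the Bogomolov quadratic form is nonnegative for all ample classes in the ample cone and vanishes at one interior point), forces $\Delta(E)L_1\cdots L_{d-2}=0$ by multilinearity.

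The main obstacle is the local freeness step at points of $X_{\mathrm{reg}}$ of codimension $\ge3$, where restriction to surfaces only sees generic behaviour. I would first try to use the flat structure on $X_{\mathrm{reg}}\setminus Z$, with $Z$ of codimension $\ge3$, and extend it across $Z$ via purity for local fundamental groups of smooth points. A flat bundle on a punctured ball of codimension $\ge2$ extends, and its extension coincides with the reflexive $E$.
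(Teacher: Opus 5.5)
\paragraph{Gap: local freeness at points of codimension at least three.} Your outline does not establish the step that carries the content of the theorem. This is local freeness of $E$ at points $x\in X_{\mathrm{reg}}$ where it is not locally free, a locus of codimension $\geq3$.

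If $S$ is a surface through $x$ that is smooth at $x$, then $E|_S$ is free at $x$ if and only if $E$ is. Indeed, Nakayama gives a surjection $\cO^r\to E$ near $x$, and its kernel has rank zero inside a free module, hence vanishes. So at a bad point the restriction $E|_S$ is never reflexive at $x$. Everything your surface argument produces (semistability, flatness, local freeness) concerns only the hull $(E|_S)^{**}$, which is free at the smooth point $x$ in any case.

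A Hartogs-type extension of the flat structure of that hull therefore carries no information about $E$ at $x$. Locally, the reflexive sheaf $\ker\bigl(\cO^{3}\xrightarrow{(u,v,w)}\cO\bigr)$ on $\mathbb C^3$ is not free at the origin, yet its restriction to every smooth surface germ through the origin has free hull.

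What is needed is a global input that sees $E$ itself at $x$. One option is numerical: in the simplest case, the term $2r\,\ell\bigl((E|_S)^{**}/E|_S\bigr)>0$ in $\Delta(E|_S)=\Delta((E|_S)^{**})+2r\,\ell(\cdot)$, together with $\Delta(E|_S)=0$, contradicts Bogomolov for the hull. The other option is a flat structure on $E$ over its locally free locus.

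Your purity route presupposes the second option but never constructs it. Theorem~\ref{thm:FUY} concerns smooth projective varieties, and $X$ is not assumed klt. For merely semistable $E$, that theorem gives only a filtration with projectively unitary flat graded pieces, not a flat connection on $E$. Transporting a flat structure from surfaces to $X_{\mathrm{reg}}$ would also need a Lefschetz-plus-identification argument like Lemma~\ref{ssb:lem-lefschetz-projective-flatness}, which you do not supply.

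\paragraph{Gap: nefness.} The nefness step is also unsupported, for three reasons.
\begin{itemize}
\item A curve in $X_{\mathrm{reg}}$ with a singular point of embedding dimension three lies on no surface smooth along it.
\item A surface forced to contain $C$ is not general, so its semistability and the applicability of your surface argument need proof.
\item On a resolution, $f^*H$ is not ample and $\Delta(f^{[*]}E)$ is uncontrolled, so Theorem~\ref{thm:FUY} cannot be invoked there.
\end{itemize}
Your final multilinearity argument is fine once nefness is known. Nefness of $\cEnd(E)$ on $X_{\mathrm{reg}}$ gives semistability, hence orbifold Bogomolov, on general complete-intersection surfaces for every multipolarization.

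\paragraph{How the paper proceeds.} The paper avoids all of this. It applies \cite[Corollary~7.14]{Langer-Simpson-singular} to the $H$-semistable reflexive sheaf $\cEnd(E)$, which has $c_1=0$ and $\operatorname{ch}_2=-\Delta(E)$. This gives at once that $\cEnd(E)|_{X_{\mathrm{reg}}}$ is numerically flat and that $\int_X\Delta(E)=0$. It then deduces local freeness of $E|_{X_{\mathrm{reg}}}$ from that of its endomorphism sheaf by \cite[Theorem~4.4]{Auslander-Goldman1960}.
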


\begin{proof}
	The reflexive sheaf $B=\cEnd (E)$ is $H$-semistable and
	satisfies $c_1(B)=0$ and $\operatorname{ch}_2(B)=-\Delta(E)$.
	Applying \cite[Corollary~7.14]{Langer-Simpson-singular} to $B$
	shows that $B|_{X_{\mathrm{reg}}}$ is a numerically flat vector 
	bundle and that $\int_X\Delta(E)=0$. By
	\cite[Theorem~4.4]{Auslander-Goldman1960}, local freeness of this
	endomorphism sheaf implies local freeness of $E|_{X_{\mathrm{reg}}}$.
\end{proof}

A normal quasi-projective variety $X$ is called a \emph{klt space} if
there is an effective $\mathbb Q$-Weil divisor $B$ such that $(X,B)$ is
klt; see \cite[Definition~2.1]{Greb-Kebekus-Peternell2022}.
For polystable reflexive sheaves on projective klt spaces, the
nonabelian Hodge correspondence gives the following stronger result.
It extends \cite[Theorem~1.20]{Greb-Kebekus-Peternell-Duke2016} and
strengthens the projective-flatness conclusion of
\cite[Proposition~1.6]{Greb-Kebekus-Peternell2022} under the
polystability assumption.

\begin{theorem}\label{klt-local-freeness}
	Let $X$ be a normal complex projective klt space of dimension
	$d\geq2$, and let $E$ be a reflexive sheaf of rank $r$ on $X$.
	Suppose that $E$ is $H$-polystable and
	$\Delta(E) H^{d-2}=0$ for some ample line bundle $H$.
	Then $E|_U$, where $U=X_{\mathrm{reg}}$, is locally free and
	projectively unitary flat.
	Moreover, there is a finite surjective Galois morphism
	$\gamma:\bar X\to X$ from a normal projective variety, \'etale in
	codimension one, such that, putting $W=\gamma^{-1}(U)$, the
	algebraic projective bundle
	$\mathbb P((\gamma|_W)^*(E|_U))$ extends to an algebraic
	Brauer--Severi bundle $P\to\bar X$ whose analytification is induced
	by a representation
	$\pi_1^{\mathrm{top}}(\bar X^{\mathrm{an}})
	\to\operatorname{PU}(r)$.
\end{theorem}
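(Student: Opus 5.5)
We have to prove Theorem~\ref{klt-local-freeness}. The plan is to pass to a quasi-étale cover on which the problem becomes a statement about Higgs bundles with vanishing discriminant, and then apply the nonabelian Hodge correspondence on klt spaces.

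\emph{Step 1: local freeness and the discriminant.} Since $X$ is klt, it has quotient singularities in codimension two, so Theorem~\ref{general-local-freeness} applies. It shows that $E|_U$ is locally free, that $\cEnd(E)|_U$ is nef, and that $\int_X\Delta(E)=0$.

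\emph{Step 2: choosing the cover.} Let $\gamma:\bar X\to X$ be a maximally quasi-étale Galois cover, whose existence for projective klt spaces is due to Greb--Kebekus--Peternell. Then $\bar X$ is again klt, and every representation of $\pi_1^{\mathrm{top}}(\bar X_{\mathrm{reg}}^{\mathrm{an}})$ factors through $\pi_1^{\mathrm{top}}(\bar X^{\mathrm{an}})$. Put $\bar E=\gamma^{[*]}E$.

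Because $\gamma$ is étale in codimension one, $\bar E$ is polystable with respect to $\gamma^*H$. This follows from uniqueness of the socle and Galois invariance, as in the standard argument for pullbacks under finite separable morphisms. Moreover, the orbifold discriminant is multiplicative under $\gamma$, so $\Delta(\bar E)(\gamma^*H)^{d-2}=0$.

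\emph{Step 3: reduction to numerically flat Higgs bundles.} Consider $B=\cEnd(\bar E)$, a reflexive sheaf with $c_1=0$, $\operatorname{ch}_2=0$, and which is polystable, being the endomorphism sheaf of a polystable sheaf in characteristic zero. By \cite[Theorem~1.20]{Greb-Kebekus-Peternell-Duke2016} and its refinement in \cite{Greb-Kebekus-Peternell2022}, such a sheaf on the maximally quasi-étale cover is locally free on all of $\bar X$. Its restriction to $\bar X_{\mathrm{reg}}$ comes from a flat connection, which extends to $\bar X$ as a local system on $\bar X^{\mathrm{an}}$.

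Polystability together with the nonabelian Hodge correspondence, Simpson's theorem transported to klt spaces via the $\pi_1$-isomorphism, makes this flat structure unitary. Concretely, $B$ with zero Higgs field is polystable with vanishing Chern classes, so it corresponds to a semisimple local system. That local system has a harmonic metric with zero Higgs field, hence is unitary. The multiplication and unit of $B$ are holomorphic morphisms between unitary flat bundles, so after pulling back to a resolution they are horizontal by Lemma~\ref{ssb:cor-unitary-horizontal}, and they descend by Lemma~\ref{ssb:lem-unitary-descent}.

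\emph{Step 4: the Brauer--Severi bundle.} Applying Lemma~\ref{ssb:lem-projective-flat-algebra} on $\bar X$ yields a representation $\pi_1^{\mathrm{top}}(\bar X^{\mathrm{an}})\to\PU(r)$. The induced Azumaya algebra $B$ is locally free on all of $\bar X$, so it defines an algebraic Brauer--Severi bundle $P\to\bar X$ by GAGA. Over $W$ this bundle agrees with $\mathbb P((\gamma|_W)^*(E|_U))$.

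\emph{Step 5: descent to $U$.} To show that $E|_U$ itself is projectively unitary flat, observe that the unitary flat structure on $\cEnd(E)|_U$ pulled back to $W$ is $\mathrm{Gal}(\gamma)$-invariant, by uniqueness of the flat structure, which is Lemma~\ref{ssb:cor-unitary-horizontal} applied on a resolution. Over $U\setminus\gamma(\mathrm{branch})$ the map $\gamma$ is étale and the structure descends directly. The remaining locus has codimension at least two in $U$, where $U$ is smooth, so the structure extends across it by Lemma~\ref{ssb:lem-unitary-extension}, applied to $U$ minus this locus. Lemma~\ref{ssb:lem-projective-flat-algebra} then gives the conclusion.

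\emph{Main obstacle.} I expect the hardest step to be Step 3: showing that $B$ is locally free across the singularities of $\bar X$ and that its flat structure is unitary. This needs the klt nonabelian Hodge theory, specifically that semistable Higgs sheaves with vanishing orbifold $c_1$ and $c_2$ on a maximally quasi-étale cover are locally free flat bundles. Only polystability, rather than merely numerical flatness, forces the connection to be unitary, and I would apply the cited Greb--Kebekus--Peternell results to exactly this point.
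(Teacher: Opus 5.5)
Your argument works in outline, but it is organized differently from the paper's proof, and the key extension step is different.

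The paper proves the first assertion with no cover at all. It applies the klt nonabelian Hodge correspondence \cite[Theorem~6.3]{GKPT2020} directly to $(\cEnd(E)|_U,0)$ on $X$. This gives a unitary flat algebra structure and hence $\rho:\pi_1^{\mathrm{top}}(U^{\mathrm{an}})\to\PU(r)$ by Lemma~\ref{ssb:lem-projective-flat-algebra}. Only then does it pass to the maximally quasi-\'etale cover: it pulls $\rho$ back to $W$ and uses $\pi_1(W)\simeq\pi_1(\bar X_{\mathrm{reg}})$ together with Malcev's theorem to factor it through $\pi_1^{\mathrm{top}}(\bar X^{\mathrm{an}})$. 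It then identifies the resulting matrix-algebra bundle with $\gamma^{[*]}\cEnd(E)$ by reflexive extension across the complement of $W$, and algebraizes it by GAGA.

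So in the paper, local freeness of $\gamma^{[*]}\cEnd(E)$ on all of $\bar X$ is an output, needing only the correspondence on the regular locus and the existence of the cover. You instead import it as an input and then pay for that with the Galois-descent Step~5. That step is fine: the equivariance argument via Lemma~\ref{ssb:cor-unitary-horizontal} on a resolution is sound, and unitarity of the descended local system follows by averaging the parallel metric over the Galois group.

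Some points to tighten:
\begin{enumerate}
\item Theorem~1.20 of \cite{Greb-Kebekus-Peternell-Duke2016} is, as far as I recall, formulated for klt spaces smooth in codimension two; the paper itself only invokes it in that setting, in Corollary~\ref{ssb:cor-horing-peternell}. For a general klt $\bar X$ you need the version with $\mathbb Q$-Chern classes from the Greb--Kebekus--Peternell--Taji theory. Alternatively, replace the citation by the Malcev-plus-reflexive-extension argument above, which makes your Step~3 self-contained.
\item Say explicitly that the flat structure on $B$ is the one attached to $(B,0)$ by the correspondence on $\bar X_{\mathrm{reg}}$, extended over $\bar X$ by Lemma~\ref{ssb:lem-unitary-extension} once $B$ is known to be locally free. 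A holomorphic bundle can also carry non-unitary flat connections, so a flat structure produced by an abstract flatness theorem is not automatically unitary.
\item The codimension-two locus in Step~5 is empty, since by purity $\gamma|_W$ is already \'etale over all of $U$.
\item In Step~2, ``every representation'' should read ``every finite-dimensional linear representation''.
\end{enumerate}
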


\begin{proof}
	The restriction $E|_U$ is locally free by
	Theorem~\ref{general-local-freeness}. The reflexive sheaf
	$B=\cEnd(E)$ is $H$-polystable, as follows by
	restriction to general smooth projective curves; see also
	\cite[Lemma~4.7]{GKPT2020}. Moreover,
	\[
	c_1(B)=0,\qquad
	\operatorname{ch}_2(B) H^{d-2}
	=-\Delta(E)H^{d-2}=0.
	\]
	The nonabelian Hodge correspondence of
	\cite[Theorem~6.3]{GKPT2020}, applied to the algebraic Higgs bundle
	$(A,0)$ with $A=B|_U$, gives a unitary flat structure on
	$A^{\mathrm{an}}$. Tensor compatibility and full faithfulness
	show that the analytifications of the algebraic Higgs morphisms
	\[
	A\otimes A\longrightarrow A,\qquad\mathcal O_U\longrightarrow A
	\]
	given by multiplication and unit are horizontal. By
	Lemma~\ref{ssb:lem-projective-flat-algebra}, this gives a
	representation
	\(
	\rho:\pi_1^{\mathrm{top}}(U^{\mathrm{an}})
	\to\operatorname{PU}(r)
	\)
	inducing $\mathbb P(E|_U)^{\mathrm{an}}$.
	
	By \cite[Theorem~1.5]{Greb-Kebekus-Peternell-Duke2016}, there is
	a finite surjective Galois morphism $\gamma:\bar X\to X$,
	\'etale in codimension one, such that
	$\bar X_{\mathrm{reg}}\hookrightarrow\bar X$ induces an
	isomorphism of \'etale fundamental groups. Set $W=\gamma^{-1}(U)$.
	Purity implies that $\gamma|_W$ is \'etale. Thus $W$ is a smooth
	big open subset of $\bar X_{\mathrm{reg}}$, and
	\(
	\pi_1^{\mathrm{top}}(W^{\mathrm{an}})
	\simeq\pi_1^{\mathrm{top}}(\bar X_{\mathrm{reg}}^{\mathrm{an}}).
	\)
	The pullback of $\rho$ therefore defines a representation
	$\sigma:\pi_1^{\mathrm{top}}(\bar X_{\mathrm{reg}}^{\mathrm{an}})
	\to\operatorname{PU}(r)$.
	
	As in \cite[Proposition~3.10]{Greb-Kebekus-Peternell2022}, this
	representation factors through the natural surjection
	\[
	q:\pi_1^{\mathrm{top}}(\bar X_{\mathrm{reg}}^{\mathrm{an}})
	\twoheadrightarrow\pi_1^{\mathrm{top}}(\bar X^{\mathrm{an}}).
	\]
	Indeed, the profinite completion of $q$ is an isomorphism, whereas
	$\operatorname{im}\sigma$ is a finitely generated linear group
	and hence residually finite by Malcev's theorem. A nontrivial
	image of an element of $\ker q$ would therefore be detected by
	a finite quotient, a contradiction. Write $\sigma=\bar\rho\circ q$.
	
	The representation $\bar\rho$ defines a holomorphic matrix-algebra
	bundle $\mathcal B$ on $\bar X^{\mathrm{an}}$. On $W^{\mathrm{an}}$
	it is holomorphically isomorphic, as an algebra bundle, to
	$(\gamma^{[*]}B)^{\mathrm{an}}|_{W^{\mathrm{an}}}$. Both sheaves
	are reflexive analytic sheaves on $\bar X^{\mathrm{an}}$, so this
	isomorphism and its inverse extend uniquely across the complement
	of $W^{\mathrm{an}}$, which has codimension at least two.
	GAGA on the projective variety $\bar X$ algebraizes $\mathcal B$,
	its multiplication and unit, and this global isomorphism. The
	result is a locally free algebraic matrix-algebra bundle
	isomorphic to $\gamma^{[*]}B$. Its associated Brauer--Severi
	scheme is the required algebraic extension $P\to\bar X$.
\end{proof}

\begin{remark}
	Here a Brauer--Severi bundle is an algebraic morphism that is
	\'etale-locally isomorphic to $\mathbb P^{r-1}\times\bar X\to\bar X$.
	In general, $(\gamma|_W)^*(E|_U)$ itself need not extend to a vector
	bundle on $\bar X$ (see
	\cite[Example~3.9]{Greb-Kebekus-Peternell2022}).
\end{remark}

\subsection{A Lefschetz argument for projective unitary flatness}

\begin{lemma}\label{ssb:lem-lefschetz-projective-flatness}
	Let $(X,H)$ be a normal polarized complex projective
	variety of dimension $n\geq3$, and let $E$ be a vector
	bundle on $X$. Suppose that, for arbitrarily large integers $m$,
	the restriction $E|_D$ is projectively unitary flat for a general
	$D\in|mH|$.
	Then $E$ is projectively unitary flat on $X$.
\end{lemma}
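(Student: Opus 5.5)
The plan is to reduce projective unitary flatness of $E$ to a statement about the algebra bundle $B=\cEnd(E)$, and to use Lemma~\ref{ssb:lem-projective-flat-algebra}. By Lemma~\ref{ssb:lem-projective-flat-algebra}, the hypothesis says that for general $D\in|mH|$, with $m$ arbitrarily large, $B|_D^{\mathrm{an}}$ carries a unitary flat structure for which multiplication and unit are horizontal. We want to produce such a structure on $B^{\mathrm{an}}$ itself. Equivalently, we want a representation $\pi_1^{\mathrm{top}}(X^{\mathrm{an}})\to\PU(r)$ inducing $\bP(E)^{\mathrm{an}}$.

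\textbf{Step 1: extend the monodromy from $D$ to $X$.} Fix a large $m$ such that $mH$ is very ample and the restriction statement holds. Choose $D$ general, hence a normal (indeed, by Bertini, general) hypersurface section of $X$. By the Lefschetz hyperplane theorem for normal projective varieties in the form of Goresky--MacPherson/Hamm, we have a surjection $\pi_1^{\mathrm{top}}(D^{\mathrm{an}})\twoheadrightarrow\pi_1^{\mathrm{top}}(X^{\mathrm{an}})$. This is an isomorphism on suitable large open subsets, because $\dim D=n-1\ge2$. The issue is that $X$ is only normal, so Lefschetz is cleanest for the complement of singularities. I would therefore work on $U=X_{\mathrm{reg}}$ and $D\cap U$, where the quasi-projective Lefschetz theorem of Hamm--L\^e gives $\pi_1(D\cap U)\simeq\pi_1(U)$ for general $D$ of large degree and $n\ge3$. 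The projective representation $\rho_D$ of $\pi_1(D\cap U)$ therefore defines $\rho$ on $\pi_1(U)$. This yields a flat $\PU(r)$-bundle $P_\rho$ on $U$, and its restriction to $D\cap U$ is isomorphic to $\bP(E|_{D\cap U})$.

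\textbf{Step 2: identify $P_\rho$ with $\bP(E)$ on $U$ (the main obstacle).} Let $\mathcal A_\rho$ be the unitary flat matrix-algebra bundle attached to $\rho$. We need an algebra isomorphism $\mathcal A_\rho\simeq B|_U$ that extends the one on $D$. My approach is to consider $\mathcal H=\cEnd(B)\otimes\mathcal A_\rho$-type Hom bundles. Concretely, let $\mathcal K=\mathcal Hom(\mathcal A_\rho, B^{\mathrm{an}}|_U)$ and use extension of sections from $D$ to $X$. On $D$ there is a section $\phi_D$, namely the isomorphism. The plan is to show that it lifts to a section over the $k$-th infinitesimal neighbourhoods, and then over $X$, via vanishing of $H^1(D,\mathcal K|_D(-jD))$ for $j\ge1$. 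This vanishing would come from Serre vanishing on a projective compactification, using reflexive extensions and Enriques--Severi--Zariski type vanishing for $m\gg0$; this is exactly why arbitrarily large $m$ is needed. Analytic flat data are not algebraic a priori, so the extension argument must be run after algebraizing $\mathcal A_\rho$. I would do this by replacing Step 1 with a Lefschetz argument on a resolution $f:Y\to X$ where $f^{-1}(D)$ is a general member of $|mf^*H|$. There one can use Lemma~\ref{ssb:lem-unitary-descent} and Lemma~\ref{ssb:lem-unitary-extension} afterwards, which avoids the singularity issue. If the cohomological extension is problematic, the alternative is a curvature argument. The hypothesis implies that $\cEnd(E)|_D$ is nef with vanishing discriminant for $D$ of arbitrarily large degree, and hence so is $f^*\cEnd(E)$ on $Y$: nefness is tested on curves, which can be moved into general $D$. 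Then one applies Theorem~\ref{thm:FUY}, parts $(2)\Rightarrow(6)$, on $Y$. Stability on $Y$ is not assumed, however, so $(6)$ only gives a filtration. In this second route, horizontality of multiplication (Lemma~\ref{ssb:cor-unitary-horizontal}) is what forces the algebra structure to be flat.

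\textbf{Step 3: conclude on $X$.} Once the unitary flat algebra structure is known on $f^*B$, or on $B|_U$, Lemma~\ref{ssb:lem-unitary-descent} (respectively Lemma~\ref{ssb:lem-unitary-extension}) transfers it to $B^{\mathrm{an}}$ on $X$. The remark following those lemmas gives horizontality of multiplication and unit. Lemma~\ref{ssb:lem-projective-flat-algebra} then shows that $E$ is projectively unitary flat.

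In summary, I expect the heart of the proof to be Step 2: proving that the nef, numerically flat endomorphism algebra on $Y$ is actually unitary flat compatibly with its algebra structure, rather than merely filtered. I would first try to handle this by showing that $\Delta(f^*E)\cdot(f^*H)^{n-2}=0$, which follows by restricting to $D$, and that the filtration in Theorem~\ref{thm:FUY}(6) splits. The splitting would come from the semisimplicity forced by the unitary structure on every $D$.
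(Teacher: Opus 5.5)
\emph{There is a genuine gap in Step~2.} Your Step~1 agrees with the paper (Hamm--L\^e on $U=X_{\mathrm{reg}}$). Step~2, which you rightly call the heart, is not carried out, and the mechanisms you propose do not work as stated.

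\emph{Missing idea: the extended local system does not depend on $D$.} Your lifting invokes Enriques--Severi--Zariski vanishing ``for $m\gg0$'' for a reflexive extension of $\mathcal K=\mathcal{H}om(\mathcal A_\rho,B)$. But $\mathcal A_\rho$ is built from $\rho_D$, i.e.\ from the very divisor along which you lift, so the threshold moves with $D$. Rewriting the obstruction groups as $H^1(D,\cEnd(B)|_D(-jD))$ via $\mathcal K|_D\simeq\cEnd(B)|_D$ does not close the gap. Their vanishing through the restriction sequence needs control of $H^2(X,\cEnd(B)(-t))$ for $t\gg0$, which a merely normal $X$ does not give, and you would still need a formal-to-global step. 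The paper instead builds the algebra sheaf $F$ once, from a fixed general $D_0$. For any other general $D$, of arbitrarily large degree, it takes a smooth complete-intersection curve $C\subset D_0\cap D\cap U$ with $\pi_1^{\mathrm{top}}(C^{\mathrm{an}})\twoheadrightarrow\pi_1^{\mathrm{top}}(U^{\mathrm{an}})$. By Lemma~\ref{ssb:cor-unitary-horizontal}, the two unitary flat algebra structures on $B|_C$ coincide, so the two representations of $\pi_1^{\mathrm{top}}(U^{\mathrm{an}})$ agree. With $F$ now fixed, choose $D\in|mH|$ with $H^1(X,B^*\otimes F(-D))=0$ and $H^0(X,F(-D))=H^0(X,(B\otimes B)^*\otimes F(-D))=0$. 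The $H^1$ vanishing lifts $\alpha_D\colon B|_D\simeq F|_D$ to $\alpha\colon B\to F$. Nakayama near $D$ and reflexivity make $\alpha$ an isomorphism, and the $H^0$ vanishings force it to respect unit and multiplication.

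\emph{Second missing ingredient: algebraization.} The paper applies Deligne's regular Riemann--Hilbert correspondence on $U$ to obtain an algebraic bundle $V$ with regular singular connection and horizontal algebra structure, and sets $F=j_*V$. It then shows the given flat connection on $B|_{D\cap U}$ is algebraic and regular singular, by resolving $D$ and using GAGA. Full faithfulness then makes $B|_{D\cap U}\simeq F|_{D\cap U}$ algebraic, and reflexivity of $F|_D$ for general $D$ extends this over $D$. Your substitute, a Lefschetz argument on a resolution $f\colon Y\to X$, needs a representation of $\pi_1^{\mathrm{top}}(Y^{\mathrm{an}})$. For the merely nef and big class $f^*H$, the map $\pi_1^{\mathrm{top}}(f^{-1}(D)^{\mathrm{an}})\to\pi_1^{\mathrm{top}}(Y^{\mathrm{an}})$ need not be injective: meridians of exceptional divisors over points avoided by $D$ are not controlled. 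The vanishing needed for lifting is also unavailable on $Y$, because of $R^1f_*$ contributions.

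\emph{The curvature fallback fails too.} Nefness cannot be tested on general members of $|mH|$, since a fixed curve lies in none of them. For instance, on the blow-up of $\bP^3$ along a line, with $h$ the pullback of a plane and $e$ the exceptional divisor, $\mathcal O(e)$ restricts to a nef bundle on a general $D\in|m(2h-e)|$ but is not nef. Moreover, Theorem~\ref{thm:FUY} only gives a filtration, and the lemma has no stability hypothesis. Splitting that filtration on $Y$ would need injectivity of $H^1$ under restriction to $f^{-1}(D)$, which you have no means to prove.
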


\begin{proof}
	Put $U=X_{\mathrm{reg}}$ and $A=\cEnd(E)$.
	Choose a sufficiently general normal divisor $D_0$ as in the
	hypothesis. Applied after a suitable Veronese embedding, the quasi-projective Lefschetz theorem
	\cite[Theorem~1.1.3]{ssb-HammLe1985} gives
	\[
	\pi_1^{\mathrm{top}}\bigl((D_0\cap U)^{\mathrm{an}}\bigr)
	\xrightarrow{\sim}\pi_1^{\mathrm{top}}(U^{\mathrm{an}}).
	\]
	Extend the representation defining the unitary flat holomorphic
	algebra bundle $(A|_{D_0\cap U})^{\mathrm{an}}$ by this isomorphism.
	The regular Riemann--Hilbert correspondence
	\cite[Theorem~II.5.9]{ssb-Deligne1970} produces an algebraic vector
	bundle $V$ on $U$ with an algebraic integrable regular singular
	connection and horizontal algebraic matrix-algebra structure.
	The monodromy of its analytification lies in the adjoint image of
	$\operatorname{PU}(r)$. For the algebraic inclusion
	$j:U\hookrightarrow X$, put $F=j_*V$, where the direct image is
	taken in the algebraic category. This is a coherent reflexive
	algebraic sheaf: extend $V$ to a coherent algebraic sheaf on $X$
	and take its reflexive hull. Its multiplication and unit extend
	uniquely as algebraic morphisms.
	
	We claim that the same local system of algebras on $U^{\mathrm{an}}$
	is obtained from every other sufficiently general divisor $D$
	satisfying the hypothesis, including divisors of arbitrarily large
	degree. Intersect $D_0$ and $D$ with $n-3$ further sufficiently
	general ample divisors. The result is a smooth connected
	projective curve
	\(
	C\subset D_0\cap D\cap U,
	\)
	and successive applications of Lefschetz give a surjection
	$\pi_1^{\mathrm{top}}(C^{\mathrm{an}})
	\twoheadrightarrow\pi_1^{\mathrm{top}}(U^{\mathrm{an}})$.
	The two induced unitary flat holomorphic connections on
	$(A|_C)^{\mathrm{an}}$ coincide. Indeed, by
	Lemma~\ref{ssb:cor-unitary-horizontal}, the identity
	holomorphic morphism between these unitary flat bundles on
	$C^{\mathrm{an}}$ is horizontal. It also preserves the algebra
	structure. Surjectivity on fundamental groups proves the claim.
	
	The holomorphic flat connection on $(A|_{D\cap U})^{\mathrm{an}}$
	supplied by the hypothesis is the analytification of an algebraic
	regular singular connection. To see this, choose an algebraic
	projective resolution $q:\widetilde D\to D$ which is an
	isomorphism over $D\cap U$. The given unitary flat structure on
	$(A|_D)^{\mathrm{an}}$ pulls back to a holomorphic flat connection
	on $(q^*(A|_D))^{\mathrm{an}}$. Since $\widetilde D$ is smooth and
	projective, GAGA algebraizes its splitting of the first-jet
	sequence, hence the connection itself. Its restriction over
	$D\cap U$ is the required algebraic connection, and extension over
	$\widetilde D$ shows that it is regular singular.
	Full faithfulness of the regular Riemann--Hilbert correspondence
	therefore gives an algebraic isomorphism
	$A|_{D\cap U}\simeq F|_{D\cap U}$. For general $D$, the algebraic
	sheaf $F|_D$ is reflexive by Bertini for $S_2$ sheaves
	\cite[Corollary~1.4 and Theorem~1.5]{Langer-Simpson-singular}.
	Since $D\setminus U$ has codimension at least two in $D$, the
	isomorphism extends uniquely to an algebraic isomorphism of algebras
	\(
	\alpha_D:A|_D\xrightarrow{\sim}F|_D.
	\)
	
	The algebraic sheaf $F$ has now been fixed independently of the
	final degree. Choose $D\in|mH|$ as above with $m$ sufficiently
	large so that
	\(
	H^1(X,A^*\otimes F(-D))=0
	\)
	by the Enriques--Severi--Zariski lemma
(see	\cite[Tag~0FD8]{Stacks}). We can  also assume that
	\[
	H^0(X,F(-D))=
	H^0\bigl(X,(A\otimes A)^*\otimes F(-D)\bigr)=0.
	\]
	The first vanishing lifts $\alpha_D$ to an algebraic morphism
	$\alpha:A\to F$. Nakayama's lemma shows that $\alpha$ is an
	isomorphism near $D$. Its cokernel is supported away from the
	ample divisor $D$, so it has finite support. The map is generically
	an isomorphism, and reflexivity therefore makes it an isomorphism
	on all of $X$. The discrepancies measuring preservation of the
	unit and multiplication vanish on $D$ and factor through $F(-D)$.
	The two $H^0$ vanishings show that they vanish everywhere.
	Thus $\alpha$ is an algebraic isomorphism of algebras.
	
	Using $\alpha^{\mathrm{an}}|_{U^{\mathrm{an}}}$, transport the
	unitary flat holomorphic algebra structure on $V^{\mathrm{an}}$
	to $(A|_U)^{\mathrm{an}}$. By
	Lemma~\ref{ssb:lem-projective-flat-algebra}, it gives a projective
	unitary flat structure on $E|_U$. Apply
	Lemma~\ref{ssb:lem-unitary-extension} to the algebraic vector
	bundle $\cEnd(E)$, retaining its parallel
	multiplication and unit, to extend this structure over $X^{\mathrm{an}}$.
\end{proof}

\begin{proposition}\label{ssb:prop-stable-zero-discriminant-normal}
	Let $(X,H)$ be a normal polarized complex projective
	variety of dimension $n\geq2$, and let $E$ be an $H$-stable
	vector bundle of rank $r\geq2$. If
	\(\Delta(E)H^{n-2}=0,\)
	then $E$ is irreducible projectively unitary flat on $X$.
\end{proposition}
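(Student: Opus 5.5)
The plan is to reduce to the smooth case via a resolution, then to dimension two and three, and finally to use the Lefschetz Lemma~\ref{ssb:lem-lefschetz-projective-flatness} to go up in dimension.

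\emph{Surfaces.} Let $n=2$ and take a projective resolution $f:Y\to X$. Choose an effective $f$-exceptional divisor $D$ with $-D$ $f$-ample, and $N$ such that $P=Nf^*H-D$ is ample. By Lemma~\ref{ssb:lem-birational-transfer}, $f^*E$ is $P$-stable; in fact the inequality there shows it is $(Nf^*H-D/k)$-stable for every $k\ge1$. We have $\Delta(f^*E)=f^*\Delta(E)$ as a number, since $f^*E$ is a pullback of a bundle, so $\int_Y\Delta(f^*E)=\int_X\Delta(E)=0$. Corollary~\ref{cor:DUY}, applied on the smooth surface $Y$ with polarization $P$, shows that $f^*E$ is irreducible projectively unitary flat. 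Hence $\cEnd(f^*E)=f^*\cEnd(E)$ carries a unitary flat structure with horizontal multiplication and unit. Lemma~\ref{ssb:lem-unitary-descent}, with the remark following Lemma~\ref{ssb:lem-unitary-extension} on descending parallel tensor morphisms, then descends this to $\cEnd(E)^{\mathrm{an}}$. Lemma~\ref{ssb:lem-projective-flat-algebra} gives projective unitary flatness of $E$.

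\emph{Irreducibility.} The representation of $\pi_1(Y^{\mathrm{an}})$ factors through the surjection $\pi_1(Y^{\mathrm{an}})\to\pi_1(X^{\mathrm{an}})$, which is surjective because the fibers are connected and $X$ is normal. Consequently a proper invariant linear subspace for $X$ would be one for $Y$, which is excluded. Alternatively, an invariant subspace would give a flat subbundle of equal slope, contradicting $H$-stability.

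\emph{Higher dimension.} Let $n\ge3$ and take a general $D\in|mH|$ with $m\gg0$. Then $D$ is normal by Seidenberg-type Bertini, and $\Delta(E|_D)(H|_D)^{n-3}=m\,\Delta(E)H^{n-2}=0$. The main obstacle is stability of $E|_D$. For this I would invoke a Mehta--Ramanathan/Flenner type restriction theorem in characteristic zero, which on normal varieties can be applied on a resolution or on $X_{\mathrm{reg}}$, since the singular locus meets $D$ in codimension two. This gives $H|_D$-stability of $E|_D$ for general $D$ and $m$ large. By induction $E|_D$ is projectively unitary flat, and Lemma~\ref{ssb:lem-lefschetz-projective-flatness} yields projective unitary flatness of $E$ on $X$.

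\emph{Irreducibility in higher dimension.} Suppose the projective unitary representation were reducible. The image of $\pi_1(D)$, which equals the full image by the Lefschetz theorem, would then preserve a subspace, contradicting irreducibility on $D$. Equivalently, an invariant subspace would yield a flat subbundle of slope equal to $\mu_H(E)$, contradicting stability.
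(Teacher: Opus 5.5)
The overall structure of your proposal matches the paper. On surfaces you use a resolution, Corollary~\ref{cor:DUY} and descent by Lemma~\ref{ssb:lem-unitary-descent}. For $n\geq3$ you use restriction, induction and Lemma~\ref{ssb:lem-lefschetz-projective-flatness}. Irreducibility comes from an equal-slope subbundle, or from surjectivity of fundamental groups.

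\textbf{The gap.} You claim that Lemma~\ref{ssb:lem-birational-transfer} makes $f^*E$ stable for $P=Nf^*H-D$, and even $(Nf^*H-D/k)$-stable for every $k\geq1$. That lemma proves the opposite implication: non-stability downstairs forces non-stability upstairs. Its identity is
\[
\mu_P(\widetilde F)-\mu_P(f^*E)=N\bigl(\mu_H(F)-\mu_H(E)\bigr)-\frac{c_1(\widetilde F)\cdot D}{\rk F}.
\]
The correction term $-c_1(\widetilde F)\cdot D/\rk F$ is nonnegative. It therefore works against you when you want $\mu_P(\widetilde F)<\mu_P(f^*E)$, and the lemma gives no upper bound for it that is uniform in $\widetilde F$. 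So for an arbitrary $N$ that merely makes $P$ ample you have not shown $P$-stability. The claim for every $k\geq1$ is likewise unsupported.

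\textbf{The fix.} The missing ingredient is Lemma~\ref{ssb:lem-nearby-polarizations}. The bundle $f^*E$ is stable for the nef class $f^*H$, by the projection formula, since $f$ is an isomorphism over a big open subset. Moreover $A=bf^*H-D$ is ample for some $b$. An embedding $f^*E\hookrightarrow\mathcal O_Y(mA)^{\oplus a}$ then gives the needed uniform bound on the correction term. This yields $(Nf^*H-D)$-stability for all $N\gg0$.

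On a surface, $\int_Y\Delta(f^*E)=0$ does not depend on the polarization. Stability for one such ample class is therefore all Corollary~\ref{cor:DUY} requires, and this is exactly the paper's argument. With this repair, the rest of your proposal, including both irreducibility arguments, goes through as in the paper.
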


\begin{proof}
	We first treat the case $n=2$. Take a projective resolution
	$f:Z\to X$ which is an isomorphism over $X_{\mathrm{reg}}$, and put
	$h=f^*H$ and $\widetilde E=f^*E$. The bundle
	$\widetilde E$ is $h$-stable.
	Choose an effective exceptional divisor $D$ with $-D$ relatively
	ample, and $b$ such that $P=bh-D$ is ample. Then by Lemma \ref{ssb:lem-nearby-polarizations}
$\widetilde E$ is stable for $P_N=Nh-D$ when $N\gg0$.
	
	The projection formula gives
	$\int_Z\Delta(\widetilde E)=\int_X\Delta(E)=0$.
	Corollary~\ref{cor:DUY} gives a projective unitary flat structure
	on $\widetilde E^{\mathrm{an}}$.
	Thus $(f^*\cEnd(E))^{\mathrm{an}}$ is unitary flat
	with parallel matrix-algebra structure.
	Lemma~\ref{ssb:lem-unitary-descent} descends this structure to
	$\cEnd(E)^{\mathrm{an}}$, proving projective unitary
	flatness of $E$.
	
	For $n\geq3$, argue by induction. By the stable
	restriction theorem \cite{ssb-MehtaRamanathan1984}, for every
	sufficiently large $m$ a general divisor $D\in|mH|$ is
	normal and $E|_D$ is $H|_D$-stable. Moreover,
	\[
	\Delta(E|_D)\cdot(H|_D)^{n-3}
	=m\,\Delta(E)H^{n-2}=0.
	\]
	The induction hypothesis and
	Lemma~\ref{ssb:lem-lefschetz-projective-flatness} prove projective
	unitary flatness on $X$.
	
	Finally, an invariant proper projective linear subspace defines
	a proper holomorphic subbundle of $E^{\mathrm{an}}$. Since $X$ is
	projective, GAGA algebraizes it to a subbundle $B\subset E$.
	On the analytification of a general smooth complete-intersection
	curve, choose a Hermitian metric on $E$ inducing its unitary flat
	projective metric. The induced Chern curvatures of $B$ and $E$ are
	given by the same scalar two-form times their respective identity
	endomorphisms. Their degrees divided by their ranks therefore
	agree. Thus $\mu_H(B)=\mu_H(E)$, contrary to algebraic slope
	stability. This proves irreducibility.
\end{proof}

\section{Holonomy of polystable reflexive sheaves}

Let $X$ be a normal complex projective variety, let $H$ be an ample
line bundle on $X$, and let $E$ be an $H$-polystable reflexive sheaf
of rank $r\geq1$. Let $U\subset X_{\mathrm{reg}}$ be the open subset
on which $E$ is locally free, and fix $x\in U(\bC)$.

Applying Theorem~\ref{cor:restriction-curves-prescribed-points} to
the stable direct summands of $E$, we see that $E|_C$ is polystable
for a general smooth complete intersection curve $x\in C\subset U$
of sufficiently large multidegree. If $X$ is a curve, we take $C=X$.
By the Narasimhan--Seshadri correspondence \cite{nar-ses}, $E|_C$
determines a unitary representation, unique up to unitary equivalence,
\[
\rho:\pi_1^{\mathrm{top}}(C^{\mathrm{an}},x)
\longrightarrow U(E_x)\subset\GL(E_x)
\]
when $\deg(E|_C)=0$. In arbitrary degree, after choosing
$c\in C(\bC)\setminus\{x\}$, we use the representation
\[
\rho_c:\pi_1^{\mathrm{top}}(C^{\mathrm{an}}\setminus\{c\},x)
\longrightarrow U(E_x)\subset\GL(E_x),
\]
whose local monodromy around $c$ is
$\exp(2\pi i\deg(E|_C)/r)\operatorname{id}_{E_x}$.
The representation is irreducible precisely when $E|_C$ is stable.
Following \cite{ssb-BalajiKollar2008}, we call these the
\emph{Narasimhan--Seshadri representations} of $E|_C$.
Since the local monodromy at the auxiliary puncture is scalar,
the projectivized representations extend across that point.

The construction of Balaji and Koll\'ar extends to polystable
reflexive sheaves as follows.

\begin{theorem}[{Balaji--Koll\'ar; cf.\ \cite[Definition~19 and
Theorem~20]{ssb-BalajiKollar2008}}]
\label{Balaji-Kollar}
There exists a unique reductive subgroup
$G=\Hol_x(E)\subset\GL(E_x)$, called the \emph{holonomy group} of
$E$, characterized by the following property: $G$ is the smallest
algebraic subgroup containing the images of the Narasimhan--Seshadri
representations of $g^*(E|_U)$ for every morphism
\[
g:(C,y)\longrightarrow(U,x)
\]
from a smooth pointed projective curve for which $g^*(E|_U)$ is
polystable, and for every choice of the auxiliary point
$c\in C(\bC)\setminus\{y\}$.

Furthermore:
\begin{enumerate}
\item If $\dim X\geq2$, a very general complete intersection curve
$x\in C\subset U$ of sufficiently large multidegree detects $G$:
the image of its Narasimhan--Seshadri representation is Zariski
dense in $G$, with the auxiliary point chosen very generally in
the nonzero-degree case. If $X$ is a curve of positive genus, the
same assertion holds with $C=X$.

\item For every $a,b\geq0$, the fiber map $F\mapsto F_x$ gives a
one-to-one correspondence between direct summand subsheaves of
\(
(E^{\otimes a}\otimes(E^*)^{\otimes b})^{**}
\)
and $G$-invariant subspaces of
$E_x^{\otimes a}\otimes(E_x^*)^{\otimes b}$. Under this
correspondence, stable direct summands correspond to irreducible
representations. In particular, $E$ is $H$-stable if and only if
$G$ acts irreducibly on $E_x$.

\item The conjugacy class of $G$ is the smallest reductive conjugacy
class to which the structure group of $E|_U$ can be reduced.
\end{enumerate}
Here reductive groups need not be connected.
\end{theorem}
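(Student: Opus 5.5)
The plan is to reduce to the smooth pointed curve case treated by Balaji and Koll\'ar, and then transport the statements to $X$ using restriction theorems and reflexive extension.

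\emph{Definition of $G$.} Consider all pairs $(g:(C,y)\to(U,x),c)$ with $g^*(E|_U)$ polystable. Each Narasimhan--Seshadri representation $\rho_c$ has an image in $\GL(E_x)$. Only the unitary equivalence class of $\rho_c$ is canonical, but I would fix it via the flat unitary frame at $x$ given by the Hermite--Einstein metric on $g^*E$. I would then take $G$ to be the Zariski closure of the subgroup generated by all these images. A priori this is an arbitrary algebraic subgroup. Reductivity, and the fact that $G$ is not enlarged by unitary conjugations, follow from (1). There $G$ is exhibited as the Zariski closure of the image of a single unitary representation, hence of a compact group, and such a closure is reductive.

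\emph{Detection, item (1).} For each pair $(g,c)$, the family of complete intersection curves $C'\ni x$ of large multidegree should dominate the image curve $g(C)$, at least after degeneration. The idea is that the image of $\rho_c$ for $g$ lies in the Zariski closure of the image for a very general $C'$. Fix an invariant tensor subspace $W\subset E_x^{\otimes a}\otimes(E_x^*)^{\otimes b}$ for the very general curve $C'$. By Theorem~\ref{cor:restriction-curves-prescribed-points}, for large multidegree, $E|_{C'}$ is polystable, and so is $(E^{\otimes a}\otimes E^{*\otimes b})^{**}|_{C'}$. Polystability ensures that restriction induces a bijection between direct summands on $X$ and on $C'$. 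This is the key Mehta--Ramanathan-type step, with a prescribed basepoint.

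It follows that $W$ is the fiber of a direct summand $F$ defined on all of $X$. Then $g^*F$ is a degree-compatible direct summand of $g^*(\ldots)$, so $W$ is invariant under every $\rho_c$ for $g$. Since a reductive group $G'$ (the closure for $C'$) is determined by its invariant tensors, by Chevalley's theorem, every image lies in $G'$, and therefore $G=G'$. The very general choices, of the curve and of the auxiliary point, avoid countably many proper closed conditions. These conditions are those under which the monodromy closure jumps down. For $X$ a curve of positive genus, take $C'=X$.

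\emph{Items (2) and (3).} For (2), combine the bijection above with the Narasimhan--Seshadri equivalence on $C'$. Direct summands of a polystable bundle on $C'$ correspond to subrepresentations, that is, to $G$-invariant subspaces. Stable summands correspond to irreducible ones. Extension from $U$ to $X$ is by reflexive hulls, since $X\setminus U$ has codimension $\ge2$. For (3), a reduction of the structure group to $G$ comes from the stabilizer of the invariant tensors. Chevalley's theorem expresses $G$ as the stabilizer of a line in some tensor space, which, by reductivity, can be refined to a tensor; that tensor spans a trivial summand $\mathcal O\subset(E^{\otimes a}\otimes E^{*\otimes b})^{**}$, whose frame-stabilizer gives a $G$-reduction of $E|_U$. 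Conversely, a reduction to a reductive $K$ gives $K$-invariant tensors, which are global sections. These are parallel on curves, so $G$ is contained in a conjugate of $K$.

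\emph{Main obstacle.} I expect the main obstacle to be the detection step: showing that an arbitrary morphism $g$ contributes nothing beyond what the very general complete intersection sees. The route I would try first is the invariant-tensor argument above, which avoids comparing fundamental groups directly.
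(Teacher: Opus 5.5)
The gap is in the detection step (1), and the argument there is circular as written. Your toolkit is the right one: parallelism of degree-compatible direct summands on curves, restriction of the polystable sheaves $(E^{\otimes a}\otimes(E^*)^{\otimes b})^{**}$ to curves through $x$, and Chevalley's description of a reductive group by its invariant tensors. Items (2) and (3) then follow from (1) roughly as you say.

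Write $T^{a,b}:=E_x^{\otimes a}\otimes(E_x^*)^{\otimes b}$. For a fixed curve $C'$, Theorem~\ref{cor:restriction-curves-prescribed-points} gives the bijection between direct summands on $X$ and on $C'$ only for those $(a,b)$ whose degree bound $m_0(a,b)$ the multidegree of $C'$ exceeds. This bound depends on the tensor sheaf, and nothing you use makes it uniform. (You also need nonisomorphic stable summands to stay nonisomorphic on $C'$, which polystability of the restriction alone does not give.)

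On the other hand, to conclude from Chevalley that every $\rho_c$ has image in $G'=\overline{\operatorname{im}\rho_{C'}}$, you need every $G'$-invariant subspace of $T^{a,b}$, up to some degree $N(G')$, to be the fiber of a summand on $X$. But $N(G')$ depends on $G'$, hence on $C'$, and it is not bounded in terms of $\dim E_x$. So you cannot fix $C'$ first.

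This is not just bookkeeping: the inputs you use cannot rule out a bad scenario. Suppose a very general curve of multidegree $m$ has holonomy $\mu_{2^{j(m)}}$, embedded by $\zeta\mapsto\operatorname{diag}(\zeta,\zeta^{-1})$, with $j(m)\to\infty$, while $G$ is the torus $\{\operatorname{diag}(t,t^{-1})\}$. Once $2^{j}>2(a+b)$, these finite groups have exactly the same invariant subspaces in $T^{a,b}$ as the torus. So this scenario is compatible with restriction for each fixed $(a,b)$, with parallelism of summands, and with Chevalley, yet no single curve detects $G$.

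Ruling this out needs geometric input comparing different curves, namely that the generic monodromy stabilizes as the multidegree grows. Only then do finitely many tensor constructions suffice, and the restriction theorem handles those. The paper obtains this from Balaji--Koll\'ar's generic-monodromy result \cite[Lemma~15]{ssb-BalajiKollar2008} and their curve correspondence, both formulated for polystable bundles. Your sentence that very general choices avoid the loci where ``the monodromy closure jumps down'' is essentially that lemma, asserted rather than proved. Even granted within one family, it does not compare different multidegrees, and your degeneration heuristic is not carried out.

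A smaller point: you derive reductivity of $G$ from (1), which is not asserted when $X\simeq\mathbb P^1$. The paper treats that case separately: $E\simeq\mathcal O(d)^{\oplus r}$, and the holonomy is trivial or scalar, detected after pulling back to a curve of positive genus.
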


\begin{proof}
The argument of \cite[Theorem~20]{ssb-BalajiKollar2008} applies to
polystable sheaves. Indeed, the curve correspondence and the
generic-monodromy result in \cite[Section~1 and Lemma~15]{ssb-BalajiKollar2008}
are already formulated for polystable bundles. The restriction
theorem applies to the stable direct summands of $E$ and of its
reflexive tensor constructions, which are polystable in
characteristic zero. Thus the generic monodromy stabilizes, and
the extension of direct summands from sufficiently positive curves
gives the stated correspondence and the minimal reductive
reduction. Stability is equivalent to irreducibility because these
tensor constructions are polystable.

For completeness, if $X\simeq\mathbb P^1$, then
$E\simeq\mathcal O_{\mathbb P^1}(d)^{\oplus r}$ for some integer
$d$. Its holonomy is trivial for $d=0$ and is the scalar
$\mathbb G_m$ for $d\ne0$. In the latter case this scalar group
is detected after pullback to a curve of positive genus and a
very general choice of the auxiliary point. This proves the
remaining assertions also in this case; the restriction in~(1)
is necessary because $\mathbb P^1$ minus one point has trivial
fundamental group.
\end{proof}

Deleting a further closed subset of codimension at least two from
$U$ does not change the holonomy group: sufficiently general
curves detecting it avoid that subset.

\begin{remark}\label{stability-of-Sym-gives-usual}
For any reflexive sheaf $E$, if $\Sym^{[m]}E$ is $H$-stable for some $m\ge 2$, then $E$ is $H$-stable. Indeed, a saturated proper subsheaf $F\subset E$	with $\mu_H(F)\geq\mu_H(E)$ would induce a subsheaf $\Sym^{[m]}F\subset \Sym^{[m]}E$
	whose slope is at least $m\mu_H(E)$. 
\end{remark}

The above remark shows that in the results below, the holonomy group of $E$ is well defined.

\begin{lemma}\label{ssb:lem-holonomy-square-dichotomy}
	Assume that $r\geq2$ and $\Sym^{[2]}E$ is $H$-stable.
	Put $V=E_x$ and $G=\Hol_x(E)\subseteq\GL(V)$.
	Then exactly one of the following holds:
	\begin{enumerate}
		\item The image of $G$ in $\PGL(V)$ is finite.
		\item 
		There are a connected semisimple subgroup $D\subseteq G$
		and a subgroup $Z\subseteq Z(\GL(V))=\mathbb G_m$ such that
		\begin{equation}\label{ssb:eq-holonomy}
			G=Z\cdot D,\qquad
			D=[G^\circ,G^\circ]
			=\SL(V)\quad\text{or}\quad\Sp(V,b),
		\end{equation}
		where $b$ is a nondegenerate alternating form in the
		symplectic case. Every $\Sym^{[m]}E$, $m\geq1$, is
		$H$-stable.
		
		In the symplectic case $r=2n$ and there are a rank one
		reflexive sheaf $N$ and an algebraic alternating form,
		nondegenerate on the smooth locally free locus $U$ of $E$,
		\begin{equation}\label{ssb:eq-form}
			\beta:\bigwedge\nolimits^{[2]}E\longrightarrow N,
			\qquad \det E\simeq N^{[n]}:=(N^{\otimes n})^{**}.
		\end{equation}
	\end{enumerate}
\end{lemma}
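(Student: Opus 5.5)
By Theorem~\ref{Balaji-Kollar}(2), applied with $a=2$, $b=0$ and the direct summand $\Sym^{[2]}E\subset(E^{\otimes2})^{**}$, stability of $\Sym^{[2]}E$ is equivalent to irreducibility of $G$ on $\Sym^2V$. By Remark~\ref{stability-of-Sym-gives-usual}, $E$ is stable, so $G$ also acts irreducibly on $V$. The proof is therefore a purely representation-theoretic classification of reductive subgroups $G\subseteq\GL(V)$ with $\Sym^2V$ irreducible, followed by a translation back to sheaves.

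Let $G^\circ$ be the identity component and $D=[G^\circ,G^\circ]$, a connected semisimple normal subgroup of $G$. If $D$ is trivial, then $G^\circ$ is a torus. A torus that is normalized by a group acting irreducibly acts through characters permuted transitively by $G/G^\circ$. I will show that irreducibility of $\Sym^2V$ forces $G^\circ$ to be scalar. Decompose $V=\bigoplus_\chi V_\chi$ into $G^\circ$-weight spaces. If there were two distinct weights, $\Sym^2V$ would contain the $G$-stable subspaces $\bigoplus_\chi\Sym^2V_\chi$ and $\bigoplus_{\chi\neq\chi'}V_\chi V_{\chi'}$, both nonzero, contradicting irreducibility. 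Hence $G^\circ$ is scalar, the image of $G$ in $\PGL(V)$ is finite, and we are in case~(1).

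If $D\neq1$, the main step is to show that $D$ acts irreducibly on $V$ and is $\SL(V)$ or $\Sp(V)$. By Clifford theory, $V|_{D}$ is isotypic up to a $G$-orbit of irreducible components, say $V|_D\simeq\bigoplus_i W_i^{\oplus m}$ with the $W_i$ permuted transitively by $G$. The same symmetric-square decomposition as above rules out more than one isotypic component. If $V=W\otimes\mathbb C^m$ with $m\geq2$, then
\[
\Sym^2V=\Sym^2W\otimes\Sym^2\mathbb C^m\oplus\bigwedge\nolimits^2W\otimes\bigwedge\nolimits^2\mathbb C^m.
\]
These two summands are $G$-stable, since the $G$-action preserves the decomposition into $D$-isotypic pieces, because $\Sym^2W$ and $\bigwedge^2W$ have no common $D$-constituents when $\dim W\geq2$. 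If $\dim W=1$, then $D$ is trivial, which is excluded. Hence $m=1$, and $D$ is irreducible on $V$ with $\Sym^2V$ irreducible under $G=Z_G(D)\cdot(\text{outer part})$.

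I expect the hardest step to be handling the finite outer part $G/(G^\circ)$, which could permute simple factors of $D$. If $D=D_1\times\cdots\times D_k$ with $V=\bigotimes V_j$ and $k\geq2$, the same tensor formula shows that $\Sym^2V$ splits into pieces indexed by the parity of the number of $\bigwedge^2$ factors. One class is $G$-stable even under permutations, because permutations preserve that parity count. So $D$ is simple. For simple $D$ irreducible on $V$ with $\Sym^2V$ irreducible even up to outer automorphisms, I would use Dynkin's classification: $\Sym^2$ of $L(\lambda)$ always contains $L(2\lambda)$, and contains $L(2\lambda-\alpha)$ for a simple root $\alpha$ with $\langle\lambda,\alpha^\vee\rangle>0$, unless the structure is very special. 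Equality $\Sym^2L(\lambda)=L(2\lambda)$ holds exactly for $\SL(V)$ with $V$ standard or dual, and for $\Sp(V)$ standard. This is the classical fact that the only irreducible groups with irreducible symmetric square are of these types. Then $G^\circ=Z^\circ D$, because the centralizer of $D$ is scalar. Also $G$ normalizes $D$, and all automorphisms of $\SL$ or $\Sp$ preserving $V$ are inner times scalar; for $\SL$ the outer automorphism sends $V$ to $V^*$, which is not isomorphic to $V$ for $r\geq3$. Together these give $G=Z\cdot D$. Uniqueness of the case is clear, since in case~(2) $D$ is infinite.

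The consequences then follow. Each $\Sym^mV$ is irreducible for $\SL$ and for $\Sp$, so every $\Sym^{[m]}E$ is stable by Theorem~\ref{Balaji-Kollar}(2). In the symplectic case, the $G$-invariant line $\mathbb C b^{-1}\subset\bigwedge^2V^*$ gives a direct summand of $(\bigwedge^2E^*)^{**}$ of rank one; call it $N^{-1}$. Dually this gives $\beta$, which is nondegenerate on $U$ because $b$ is nondegenerate at every point of $U$ by parallel transport along curves. Indeed, the degeneracy locus is a $G$-reduction-compatible closed set missing very general points, hence empty on $U$ by the minimal reduction in Theorem~\ref{Balaji-Kollar}(3). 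Finally, the Pfaffian $\beta^{\wedge n}$ gives an isomorphism $\det E\simeq N^{[n]}$ on $U$, and this extends by reflexivity.
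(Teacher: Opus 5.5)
Your architecture is viable, and its second half is the paper's argument. The paper also uses the normalizer of $D$ to get $G=Z\cdot D$; your automorphism argument is the same point. It also uses irreducibility of every $\Sym^mV$ to get stability of $\Sym^{[m]}E$, and the $G$-invariant line $\mathbb Cb\subset\bigwedge^2V^*$ (not $\mathbb Cb^{-1}$) pushed through the holonomy reduction to get $\beta$. For the dichotomy itself, however, the paper simply cites Guralnick--Tiep, whose results cover disconnected $G$, while you reduce by hand to a connected simple $D$. Your torus case is fine, but the next step rests on a false claim.

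When $V|_D\simeq W^{\oplus m}$, you justify $G$-stability of $\Sym^2W\otimes\Sym^2\mathbb C^m$ and $\bigwedge^2W\otimes\bigwedge^2\mathbb C^m$ by asserting that $\Sym^2W$ and $\bigwedge^2W$ share no $D$-constituent. This fails, for example, for $D=\SL_3$ and $W=\mathfrak{sl}_3$: the adjoint representation lies in $\bigwedge^2W$ via the bracket and in $\Sym^2W$ via $(X,Y)\mapsto XY+YX-\tfrac23\operatorname{tr}(XY)\operatorname{id}$. The conclusion is still true, but it needs Clifford's tensor factorization instead. Write $V=W\otimes M$ with $M=\operatorname{Hom}_D(W,V)$. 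Each $g\in G$ intertwines $V$ with its twist by $d\mapsto gdg^{-1}$, so the corresponding twist of $W$ is isomorphic to $W$ via some $A(g)\in\GL(W)$. Schur's lemma then gives $g=A(g)\otimes B(g)$, and the splitting of $\Sym^2(W\otimes M)$ is $\GL(W)\times\GL(M)$-stable.

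The same factorization, now of the form $g=P_\sigma\circ\bigotimes_jA_j(g)$ with $P_\sigma$ permuting tensor factors, is what actually justifies your exclusion of $k\geq2$ simple factors. There you should index the summands of $\Sym^2(\bigotimes_jV_j)$ by the set $S$ of $\bigwedge^2$-factors. Since $|S|$ is always even, ``parity'' separates nothing. The element $g$ sends the $S$-summand to the $\sigma(S)$-summand, so the $S=\emptyset$ summand and its complement are both $G$-stable and nonzero, which contradicts irreducibility.

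Two further corrections are needed in the simple case. First, $L(2\lambda-\alpha)$ lies in $\bigwedge^2L(\lambda)$, with highest weight vector $v_\lambda\wedge f_\alpha v_\lambda$, not in $\Sym^2L(\lambda)$. Drop that heuristic and cite the classification instead, as the paper does.

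Second, and more importantly, before applying the classification of those $L(\lambda)$ with $\Sym^2L(\lambda)=L(2\lambda)$, you must show that $\Sym^2V$ is irreducible under $D$, not just under $G$. This follows because the Cartan component $L(2\lambda)\subseteq\Sym^2V$ is $G$-stable. It is spanned by the squares of highest weight vectors of $V$ for all Borel subgroups of $D$, and $G$ permutes these Borel subgroups.

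With these repairs, and granting the classical classification, your argument proves the lemma. For nondegeneracy of $\beta$ on $U$, your degeneracy-locus argument is murkier than necessary. As in the paper, in local frames of the holonomy reduction the line subbundle is spanned by the fixed nondegenerate form $b$.
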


\begin{proof}
	By Theorem~\ref{Balaji-Kollar}, $\Sym^2V$ is an irreducible
	$G$-module. Propositions~2.8 and~2.14 and Theorem~3.1(i) of
	\cite{ssb-GuralnickTiep2008} show that either the projective
	image of $G$ is finite, or
	\[
	D:=[G^\circ,G^\circ]=\SL(V)\quad\text{or}\quad\Sp(V,b).
	\]
	Indeed, the normalized group in those results has the same
	projective image as $G$; it is either finite or has connected
	component $\SL(V)$ or $\Sp(V,b)$.
	
	Suppose that the projective image is infinite. Since $D$ is
	normal in $G$, we have
	\[
	D\subseteq G\subseteq N_{\GL(V)}(D).
	\]
	The two normalizers are $\GL(V)$ and $\GSp(V,b)$, respectively.
	Every element of either normalizer is a scalar times an element
	of $D$. Since $D\subseteq G$, this gives $G=Z\cdot D$ with
	$Z=G\cap Z(\GL(V))$, proving \eqref{ssb:eq-holonomy}.
	Every symmetric power of the defining representation of
	$\SL(V)$ or $\Sp(V,b)$ is irreducible. The holonomy
	correspondence therefore gives stability of every $\Sym^{[m]}E$.
	
	In the symplectic case the line
	$\mathbb Cb\subseteq\bigwedge^2V^*$ is $G$-invariant.
	Associating it to the algebraic holonomy reduction gives a
	line subbundle
	$N^{-1}|_U\subseteq\bigwedge^2(E|_U)^*$, where $N$ denotes
	the rank one reflexive extension of its dual to $X$.
	This gives the form in \eqref{ssb:eq-form} by reflexive
	extension. On $U$, it is the fixed nondegenerate form $b$
	in a local trivialization of the holonomy bundle.
	Its $n$-fold exterior product gives
	$\det E\simeq N^{[n]}$ after reflexive extension.
\end{proof}

\begin{lemma}\label{ssb:lem-finite-projective-holonomy}
	Let $X$ be a normal complex projective variety of dimension
	$n\geq2$, and let $E$ be an $H$-stable reflexive sheaf.
	Suppose that the image of $\Hol_x(E)$ in $\PGL(E_x)$ is finite.
	Then $E|_{X_{\mathrm{reg}}}$ is locally free and irreducible
	projectively unitary flat, with finite projective monodromy.
	If $E$ is locally free on $X$, this structure extends over $X$.
	In particular, if $X$ is smooth, then $E$ is a locally free
	irreducible projectively unitary flat sheaf on $X$.
	If $X$ has quotient singularities in codimension two, then
	$\int_X\Delta(E)=0$.
\end{lemma}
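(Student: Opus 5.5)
The plan is to pass to a finite cover on which $\cEnd(E)$ has trivial holonomy, so that it becomes a trivial bundle there, and then descend the resulting flat structure. Put $U$ for the locally free part of $E$ in $X_{\mathrm{reg}}$, $G=\Hol_x(E)$, and let $\bar G\subset\PGL(E_x)$ be its finite image.

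\emph{Step 1: holonomy of $A=\cEnd(E)$.} Since $\End(E_x)=E_x\otimes E_x^*$ is a tensor construction, Theorem~\ref{Balaji-Kollar}(3) shows that the structure group of $A|_U$ reduces to the image of $G$ under the adjoint action. This image is $\bar G$, because scalars act trivially. So $A|_U$ comes from a principal $\bar G$-bundle $\mathcal Q\to U$ for the finite group $\bar G$. Such a bundle is a finite étale Galois cover $\mathcal Q\to U$ with group a quotient of $\bar G$, and $A|_U$ is the associated bundle. The projective bundle $\mathbb P(E|_U)$ is likewise associated to $\mathcal Q$ via $\bar G\subset\PGL(E_x)$.

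\emph{Step 2: flatness on $U$.} A finite étale cover is given by a homomorphism $\pi_1^{\mathrm{top}}(U^{\mathrm{an}})\to\bar G$, and $\bar G$ is compact. Composing this homomorphism with the inclusion into $\PGL(E_x)$ gives a representation with finite image. That image is conjugate into $\PU(r)$, and it induces $\mathbb P(E|_U)^{\mathrm{an}}$. Hence $E|_U$ is projectively unitary flat with finite projective monodromy. Irreducibility follows because $G$ acts irreducibly on $E_x$ by Theorem~\ref{Balaji-Kollar}(2). An invariant projective subspace under the monodromy would be $\bar G$-invariant, hence $G$-invariant.

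\emph{Step 3: local freeness on $X_{\mathrm{reg}}$ and extension.} This is the step I expect to need the most care. Let $\nu:Y\to X$ be the normalization of $X$ in the function field of $\mathcal Q$. Then $\nu$ is finite and étale over $U$, and $X\setminus U$ has codimension $\ge2$ in $X$. On $Y$, the sheaf $\nu^{[*]}A$ is reflexive and trivial on $\nu^{-1}(U)$, so it is $\cO_Y^{\oplus r^2}$. Over $X_{\mathrm{reg}}$, purity of the branch locus shows that $\nu$ is étale over all of $X_{\mathrm{reg}}$, since it is étale in codimension one. So $A|_{X_{\mathrm{reg}}}$ is étale-locally trivial, hence locally free, and $E|_{X_{\mathrm{reg}}}$ is locally free by Auslander--Goldman, as in the proof of Theorem~\ref{general-local-freeness}. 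The étale cover then trivializes the algebra bundle with constant algebra structure. This gives the flat structure on $X_{\mathrm{reg}}$ with monodromy in $\bar G$, and it agrees with Step 2.

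If $E$ is locally free on $X$, apply Lemma~\ref{ssb:lem-unitary-extension} to $\cEnd(E)$ together with its parallel multiplication and unit, and then Lemma~\ref{ssb:lem-projective-flat-algebra}. The smooth case is the special case $X_{\mathrm{reg}}=X$.

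\emph{Step 4: discriminant.} Suppose $X$ has quotient singularities in codimension two. After cutting by general hyperplanes, $\int_X\Delta(E)=0$ reduces to computing $\operatorname{ch}_2(A)\cdot L_1\cdots L_{d-2}$. This equals $-\Delta(E)\cdot L_1\cdots L_{d-2}$, and it vanishes for the following reason. On the finite cover $Y$, which is étale in codimension one over the regular locus, $\nu^{[*]}A$ is trivial. Orbifold Chern classes are multiplicative under such finite covers, computed on orbifold charts. This gives $\deg(\nu)\,\operatorname{ch}_2(A)\cdot L_1\cdots L_{d-2}=\operatorname{ch}_2(\cO^{r^2})\cdot\nu^*L_1\cdots\nu^*L_{d-2}=0$.

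Alternatively, the same vanishing follows from Theorem~\ref{general-local-freeness} once we know that $\Delta(E)H^{d-2}=0$. That in turn can be checked on a general complete-intersection surface through the quotient singular points, where the projective flatness on the regular part forces the orbifold discriminant to vanish. The remaining technical point here is compatibility of orbifold Chern classes with the cover $\nu$ near the quotient singularities.
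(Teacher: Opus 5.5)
Your proposal is correct and follows essentially the same route as the paper: a finite \'etale cover from the finite projective holonomy, normalization to a quasi-\'etale Galois cover, purity over $X_{\mathrm{reg}}$ plus descent, Lemma~\ref{ssb:lem-unitary-extension} for the extension, and functoriality of orbifold Chern classes under quasi-\'etale covers. The variations are that you trivialize $\cEnd(E)$ and invoke Auslander--Goldman, where the paper obtains $\gamma^{[*]}E\simeq L^{\oplus r}$ and descends $E$ directly; and one small correction: a connected component of the $\bar G$-torsor is Galois with group a \emph{subgroup} (not a quotient) of $\bar G$, and your irreducibility argument needs this subgroup to be all of $\bar G$, which holds by minimality of the holonomy reduction in Theorem~\ref{Balaji-Kollar}(3) (or can be avoided, as in the paper, by restricting to a general complete intersection curve on which $E$ is stable).
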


\begin{proof}
	Let $U\subset X_{\mathrm{reg}}$ be the locally free locus of $E$.
	The algebraic holonomy reduction of $E|_U$ induces a reduction
	of its projective-frame bundle to a finite subgroup of
	$\PGL(E_x)$. Consequently, there is a connected finite \'etale
	Galois cover $T\to U$ trivializing $\mathbb P(E|_U)$.
	Normalize $X$ in the function field of $T$ to obtain a finite
	surjective Galois morphism $\gamma:Y\to X$, \'etale in
	codimension one. On $T$ the pullback of $E$ is a direct sum
	of copies of a line bundle. Taking its rank one reflexive
	extension $L$ to $Y$ gives
	\(	\gamma^{[*]}E\simeq L^{\oplus r}\), where $r=\operatorname{rk}E$.
	Purity shows that $\gamma$ is \'etale over $X_{\mathrm{reg}}$.
	Thus $Y$ is smooth over $X_{\mathrm{reg}}$, and $L$ is invertible
	there. Faithfully flat descent now proves that
	$E|_{X_{\mathrm{reg}}}$ is locally free. The finite projective
	structure extends over this regular locus, since its complement
	of $U$ has codimension at least two. A finite subgroup of
	$\PGL_r(\mathbb C)$ is conjugate to a subgroup of
	$\operatorname{PU}(r)$, so the resulting structure is projectively
	unitary flat. Its restriction to a sufficiently general stable
	complete intersection curve is irreducible; hence the structure
	on $X_{\mathrm{reg}}$ is irreducible as well.
	
	If $E$ is locally free on $X$, apply
	Lemma~\ref{ssb:lem-unitary-extension} to $\cEnd(E)$, retaining
	its parallel multiplication and unit, and then apply
	Lemma~\ref{ssb:lem-projective-flat-algebra}. This extends the
	projective unitary flat structure over $X$.
	
	Finally, suppose that $X$ has quotient singularities in
	codimension two. The quasi-\'etale cover $Y$ has the same property.
	The rational Chern classes of $L^{\oplus r}$ have zero
	discriminant. Functoriality of these classes under finite
	quasi-\'etale morphisms (see \cite{ssb-LangerIntersection2025}
	and \cite[Section~7]{ssb-Langer2024}) therefore gives
	\[
	\deg(\gamma)\,\Delta(E)M_1\cdots M_{n-2}
	=\Delta(\gamma^{[*]}E)\gamma^*M_1\cdots\gamma^*M_{n-2}=0
	\]
	for all Cartier divisors $M_1,\ldots,M_{n-2}$ on $X$.
\end{proof}

\begin{lemma}\label{ssb:prop-holonomy}
	Assume that $r\geq2$ and $\Sym^{[6]}E$ is $H$-stable.
	Then the second alternative of
	Lemma~\ref{ssb:lem-holonomy-square-dichotomy} holds.
	In particular, \eqref{ssb:eq-holonomy} and, in the symplectic
	case, \eqref{ssb:eq-form} hold, and every $\Sym^{[m]}E$,
	$m\geq1$, is $H$-stable.
\end{lemma}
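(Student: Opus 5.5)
Our plan is to reduce to Lemma~\ref{ssb:lem-holonomy-square-dichotomy} and then exclude its first alternative, a finite projective image, by a representation-theoretic argument on $\Sym^6V$, with $V=E_x$ and $G=\Hol_x(E)$.

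First, stability of $\Sym^{[6]}E$ implies stability of $\Sym^{[2]}E$. Remark~\ref{stability-of-Sym-gives-usual} only gives stability of $E$, so we argue through holonomy instead. By Theorem~\ref{Balaji-Kollar}(2), $\Sym^6V$ is an irreducible $G$-module. If $\Sym^2V$ were reducible, a nontrivial decomposition $\Sym^2V=A\oplus B$ into $G$-submodules (available since $G$ is reductive) would make the image of $A\otimes\Sym^4V$ under multiplication a proper nonzero $G$-submodule of $\Sym^6V$. We still have to check that this image is proper. If it is not, use $\Sym^2V\otimes\Sym^4V\to\Sym^6V$ together with $\Sym^4V\supseteq \Sym^2A$ and the corresponding images. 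A cleaner route is to pass to the dual picture of polynomial functions: a nonzero invariant subspace $A\subsetneq\Sym^2V^*$ of quadrics generates an ideal whose degree-$6$ part is a $G$-submodule, and it is proper unless $A$ spans everything in degree $6$. Since the ideal generated by a proper subspace of quadrics need not have proper degree-$6$ part, we expect to avoid this step altogether by handling the finite case directly, as follows. In any case, once $\Sym^{[2]}E$ is known to be stable, Lemma~\ref{ssb:lem-holonomy-square-dichotomy} applies and yields the dichotomy.

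Next, suppose the projective image $\bar G\subset\PGL(V)$ is finite. Then $G^\circ$ consists of scalars, and $G$ is a central extension of the finite group $\bar G$. Irreducibility of $\Sym^6V$ under $G$ is then irreducibility of the induced projective representation of $\bar G$, that is, of a linear representation of a finite group $\tilde G\subset\SL(V)$, a finite cover of $\bar G$. Hence
\[
\dim\Sym^6V=\binom{r+5}{6}\le|\tilde G|^{1/2}.
\]
This forces $\tilde G$ to be very large compared with $r$, but by itself it gives no contradiction. The substantive input must therefore be the classification of finite linear groups with irreducible symmetric powers. This is the content of \cite{ssb-GuralnickTiep2008}: for a finite irreducible subgroup of $\SL(V)$ with $r\ge2$, the module $\Sym^kV$ is reducible once $k\ge6$. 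More precisely, they classify the cases where $\Sym^kV$ is irreducible for $k\geq4$, and in rank $2$ the binary icosahedral group gives $\Sym^kV$ irreducible only for $k\le5$. We would invoke these results (their Theorem~1.3 or Theorem~3.1 and the accompanying propositions) to conclude that $\Sym^6V$ is never irreducible for finite $\tilde G$. The same results also show irreducibility of $\Sym^6V$ forces that of $\Sym^2V$, which settles the first step directly. This excludes the first alternative.

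The main obstacle is this citation step: matching the precise hypotheses of the Guralnick--Tiep classification to our reductive, possibly disconnected $G$ with a scalar identity component. We handle it by replacing $G$ with its normalized finite subgroup $\tilde G=G\cap\SL(V)$ modulo the central torus, exactly as in the proof of Lemma~\ref{ssb:lem-holonomy-square-dichotomy}, which has the same projective image. Once the second alternative holds, the remaining conclusions, namely \eqref{ssb:eq-holonomy}, \eqref{ssb:eq-form}, and stability of all $\Sym^{[m]}E$, follow verbatim from Lemma~\ref{ssb:lem-holonomy-square-dichotomy}.
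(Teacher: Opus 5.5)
Your proposal follows the paper's proof: Guralnick--Tiep deduces irreducibility of $\Sym^2V$ from that of $\Sym^6V$ (the paper cites their Corollary~1.6, so your abandoned elementary attempt is unnecessary) and excludes a finite projective image (their Theorems~1.1 and~1.2), after which Lemma~\ref{ssb:lem-holonomy-square-dichotomy} finishes the argument. One small correction: the normalized finite group must be the full preimage of $\bar G$ in $\SL(V)$, i.e.\ $\mathbb C^\times G\cap\SL(V)$ as in your second paragraph, not $G\cap\SL(V)$, since the latter can have strictly smaller projective image when $G$ itself is finite, and irreducibility need not pass to it.
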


\begin{proof}
	Put $V=E_x$ and $G=\Hol_x(E)$.
	By Theorem~\ref{Balaji-Kollar}, $\Sym^6V$ is an irreducible
	$G$-module. By \cite[Corollary~1.6]{ssb-GuralnickTiep2008},
	$\Sym^2V$ is also irreducible, so $\Sym^{[2]}E$ is $H$-stable.
	Moreover, \cite[Theorems~1.1 and~1.2]{ssb-GuralnickTiep2008}
	exclude a finite projective image when $\Sym^6V$ is irreducible.
	The assertion therefore follows from
	Lemma~\ref{ssb:lem-holonomy-square-dichotomy}.
\end{proof}

\begin{remark}
	Even when $c_1(E)\equiv0$, the scalar subgroup $Z$ need not be finite,
	and the form in \eqref{ssb:eq-form} need not be
	$\mathcal O_X$-valued.
\end{remark}

\begin{corollary}\label{stability-pullback-of-symmetric-powers}
\label{ssb:cor-derived-holonomy-pullback}
Let $X$ be a normal complex projective variety, let $H$ be ample,
and let $E$ be an $H$-polystable reflexive sheaf of rank $r\geq1$.
Fix a general point $x$
in its smooth locally free locus, and set
\(V=E_x,\) \(G=\Hol_x(E)\) and \( D=[G^\circ,G^\circ].\)
\begin{enumerate}
\item If $f:Y\to X$ is a finite surjective morphism from a normal
projective variety, then $f^{[*]}E$ is
$f^*H$-polystable. For compatible general base points $y\in Y$
and $x=f(y)$, identify $(f^{[*]}E)_y$ with $V$ and write
$G_f=\Hol_y(f^{[*]}E)$. Then
$G_f\subseteq G$ and $[G_f^\circ,G_f^\circ]=D.$
\item For every $m\geq1$, the following conditions are equivalent:
\begin{enumerate}[(a)]
\item $D$ acts irreducibly on $\Sym^mV$;
\item $\Sym^{[m]}(f^{[*]}E)$ is $f^*H$-stable for every finite
surjective morphism $f:Y\to X$ from a normal projective variety;
\item $\Sym^{[m]}(f^{[*]}E)$ is $f^*H$-stable for every generically
finite surjective morphism $f:Y\to X$ from a normal projective
variety.
\end{enumerate}
In~(c), slope stability is understood with respect to the nef and big
divisor $f^*H$.
In particular, $E$ remains stable under every finite surjective
reflexive pullback if and only if $D$ acts irreducibly on $V$.
\end{enumerate}
If $\Sym^{[6]}E$ is $H$-stable, then the conditions in~(2) hold
for every $m\geq1$.
\end{corollary}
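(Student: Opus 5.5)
Part (1) and the equivalences in (2) both come from comparing holonomy groups through Narasimhan--Seshadri representations on curves (Theorem~\ref{Balaji-Kollar}).

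\emph{Part (1).} Polystability of $f^{[*]}E$ with respect to $f^*H$ is standard in characteristic zero. Pass to the Galois closure $\tilde f:\tilde Y\to X$, which factors through $Y$. The maximal destabilizing subsheaf of $\tilde f^{[*]}E$ is Galois invariant, so it descends. Hence $\tilde f^{[*]}E$ is semistable, and its socle descends likewise, giving polystability. Polystability then passes down from $\tilde Y$ to $Y$, since a destabilizing subsheaf on $Y$ pulls back to one on $\tilde Y$.

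For $G_f\subseteq G$, take a curve $g:(C,y)\to(Y,y)$ detecting $G_f$. The composite $f\circ g$ is a curve mapping to $X$, and $g^*f^*E$ is polystable. By the minimality property in Theorem~\ref{Balaji-Kollar}, its Narasimhan--Seshadri image lies in $G$, so $G_f\subseteq G$.

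For the reverse comparison of identity components, choose a very general complete intersection curve $C\subset X$ detecting $G$. Its preimage $C'=f^{-1}(C)$ is, after normalization, a finite cover $C'\to C$, unramified over the general points involved. Then $\pi_1(C')$ maps onto a finite index subgroup of $\pi_1(C)$ (punctured versions in the nonzero-degree case). The Zariski closure of the image of a finite index subgroup of a group whose image is dense in $G$ contains $G^\circ$. This gives $G_f\supseteq G^\circ$, up to a subtlety with the auxiliary puncture. The scalar local monodromy there only changes the central part, so at least $[G_f^\circ,G_f^\circ]\supseteq[G^\circ,G^\circ]=D$.

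Combining, $G^\circ\subseteq G_f\subseteq G$, so $G_f^\circ=G^\circ$ and the derived groups agree. The main obstacle is making this covering argument rigorous: $C'$ may be singular or reducible, and we need its image curve to be a legitimate test curve in $Y$. I would fix this by using Bertini to make $C'$ irreducible and smooth (general complete intersection pulled back under a finite map), or by replacing $f$ with its Galois closure first.

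\emph{Part (2).} (c)$\Rightarrow$(b) is trivial.

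For (b)$\Rightarrow$(a), I choose $f$ so that $G_f$ is as small as possible. Take the finite étale Galois cover of the locally free locus trivializing the component group $G/G^\circ$, and normalize $X$ in it. Then $G_f=G^\circ$, and $G^\circ=Z^\circ D$ with $Z^\circ$ a central torus. Since $G^\circ$ is reductive and its centre acts on $\Sym^mV$ by scalars on isotypic pieces (the centre of $G^\circ$ commutes with $D$), irreducibility of $G^\circ$ on $\Sym^mV$ is equivalent to irreducibility of $D$. Theorem~\ref{Balaji-Kollar}(2) translates stability of $\Sym^{[m]}f^{[*]}E$ into irreducibility of $G_f$ on $\Sym^mV$.

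For (a)$\Rightarrow$(c), let $f$ be generically finite. Take the Stein factorization and restrict over a big open set, or simply test on curves: a destabilizing summand would produce a $G_f$-invariant subspace. Run the curve argument of part (1) with the finite part of $f$ to get $G_f\supseteq D$. Then $D$ is irreducible on $\Sym^mV$, so $G_f$ is too, hence $\Sym^{[m]}f^{[*]}E$ is polystable with irreducible holonomy, i.e.\ stable. Polystability for the nef and big divisor $f^*H$ follows from restriction to curves avoiding the exceptional locus.

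The case $m=1$ gives the statement about $E$ itself. The final sentence follows from Lemma~\ref{ssb:prop-holonomy}, since $D=\SL(V)$ or $\Sp(V,b)$ acts irreducibly on all symmetric powers.
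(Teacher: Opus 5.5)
\textbf{There is a gap in (a)$\Rightarrow$(c) when $f$ is generically finite but not finite; everything else is essentially the paper's argument, with three small slips.}

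Your treatment of (1), of (b)$\Rightarrow$(a), and of the final sentence matches the paper. In (b)$\Rightarrow$(a) you normalize $X$ in a component of $P/G^\circ\to U$, so the holonomy lands in $G^\circ$, and then use that the centre of $G^\circ$ acts by scalars on an irreducible $G^\circ$-module.

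The three slips:
\begin{itemize}
\item In nonzero degree your covering argument only gives $p(G_f^\circ)=p(G^\circ)$ for $p\colon\GL(V)\to\PGL(V)$. From this, $\mathfrak g\subseteq\mathfrak g_f+\mathbb C\operatorname{id}_V$ and $[\mathfrak g,\mathfrak g]=[\mathfrak g_f,\mathfrak g_f]$. It does not give $G^\circ\subseteq G_f$. You do not need that: $D\subseteq[G_f^\circ,G_f^\circ]\subseteq[G^\circ,G^\circ]=D$ already suffices.
\item Pulling back a destabilizing subsheaf from $Y$ to $\tilde Y$ only shows that semistability descends. For polystability you must also split equal-slope subsheaves, for instance by averaging a splitting over $\operatorname{Gal}(\tilde Y/Y)$, or simply cite the standard lemma.
\item The ``main obstacle'' you name is not one. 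The normalization of a component of $f^{-1}(C)$ through $y$ is a smooth curve mapping to $Y$, which is all the curve characterization of $G_f$ needs. No Bertini argument is required.
\end{itemize}

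\textbf{The gap.} For generically finite $f$ you argue directly on $Y$: ``$\Sym^{[m]}f^{[*]}E$ is polystable with irreducible holonomy, i.e.\ stable.'' You obtain polystability for $f^*H$ ``from restriction to curves avoiding the exceptional locus.''

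Theorem~\ref{Balaji-Kollar}, including the correspondence between stable summands and irreducible subrepresentations, is only available for sheaves polystable with respect to an \emph{ample} class. Here $f^*H$ is only nef and big. You have not shown that $f^{[*]}E$ is polystable for any ample class on $Y$. So neither $G_f$ on $Y$ nor the step ``irreducible holonomy $\Rightarrow$ $f^*H$-stable'' is justified.

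\textbf{The fix} is the option you mention but do not carry out, and it avoids holonomy on $Y$ entirely.
\begin{itemize}
\item Stein factorize $f=h\circ b$, with $h\colon Z\to X$ finite and $b\colon Y\to Z$ birational.
\item The finite case gives $h^*H$-stability of $M=\Sym^{[m]}(h^{[*]}E)$.
\item For a saturated $F\subset\Sym^{[m]}(f^{[*]}E)$ of intermediate rank, let $F_Z\subset M$ be the saturated subsheaf with the same generic fibre.
\item Since $b$ is an isomorphism over a big open subset of $Z$, the projection formula gives
\[
\mu_{f^*H}(F)=\mu_{h^*H}(F_Z)<\mu_{h^*H}(M)=\mu_{f^*H}\bigl(\Sym^{[m]}(f^{[*]}E)\bigr),
\]
which is (c).
\end{itemize}
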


\begin{proof}
Finite reflexive pullback preserves polystability in
characteristic zero (see \cite[Lemma~3.2.3]{Huybrechts-Lehn2010}).
 This gives the first assertion of~(1). By the curve
characterization of the holonomy group in Theorem~\ref{Balaji-Kollar}, composition with
$f$ gives $G_f\subseteq G$.

If $X\simeq\mathbb P^1$, both holonomy groups $G$ and $G_f$ are scalar and
their derived subgroups are trivial, so the conclusion of (1) holds.
So we can assume that $\dim X\geq2$ or that $X$ is a curve of positive
genus. Choose a sufficiently positive very general curve
$x\in C\subset X$ detecting $G$, and let $g:(C',y)\to(C,x)$ be
the normalization of a component of $f^{-1}(C)$ through $y$.
We choose $C$ to avoid the images of the excluded codimension-two
subsets. If $X$ is a curve, take $C=X$ and $C'=Y$.
The projectivized Narasimhan--Seshadri representation is
\(
\bar\rho:\pi_1^{\mathrm{top}}(C^{\mathrm{an}},x)
\to\PGL(V).
\)
The projectively unitary flat connection pulls back under $g$. Therefore
the projective Narasimhan--Seshadri monodromy of
$g^*(E|_C)$ is $\bar\rho\circ g_*$. This also holds
when $g$ is ramified as adjusting the scalar part of the pulled-back
connection to have constant central curvature does not change
its projective monodromy.

The image $g_*\pi_1^{\mathrm{top}}((C')^{\mathrm{an}},y)$ has finite
index in $\pi_1^{\mathrm{top}}(C^{\mathrm{an}},x)$. Indeed, remove
the branch points, use the finite-index subgroup associated with
the resulting covering, and then fill in the punctures.
Consequently, if $p:\GL(V)\to\PGL(V)$ is the quotient map, the
Zariski closure of $\bar\rho\circ g_*$ has identity component
$p(G)^\circ$. Its image is contained in $p(G_f)$, and the reverse
inclusion of identity components follows from $G_f\subseteq G$.
Thus
\(
p(G_f^\circ)=p(G^\circ).
\)
Writing $\mathfrak g$ and $\mathfrak g_f$ for their Lie algebras,
we obtain
\[
\mathfrak g=\mathfrak g_f+
(\mathfrak g\cap\mathbb C\operatorname{id}_V) ,
\qquad
[\mathfrak g,\mathfrak g]=[\mathfrak g_f,\mathfrak g_f].
\]
The connected derived subgroups therefore coincide, proving (1).

For~(2), condition~(a) and~(1) imply that $G_f$ acts irreducibly
on $\Sym^mV$ for every finite $f$. The polystability of reflexive
symmetric powers and Theorem~\ref{Balaji-Kollar} give~(b).
Conversely, let $P\to U$ be the holonomy reduction of $E|_U$.
The cover $P/G^\circ\to U$ is finite \'etale. As in
\cite[Lemma~40]{ssb-BalajiKollar2008}, choose a connected
component and extend it by normalization to a finite surjective
morphism $h:Z\to X$. On the inverse image of $U$, the pulled-back
holonomy reduction reduces further to $G^\circ$. If~(b) holds,
the stability of $\Sym^{[m]}(h^{[*]}E)$ implies that
$G^\circ$ acts irreducibly on $\Sym^mV$. The center of $G^\circ$
then acts by scalars. Since
$G^\circ=Z(G^\circ)^\circ D$, its restriction to $D$ is also
irreducible. This proves~(a).

It remains to prove that~(a) implies~(c), since the converse
follows from~(b). Let $f:Y\to X$ be generically finite and
surjective. Its Stein factorization is
\(
Y\xrightarrow{b}Z\xrightarrow{h}X,
\)
where $Z$ is normal, $b$ is birational and $h$ is finite. Let us set
\[
B=h^*H,\qquad M=\Sym^{[m]}(h^{[*]}E),\qquad
E_m=\Sym^{[m]}(f^{[*]}E).
\]
By the finite case, $M$ is $B$-stable. For a saturated subsheaf
$F\subset E_m$ of intermediate rank, let $F_Z\subset M$ be the
saturated subsheaf determined by its generic fiber. Since $b$
is an isomorphism over a big open subset of $Z$, the projection
formula gives
\[
\mu_{b^*B}(F)=\mu_B(F_Z)
<\mu_B(M)=\mu_{b^*B}(E_m).
\]
As $b^*B=f^*H$, this proves~(c).

Finally, if $\Sym^{[6]}E$ is stable and $r\geq2$,
Lemma~\ref{ssb:prop-holonomy} gives
$D=\SL(V)$ or $D=\Sp(V,b)$. All symmetric powers of the defining
representation of either group are irreducible, so~(2) applies
for every $m$. The rank one case is immediate.
\end{proof}

\section{Pseudoeffectivity of a normalized exterior power}

Let \((X,H)\) be a smooth polarized complex projective variety of
dimension \(d\geq2\), and let \(E\) be a vector bundle of rank \(r\geq2\)
on \(X\).  Choose a finitely generated integral subring
\(R\subset\mathbb C\), put \(S:=\operatorname{Spec}R\), and choose a model
\[
(\mathscr X,\mathscr H,\mathscr E)\longrightarrow S
\]
of \((X,H,E)\), with \(\mathscr X/S\) smooth and projective,
\(\mathscr H\) relatively ample, and \(\mathscr E\) locally free. 

For \(1\leq i<r\), we write
\[
\begin{gathered}
	G_i:=\bigwedge\nolimits^iE,
	\quad \pi_i:Y_i:=\mathbb P(G_i)\longrightarrow X,
	\quad \xi_i:=c_1\bigl(\mathcal O_{Y_i}(1)\bigr),\quad
	\theta_i:=\xi_i-\frac{i}{r}\pi_i^*c_1(E).
\end{gathered}
\]
In particular, \(r\theta_i\) is the first Chern class of the line bundle
\(
\widehat{\mathcal L}_i
:=\mathcal O_{Y_i}(r)\otimes\pi_i^*(\det E)^{-i}.
\)

\begin{theorem}
	\label{thm:pseudoeffective-exterior-power}
	Let \(E\) be an \(H\)-stable vector bundle of rank \(r\geq2\) on a smooth
	complex projective variety \(X\) of dimension \(d\geq2\).  If
	\(\Sigma^{\mathrm{ns}}(\mathscr E/S)\) is Zariski dense in \(S\), then
	the line bundle  \(\widehat{\mathcal L}_i\) 	is pseudoeffective for some \(1\leq i<r\). If, in addition, \(\det E\) is pseudoeffective, then
	\(\bigwedge^iE\) is pseudoeffective for the same \(i\).
\end{theorem}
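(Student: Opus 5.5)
We argue by contradiction. Suppose that none of the classes \(\theta_1,\dots,\theta_{r-1}\) is pseudoeffective. Since the pseudoeffective cone is closed and \(H\) is ample, we can then fix a small rational \(\varepsilon>0\) such that, for every \(1\le i<r\), the class \(\theta_i+\varepsilon\pi_i^*H\) is not pseudoeffective on \(Y_i\). Non-pseudoeffectivity is detected by duality with movable curves (Boucksom--Demailly--P\u{a}un--Peternell). So for each \(i\) there is a movable class \(\gamma_i\) on \(Y_i\), say a strongly movable curve given by complete intersections of ample divisors on a modification, with
\[
(\theta_i+\varepsilon\pi_i^*H)\cdot\gamma_i<0 .
\]
These finitely many data spread out over \(S\) after shrinking \(S\), and the inequalities stay strict on closed fibres. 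We obtain: for closed \(s\in S\) in a dense open set and every \(i\), the class \(\theta_{i,s}+\varepsilon\pi_{i,s}^*\mathscr H_s\) has negative degree on a covering family of curves on \(Y_{i,s}\). Hence no positive multiple of it is effective, nor effective up to a \(\pi\)-pullback of a divisor of bounded degree.

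Now use density of \(\Sigma^{\mathrm{ns}}\). Pick \(s\) in the above open set with \(\mathscr E_s\) not strongly geometrically stable. There is \(e\ge0\) and, after a finite extension of \(\kappa(s)\), a saturated subsheaf \(F\subset (F^e)^*\mathscr E_s\) of rank \(0<i<r\) with \(\mu(F)\ge p^e\mu(\mathscr E_s)\). Its determinant gives a nonzero map \(\det F\to\bigwedge^i (F^e)^*\mathscr E_s=(F^e)^*G_{i,s}\). Equivalently, it gives a rational section of \(Y_{i,s}\) pulled back by Frobenius, hence an effective divisor on \(Y_{i,s}\) in the class
\[
p^e\xi_i-\pi_i^*c_1(\det F)+(\text{correction}).
\]
Concretely, \(H^0\bigl(Y_{i,s},\mathcal O(p^e)\otimes\pi_i^*(\det F)^{-1}\bigr)\supset H^0\bigl((F^e)^*G_{i,s}^{\vee\vee}\otimes(\det F)^{-1}\bigr)\neq0\) through \(\Sym^{p^e}G_i\to(F^e)^*G_i\). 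Since \(\mu(\det F)\ge ip^e\mu(E)\), the line bundle \(\mathcal O(p^e r)\otimes\pi_i^*(\det\mathscr E_s)^{-ip^e}\), twisted by a degree \(\le 0\) correction on \(X_s\), has a section. Dividing by \(p^e\) shows that \(r\theta_{i,s}+\pi_i^*\delta\) is effective, where \(\delta\) is a numerical class on \(X_s\) with \(\delta\cdot \mathscr H_s^{d-1}\le0\).

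The main obstacle is that \(\delta\) is only controlled in \(H\)-degree, not as a class. We must turn this into something like effectivity of \(\theta_i+\varepsilon\pi_i^*H\), uniformly in \(s\) and \(e\). My plan is to bound \(\delta\) using Grothendieck's boundedness theorem (Theorem~\ref{thm:Grothendieck-boundedness}) applied to the quotients, after first normalizing \(\det F\). Alternatively, to avoid that, I would choose the \(\gamma_i\) of the special form \(\pi_i\)-horizontal complete-intersection curves over curves in \(|mH|^{d-1}\). Pairing then only sees \(\delta\cdot H^{d-1}\le0\), and the contradiction \(0\le (r\theta_i+\pi_i^*\delta)\cdot\gamma_i<0\) follows.

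The stable-but-not-geometrically-stable and semistable cases (Proposition~\ref{prop:Cartier-descent}) need no separate treatment: we only used \(\mu(F)\ge\mu\). Finally, pseudoeffectivity of \(\theta_i\), together with pseudoeffectivity of \(\det E\), gives that \(\xi_i=\theta_i+\frac ir\pi_i^*c_1(E)\) is pseudoeffective. This is the last claim.
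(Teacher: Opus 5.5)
Your treatment of the bad fibre matches the paper. The effective divisor of class $q\theta_{i,\bar s}-\pi_{i,\bar s}^*\bigl(c_1(\det F)-\tfrac{qi}{r}c_1(E_{\bar s})\bigr)$ comes from the Frobenius-power \emph{inclusion} $F^{e*}\bigwedge^iE_{\bar s}\hookrightarrow\operatorname{Sym}^q\bigwedge^iE_{\bar s}$ (your arrow points the wrong way). You also correctly locate the crux: the base correction $\delta$ is controlled only through $\delta\cdot H^{d-1}\le0$. But you leave this unresolved, and neither proposed fix works.

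A BDPP witness $\gamma_i$ for $\theta_i+\varepsilon\pi_i^*H$ has an arbitrary pushforward. Unless $(\pi_i)_*\gamma_i$ is proportional to $H^{d-1}$, the pairing $\delta\cdot(\pi_i)_*\gamma_i$ is unbounded above on the half-space $\delta\cdot H^{d-1}\le0$. The $\varepsilon$-perturbation does nothing about this.

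Fix (b) fails outright. To get $D_{\bar s}\cdot\gamma_{i,\bar s}\ge0$ for an arbitrary effective $D_{\bar s}$, the curves must move, so $C$ must be a general complete intersection of large degree. Then $E|_C$ is semistable by Mehta--Ramanathan, so $\bigwedge^iE|_C$ is semistable of slope $\tfrac ir\deg E|_C$. In characteristic zero this makes $\theta_i|_{\pi_i^{-1}(C)}$ nef, so every curve lying over such $C$ has $\theta_i\cdot\gamma\ge0$.

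Fix (a) is not an argument either. The Frobenius exponent $e$ is unbounded, so the destabilizing subsheaves need not form a bounded family. Moreover, $N^1(X_{\bar s})$ may be larger than $N^1(X)$, so bounding Hilbert polynomials with respect to $H$ does not control the numerical class of $\delta$.

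The missing idea is Lemma~\ref{lem:movable-class-over-polarization}, which produces a movable witness with \emph{exact} pushforward. By openness of stability in the movable cone (Greb--Kebekus--Peternell), $E$ is stable for all $\alpha$ in a neighbourhood $U$ of $\alpha_0=H^{d-1}$. Hence every $\operatorname{Sym}^a\bigwedge^iE$ is $\alpha$-semistable of slope $\tfrac{ai}{r}c_1(E)\cdot\alpha$.

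It follows that every pseudoeffective class $a\theta_i+\pi_i^*D$ has $a\ge0$ and $D\cdot\alpha\ge0$ for all $\alpha\in U$. This gives $\|D\|\le C\,(D\cdot\alpha_0)$. Closedness of the pseudoeffective cone and non-pseudoeffectivity of $\theta_i$ then give a uniform bound $D\cdot\alpha_0\ge c\,a$ with $c>0$.

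Now define the functional $\beta_i$ by $\theta_i\cdot\beta_i=-\delta_i$ and $\pi_i^*D\cdot\beta_i=D\cdot\alpha_0$. It is strictly positive on the pseudoeffective cone, so by BDPP it is a positive rational combination of strongly movable classes with $(\pi_i)_*\beta_i\equiv H^{d-1}$.

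This $\beta_i$ is not carried by curves over a single $C$, and its pushforward identity is only numerical. So you also need Lemma~\ref{lem:specialization-movable-witnesses} to keep $(\pi_{i,\bar s})_*\beta_{i,\bar s}\equiv H_{\bar s}^{d-1}$ against divisors on $X_{\bar s}$ that need not lift, such as $\det F$. That lemma uses that numerically trivial one-cycles have zero Betti class, followed by $\ell$-adic specialization.
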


We first establish the numerical consequence of semistability needed
in the proof. We write \(\operatorname{Mov}(X)\subset N_1(X)_{\mathbb R}\)
for the closed cone of movable curve classes and set
\(\alpha_0:=H^{d-1}\). This is an interior point of
\(\operatorname{Mov}(X)\). Indeed, if \(B_1,\ldots,B_\rho\) form a basis
of \(N^1(X)_{\mathbb R}\), the Hodge index theorem implies that
\(B_1H^{d-2},\ldots,B_\rho H^{d-2}\) form a basis of
\(N_1(X)_{\mathbb R}\). For sufficiently small \(t>0\), the classes
\[
\alpha_0\pm tB_jH^{d-2}=(H\pm tB_j)H^{d-2}
\]
are movable complete-intersection classes, and their convex hull
contains a neighborhood of \(\alpha_0\).

\begin{lemma}
	\label{lem:movable-class-over-polarization}
	Let \(\pi:Y\to X\) be a smooth projective morphism with
	geometrically integral fibers of positive dimension, and let \(L\)
	be a relatively ample line bundle such that
	\[
	\operatorname{Pic}(Y)\otimes\mathbb Q
	=\mathbb Q[L]\oplus\pi^*\bigl(\operatorname{Pic}(X)\otimes\mathbb Q\bigr).
	\]
	Let \(U\subset\operatorname{Mov}(X)\) be an open neighborhood of
	\(\alpha_0\), where openness is understood in
	\(N_1(X)_{\mathbb R}\). Suppose that, for every integer \(a\geq0\),
	the sheaf \(V_a:=\pi_*L^{\otimes a}\) is locally free of positive
	rank and slope-semistable with respect to every \(\alpha\in U\).
	Assume that there is a class \(\eta\in N^1(X)_{\mathbb Q}\) such that
	\[
	\frac{c_1(V_a)}{\operatorname{rk}V_a}=a\eta
	\qquad(a\geq0),
	\]
	and put \(\theta:=c_1(L)-\pi^*\eta\).
	If \(\theta\) is not pseudoeffective, there is a class
	\(\beta\in N_1(Y)_{\mathbb Q}\) such that
	\[
	\pi_*\beta\equiv\alpha_0,
	\qquad \theta\cdot\beta<0,
	\]
	and \(\beta\) is a finite positive rational linear combination of
	strongly movable curve classes.
\end{lemma}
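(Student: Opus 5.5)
The plan is to argue by duality, using the theorem of Boucksom--Demailly--P\u{a}un--Peternell: the closed cone $\overline{\operatorname{Eff}}(Y)$ of pseudoeffective classes is dual to the cone $\operatorname{Mov}(Y)$ of movable curve classes. The latter is the closure of the convex cone $M$ generated by strongly movable classes. Let $W\subset N_1(Y)_{\mathbb R}$ be the rational subspace $\pi_*^{-1}(\mathbb R\alpha_0)$. We will show the contrapositive: if $\theta\cdot\gamma\geq0$ for every $\gamma\in M\cap W$ with $\pi_*\gamma\in\mathbb R_{>0}\alpha_0$, then $\theta$ is pseudoeffective.

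Once an element $\gamma\in M\cap W$ with $\pi_*\gamma=t\alpha_0$, $t>0$, and $\theta\cdot\gamma<0$ is found, the required $\beta$ is obtained as follows. Since $\alpha_0$ and $W$ are rational and $M$ is generated by rational classes, the rational points of $M\cap W$ are dense in it. Because the inequality $\theta\cdot\gamma<0$ is open, we may perturb $\gamma$ to a finite positive rational combination of strongly movable classes lying in $W$, still with $\theta\cdot\gamma<0$ and $\pi_*\gamma=t\alpha_0$ for some $t\in\mathbb Q_{>0}$. Then $\beta:=t^{-1}\gamma$ works. Note that $\pi_*\gamma\neq0$ automatically for nonzero movable $\gamma$ in $W$ that meet $\theta$ negatively: if $\pi_*\gamma=0$, then $\gamma$ is a multiple of a fiber-curve class, on which $L$, hence $\theta$, is positive.

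For the contrapositive, suppose $\theta\ge0$ on $\operatorname{Mov}(Y)\cap W$. By conic duality, the dual cone of $\operatorname{Mov}(Y)\cap W$ is the closure of $\overline{\operatorname{Eff}}(Y)+W^{\perp}$. Using the decomposition $N^1(Y)=\mathbb R[L]\oplus\pi^*N^1(X)$, one has $W^\perp=\pi^*\{B: B\cdot\alpha_0=0\}$. The closure is the delicate point. I would handle it by exploiting that $\alpha_0$ is an interior point of $\operatorname{Mov}(X)$, so that $\pi_*\operatorname{Mov}(Y)$ is a neighbourhood of the ray through $\alpha_0$; this should make the sum closed, or at least allow an $\varepsilon$-approximation. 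The outcome should be: for every $\varepsilon>0$ there are $B_\varepsilon\in N^1(X)_{\mathbb Q}$ with $B_\varepsilon\cdot\alpha_0=0$ and $\|B_\varepsilon\|$ bounded, such that $\theta+\pi^*B_\varepsilon+\varepsilon A$ is big. Here $A=cL+\pi^*A_X$ is a fixed ample class on $Y$.

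Next, semistability gives a contradiction unless $B_\varepsilon\to0$. A multiple $m(\theta+\pi^*B_\varepsilon+\varepsilon A)$ is effective. Writing it as $L^{\otimes a}\otimes\pi^*N$ with $a=m(1+\varepsilon c)$ and $N=m(-\eta+B_\varepsilon+\varepsilon A_X)$, a nonzero section gives a nonzero map $N^{-1}\to V_a$. Semistability of $V_a$ with respect to every $\alpha\in U$, together with $\mu_\alpha(V_a)=a\,\eta\cdot\alpha$, yields
\[
m\bigl(\eta-B_\varepsilon-\varepsilon A_X\bigr)\cdot\alpha\le m(1+\varepsilon c)\,\eta\cdot\alpha ,
\]
that is, $B_\varepsilon\cdot\alpha\geq-\varepsilon(A_X+c\eta)\cdot\alpha$ for all $\alpha\in U$. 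The linear form $B_\varepsilon$ vanishes at $\alpha_0$ and is bounded below by $-O(\varepsilon)$ on the open neighbourhood $U$. Hence $\|B_\varepsilon\|=O(\varepsilon)$, so $B_\varepsilon\to0$. Passing to the limit, $\theta$ is a limit of big classes and is therefore pseudoeffective, which completes the contrapositive.

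I expect the main obstacle to be the duality step: controlling the closure of $\overline{\operatorname{Eff}}(Y)+W^\perp$ and obtaining a uniform bound on $B_\varepsilon$. I would first try to use the openness of $U$ directly at that stage, arguing that a diverging sequence $B_\varepsilon$ would already violate the semistability inequality above after normalization.
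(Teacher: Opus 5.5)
Your route differs from the paper's and it does go through, but the step you flag as the main obstacle is not the real issue. The $\varepsilon$-approximation follows directly from the definition of closure. Suppose $\theta\in\overline{\overline{\operatorname{Eff}}(Y)+W^\perp}$. Choose $P$ pseudoeffective and $B'$ with $B'\cdot\alpha_0=0$ such that $\theta-P-\pi^*B'$ is small enough that $(\theta-P-\pi^*B')+\varepsilon A$ is ample. Then $\theta-\pi^*B'+\varepsilon A$ is big, and $B'$ can be taken rational because $\alpha_0^\perp$ is a rational hyperplane.

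No a priori bound on $B_\varepsilon$ is needed: your semistability estimate already forces $\|B_\varepsilon\|=O(\varepsilon)$, and it is the same inequality $D\cdot\alpha\geq0$ on $U$ that the paper uses. In fact the sum is closed. The same estimate shows that a pseudoeffective $\pi^*B$ with $B\cdot\alpha_0=0$ is numerically trivial, so $\overline{\operatorname{Eff}}(Y)\cap W^\perp=0$.

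What you do leave unjustified is the passage between $M\cap W$ and $\operatorname{Mov}(Y)\cap W=\overline M\cap W$. Biduality only gives $\gamma\in\overline M\cap W$ with $\theta\cdot\gamma<0$, whereas your rational-perturbation paragraph needs $\gamma\in M$. In general $\overline M\cap W\neq\overline{M\cap W}$, so you need $W$ to meet $\operatorname{int}\operatorname{Mov}(Y)$.

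One witness is $\gamma_1=(\pi^*H)^{d-1}\cdot A^{\dim Y-d}$. It lies in $W$, since $\pi_*\gamma_1$ is a positive multiple of $\alpha_0$. Restricting to $\pi^{-1}(C)$ for a very general complete-intersection curve $C$ shows that $\gamma_1$ is strictly positive on every nonzero pseudoeffective class, so it is interior. Then for $0<s\ll1$, the class $\gamma+s\gamma_1$ is an interior point of $\operatorname{Mov}(Y)$ lying in $W$ with negative $\theta$-degree, and interior points lie in $M$. Also, $\pi_*\gamma=t\alpha_0$ with $t\geq0$ because $\pi^*H$ is nef; combined with your fibre argument this gives $t>0$.

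The paper never passes to the slice $W$ or argues by biduality. It uses the same semistability inequality to show that every pseudoeffective class $a\theta+\pi^*D$ has $a\geq0$ and $\|D\|\leq C\,(D\cdot\alpha_0)$. Because $\theta$ is not pseudoeffective, it also gets $D\cdot\alpha_0\geq\varepsilon a$. It then defines $\beta$ outright by $\theta\cdot\beta=-\delta$ and $\pi^*D\cdot\beta=D\cdot\alpha_0$. This $\beta$ is strictly positive on all nonzero pseudoeffective classes, hence interior in $\operatorname{Mov}(Y)$ by BDPP. So interiority, rationality and the exact normalization $\pi_*\beta\equiv\alpha_0$ come for free.

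In effect, the paper writes down explicitly the separating functional in $W$ whose existence you obtain by contradiction. Your approach is the standard convex-duality argument, at the cost of the extra interior-slice check above.
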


\begin{proof}
	For an effective integral divisor \(T\) on $Y$, some
	positive multiple \(mT\) has associated line bundle
	\(L^{\otimes a}\otimes\pi^*M\), with \(a\geq0\) an integer.
	Its defining section gives a nonzero morphism
	\(
	M^{-1}\to V_a.
	\)
	By the assumed semistability, for every \(\alpha\in U\),
	\[
	-c_1(M)\cdot\alpha\leq\mu_\alpha(V_a)=a\eta\cdot\alpha.
	\]
	Writing the numerical class of \(mT\) in $N^1(Y)_{\mathbb Q}$ as a sum of $a\theta$ and the pullback of
	\(D:=c_1(M)+a\eta\), we get $D\cdot\alpha\geq0$.
	So for  every pseudoeffective class \(a\theta+\pi^*D\)  and $\alpha\in U$ we also have
	$	a\geq0$ and $D\cdot\alpha\geq0.$
	
	Choose a basis \(\gamma_1,\ldots,\gamma_\rho\) of
	\(N_1(X)_{\mathbb R}\) and \(t>0\) such that
	\(\alpha_0\pm t\gamma_j\in U\) for every \(j\). Then
	\(
	|D\cdot\gamma_j|\leq t^{-1}D\cdot\alpha_0.
	\)
	For any fixed norm on \(N^1(X)_{\mathbb R}\), there is consequently
	a constant \(C>0\) such that
	\begin{equation}
		\|D\|\leq C(D\cdot\alpha_0)
		\label{eq:exterior-base-class-bound}
	\end{equation}
	whenever \(a\theta+\pi^*D\) is pseudoeffective.
	
	Since \(\theta\) is not pseudoeffective, there is an
	\(\varepsilon>0\) such that
	\begin{equation}
		D\cdot\alpha_0\geq\varepsilon a
		\label{eq:exterior-positive-threshold}
	\end{equation}
	for all these classes. Otherwise, normalizing \(a=1\) would give
	pseudoeffective classes \(\theta+\pi^*D_n\) with
	\(D_n\cdot\alpha_0\to0\); by
	\eqref{eq:exterior-base-class-bound}, \(D_n\to0\), contradicting
	closedness of the pseudoeffective cone.
	
	Choose a rational number \(0<\delta<\varepsilon\), and define
	\(\beta\in N_1(Y)_{\mathbb Q}\) by
	\[
	\theta\cdot\beta=-\delta,
	\qquad
	\pi^*D\cdot\beta=D\cdot\alpha_0
	\quad(D\in N^1(X)_{\mathbb R}).
	\]
	The class \(\beta\) is rational because both \(\theta\) and
	\(\alpha_0\) are rational. For every nonzero pseudoeffective class,
	\[
	(a\theta+\pi^*D)\cdot\beta=D\cdot\alpha_0-\delta a>0.
	\]
	For \(a>0\), this follows from
	\eqref{eq:exterior-positive-threshold}; for \(a=0\), it follows from
	\eqref{eq:exterior-base-class-bound}. Thus \(\beta\) is an interior
	point of the dual pseudoeffective cone.
	
	By  \cite[Theorem~2.2]{BDPP}, this dual cone is the closed
	cone generated by strongly movable curves. A convex cone with nonempty
	interior has the same interior as its closure. Hence \(\beta\) lies in
	the cone generated by finitely many strongly movable classes. Since
	these generators and \(\beta\) are rational, the coefficients can be
	chosen rational. After clearing denominators in ample classes, we may
	therefore write
	\begin{equation}
		\beta=\sum_{\nu=1}^{N}\lambda_\nu(\mu_\nu)_*
		(A_{\nu,1}\cdots A_{\nu,\dim Y-1}),
		\qquad \lambda_\nu\in\mathbb Q_{>0},
		\label{eq:exterior-strongly-movable-expression}
	\end{equation}
	where \(\mu_\nu:\widetilde Y_\nu\to Y\) is projective and birational,
	\(\widetilde Y_\nu\) is smooth and projective, and every \(A_{\nu,j}\)
	is very ample. The defining equalities for \(\beta\) also give
	\(\pi_*\beta\equiv\alpha_0\) and \(\theta\cdot\beta=-\delta<0\).
\end{proof}

The following observation will ensure that the projection relation for
these curve classes survives reduction, even when the Picard number
of a fiber increases.

\begin{lemma}[{\cite[Proposition 1.1]{CTSk2013}}]
	\label{lem:exterior-numerical-one-cycles}
	Let \(Z\) be a smooth complex projective variety of dimension \(d\geq2\).
	A numerically trivial rational one-cycle on \(Z\) has zero rational
	Betti cycle class.
\end{lemma}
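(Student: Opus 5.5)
The plan is to show that a numerically trivial rational one-cycle is homologically trivial. I would pass through the integral Hodge-theoretic description of one-cycles. Let \(z\) be a rational one-cycle on \(Z\) with \(z\cdot D=0\) for every Cartier divisor \(D\). Its Betti class \([z]\in H^{2d-2}(Z,\mathbb Q)\) is a Hodge class of type \((d-1,d-1)\). By Poincaré duality, \([z]\) vanishes if and only if \(\int_Z[z]\cup\omega=0\) for every \(\omega\in H^2(Z,\mathbb Q)\).

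The key steps, in order, are as follows.

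First, write \(H^2(Z,\mathbb C)=H^{2,0}\oplus H^{1,1}\oplus H^{0,2}\). Because \([z]\) has type \((d-1,d-1)\), its pairing with \(H^{2,0}\oplus H^{0,2}\) vanishes for type reasons. Hence the linear functional \(\omega\mapsto\int_Z[z]\cup\omega\) on \(H^2(Z,\mathbb Q)\) factors through the projection onto \(H^{1,1}\).

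Second, restrict this functional to \(H^2(Z,\mathbb Q)\cap H^{1,1}\). By the Lefschetz \((1,1)\) theorem, this subspace is \(NS(Z)_{\mathbb Q}\), the span of divisor classes. Numerical triviality says the functional vanishes there.

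Third, I must conclude that the functional vanishes on all of \(H^2(Z,\mathbb Q)\), not only on the algebraic part. This is the main obstacle, since \(NS(Z)_{\mathbb Q}\) may be a proper subspace. I would use the polarized Hodge structure. Fix an ample class \(h\). Hard Lefschetz and the Hodge--Riemann relations give an orthogonal decomposition
\[
H^2(Z,\mathbb Q)=NS(Z)_{\mathbb Q}\oplus T,
\]
with respect to the nondegenerate form \((\alpha,\beta)\mapsto\int\alpha\beta h^{d-2}\). Here \(T\) is the rational sub-Hodge structure orthogonal to \(NS\), and \(T\cap H^{1,1}\) contains no nonzero rational classes. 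The functional \(\lambda=\int[z]\cup(-)\) restricted to \(T\) is a morphism of rational Hodge structures \(T\to\mathbb Q(-1)\), because \([z]\) is a Hodge class. Its kernel is a sub-Hodge structure of corank at most one. If \(\lambda|_T\neq0\), then the orthogonal complement of this kernel in \(T\) is a one-dimensional rational sub-Hodge structure of weight two, and the form is nondegenerate on \(T\). Any one-dimensional rational Hodge structure of weight two is of type \((1,1)\). So it would supply a nonzero rational \((1,1)\) class in \(T\), contradicting the Lefschetz \((1,1)\) theorem. Therefore \(\lambda|_T=0\), so \(\lambda=0\) and \([z]=0\).

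This is the argument of \cite[Proposition~1.1]{CTSk2013}; I would cite it after sketching the above. The only care needed is the nondegeneracy of the restricted intersection form on \(T\). This follows from the Hodge--Riemann bilinear relations on primitive cohomology, together with the fact that \(h\in NS\).
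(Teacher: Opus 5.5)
Your argument is correct. The paper gives no proof of this lemma of its own; it only quotes \cite[Proposition~1.1]{CTSk2013}. So your sketch is a self-contained replacement for that citation rather than a variant of an argument in the text.

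The key points all hold. The form $Q(\alpha,\beta)=\int_Z\alpha\beta h^{d-2}$ is nondegenerate on $H^2(Z,\mathbb Q)$ by Hard Lefschetz and Poincar\'e duality. It is nondegenerate on $NS(Z)_{\mathbb Q}$ by the Hodge index theorem, so $T=NS(Z)_{\mathbb Q}^{\perp}$ is a complement. Orthogonal complements of sub-Hodge structures, for a form compatible with the Hodge structure, are again sub-Hodge structures. Finally, a one-dimensional rational Hodge structure of weight two is of type $(1,1)$. Your first step, the vanishing against $H^{2,0}\oplus H^{0,2}$, is not actually used later.

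A slightly shorter route uses the same inputs but applies Hard Lefschetz on the cycle side. Write $[z]=h^{d-2}\cup\beta$ with $\beta\in H^2(Z,\mathbb Q)$. Since $h^{d-2}\cup-$ is an isomorphism of Hodge structures up to Tate twist, $\beta$ is a Hodge class, hence $\beta\in NS(Z)_{\mathbb Q}$ by Lefschetz $(1,1)$. Numerical triviality says $Q(\beta,D)=z\cdot D=0$ for every divisor $D$, and nondegeneracy of $Q$ on $NS(Z)_{\mathbb Q}$ forces $\beta=0$.

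This avoids the decomposition $NS(Z)_{\mathbb Q}\oplus T$ and the corank-one argument. In your version, that argument amounts to showing that $T$ admits no nonzero morphism of Hodge structures to $\mathbb Q(-1)$. Your formulation has the mild advantage of isolating exactly where Lefschetz $(1,1)$ enters, namely that $T$ contains no rational $(1,1)$-classes.
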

\begin{lemma}\label{lem:specialization-movable-witnesses}
	Let $\pi:Y\to X$ be a morphism of smooth complex projective
	varieties, with $d:=\dim X\geq2$ and $n:=\dim Y$.
	Let $H$ be an ample line bundle on $X$, and let
	$\theta\in\operatorname{Pic}(Y)\otimes\mathbb Q$.
	Suppose that
	\[
	\beta=\sum_{\nu=1}^N\lambda_\nu(\mu_\nu)_*
	(A_{\nu,1}\cdots A_{\nu,n-1})
	\in N_1(Y)_{\mathbb Q},
	\qquad \lambda_\nu\in\mathbb Q_{>0},
	\]
	where $\mu_\nu:\widetilde Y_\nu\to Y$ is projective and
	birational, $\widetilde Y_\nu$ is smooth and projective,
	and the $A_{\nu,j}$ are very ample divisors. Assume that
	\(
	\pi_*\beta\equiv H^{d-1}.
	\)
	Fix a smooth projective model of $(X,H)$ over
	$S=\operatorname{Spec}R$, where $R\subset\mathbb C$ is a
	finitely generated integral $\mathbb Z$-algebra.
	
	After enlarging $R$ inside $\mathbb C$ and shrinking the base,
	all these data admit models such that, on every geometric fiber,
	the class
	\[
	\beta_{\bar s}:=
	\sum_{\nu=1}^N\lambda_\nu(\mu_{\nu,\bar s})_*
	(A_{\nu,1,\bar s}\cdots A_{\nu,n-1,\bar s})
	\]
	satisfies
	\[
	(\pi_{\bar s})_*\beta_{\bar s}\equiv H_{\bar s}^{d-1},
	\qquad
	\theta_{\bar s}\cdot\beta_{\bar s}=\theta\cdot\beta,
	\qquad
	T\cdot\beta_{\bar s}\geq0
	\]
	for every effective divisor $T$ on $Y_{\bar s}$.
	The change of base can be chosen dominant and flat of finite
	presentation over a nonempty open subset of the original $S$.
	These choices can be made simultaneously for finitely many
	sets of data.
\end{lemma}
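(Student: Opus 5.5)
The plan is to spread out all the finitely many data and then check the three assertions on geometric fibers separately. The data are $X,Y,\pi,H,\theta$, the birational morphisms $\mu_\nu:\widetilde Y_\nu\to Y$ and the very ample divisors $A_{\nu,j}$. After enlarging $R$ we may assume everything is defined over $R$. We then shrink $S$ so that $\widetilde{\mathscr Y}_\nu$, $\mathscr Y$, $\mathscr X$ are smooth and projective over $S$ with geometrically integral fibers, and $\mu_\nu$ is projective and birational on every fiber. Generic flatness and constructibility make this possible. We may also require that the $A_{\nu,j}$ are relatively very ample and that $\theta$ is represented by a rational combination of line bundles on $\mathscr Y$. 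The finitely many extra elements needed to define these objects give a dominant base change of finite presentation. It can be taken flat over a nonempty open subset of the original $S$ by generic flatness, and treating finitely many data sets at once is clearly harmless.

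The numerical equality $\theta_{\bar s}\cdot\beta_{\bar s}=\theta\cdot\beta$ is the easy part. Each summand is $\deg\bigl(\mu_{\nu,\bar s}^*\theta_{\bar s}\cdot A_{\nu,1,\bar s}\cdots A_{\nu,n-1,\bar s}\bigr)$, an intersection number of line bundles on the fibers of the flat projective family $\widetilde{\mathscr Y}_\nu\to S$. Such numbers are locally constant in flat families, so they agree with the complex values. Nonnegativity against effective divisors is also geometric. For general members of the very ample systems on $\widetilde Y_{\nu,\bar s}$ (Bertini over the algebraically closed field $\kappa(\bar s)$), the complete intersection $A_{\nu,1,\bar s}\cap\dots\cap A_{\nu,n-1,\bar s}$ is an irreducible curve not contained in $\mu_{\nu,\bar s}^{-1}(T)$. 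Indeed, moving the $A$'s avoids the finitely many components of $\mu_{\nu,\bar s}^*T$, or one uses that $\mu^*T$ is effective and $A$ is nef. Hence $\mu^*_{\nu,\bar s}T\cdot A\cdots A\ge 0$, and summing with $\lambda_\nu>0$ gives $T\cdot\beta_{\bar s}\ge0$.

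The main obstacle is $(\pi_{\bar s})_*\beta_{\bar s}\equiv H_{\bar s}^{d-1}$. The Picard group of $X_{\bar s}$ may be larger than that of $X$, so numerical equivalence tested only against divisors coming from characteristic zero is not enough. I would use Lemma~\ref{lem:exterior-numerical-one-cycles}. The rational one-cycle $z:=\pi_*\beta-H^{d-1}$, realized as a cycle via the pushforwards of the complete intersection curves, is numerically trivial on $X$. Hence its Betti class in $H^{2d-2}(X^{\mathrm{an}},\mathbb Q)$ vanishes, and so does its $\ell$-adic cycle class in $H^{2d-2}_{\mathrm{et}}(X_{\bar{\mathbb C}},\mathbb Q_\ell(d-1))$ by the comparison theorem. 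After inverting $\ell$ and shrinking $S$, smooth proper base change identifies étale cohomology of the generic geometric fiber with that of each $X_{\bar s}$, for primes $\ell$ invertible on the relevant open set (two primes suffice to cover $S$). Compatibility of cycle class maps with specialization of cycles, using the spread-out family of cycles over $S$, gives that the class of $z_{\bar s}$ vanishes in $H^{2d-2}_{\mathrm{et}}(X_{\bar s},\mathbb Q_\ell(d-1))$. Intersection with any divisor $D$ on $X_{\bar s}$ is then computed by the cup product with $\mathrm{cl}(D)\in H^2$, so $D\cdot z_{\bar s}=0$. This proves the required numerical equivalence and, together with the two previous steps, the lemma.
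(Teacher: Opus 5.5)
Your proposal is correct and follows the paper's proof essentially step by step. The paper also spreads out using generic flatness, gets $\theta_{\bar s}\cdot\beta_{\bar s}=\theta\cdot\beta$ from constancy of intersection numbers, and gets nonnegativity from the projection formula with the effective pullbacks $\mu_{\nu,\bar s}^*T$. For the projection relation it likewise combines Lemma~\ref{lem:exterior-numerical-one-cycles} with Betti--\'etale comparison and smooth proper base change of the $\ell$-adic class of $z$; the only cosmetic difference is that the paper fixes one prime $\ell$ and shrinks $S$ to make it invertible, whereas you cover $S$ by the loci where two different primes are invertible.
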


\begin{proof}
	Spread out the finitely many varieties, morphisms and line
	bundles involved. By generic flatness and shrinking, the new
	base can be taken dominant and flat over a nonempty open subset
	of the original base. We continue to denote it by $S$.
	After further shrinking, all the varieties are smooth and
	projective over $S$, the morphisms $\mu_{\nu,\bar s}$ are
	birational on every geometric fiber, and the divisors
	$A_{\nu,j}$ are relatively very ample.
	
	For every effective divisor $T$ on $Y_{\bar s}$, the projection
	formula gives
	\[
	T\cdot\beta_{\bar s}
	=\sum_{\nu=1}^N\lambda_\nu\,
	\mu_{\nu,\bar s}^*T\cdot
	A_{\nu,1,\bar s}\cdots A_{\nu,n-1,\bar s}\geq0.
	\]
	Indeed, each pullback $\mu_{\nu,\bar s}^*T$ is effective.
	Constancy of intersection numbers gives
	$\theta_{\bar s}\cdot\beta_{\bar s}=\theta\cdot\beta$.
	
	To prove the projection relation, consider the rational
	one-cycle class
	\[
	z:=
	\sum_{\nu=1}^N\lambda_\nu(\pi\circ\mu_\nu)_*
	(A_{\nu,1}\cdots A_{\nu,n-1})-H^{d-1}
	\]
	on $X$. It is numerically trivial by assumption, so
	Lemma~\ref{lem:exterior-numerical-one-cycles} gives
	$\operatorname{cl}_B(z)=0$.
	
	Fix a prime $\ell$ and shrink $S$ so that $\ell$ is invertible.
	Write $f:\mathscr X\to S$ for the structural morphism.
	Chern classes, cup products and proper cohomological
	pushforwards of the spread intersection expressions define
	a section of the lisse sheaf
	\(
	R^{2d-2}f_*\mathbb Q_\ell(d-1),
	\)
	whose value at each geometric point $\bar s$ is
	$\operatorname{cl}_\ell(z_{\bar s})$.
	These operations commute with geometric base change.
	Betti--\'etale comparison shows that this section vanishes
	at the complex geometric point. By smooth proper base change
	and connectedness of $S$, it vanishes on every geometric fiber
	(see \cite[Lemma~59.94.4, Tag~0GKD]{Stacks}).
	Pairing with the first Chern class of any line bundle on
	$X_{\bar s}$ proves
	\(
	(\pi_{\bar s})_*\beta_{\bar s}\equiv H_{\bar s}^{d-1}.
	\)
	This includes divisor classes on the special fiber that do not
	lift to characteristic zero.
	The same argument applies simultaneously to finitely many
	sets of data.
\end{proof}

The changes of model in
Lemma~\ref{lem:specialization-movable-witnesses}
preserve density of both bad loci considered here. Indeed, a flat
morphism of finite presentation is open, and failure of strong
semistability or strong geometric stability is invariant under
extension of algebraically closed fields.

\begin{proof}[Proof of Theorem~\ref{thm:pseudoeffective-exterior-power}]
	Suppose that none of the classes \(\theta_i\), \(1\leq i<r\), is
	pseudoeffective. Since \(\alpha_0=H^{d-1}\) is an interior movable
	class, openness of stability
	\cite[Theorem~3.3]{Greb-Kebekus-Peternell2016} gives an open neighborhood
	\(U\subset\operatorname{Mov}(X)\) of \(\alpha_0\) on which \(E\) is
	stable. The exterior powers \(G_i\) and all their symmetric powers
	are semistable throughout \(U\), since in characteristic zero they
	are direct summands of tensor powers of \(E\), which are semistable
	by \cite[Theorem~4.2 and Corollary~4.3]{Greb-Kebekus-Peternell2016}.
	Moreover,
	\[
	\frac{c_1(\operatorname{Sym}^aG_i)}
	{\operatorname{rk}(\operatorname{Sym}^aG_i)}
	=\frac{ai}{r}c_1(E).
	\]
	Thus Lemma~\ref{lem:movable-class-over-polarization} applies to
	\(\pi_i:\mathbb P(G_i)\to X\), with \(L=\mathcal O_{Y_i}(1)\)
	and \(\eta=(i/r)c_1(E)\), without requiring the exterior powers
	themselves to be stable. It gives rational curve classes \(\beta_i\)
	such that
	\begin{equation*}
		(\pi_i)_*\beta_i\equiv H^{d-1},
		\qquad \theta_i\cdot\beta_i=-\delta_i<0,
	\end{equation*}
	each with a finite expression as in
	\eqref{eq:exterior-strongly-movable-expression}.
	
	Apply Lemma~\ref{lem:specialization-movable-witnesses}
	simultaneously to the finitely many triples
	$(\pi_i,\theta_i,\beta_i)$.
	After changing the model and shrinking the base, preserving
	density of the bad locus, we obtain on every geometric fiber
	\begin{equation}
		T\cdot\beta_{i,\bar s}\geq0
		\label{eq:exterior-specialized-nonnegativity}
	\end{equation}
	for every effective divisor $T$ on $Y_{i,\bar s}$,
	\begin{equation}
		\theta_{i,\bar s}\cdot\beta_{i,\bar s}=-\delta_i,
		\qquad
		\theta_{i,\bar s}:=\xi_{i,\bar s}
		-\frac{i}{r}\pi_{i,\bar s}^*c_1(E_{\bar s}),
		\label{eq:exterior-specialized-negative-degree}
	\end{equation}
	and
	\begin{equation}
		(\pi_{i,\bar s})_*\beta_{i,\bar s}
		\equiv H_{\bar s}^{d-1}.
		\label{eq:exterior-specialized-projection}
	\end{equation}
	
	Choose a bad closed point \(s\in\Sigma^{\mathrm{ns}}(\mathscr E/S)\),
	of characteristic \(p>0\), and write
	\(F_{\bar s}:X_{\bar s}\to X_{\bar s}\) for absolute Frobenius.
	There are an integer \(m\geq0\) and a saturated subsheaf
	\[
	\mathscr F\subset F_{\bar s}^{m*}E_{\bar s},
	\qquad 1\leq i:=\operatorname{rk}\mathscr F<r,
	\]
	with slope at least that of
	\(F_{\bar s}^{m*}E_{\bar s}\). Set \(q:=p^m\) and
	\(L:=\det\mathscr F\). Smoothness of
	\(X_{\bar s}\) implies that \(L\) is a line bundle. The destabilizing
	inequality reads
	\begin{equation}
		\left(c_1(L)-\frac{qi}{r}c_1(E_{\bar s})\right)
		\cdot H_{\bar s}^{d-1}\geq0.
		\label{eq:exterior-destabilizing-degree}
	\end{equation}
	Taking determinants gives a nonzero morphism
	\(
	L\to F_{\bar s}^{m*}G_{i,\bar s}.
	\)
	It is initially defined on an open subset whose complement has
	codimension at least two and extends to all of \(X_{\bar s}\), because
	the source and target are locally free and \(X_{\bar s}\) is normal.
	
	The canonical Frobenius-power inclusion
	\(
	F_{\bar s}^{m*}G_{i,\bar s}
	\hookrightarrow\operatorname{Sym}^{q}G_{i,\bar s}
	\)
	is the identity when \(m=0\). Composing with the determinant morphism
	gives a nonzero section of
	\(\operatorname{Sym}^{q}G_{i,\bar s}\otimes L^{-1}\). Thus the divisor
	class
	\[
	\begin{aligned}
		q\xi_{i,\bar s}-\pi_{i,\bar s}^*c_1(L)
		&=q\theta_{i,\bar s}
		-\pi_{i,\bar s}^*\left(c_1(L)
		-\frac{qi}{r}c_1(E_{\bar s})\right)
	\end{aligned}
	\]
	is effective. However,
	\eqref{eq:exterior-specialized-negative-degree},
	\eqref{eq:exterior-specialized-projection} and
	\eqref{eq:exterior-destabilizing-degree} give
	\[
	\bigl(q\xi_{i,\bar s}-\pi_{i,\bar s}^*c_1(L)\bigr)
	\cdot\beta_{i,\bar s}
	=-q\delta_i
	-\left(c_1(L)-\frac{qi}{r}c_1(E_{\bar s})\right)
	\cdot H_{\bar s}^{d-1}<0,
	\]
	contradicting \eqref{eq:exterior-specialized-nonnegativity}.
	
	Therefore \(\theta_i\) is pseudoeffective for some \(1\leq i<r\).
	If \(\det E\) is pseudoeffective, so is
	\[
	\xi_i=\theta_i+\frac{i}{r}\pi_i^*c_1(E),
	\]
	which is equivalent to pseudoeffectivity of \(\bigwedge^iE\).
\end{proof}

\section{Positivity on homogeneous bundles of the holonomy group}
\label{ssb:sec-general-holonomy-positivity}

\subsection{Pseudoeffectivity and nefness}

We first isolate the base-locus argument that extends the
restriction and restricted base locus method of
\cite[Lemmas~2.13 and~3.4]{HoringPeternell2019}.

\begin{lemma}\label{ssb:lem-pseff-nef}
	Let $(X,H)$ be a smooth polarized complex projective surface, and let
	$\pi:Y\to X$ be a smooth projective morphism with geometrically integral
	fibers of dimension $d\geq1$.  Let $L$ be a relatively very ample line
	bundle.  Suppose that for all $m\geq0$ the sheaves
	$\mathcal V_m=\pi_*L^{\otimes m}$ are locally free, commute with base
	change, and satisfy $R^j\pi_*L^{\otimes m}=0$ for $j>0$.
	Suppose also that $\mathcal V_m$ is $H$-stable with
	$c_1(\mathcal V_m)\cdot H=0$ for every $m\geq1$.
	If $L$ is pseudoeffective, then $L$ is nef.
\end{lemma}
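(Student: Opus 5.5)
We argue by contradiction along the lines of \cite[Lemmas~2.13 and~3.4]{HoringPeternell2019}. Suppose $L$ is pseudoeffective but not nef. Then there is an irreducible curve $C\subset Y$ with $L\cdot C<0$, and $C$ lies in the restricted (diminished) base locus $\mathbf B_-(L)$. We first show that $L$ cannot be big. If it were, $L^{\otimes m}\otimes\pi^*H^{-1}$ would be effective for some $m$, giving a nonzero morphism $H\to\mathcal V_m$. This contradicts semistability of $\mathcal V_m$, because $\mu_H(\mathcal V_m)=0<H^2$. Hence $L$ is pseudoeffective, not big, and not nef. So $\mathbf B_-(L)$ is a nonempty countable union of proper subvarieties, and it contains a curve or surface component $W$ on which $L$ fails to be nef.

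The next step is to study the image $\pi(W)$. If $\pi(W)$ is a point $x$, then $W\subset\pi^{-1}(x)$. But $L$ restricted to a fiber is very ample, so $L\cdot C>0$ for every curve $C$ in a fiber, a contradiction. Thus $\pi(C)$ is a curve $B\subset X$ for the negative curve $C$.

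We now pass to the normalization $\nu:\widetilde B\to X$ of $B$ and to $Y_B=Y\times_X\widetilde B$. By base change, $\pi_{B*}L^{m}=\nu^*\mathcal V_m$. Since $C$ dominates $B$ and $L\cdot C<0$, the bundle $L|_{Y_B}$ is not nef. By the standard criterion (Barton, Miyaoka) for a relative $\mathcal O(1)$ over a curve, some $\nu^*\mathcal V_m$ is then not semistable and has a quotient of negative degree.

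The key step is to pull this instability back to $X$ and contradict pseudoeffectivity. Here we use the Mumford-type argument (\cite[Prop.~1.3]{HoringPeternell2019}). Pseudoeffectivity of $L$ means that for every ample $A$ on $Y$ and every $k$ we have $H^0(L^{km}\otimes A)\neq0$ for $m\gg0$. Taking $A=L^{a}\otimes\pi^*H^b$, this gives sections of $\mathcal V_{km+a}\otimes H^b$ whose growth in $b$ is sublinear in $m$. Restricting such a section to $B$ and composing with a negative-degree quotient of $\nu^*\mathcal V_{km+a}$ must give zero once $m$ is large. So the sections land in a subsheaf that is destabilizing along $B$.

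We then use $H$-stability of all $\mathcal V_m$ with $c_1\cdot H=0$, together with boundedness (Theorem~\ref{thm:Grothendieck-boundedness}), to control these subsheaves. The aim is to show that the saturation of the image of $H^{-b}\otimes\mathcal O\to\mathcal V_{km+a}$ forces a nonzero map $H^{-b'}\to\mathcal V_{m'}$ with $b'/m'\to0$, whose zero locus contains $B$. Taking limits places $W$ outside $\mathbf B_-(L)$, which is the desired contradiction.

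I expect the main obstacle to be this last step: turning "restriction to $B$ is unstable" into a statement on $X$. The tools are stability of $\mathcal V_m$ for all $m$ and the surface hypothesis, which makes $B$ a divisor. I would try the Mehta--Ramanathan style estimate used by Höring--Peternell. On a surface, a stable bundle $\mathcal V$ with $c_1\cdot H=0$ restricted to the divisor $B$ has maximal destabilizing degree bounded below by $-\mathrm{const}\cdot\mathrm{rk}$ in terms of $B\cdot H$ and $\Delta(\mathcal V)$. Combining this with the Riemann--Roch growth of $h^0(L^m\otimes A)$ should rule out a uniform negative slope $L\cdot C/\deg$.
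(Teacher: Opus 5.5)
Your reduction to a curve $\Gamma\subset Y$ with $L\cdot\Gamma<0$ lying over a curve $B=\pi(\Gamma)$ is fine; the non-bigness step is correct but not needed. However, the argument stops exactly where the content of the lemma lies, and the route you propose for the last step cannot work.

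\textbf{Why working on $B$ fails.} $B$ is a fixed special curve, and $H$-stability of the $\mathcal V_m$ gives no control over $\mathcal V_m|_B$. Restriction theorems only concern general curves of large degree. Any Bogomolov-type estimate on a fixed curve involves $\Delta(\mathcal V_m)/\operatorname{rk}(\mathcal V_m)^2$. This grows quadratically in $m$, already for $\mathcal V_m=\Sym^mE$ with $E$ of rank two. So such an estimate cannot exclude quotients of slope of order $-m$, which is exactly the negativity $m\,(L\cdot\Gamma)$ at stake.

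Your configuration really occurs. For the bundle $S_0$ on $\mathbb P^2$ in the proof of Proposition~\ref{prop:rank-four-counterexample}, all $\Sym^mS_0$ are stable of degree zero. Yet $\cO_{\mathbb P(S_0)}(1)$ is negative on some curve, by Theorem~\ref{thm:FUY}. Only pseudoeffectivity excludes this.

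Your sketch of how pseudoeffectivity enters does not produce a contradiction:
\begin{enumerate}
\item A section of $L^{\otimes N}\otimes\pi^*H^{\otimes b}$ with $b/N$ small is forced to vanish along $\Gamma$, which merely restates $\Gamma\subset\mathbf B_-(L)$. It is not forced to vanish along the divisor $\pi^{-1}(B)$ to order linear in $N$.
\item The kernel of a negative quotient of $\nu^*\mathcal V_N$ lives on $\widetilde B$. The corresponding elementary transformation of $\mathcal V_N$ on $X$ has negative slope and need not be stable. A section of it twisted by $H^{\otimes b}$ therefore contradicts nothing, and no map $H^{-b'}\to\mathcal V_{m'}$ violating stability arises.
\item The closing step, ``placing $W$ outside $\mathbf B_-(L)$'', has no mechanism behind it.
\end{enumerate}
Also, a negative-degree quotient of $\nu^*\mathcal V_m$ does not make it unstable, since only $c_1(\mathcal V_m)\cdot H=0$ is assumed, not $c_1(\mathcal V_m)\cdot B=0$.

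\textbf{The missing idea.} Transfer everything to a \emph{general} curve $C\in|aH|$, where stability does restrict. Write $\xi=c_1(L)$, $Y_C=\pi^{-1}(C)$, $\xi_C=\xi|_{Y_C}$, and fix a very ample $A$ on $Y$.
\begin{enumerate}
\item For $a\gg0$, $\mathcal V_1|_C$ is stable of degree zero, hence nef. So $\xi_C$ is nef.
\item Riemann--Roch on the $(d+1)$-dimensional $Y_C$, compared with $\chi(Y_C,L_C^{\otimes m})=\operatorname{rk}(\mathcal V_m)(1-g(C))$, which has degree $d$ in $m$, gives $\xi_C^{d+1}=0$.
\item A Mumford-type argument gives $\xi_C^{\dim Z-1}\cdot[Z\cap Y_C]>0$ for every proper subvariety $Z\subsetneq Y$ with $\dim\pi(Z)\geq1$. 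The point is that proper quotients of the stable degree-zero bundle $\mathcal V_m|_C$, for an $m$ fixed in advance depending on $Z$, are ample.
\item Pseudoeffectivity enters through the asymptotic order $\sigma_Z(\xi)>0$ along a component $Z$ of $\mathbf B_-(\xi)$. Suppose some component has positive-dimensional image. Take one, $Z$, of minimal codimension $k$, and a very general $C$. Then $k$ general members of a divisible multiple of $\xi+\varepsilon A$, restricted to $Y_C$, meet properly and have multiplicity at least $b>0$ along $Z_C$. This gives $(\xi_C+\varepsilon A_C)^k\cdot\xi_C^{d+1-k}\geq b^k\,\xi_C^{d+1-k}\cdot[Z_C]>0$, contradicting $\xi_C^{d+1}=0$ as $\varepsilon\to0$.
\item Hence every component of $\mathbf B_-(\xi)$ lies over a point. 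Your curve $\Gamma\subset\mathbf B_-(\xi)$ would then be vertical, which relative ampleness forbids.
\end{enumerate}
The special curve $\pi(\Gamma)$ is never analysed: the ample curve $C$ meets $\pi(Z)$ automatically.
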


\begin{proof}
	Choose a Cartier divisor $\xi$ with $\mathcal O_Y(\xi)\simeq L$, and
	use the same symbol for its numerical class.  For a smooth curve
	$C\in|aH|$, set
	$Y_C:=\pi^{-1}(C)$, $L_C:=L|_{Y_C}$ and $\xi_C:=\xi|_{Y_C}$.
	
	\smallskip
	\noindent\emph{Restriction and strict positivity.}
	Choose $a$ sufficiently large so that
	$\mathcal V_1|_C$ is stable (this is possible by the strong restriction theorem \cite[Theorem 5.2]{Langer2004}).  
	Since 	$\mathcal V_1|_C$	 has degree zero, it is nef by \cite[Proposition 6.4.11]{ssb-Lazarsfeld2004II}.
	Consequently, the evaluation quotient shows that $\xi_C$ is nef.
	Moreover, if $R_m=\operatorname{rk}\mathcal V_m$, then
	\[
	\chi(Y_C,L_C^{\otimes m})
	=\chi(C,\mathcal V_m|_C)=R_m(1-g(C)).
	\]
	The polynomial $R_m$ has degree $d$.  Comparing leading coefficients in
	Riemann--Roch on the $(d+1)$-dimensional variety $Y_C$ gives
	\begin{equation}\label{ssb:eq-zero-top-on-curve}
		\xi_C^{d+1}=0.
	\end{equation}
	
	We claim that, for every proper integral subvariety $Z\subsetneq Y$ with
	$\dim\pi(Z)\geq1$, one can choose $a$ sufficiently large and $C$
	sufficiently general such that
	\begin{equation}\label{ssb:eq-strict-subvariety}
		\xi_C^{\dim Z-1}\cdot[Z\cap Y_C]>0.
	\end{equation}
	If $\pi(Z)$ is a curve, all components of a general $Z\cap Y_C$ are
	vertical over $C$, and relative ampleness proves the assertion.
	Suppose that $Z$ dominates $X$.  Fix $m\gg0$ such that
	\begin{equation}\label{ssb:eq-relative-serre}
		R^j\pi_*(\mathcal I_Z\otimes L^{\otimes m})=0\quad(j>0),
		\qquad
		\operatorname{rk}\pi_*(\mathcal I_Z\otimes L^{\otimes m})>0.
	\end{equation}
	The first assertion is relative Serre vanishing; the second follows
	because $Z$ is a proper subvariety on the generic fiber.
	Only after fixing this $m$, we choose $a$ so that both
	$\mathcal V_1|_C$ and $\mathcal V_m|_C$ are stable and
	$Z_C=Z\cap Y_C$ is reduced and pure-dimensional.
	
	The equation of $C$ is a nonzerodivisor on $\mathcal O_Z$.
	Applying the restriction sequence to
	$\mathcal I_Z\otimes L^{\otimes m}$, and using
	\eqref{ssb:eq-relative-serre}, gives the required base change for its
	zeroth direct image and vanishing of its higher direct images on $C$.
	Thus the ideal-sheaf sequence on $Y_C$ gives a surjection
	\(
	\mathcal V_m|_C\twoheadrightarrow
	\mathcal Q:=(\pi_C)_*(L_C^{\otimes m}|_{Z_C}).
	\)
	The torsion-free quotient $\mathcal Q_{\mathrm{tf}}$ has positive rank
	strictly smaller than $R_m$.  Every nonzero vector-bundle quotient of
	$\mathcal Q_{\mathrm{tf}}$ is a proper quotient of the stable degree-zero
	bundle $\mathcal V_m|_C$, and hence has positive degree.
	Therefore by \cite[Theorem 6.4.15]{ssb-Lazarsfeld2004II},
	$\mathcal Q_{\mathrm{tf}}$ is	ample.
	
	Let $W$ be an irreducible component of $Z_C$ dominating $C$.
	The torsion of $\mathcal Q$ maps to zero in the invertible sheaf
	$L_C^{\otimes m}|_W$, since $W$ is integral and dominates $C$.
	The evaluation therefore factors through a surjection
	\(
	(\pi_C|_W)^*\mathcal Q_{\mathrm{tf}}
	\twoheadrightarrow L_C^{\otimes m}|_W.
	\)
	The relative closed immersion defined by $L_C^{\otimes m}$ then factors
	as a closed immersion into $\mathbb P_C(\mathcal Q_{\mathrm{tf}})$.
	It follows that $L_C|_W$ is ample.  Vertical components are handled by
	relative ampleness.  This proves \eqref{ssb:eq-strict-subvariety}.

	\smallskip
	\noindent\emph{Eliminating positive-dimensional images of base loci.}
	Fix a very ample divisor $A$ on $Y$.  Recall that the restricted base locus
	\[
	\mathbf B_-(\xi)=\bigcup_{q\geq1}
	\mathbf B\left(\xi+\frac1q A\right)
	\]
	is	a countable union of closed algebraic subsets.  We use the standard
	asymptotic-multiplicity property of its irreducible components: if $Z$
	is such a component and $\xi$ is pseudoeffective, then
	\begin{equation}\label{ssb:eq-sigma-positive}
		\sigma_Z(\xi)>0.
	\end{equation}
	Here $\sigma_Z$ is the asymptotic order at the generic point of $Z$,
	computed by the exceptional divisor of the blowup of $Z$ if its
	codimension is greater than one (see
	\cite{ssb-EinLazarsfeldMustataNakamayePopa2006} and
	\cite[Lemma~6.12]{Druel2018}).  In particular, \eqref{ssb:eq-sigma-positive} implies
	that there is $b>0$ such that every effective $\mathbb Q$-divisor
	$D\sim_{\mathbb Q}\xi+\varepsilon A$, for sufficiently small positive
	rational $\varepsilon$, satisfies $\operatorname{mult}_Z D\geq b$.
	
	Suppose that some component of $\mathbf B_-(\xi)$ has
	positive-dimensional image in $X$, and choose one, denoted by $Z$, with
	smallest codimension $k$.  All components of smaller codimension map
	to points.  Choose a sufficiently ample very general $C\subset X$
	avoiding this countable collection of points and satisfying
	\eqref{ssb:eq-zero-top-on-curve} and
	\eqref{ssb:eq-strict-subvariety} for $Z$.  We may also require generic
	transversality for a countable sequence of linear systems used below.
	
	For positive rational $\varepsilon\to0$, choose a sufficiently divisible
	multiple of $\xi+\varepsilon A$ realizing its stable base locus, and
	choose $k$ general members, divided by that multiple.  Denote the
	resulting effective $\mathbb Q$-divisors by $D_1,\ldots,D_k$.
	Their restrictions to $Y_C$ meet properly in codimension $k$: a
	fixed component of smaller codimension would be contained in a
	component of $\mathbf B_-(\xi)$ whose point image was avoided by $C$.
	At the generic points of the reduced cycle $Z_C=Z\cap Y_C$, their
	orders are at least $b$.  The local intersection-multiplicity
	inequality in a regular local ring shows that
	\[
	(D_1|_{Y_C})\cdots(D_k|_{Y_C})-b^k[Z_C]
	\]
	is an effective real cycle.  Indeed, at the generic point of each
	component of $Z_C$, the $k$ local equations form a system of parameters
	in a regular local ring of dimension $k$, and their intersection
	multiplicity is at least the product of their orders.  Intersecting
	with the nef class
	$\xi_C^{d+1-k}$ gives
	\begin{equation}\label{ssb:eq-base-locus-inequality}
		(\xi_C+\varepsilon A_C)^k\cdot\xi_C^{d+1-k}
		\geq b^k\,\xi_C^{d+1-k}\cdot[Z_C]>0.
	\end{equation}
	For precision, one can first fix a countable sequence of these divisor
	choices and then choose the very general $C$ satisfying all the
	transversality conditions.  The integer governing stability of
	$\mathcal V_m|_C$ was already fixed above.
	Letting $\varepsilon$ tend to zero in
	\eqref{ssb:eq-base-locus-inequality} contradicts
	\eqref{ssb:eq-zero-top-on-curve}.
	Thus every irreducible component of $\mathbf B_-(\xi)$ maps to a point.
	
	\smallskip
	\noindent\emph{Nefness.}
	If an integral curve $\Gamma\subset Y$ had $\xi\cdot\Gamma<0$, then,
	for sufficiently small positive rational $\varepsilon$, every effective
	divisor in a sufficiently divisible multiple of $\xi+\varepsilon A$
	would contain $\Gamma$.  Hence $\Gamma$ would be contained in a component
	of $\mathbf B_-(\xi)$ and would map to a point of $X$.  Relative
	ampleness of $L$ excludes this.  Therefore $L$ is nef.
\end{proof}

\subsection{Closed orbits and arbitrary holonomy}

We use the convention that $\mathbb P(V)$ parametrizes one-dimensional
quotients of $V$. If a connected reductive group acts irreducibly on
$V$, its center acts by scalars and the unique closed orbit in
$\mathbb P(V)$ is also an orbit of its derived subgroup.

\begin{lemma}\label{ssb:lem-connected-orbit-nef}
Let $F$ be an $H$-stable vector bundle of rank $s\geq1$ on a smooth
complex projective surface $X$, with connected algebraic holonomy
$G_F$. Let $\pi:Z\to X$ be the bundle of closed $G_F$-orbits in
$\mathbb P_X(F)$, and set
\( L=\mathcal O_{\mathbb P_X(F)}(1)|_Z\) and
\(
\widehat L_F=L^{\otimes s}\otimes\pi^*(\det F)^{-1}.
\)
Then $\widehat L_F$ is nef if and only if
$\int_X\Delta(F)=0$.
\end{lemma}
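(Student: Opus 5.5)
The plan is to prove the two implications separately. The ``if'' direction reduces to Theorem~\ref{thm:FUY}. For the ``only if'' direction I would compare the top self-intersection of $\widehat L_F$ on $Z$ with the second Chern character of pushed-forward Cartan-power bundles, and then close with the Bogomolov inequality.

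\emph{Step 1: $\int_X\Delta(F)=0$ implies $\widehat L_F$ nef.} Since $F$ is $H$-stable and $\Delta(F)=0$, Theorem~\ref{thm:FUY} (implication (3)$\Rightarrow$(1)) shows that $\mathcal O_{\mathbb P_X(F)}(s)\otimes\pi^*(\det F)^{-1}$ is nef on $\mathbb P_X(F)$. The line bundle $\widehat L_F$ is the restriction of this bundle to the closed subvariety $Z\subset\mathbb P_X(F)$. The restriction of a nef bundle to a subvariety is nef, so $\widehat L_F$ is nef. This step is routine.

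\emph{Step 2: push-forwards.} Let $P\to X$ be the holonomy reduction of $F$, with connected group $G_F$. Then $Z=P\times^{G_F}(G_F/Q)$, where $Q$ is a parabolic subgroup and $G_F/Q$ is the closed orbit. The restriction of $\mathcal O(1)$ to $G_F/Q$ is $\mathcal L_Q(\lambda)$, where $\lambda$ is the highest weight of the irreducible $G_F$-module $F_x$ (up to the dual convention for quotients). By Borel--Weil--Bott and cohomology and base change, we get $R^j\pi_*\widehat L_F^{\otimes m}=0$ for $j>0$. We also get
\[
W_m:=\pi_*\widehat L_F^{\otimes m}\simeq (P\times^{G_F}L(sm\lambda))\otimes(\det F)^{-m},
\]
which is the normalized $sm$-th Cartan power bundle. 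The determinant of $L(sm\lambda)$ is a power of the determinant character. Hence $c_1(W_m)=0$ numerically, by the choice of normalization. Therefore
\[
\chi(Z,\widehat L_F^{\otimes m})=\chi(X,W_m)=R_m\chi(\mathcal O_X)+\operatorname{ch}_2(W_m),
\]
where $R_m=\dim L(sm\lambda)$ is a polynomial of degree $e:=\dim G_F/Q$. Note that $\dim Z=e+2$.

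\emph{Step 3: the key computation (the main obstacle).} I expect the hardest part to be showing that $\operatorname{ch}_2(W_m)=-c(m)\int_X\Delta(F)$, where $c(m)$ is a polynomial of degree $e+2$ with positive leading coefficient. I would compute this with the splitting principle for $P$, reduced to a maximal torus $T$, on a finite cover of a suitable flag bundle. For a representation $V$ of $G_F$, the term $\operatorname{ch}_2$ of the associated trace-free normalized bundle equals $\tfrac12\sum_{\mu}\langle\mu,x\rangle^2$, summed over the weights $\mu$ of $V$. Here $x$ denotes the Chern roots of the $T$-reduction, with the central part removed.

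This expression is a $W$-invariant quadratic form on $\mathfrak t$, restricted to the derived group. When $G_F$ is simple this form is proportional to the one coming from $\End(F_x)$. In general one applies this to each simple factor, with positive coefficients; for semisimple $G_F$ one should check that the contributions add up with the same sign, since $\Delta(F)$ decomposes through $\End_0F$ in the same way. The form $\operatorname{ch}_2(\End_0 F)=-\Delta(F)$, and each coefficient is a ratio of sums of squares, hence positive.

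The leading behaviour $\sum_{\mu\in L(sm\lambda)}\langle\mu,\cdot\rangle^2\sim c\,m^{e+2}$ follows from the Weyl dimension formula, or from the Duistermaat--Heckman measure on the moment polytope.

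\emph{Step 4: conclusion.} Comparing leading coefficients in asymptotic Riemann--Roch on $Z$ gives
\[
\widehat L_F^{\,e+2}=-(e+2)!\,c\int_X\Delta(F),\qquad c>0.
\]
If $\widehat L_F$ is nef, then $\widehat L_F^{\,e+2}\geq 0$, so $\int_X\Delta(F)\leq 0$. On the other hand, the Bogomolov inequality for the stable bundle $F$ gives $\int_X\Delta(F)\geq 0$. Therefore $\int_X\Delta(F)=0$.
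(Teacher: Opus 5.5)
Your Step~1 is exactly the paper's argument for the converse. For the forward direction the paper computes no intersection numbers. It observes that the character defining $\widehat L_F$ is trivial on the center and antidominant, nontrivial on every simple factor. Then \cite[Theorem~4.3]{ssb-BiswasBruzzo2008} turns nefness of $\widehat L_F$ into semistability of every curve pullback of the holonomy reduction, hence of $F$. This makes $\mathcal O_{\mathbb P_X(F)}(s)\otimes\pi_F^*(\det F)^{-1}$ nef, and Theorem~\ref{thm:FUY} gives $\int_X\Delta(F)=0$.

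Your Riemann--Roch route is instead the mechanism of Corollary~\ref{ssb:thm-plucker-positivity} and Proposition~\ref{ssb:prop-top-intersection}. It works as written when $[G_F,G_F]$ is simple, but there is a genuine gap when $[G_F,G_F]$ has several simple factors $S_1,\dots,S_k$. Your $W$-invariance argument gives
\[
\operatorname{ch}_2(W_m)=\sum_j a_j(m)\gamma_j,\qquad -\Delta(F)=\operatorname{ch}_2(\End F)=\sum_j b_j\gamma_j,
\]
with $a_j(m),b_j>0$ and $\gamma_j:=\operatorname{ch}_2(\mathcal P\times^{G_F}\mathfrak s_j)$. The ratios $a_j(m)/b_j$ depend on $j$, so the identity $\operatorname{ch}_2(W_m)=-c(m)\int_X\Delta(F)$ asserted in Step~3 is false in general. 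Take $\mathbb G_m\cdot(\SL_2\times\SL_3)$ acting on $\bC^2\otimes\bC^3$; formally $F=A\otimes B$ with $c_1(A)=c_1(B)=0$ and $Z=\mathbb P(A)\times_X\mathbb P(B)$. Then $\int_Z c_1(L)^5=-5\int_X(2c_2(A)+c_2(B))$, whereas $\Delta(F)=12(3c_2(A)+2c_2(B))$.

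Consequently Step~4 does not close. Nefness gives $\sum_j a_j\gamma_j\ge 0$ and Bogomolov for $F$ gives $\sum_j b_j\gamma_j\le 0$. These do not force $\sum_j b_j\gamma_j=0$ unless you know the signs of the individual $\gamma_j$. Your remark that $\Delta(F)$ ``decomposes through $\End_0F$ in the same way'' supplies no such information.

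The missing ingredient is a Bogomolov inequality for each simple factor separately. Since $G_F$ is reductive, $\mathfrak s_j$ is a $G_F$-direct summand of $\End(F_x)$, so $\mathcal P\times^{G_F}\mathfrak s_j$ is a direct summand of the polystable bundle $\End F$. It is therefore semistable, and it has trivial determinant because $G_F=Z(G_F)^\circ\cdot[G_F,G_F]$ is connected. Bogomolov's inequality then gives $\int_X\gamma_j\le 0$ for every $j$.

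You also need each $a_j(m)$, not just their sum, to have degree exactly $e+2$ with positive leading coefficient. This holds because faithfulness makes the highest weight nonzero on every simple factor. The closed orbit is then a product of positive-dimensional closed orbits of the factors, and $e$ is the sum of their dimensions. With these two inputs, nefness forces $\int_X\gamma_j=0$ for all $j$, hence $\int_X\Delta(F)=0$.

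So your approach can be completed, and it yields an explicit formula for $\int_Z c_1(\widehat L_F)^{e+2}$ as a negative combination of factorwise Chern--Weil numbers. The cost is this representation-theoretic bookkeeping, which the paper's appeal to Biswas--Bruzzo avoids entirely.
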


\begin{proof}
For $s=1$ the assertion is immediate. Assume that $s\geq2$ and
write $Z=\mathcal P_{G_F}/P$, where $\mathcal P_{G_F}$ is the
holonomy reduction and $P\subset G_F$ is the stabilizer of a point
of the closed orbit. The character defining $\widehat L_F$ is
trivial on the center of $G_F$. Its restriction to the parabolic
subgroup in each simple factor of $G_F/Z(G_F)$ is nontrivial and
antidominant. Indeed, the defining representation of $G_F$ is
faithful and irreducible, so its highest weight is nonzero on every
simple factor of the derived group. The same holds for its dual,
which defines the closed orbit in our quotient convention.

If $\widehat L_F$ is nef, the equivalence of conditions (2) and (4)
in \cite[Theorem~4.3]{ssb-BiswasBruzzo2008} shows that every
pullback of $\mathcal P_{G_F}$ to a smooth projective curve is
semistable. Since the center acts by scalars on $F$, the associated
pullback of $F$ is semistable as well. It follows, by testing on
normalizations of curves in $\mathbb P_X(F)$, that
\(
\mathcal O_{\mathbb P_X(F)}(s)\otimes\pi_F^*(\det F)^{-1}
\)
is nef, where $\pi_F:\mathbb P_X(F)\to X$ is the projection.
Theorem~\ref{thm:FUY} gives $\int_X\Delta(F)=0$.
Conversely, that theorem gives nefness of the above line bundle
when $\int_X\Delta(F)=0$, and restriction to $Z$ proves the claim.
\end{proof}

\begin{theorem}\label{ssb:thm-holonomy-orbit-positivity} 
Let $(X,H)$ be a smooth polarized complex projective surface, and let
$E$ be an $H$-stable vector bundle of rank $r\geq2$. Fix $x\in X$ and
set
$V=E_x,$ $ G=\Hol_x(E)$ and $ D=[G^\circ,G^\circ].$
Choose an irreducible $G^\circ$-quotient $V\twoheadrightarrow W$, let
$O_0\subseteq\mathbb P(W)\subseteq\mathbb P(V)$ be its unique
closed $D$-orbit, and set $O=G\cdot O_0$. For the holonomy reduction
$\mathcal P_G$, form
\[
\pi:Y=\mathcal P_G\times^G O\longrightarrow X,
\qquad L=\mathcal O_{\mathbb P_X(E)}(1)|_Y,
\qquad
\widehat L=L^{\otimes r}\otimes\pi^*(\det E)^{-1}.
\]
Then $O$ is a finite disjoint union of closed $D$-orbits, and the
following conditions are equivalent:
\begin{enumerate}
\item $\widehat L$ is pseudoeffective;
\item $\widehat L$ is nef;
\item $\int_X\Delta(E)=0$;
\item $E$ is projectively unitary flat.
\end{enumerate}
\end{theorem}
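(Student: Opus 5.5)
The plan is to prove the cycle $(4)\Rightarrow(3)\Rightarrow(2)\Rightarrow(1)\Rightarrow(2)\Rightarrow(4)$. First I would check that $O$ is a finite disjoint union of closed $D$-orbits. Since $D$ is normal in $G$, every translate $gO_0$ is a $D$-orbit, namely the closed orbit in $\mathbb P(gW)$ with $gW$ another irreducible $G^\circ$-quotient. Only finitely many translates occur because $G/G^\circ$ is finite and $G^\circ=Z(G^\circ)^\circ D$ preserves $O_0$. Hence $O$ is closed, and $Y\to X$ is a smooth projective morphism whose fibers are finite unions of flag varieties. Since $E$ is stable, Corollary~\ref{cor:DUY} gives $(3)\Leftrightarrow(4)$. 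For $(3)\Rightarrow(2)$, Theorem~\ref{thm:FUY} says that $\mathcal O_{\mathbb P(E)}(r)\otimes\pi^*(\det E)^{-1}$ is nef, and its restriction to $Y$ is $\widehat L$. The implication $(2)\Rightarrow(1)$ is trivial.

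For $(1)\Rightarrow(2)$ I would like to apply Lemma~\ref{ssb:lem-pseff-nef}, but $Y$ may be disconnected over $X$ with non-integral fibers. So I first pass to a finite cover. Let $h\colon X'\to X$ be the normalization of $X$ in a component of $\mathcal P_G/G^\circ$. By Corollary~\ref{ssb:cor-derived-holonomy-pullback}, the pullback has connected holonomy $G'$ with $[G',G']=D$. On $X'$, $Y$ splits into components $Y'_j=\mathcal P_{G'}\times^{G'}(g_jO_0)$, each with connected fibers $D/P_j$. Pseudoeffectivity pulls back along $Y\times_XX'\to Y$, hence to at least one component, and by the $G/G^\circ$-symmetry to all of them (Galois conjugation permutes the components).

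Two issues remain: $X'$ may be singular, and Lemma~\ref{ssb:lem-pseff-nef} requires smoothness. I would resolve $X'$ and use Lemma~\ref{ssb:lem-nearby-polarizations} and Lemma~\ref{ssb:lem-birational-transfer} to get an ample polarization for which the relevant bundles remain stable. A cleaner alternative is to avoid the cover and work with a single component over the \'etale locus, but I would try the cover first.

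On a component $Y'$ with fiber $D/P$ and $L'=\widehat L|_{Y'}$, I take $\mathcal V_m=\pi_*L'^{\otimes m}$. By Borel--Weil--Bott (Subsection~\ref{subsection:Cartan}), $\mathcal V_m$ is locally free, commutes with base change, has vanishing higher direct images, and is the bundle associated to the irreducible Cartan power of $W^*$. It is therefore stable by Theorem~\ref{Balaji-Kollar}(2) applied on the cover, with the twist by $\det E$ making $c_1\cdot H=0$. Lemma~\ref{ssb:lem-pseff-nef} then gives nefness of a multiple of $L'$. Relative very ampleness may need replacing $L'$ by a power, since $L$ itself is relatively very ample on the orbit.

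For $(2)\Rightarrow(4)$, nefness of $\widehat L$ on $Y'$ is exactly nefness of the line bundle attached to an antidominant character on the $G'$-bundle $\mathcal P_{G'}/P$. As in Lemma~\ref{ssb:lem-connected-orbit-nef}, \cite[Theorem~4.3]{ssb-BiswasBruzzo2008} would give semistability of all curve pullbacks of the adjoint bundle of $\mathcal P_{G'}$ modulo centre. This needs the character to be nontrivial on every simple factor of $D$. I would check that this follows because $D$ acts faithfully, up to centre, on $V=\bigoplus_j g_jW$, so that each simple factor of $D$ acts nontrivially on some $g_jW$. Then I would use nefness on all components simultaneously, not just one. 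This gives universal semistability of $\cEnd(h^*E)$, so $\int\Delta(h^*E)=0$, and by the projection formula $\int_X\Delta(E)=0$.

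I expect the main obstacle to be this last step when $W\ne V$, that is, when $G^\circ$ is reducible. The difficulty is bookkeeping the simple factors of $D$ across the different $g_jW$ and making sure nefness on every component reaches all of them.
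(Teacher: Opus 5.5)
Your orbit decomposition, $(3)\Leftrightarrow(4)$ via Corollary~\ref{cor:DUY}, $(3)\Rightarrow(2)\Rightarrow(1)$, and $(1)\Rightarrow(2)$ via the holonomy cover, Borel--Weil and Lemma~\ref{ssb:lem-pseff-nef} are essentially the paper's argument. Two small corrections there. First, $E$ is locally free on the smooth surface $X$, so the holonomy reduction lives on all of $X$. Hence $X'\to X$ is finite \'etale, $X'$ is smooth, and no resolution is needed. Second, if $\dim W=1$ the components have relative dimension $0$, so Lemma~\ref{ssb:lem-pseff-nef} does not apply. There you need instead that a pseudoeffective class of $H'$-degree zero on a surface is numerically trivial.

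The genuine gap is in $(2)\Rightarrow(4)$. On the component attached to $g_jW$, let $F_j\subset E'=h^*E$ be the corresponding stable summand, of rank $s$. The class of $\widehat L$ there is normalized by $\det E'$, not by $\det F_j$:
\[
\ell_j\equiv\frac rs\bigl(s\xi_j-\pi^*c_1(F_j)\bigr)+\pi^*B_j,
\qquad B_j:=\frac rs\,c_1(F_j)-c_1(E').
\]
Equivalently, the character $r\lambda_j-\det_V$ kills the scalars of $\GL(V)$, but in general not the centre of $G'$. When $G^\circ$ is reducible, that centre may act on different constituents $g_jW$ by different characters.

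The Biswas--Bruzzo criterion, as used in Lemma~\ref{ssb:lem-connected-orbit-nef}, needs a character trivial on the centre. Moreover, semistability of curve pullbacks of the adjoint bundle modulo centre only controls the projective part of each $F_j$, never the $B_j$. Universal semistability of $\cEnd(E')$, however, forces $B_j\equiv B_i$, because $c_1(\mathcal H om(F_i,F_j))=\frac{s^2}{r}(B_j-B_i)$.

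For instance, take $E=h_*M$ for an \'etale double cover $h$ with involution $\sigma$ and $c_1(M)\not\equiv\sigma^*c_1(M)$. Then $E$ is stable and $D=1$, so the criterion is vacuous. Yet $\int_X\Delta(E)>0$ by the Hodge index theorem. So the step ``Biswas--Bruzzo $\Rightarrow$ universal semistability of $\cEnd(h^*E)$'' fails. The real issue is the central part, not the bookkeeping of simple factors you anticipated.

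The paper handles this with the identity
\[
(\pi_0)_*\ell^{d+1}=(d+1)\,v\,r^d B,
\]
obtained from relative Grothendieck--Riemann--Roch and $c_1\bigl((\pi_0)_*L_Z^{\otimes k}\bigr)=\frac{kR_k}{s}c_1(F)$. If $\ell$ is nef, this pushforward is nef, so $B$ is nef. Since $B\cdot H'=0$, Hodge index gives $B\equiv0$.

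Only then is $\widehat L_Z$ numerically a positive multiple of the $F$-normalized class, so Lemma~\ref{ssb:lem-connected-orbit-nef} applies to $F$ alone. Its own holonomy acts faithfully on $W$, so every simple factor is seen. Stability of $E$ makes the deck group act transitively on the summands. This transitivity propagates $\int_{X'}\Delta(F)=0$ and $c_1(F)/s\equiv c_1(E')/r$ to every summand, and additivity gives $\int_{X'}\Delta(E')=0$. With this step inserted, one component suffices and the rest of your plan goes through.
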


\begin{proof}
The center of $G^\circ$ acts by scalars on $W$, so $O_0$ is also
the unique closed $G^\circ$-orbit in $\mathbb P(W)$. Since $G^\circ$
is normal and has finite index in $G$, its $G$-translates are
finitely many closed $G^\circ$-orbits. Distinct such orbits are
disjoint. Thus $Y\to X$ is smooth and projective, although its
fibers need not be connected.

Let  \(f:X'\to X\) be the finite \'etale Galois holonomy cover with group $G/G^\circ$
(see \cite[Lemma~40]{ssb-BalajiKollar2008}). Let us set $ H':=f^*H$.
Then the bundle $E':=f^*E$ is $H'$-polystable and its holonomy is $G^\circ$. The quotient $W$
therefore gives a stable direct summand $F$ of $E'$, of rank $s$,
with connected holonomy and
\begin{equation}\label{ssb:eq-orbit-equal-slopes}
\mu_{H'}(F)=\mu_{H'}(E').
\end{equation}
The component of $Y\times_X X'$ associated with $O_0$ is the
closed-orbit bundle
\(
\pi_0:Z\subseteq\mathbb P_{X'}(F)\to X'.
\)
Write $L_Z:=\mathcal O_{\mathbb P_{X'}(F)}(1)|_Z$ and
\(
\widehat L_Z:=L_Z^{\otimes r}\otimes\pi_0^*(\det E')^{-1}.
\)

Assume (1). Pullback preserves pseudoeffectivity, and $Z$ is an
open-and-closed component of the finite \'etale pullback
$Y\times_X X'$. Thus $\widehat L_Z$ is pseudoeffective.
For every $m\geq1$, the Borel--Weil theorem identifies
\(
\mathcal V_m=(\pi_0)_*\widehat L_Z^{\otimes m}
\)
with the $(rm)$th irreducible Cartan-power bundle, twisted by
$(\det E')^{-m}$. It is $H'$-stable, and
\[
\mu_{H'}(\mathcal V_m)
=rm\mu_{H'}(F)-m c_1(E')\cdot H'=0.
\]
The fiber calculations give local freeness, base change, and
$R^j(\pi_0)_*\widehat L_Z^{\otimes m}=0$ for $j>0$, by
the Borel--Weil theorem and vanishing for dominant weights
\cite[Part~II, Sections~2 and~4]{ssb-Jantzen2003}.
The base-change and vanishing assertions also hold for $m=0$, with
$\mathcal V_0=\mathcal O_{X'}$.
If $d:=\dim(Z/X')\geq1$, Lemma~\ref{ssb:lem-pseff-nef} proves
that $\widehat L_Z$ is nef. If $d=0$, then $s=1$ and $Z=X'$;
pseudoeffectivity and degree zero against $H'$ imply that
$\widehat L_Z$ is numerically trivial, hence nef.

Put
\[
\xi=c_1(L_Z),\qquad
\ell=c_1(\widehat L_Z)=r\xi-\pi_0^*c_1(E'),\qquad
B=\frac rs c_1(F)-c_1(E').
\]
By \eqref{ssb:eq-orbit-equal-slopes}, $B\cdot H'=0$. We claim that
\begin{equation}\label{ssb:eq-orbit-central-pushforward}
(\pi_0)_*\ell^{d+1}=(d+1)v r^d B,
\qquad
v:=\int_{Z_{x'}}\xi^d>0.
\end{equation}
For $k\geq0$, let $\mathcal A_k=(\pi_0)_*L_Z^{\otimes k}$ and
$R_k=\rk\mathcal A_k$. Since the holonomy of $F$ is connected
and its center acts by scalars, comparison of determinant characters
gives
\[
c_1(\mathcal A_k)=\frac{kR_k}{s}c_1(F),
\qquad
R_k=\frac{v}{d!}k^d+O(k^{d-1}).
\]
For $d\geq1$, comparison of the coefficients of $k^{d+1}$ in the
codimension-one part of relative Grothendieck--Riemann--Roch yields
\[
(\pi_0)_*\xi^{d+1}=(d+1)v\frac{c_1(F)}s.
\]
Together with $(\pi_0)_*\xi^d=v$, this proves
\eqref{ssb:eq-orbit-central-pushforward}. For $d=0$ the identity
is immediate.

The pushforward in \eqref{ssb:eq-orbit-central-pushforward} is nef:
its intersection with a curve $C\subset X'$ is the top intersection
of the nef class $\ell$ on the effective cycle $\pi_0^{-1}(C)$.
Consequently $B$ is nef. Since $B\cdot H'=0$ and $H'$ is ample,
the Hodge index theorem implies $B\equiv0$. Hence
\[
\ell\equiv\frac rs
\bigl(s\xi-\pi_0^*c_1(F)\bigr).
\]
Lemma~\ref{ssb:lem-connected-orbit-nef} now gives
$\int_{X'}\Delta(F)=0$.

The group $G/G^\circ$ permutes transitively the isomorphism classes
of irreducible $G^\circ$-constituents of $V$: otherwise the sum of
the isotypic components in one orbit would be a proper $G$-invariant
subspace of $V$, contrary to stability of $E$. Thus the deck group
permutes transitively the isomorphism classes of stable direct
summands of $E'$. Every such summand $F_a$ consequently satisfies
\[
\int_{X'}\Delta(F_a)=0,
\qquad
\frac{c_1(F_a)}{\rk F_a}\equiv\frac{c_1(E')}{r}.
\]
Indeed, these assertions for $F$ are preserved by deck
transformations, and repeated isomorphic summands give the same
classes. Additivity of the Chern character now yields
\(
\int_{X'}\Delta(E')=0.
\)
Since $\int_{X'}\Delta(E')=(\deg f)\int_X\Delta(E)$, this proves
(1)$\Rightarrow$(3).

The equivalence of (3) and (4) is Corollary~\ref{cor:DUY}.
If (3) holds, Theorem~\ref{thm:FUY} makes the normalized
tautological line bundle on $\mathbb P_X(E)$ nef; its restriction
to $Y$ is $\widehat L$, proving (2). Finally, nef line bundles on
projective varieties are pseudoeffective, so (2)$\Rightarrow$(1).
\end{proof}

\begin{remark}
If $D$ acts irreducibly on $V$, its unique closed orbit in
$\mathbb P(V)$ is $G$-invariant, so $Y\to X$ has geometrically
integral fibers and no finite union is needed.

The theorem does not assert the same discriminant criterion for
every homogeneous space $D/P$. A defining weight which vanishes on
a simple factor does not detect the curvature of that factor.
The corresponding criterion for a
general parabolic must detect all simple factors, as in
\cite[Theorem~4.3]{ssb-BiswasBruzzo2008}. 
\end{remark}

\subsection{The criterion of H\"oring and Peternell}

The preceding base-locus lemma recovers
\cite[Theorem~1.1]{HoringPeternell2019}. By
Lemma~\ref{ssb:lem-holonomy-square-dichotomy}, it is enough to assume
stability of the second reflexive symmetric power. For a reflexive sheaf, we
use pseudoeffectivity in the sense of
\cite[Definition~2.1]{HoringPeternell2019}: for every real $c>0$
there are positive integers $m,j$ with $m>cj$ such that
\(
H^0\bigl(X,\Sym^{[m]}E\otimes\mathcal O_X(jH)\bigr)\neq0.
\)

\begin{corollary}\label{ssb:cor-horing-peternell}
Let $X$ be a normal complex projective variety of dimension $n\geq2$
which is smooth in codimension two, and let $H$ be an ample Cartier
divisor. Let $E$ be a reflexive sheaf of positive rank such that
$\Sym^{[2]}E$ is $H$-stable and
\(
c_1(E)\cdot H^{n-1}=0.
\)
If $E$ is pseudoeffective, then
\[
c_1(E)^2\cdot H^{n-2}=c_2(E)\cdot H^{n-2}=0.
\]
If $X$ is smooth, then $E$ is a numerically flat vector bundle.
If $X$ is klt, there is a finite surjective Galois morphism
$\gamma:\widetilde X\to X$, \'etale in codimension one, such that
$\gamma^{[*]}E$ is a numerically flat vector bundle.
\end{corollary}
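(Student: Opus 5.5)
Having $\Sym^{[2]}E$ stable, we first apply Lemma~\ref{ssb:lem-holonomy-square-dichotomy} at a general point $x$ of the smooth locally free locus. If the projective holonomy is finite, Lemma~\ref{ssb:lem-finite-projective-holonomy} shows that $E|_{X_{\mathrm{reg}}}$ is irreducible projectively unitary flat and $\int_X\Delta(E)=0$, since $X$ is smooth in codimension two. Together with $c_1(E)\cdot H^{n-1}=0$, this forces $c_1(E)^2H^{n-2}=0$ by a Hodge index argument, and then $c_2(E)H^{n-2}=0$. To see the Hodge index step, note that pseudoeffectivity of $E$ makes $\det E$ pseudoeffective, or rather pseudoeffective on a quasi-\'etale cover trivializing $\mathbb P(E)$. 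Since its $H$-degree is zero, $c_1(E)\equiv0$ on that cover, hence on $X$.

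So the main case is $D=\SL(V)$ or $\Sp(V,b)$. Here we reduce to surfaces. Cutting by $n-2$ general members of $|mH|$ with $m\gg0$ gives a smooth surface $S\subset X_{\mathrm{reg}}$ on which $E$ is locally free. By Mehta--Ramanathan applied to the stable summands, $\Sym^{[2]}E|_S$ remains stable, and its holonomy still has the same derived group $D$. Pseudoeffectivity in the sense of H\"oring--Peternell restricts to $S$: a nonzero section of $\Sym^{[m]}E(jH)$ restricts nonzero for general $S$, after controlling $m,j$ uniformly. The ratio condition $m>cj$ survives, with $H$ replaced by $H|_S$. Moreover $c_1(E|_S)\cdot H|_S=0$. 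Since all intersection numbers against $H^{n-2}$ are computed on $S$, it suffices to prove the surface statement: $E|_S$ is numerically flat.

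On the surface $S$ with $c_1\cdot H=0$, pseudoeffectivity of $E$ means $\xi=c_1(\mathcal O_{\mathbb P(E)}(1))$ is pseudoeffective. We now apply Theorem~\ref{ssb:thm-holonomy-orbit-positivity} with $W=V$. Since $D$ is $\SL$ or $\Sp$, it acts transitively on $\mathbb P(V)$, so $O=\mathbb P(V)$ and $Y=\mathbb P(E)$. Then $\widehat L=\mathcal O(r)\otimes\pi^*(\det E)^{-1}$, whose class is $r\xi-\pi^*c_1(E)$. The \textbf{main obstacle} is showing that $\widehat L$ is pseudoeffective when only $\xi$ is known to be, because $-c_1(E)$ need not be effective. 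I would handle this by showing $c_1(E)\equiv0$ first. Pushing forward, $(\pi_*\xi^{r})$ is pseudoeffective, and pseudoeffectivity of $\Sym^mE$ gives $\det E$ pseudoeffective on $S$ via $\Sym^m E\to(\det E)$-type arguments: take the $r$th symmetric tensors and project to $\det$ using the $\GL$-decomposition. Then $\det E$ is pseudoeffective of $H$-degree $0$, hence numerically trivial. Alternatively, one can apply Lemma~\ref{ssb:lem-pseff-nef} directly: by stability of all $\Sym^mE$ (Lemma~\ref{ssb:lem-holonomy-square-dichotomy}) and $\mu_H=0$, pseudoeffective $\xi$ is nef, so $E$ is nef. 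Then $\det E$ is nef of $H$-degree $0$, hence numerically trivial, and $E^*$ is nef too, since $E^*\simeq\bigwedge^{r-1}E\otimes\det E^{-1}$.

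In either form we conclude that $E|_S$ is nef with numerically trivial determinant, that is, numerically flat. Consequently $c_1^2=c_2=0$ on $S$, which yields the stated intersection numbers. When $X$ is smooth, Theorem~\ref{thm:FUY} together with $\Delta=0$ and $c_1\equiv0$ gives numerical flatness. In the klt case we use Theorem~\ref{klt-local-freeness} to pass to the Galois quasi-\'etale cover $\gamma$. Flatness of $\cEnd$ plus $c_1\equiv0$ and vanishing $c_2$ then make $\gamma^{[*]}E$ a numerically flat vector bundle, using the extension argument there together with the Greb--Kebekus--Peternell characterization of flat sheaves with vanishing Chern classes.
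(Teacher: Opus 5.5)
Your proof is correct, and via your second alternative it is essentially the paper's argument: restrict to a very general complete-intersection surface (keeping one section for each $c$ so that pseudoeffectivity survives), and treat finite projective holonomy through the cover with $f^*E\simeq L^{\oplus r}$. In the main case, apply Lemma~\ref{ssb:lem-pseff-nef} to $\mathcal O_{\mathbb P(E|_S)}(1)$ using stability and slope zero of all $\Sym^m(E|_S)$ and finish with the Hodge index theorem; in the klt case, combine \cite[Theorem~1.20]{Greb-Kebekus-Peternell-Duke2016} (local freeness on a quasi-\'etale cover) with Theorem~\ref{klt-local-freeness} (projective unitary flatness, hence nefness of the pullback once its determinant is numerically trivial).

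Discard your first alternative, however. Pseudoeffectivity of $\xi$ forces neither $\xi^r$ nor $\det E$ to be pseudoeffective: take $E=\mathcal O\oplus\mathcal O(-1)$ on $\mathbb P^1$. There is also no natural nonzero map $\Sym^rE\to\det E$ in characteristic zero, so the detour through Theorem~\ref{ssb:thm-holonomy-orbit-positivity} is unsupported as well as unnecessary.
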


\begin{proof}
First suppose that $X$ is a smooth surface, so $E$ is locally free.
If $E$ has rank one, pseudoeffectivity and $c_1(E)\cdot H=0$
imply that $E$ is numerically trivial. Suppose that its rank $r$ is at
least two. By Lemma~\ref{ssb:lem-holonomy-square-dichotomy}, either
$E$ has finite projective holonomy or all $\Sym^mE$ are $H$-stable.
In the first case, the proof of
Lemma~\ref{ssb:lem-finite-projective-holonomy} gives a finite
\'etale cover $f:X'\to X$ such that $f^*E\simeq L^{\oplus r}$
for a line bundle $L$. Pulling back the sections witnessing
pseudoeffectivity shows that $L$ is pseudoeffective. Since
$c_1(L)\cdot f^*H=0$, it is numerically trivial. Thus $f^*E$, and
hence $E$, is numerically flat, proving the assertion in this case.
In the second case, apply Lemma~\ref{ssb:lem-pseff-nef} to
\(
\pi:\mathbb P_X(E)\to X  \) and $L=\mathcal O_{\mathbb P_X(E)}(1).$
Indeed, $\pi_*L^{\otimes m}=\Sym^mE$ is stable of slope zero for
$m\geq1$, and the required base change and higher-direct-image
vanishing follow from the projective bundle formula. The hypothesis
on $E$ says precisely that $L$ is pseudoeffective. Thus $L$ is nef,
so $E$ and $\det E$ are nef. Since $c_1(E)\cdot H=0$, the Hodge
index theorem gives $c_1(E)\equiv0$. Consequently $E$ is numerically
flat, and its Chern classes are numerically trivial.

In general, choose a sufficiently ample very general complete
intersection surface
\(
S\in |m_1H|\cap\cdots\cap |m_{n-2}H|
\)
contained in the smooth locally free locus of $E$, such that
$\Sym^2(E|_S)$ is $H|_S$-stable. This is possible by the restriction
theorem and the codimension-three bounds for the singular and
non-locally-free loci. We may also require $E|_S$ to be
pseudoeffective: choose one nonzero section witnessing the
definition above for each positive integer $c$, and take $S$ outside the
countably many proper closed conditions on which one of these
sections restricts to zero. Also
$c_1(E|_S)\cdot H|_S=0$. The surface case therefore gives
\(
c_1(E|_S)^2=c_2(E|_S)=0.
\)
Dividing these equalities by $m_1\cdots m_{n-2}$ proves the asserted
Chern number equalities on $X$.

If $X$ is smooth, these equalities give
$\Delta(E) H^{n-2}=0$. Theorem~\ref{thm:FUY} implies that
$E$ is locally free and its normalized tautological class is nef.
The Hodge index theorem, applied to
$c_1(E)\cdot H^{n-1}=c_1(E)^2\cdot H^{n-2}=0$, gives
$c_1(E)\equiv0$. Hence $E$ is nef with numerically trivial
determinant, and therefore numerically flat.

Finally suppose that $X$ is klt. By
\cite[Theorem~1.20]{Greb-Kebekus-Peternell-Duke2016}, the Chern
number equalities give a finite surjective Galois morphism
$\gamma:\widetilde X\to X$, \'etale in codimension one, such that
$F:=\gamma^{[*]}E$ is locally free and flat. In rank one this is
already numerically flat. Otherwise, the stability of $E$ and the
Chern number equalities allow us to apply
Theorem~\ref{klt-local-freeness}: $E|_{X_{\mathrm{reg}}}$ is
projectively unitary flat. Pulling back gives such a structure on
$F|_{\gamma^{-1}(X_{\mathrm{reg}})}$. This extends across the
codimension-two complement in $\widetilde X_{\mathrm{reg}}$.
Since $F$ is locally free, Lemmas~\ref{ssb:lem-unitary-extension}
and~\ref{ssb:lem-projective-flat-algebra}, applied to $\cEnd(F)$
with its multiplication and unit, extend this structure over all
of $\widetilde X$. Its determinant is flat, hence numerically trivial. Every pullback of $F$ to a smooth
projective curve is consequently semistable of degree zero, so
both $F$ and $F^*$ are nef. Thus $F$ is numerically flat.
\end{proof}

\begin{remark}
\cite[Example 6.9]{Druel2018} shows that Corollary \ref{ssb:cor-horing-peternell} fails if instead of stability of $\Sym^{[2]}E$ we assume only stability of $E$.
\end{remark}

\subsection{A pseudoeffectivity criterion for flatness of orthogonal bundles}

Theorem \ref{ssb:thm-holonomy-orbit-positivity} implies the following corollary:

\begin{corollary}\label{cor:orthogonal-flatness-criterion}
	Let $(X,H)$ be a smooth polarized complex projective surface,
	and let $E$ be an $H$-stable vector bundle of rank $r\geq3$.
	Assume that $\det E\simeq\cO_X$ and that $E$ is equipped with
	a nondegenerate symmetric form
	\(
	b:E\otimes E\longrightarrow\cO_X
	\)
	such that $\Hol_x(E)=\SO(E_x,b_x)$ for some $x\in X$.
	Let
	\(
	\pi:Q(E)\longrightarrow X
	\)
	be the smooth quadric bundle of isotropic one-dimensional
	quotients of $E$, where isotropy is taken with respect to the
	dual form on $E^*$. Then the following conditions are equivalent:
	\begin{enumerate}
		\item $-K_{Q(E)/X}$ is pseudoeffective;
		\item $-K_{Q(E)/X}$ is nef;
		\item $\int_Xc_2(E)=0$;
		\item $E$ is unitary flat.
	\end{enumerate}
\end{corollary}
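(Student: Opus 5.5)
We apply Theorem~\ref{ssb:thm-holonomy-orbit-positivity} with $G=\SO(V,b_x)$, $V=E_x$. For $r\geq3$ the group $G$ is connected, so $D=[G^\circ,G^\circ]=G$ acts irreducibly on $V$, and the only choice is $W=V$. First we identify the closed $D$-orbit $O_0\subset\mathbb P(V)$. Since $\mathbb P(V)$ parametrizes one-dimensional quotients, it is the orbit of highest-weight lines in $V^*$. For $\SO$ acting on its defining representation, this is the set of isotropic lines for the dual form, i.e.\ the smooth quadric of dimension $r-2$. The case $r=4$, where $\SO_4$ is not simple, needs a separate check: $V=V_1\otimes V_2$, and the closed orbit is the Segre quadric $\mathbb P^1\times\mathbb P^1$, which is again the isotropic quadric. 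Hence $Y=\mathcal P_G\times^G O_0=Q(E)$, and $L=\mathcal O_{\mathbb P(E)}(1)|_{Q(E)}$. By the Remark after the theorem, $Y\to X$ has geometrically integral fibers.

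Next we compare $\widehat L$ with $-K_{Q(E)/X}$. Since $\det E\simeq\cO_X$, we have $\widehat L=L^{\otimes r}$. On the other hand, $Q(E)\subset\mathbb P(E)$ is a relative quadric divisor cut out by the section $b^*$ of $\Sym^2E^{**}=\Sym^2E$, via the isomorphism $E\simeq E^*$ given by $b$. Its class is therefore $\cO_{\mathbb P(E)}(2)\otimes\pi^*M$ for some line bundle $M$ on $X$; with the form $\cO_X$-valued, $M$ is trivial. Adjunction then gives
\[
-K_{Q(E)/X}=\bigl(\cO_{\mathbb P(E)}(r)\otimes\pi^*(\det E)^{-1}\otimes\cO_{\mathbb P(E)}(-2)\bigr)|_{Q(E)}=L^{\otimes(r-2)}.
\]
Both $-K_{Q(E)/X}$ and $\widehat L$ are thus positive multiples of $L$ (here $r-2\geq1$). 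Pseudoeffectivity and nefness are invariant under positive rescaling, so conditions (1) and (2) of the corollary match conditions (1) and (2) of the theorem. The main care needed is in the sign conventions: quotients versus subspaces, and the dual form defining $Q(E)$. I expect this bookkeeping to be the main source of error, so I would verify it on a fiber, where $-K$ of a quadric of dimension $r-2$ is $\cO(r-2)$.

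Finally we match conditions (3) and (4). Since $c_1(E)=0$, we get $\Delta(E)=2r\,c_2(E)$, so condition (3) of the theorem becomes $\int_Xc_2(E)=0$. The theorem then gives equivalence with projective unitary flatness. It remains to upgrade this to unitary flatness. Because $\det E$ is trivial, $c_1=0$ and $\Delta=0$ together with stability yield a Hermite--Einstein metric with vanishing full curvature, by Corollary~\ref{cor:DUY}(4) and degree zero. Hence $E$ is unitary flat. Conversely, unitary flatness gives $c_2(E)=0$ directly.
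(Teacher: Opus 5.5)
Your proposal is correct and follows essentially the same route as the paper. You identify $Q(E)$ with the closed-orbit bundle of Theorem~\ref{ssb:thm-holonomy-orbit-positivity}, use adjunction to get $-K_{Q(E)/X}=L^{\otimes(r-2)}$ and $\widehat L=L^{\otimes r}$, note $\Delta(E)=2r\,c_2(E)$, and upgrade projective unitary flatness to unitary flatness using $\det E\simeq\cO_X$. The only differences are cosmetic: you check the $r=4$ closed orbit separately, and you prove the final upgrade with a Hermite--Einstein curvature argument, where the paper simply cites the triviality of the determinant.
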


\begin{proof}
	The inverse form defines a nowhere vanishing section
	\(
	b^{-1}\in H^0(X,\Sym^2E).
	\)
	Under the projective bundle formula, this gives a section of
	$\cO_{\mathbb P_X(E)}(2)$ whose zero scheme is $Q(E)$.
	Thus $Q(E)\subset\mathbb P_X(E)$ is a smooth relative quadric
	divisor. Write $\cO_{Q(E)}(1)$ for the restricted
	tautological line bundle. Since 
	$\omega_{\mathbb P_X(E)/X}
\simeq\cO_{\mathbb P_X(E)}(-r)$,
we have	by adjunction
	\begin{equation}\label{eq:orthogonal-relative-anticanonical}
		\cO_{Q(E)}(-K_{Q(E)/X})\simeq\cO_{Q(E)}(r-2).
	\end{equation}
	
	For $r\geq3$, the standard representation of $\SO_r$ is
	irreducible, and its unique closed orbit in the projective space
	of one-dimensional quotients is the quadric of isotropic
	quotients. Consequently $Q(E)$ is precisely the closed-orbit
	bundle occurring in Theorem~\ref{ssb:thm-holonomy-orbit-positivity}.
	Since $\det E\simeq\cO_X$, its normalized tautological line
	bundle is
$	\widehat L=\cO_{Q(E)}(r)$ 
and we have
$	\int_X\Delta(E)=2r\int_Xc_2(E).$
Since pseudoeffectivity and nefness of a line bundle are unchanged by
	taking a positive tensor power, Theorem~\ref{ssb:thm-holonomy-orbit-positivity} and
	\eqref{eq:orthogonal-relative-anticanonical} show that (1), (2)
	and (3) are equivalent, and that they are equivalent to
	projective unitary flatness of $E$.
	
	Finally,  since $\det E\simeq\cO_X$, projective unitary flatness
	of $E$ implies its unitary flatness. Conversely, a unitary flat bundle has
	vanishing rational Chern classes, so (4) implies (3).
\end{proof}

\begin{remark}
The orthogonal form already makes $E$ pseudoeffective, since
$b^{-1}$ is a nowhere-vanishing section of $\Sym^2E$. Moreover, for every $m\geq2$, multiplication by $b^{-1}$
	gives a proper nonzero subbundle
	\(
	\Sym^{m-2}E\lhook\joinrel\longrightarrow\Sym^mE
	\)
	of the same $H$-slope, namely zero. Thus none of the bundles
	$\Sym^mE$, $m\geq2$, is $H$-stable.
	The criterion above therefore detects flatness in a setting
	where pseudoeffectivity of $E$ is automatic and stability of
	its higher symmetric powers is unavailable. If
	$\int_Xc_2(E)>0$, it shows that
	$-K_{Q(E)/X}$ is not pseudoeffective.
\end{remark}

\section{The low rank theorems}

Theorem~\ref{ssb:thm-holonomy-orbit-positivity} implies the following theorem:

\begin{theorem}\label{ssb:prop-low-rank-holonomy}
Let $(X,H)$ be a smooth polarized complex projective surface, and let
$E$ be an $H$-stable vector bundle of rank $r\in\{2,3\}$.
If $\Sigma^{\mathrm{ns}}(E)$ is dense, then
$\int_X\Delta(E)=0$.
\end{theorem}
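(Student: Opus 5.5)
We argue by contradiction. Assume $\int_X\Delta(E)\neq0$, i.e.\ $E$ is not projectively unitary flat. We will build a line bundle $\widehat L$ on a closed-orbit bundle $Y\to X$ to which Theorem~\ref{ssb:thm-holonomy-orbit-positivity} applies, and show that density of $\Sigma^{\mathrm{ns}}(E)$ forces $\widehat L$ to be pseudoeffective. Theorem~\ref{ssb:thm-holonomy-orbit-positivity} then gives $\int_X\Delta(E)=0$.

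\emph{Choosing $Y$.} If the projective image of $G=\Hol_x(E)$ is finite, we are done by Lemma~\ref{ssb:lem-finite-projective-holonomy}. Otherwise, since $E$ is stable of rank $2$ or $3$, the connected derived holonomy $D$ is one of $\SL_2$, $\SL_3$, $\SO_3$, or a finite-index situation in which $G^\circ$ is a torus and $V$ splits into lines. We pick $Y$ according to the case.
\begin{itemize}
\item For $\SL_r$, take $Y=\mathbb P_X(E)$ together with its dual version $\mathbb P_X(E^*)\simeq\mathbb P_X(\bigwedge^{2}E)\otimes\det$ when $r=3$.
\item For $\SO_3$, take the conic bundle $Q(E)$.
\item For the toral case, take the finite union of sections given by the line quotients.
\end{itemize}
In each case $Y\subseteq\mathbb P_X(E)$ or $Y\subseteq\mathbb P_X(E^*)$ is a closed-orbit bundle as in Theorem~\ref{ssb:thm-holonomy-orbit-positivity}, applied to $E$ or to $E^*$. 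Note that $E^*$ is also stable with the same discriminant.

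\emph{Arithmetic input.} We imitate the proof of Theorem~\ref{thm:pseudoeffective-exterior-power}. Suppose $\widehat L$ is not pseudoeffective on $Y$, and also not on the dual choice $Y^\vee$. The pushforwards $\pi_*\widehat L^{\otimes m}$ are Cartan-power bundles. These are direct summands of tensor powers of $E$, hence semistable on an open neighbourhood of $\alpha_0$ in $\operatorname{Mov}(X)$, with slope $0$. This puts us in the setting of Lemma~\ref{lem:movable-class-over-polarization}; for disconnected $Y$ we first pass to the holonomy cover $X'$ as in the proof of Theorem~\ref{ssb:thm-holonomy-orbit-positivity}. The lemma yields classes $\beta$ with $\pi_*\beta\equiv H$ and $c_1(\widehat L)\cdot\beta<0$. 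Lemma~\ref{lem:specialization-movable-witnesses} then transports these classes to all geometric fibres, where they remain nonnegative on effective divisors.

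Now take $s\in\Sigma^{\mathrm{ns}}$ and a Frobenius power $q$ such that $F^{m*}E_{\bar s}$ is not stable. Since $r\le3$, after possibly passing to the dual, the destabilizing subsheaf, or its saturated quotient, has rank one. This gives a line subsheaf $M\subset F^{m*}E_{\bar s}$ or $M\subset F^{m*}E^*_{\bar s}$ with $\mu(M)\ge q\mu(E)$, respectively $q\mu(E^*)$. As in the proof of Theorem~\ref{thm:pseudoeffective-exterior-power}, $M$ produces an effective divisor in $|q\xi-\pi^*c_1(M)|$ on $\mathbb P(E^*)$, respectively $\mathbb P(E)$. Pairing this divisor with $\beta_{\bar s}$ gives a contradiction, \emph{provided} the divisor does not contain $Y_{\bar s}$.

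\emph{Main obstacle.} The essential difficulty is that last proviso. A line in $F^{m*}E$ defines a section of $\mathbb P(E^*)$, which lies on the closed orbit only in the $\SL$ case. In the $\SO_3$ and toral cases the effective divisor may contain $Y_{\bar s}$, so its restriction to $Y_{\bar s}$ is not effective. To handle this we would use the specialized holonomy reduction, that is, the form $b$ or the splitting spread out over $S$. For $\SO_3$, $E_{\bar s}\simeq\Sym^2A\otimes N$ locally, and a destabilizing line or corank-one quotient should come from an isotropic line, because a non-isotropic line would split off orthogonally and contradict the slope inequality on the complement. The section of $\mathbb P(E)$ then lies in $Q(E)_{\bar s}$, and we restrict the effective divisor there. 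If this fails, the fallback is to apply the Theorem~\ref{thm:pseudoeffective-exterior-power} argument directly with $Y=\mathbb P(\bigwedge^iE)$, $i\in\{1,2\}$. This yields pseudoeffectivity of $\widehat{\mathcal L}_i$ on the full projective bundle, and for the relevant holonomy groups we would then restrict to closed orbits using that the restricted base locus cannot contain the whole orbit bundle; the proof of Lemma~\ref{ssb:lem-pseff-nef} is the model for this step.
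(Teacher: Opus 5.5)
There is a genuine gap, and it sits exactly at the step you flag as the ``main obstacle''. For the case where $G^\circ$ acts irreducibly, your plan matches the paper's: closed-orbit bundles $Y_B\subseteq\mathbb P_X(B)$ for $B\in\{E,E^*\}$, non-pseudoeffectivity from Theorem~\ref{ssb:thm-holonomy-orbit-positivity}, witnesses $\beta_B$ from Lemma~\ref{lem:movable-class-over-polarization}, specialization, and a Frobenius-destabilizing line $M$. However, you never show that the divisor induced by $M$ restricts to a divisor on $Y_{\bar s}$. The obstacle you describe comes from confusing the \emph{point} determined by $M$ with the \emph{hyperplane} it cuts out.

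In the paper's convention ($\mathbb P$ of one-dimensional quotients), $M\subset F^{e*}B_{\bar s}$ composed with the universal quotient gives a section of $\mathcal O(1)\otimes\pi^*M^{-1}$ on $\mathbb P_{X_{\bar s}}(F^{e*}B_{\bar s})$. This lives on $\mathbb P(B)$, not $\mathbb P(B^*)$, and its zero locus is the relative hyperplane $\{\phi:\phi(M)=0\}$. Since the Frobenius twist $Y_{B,\bar s}^{(q)}$ is integral, the restricted section vanishes identically only if the orbit in the generic fiber lies in a hyperplane. That is impossible because the closed orbits here ($\mathbb P^1$, $\mathbb P^2$, a smooth plane conic) span their ambient projective spaces: no nonzero vector of $F^{e*}B_{\bar s}$ is killed by every functional in the orbit.

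So the $\SO_3$ case goes through verbatim, and whether $M$ is isotropic is irrelevant. Your isotropy argument and the fallback via $\mathbb P(\bigwedge^iE)$ and restricted base loci are unnecessary, and neither is justified as written. Pulling back by the relative Frobenius gives an effective divisor of class $q\bigl(c_1(L_B)-\frac1r\pi_B^*c_1(B)\bigr)-\pi_B^*\bigl(c_1(M)-\frac qr c_1(B_{\bar s})\bigr)$. Pairing it with $\beta_{B,\bar s}$ gives the contradiction.

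The toral case is not covered either. There $Y\to X$ is finite, so Lemma~\ref{lem:movable-class-over-polarization} does not apply: it needs geometrically integral fibers of positive dimension and the splitting $\operatorname{Pic}(Y)_{\mathbb Q}=\mathbb Q[L]\oplus\pi^*\operatorname{Pic}(X)_{\mathbb Q}$. Passing to $X'$ does not help, since the components are then copies of $X'$. The paper argues directly instead.

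On the holonomy cover $f\colon X'\to X$, either all the line summands of $f^*E$ are isomorphic, so $\Delta=0$ at once, or $f^*E=L_1\oplus\cdots\oplus L_r$ with the deck group permuting the summands transitively. In the latter case, the morphism $f^*M\to\bigoplus_j L_j^{\otimes q}$ is equivariant because it descends to $X$. Transitivity then forces every component to be nonzero. This is the input you were missing: it is what stops the projection of $M$ to a given summand from vanishing.

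Since $\deg f^*M$ is at least the common degree of the $L_j^{\otimes q}$, each component is an isomorphism. Hence $F^{e*}f^*B_{\bar s}\simeq(f^*M)^{\oplus r}$, so $q^2\deg(f)\int\Delta(E_{\bar s})=0$. Constancy of Chern numbers then gives $\int_X\Delta(E)=0$.
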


\begin{proof}
Put $G=\Hol_x(E)$ for a fixed $x\in X$ and $V=E_x$.
We may shrink the arithmetic base so that all geometric fibers of
$\mathscr E$ are stable. We will spread out finitely many additional
objects below. The dominant changes of model and shrinkings needed
for this preserve density, as explained after
Lemma~\ref{lem:specialization-movable-witnesses}.

We first record the consequence of the rank assumption. If
$F_{X_{\bar s}}^{e*}E_{\bar s}$ is not stable, then, for one of
$B=E$ or $B=E^*$, there is a saturated rank $1$ subsheaf $M\subset F_{X_{\bar s}}^{e*}B_{\bar s}$ such that
\[
\left(c_1(M)-\frac q r c_1(B_{\bar s})\right)\cdot H_{\bar s}
\geq0,
\]
where $q=p^e$.
For rank two this follows directly from nonstability. For rank three, a saturated
rank two subsheaf of slope at least that of the bundle gives the
required subsheaf in the dual by dualizing its rank one torsion-free
quotient. 

Suppose first that $G^\circ$ acts reducibly on $V$. Since $G$ acts
irreducibly, it permutes transitively the isomorphism classes of
irreducible $G^\circ$-constituents, and their dimensions and
multiplicities are equal. As $r$ is prime, all these constituents
are one-dimensional. On the finite \'etale Galois holonomy cover
\( f:X'\to X\)
of \cite[Lemma~40]{ssb-BalajiKollar2008}, the bundle $f^*E$ is
polystable and therefore a direct sum of line bundles of the same
$f^*H$-slope.
Either all summands are isomorphic, in which case
$\int_X\Delta(E)=0$ immediately, or
\[
f^*E=L_1\oplus\cdots\oplus L_r,
\]
where the deck group permutes the summands transitively.
Spread out the cover, the decomposition and this action, and shrink
so that the fibers of $X'$ are geometrically integral and all
displayed data remain defined.

Choose a bad geometric fiber and a line bundle $M$ as above. For
$B=E^*$ use the dual line bundles in the decomposition. Pullback
gives a nonzero equivariant morphism
\[
f_{\bar s}^*M\longrightarrow
\bigoplus_{j=1}^r L_{j,\bar s}^{\otimes q}.
\]
Its projection to every summand is nonzero: the source and the
morphism descend to $X_{\bar s}$, and the deck group permutes the
summands transitively. All target summands have the same
$f_{\bar s}^*H_{\bar s}$-degree, whereas the degree of the source
is at least this common degree. A nonzero morphism between these
line bundles consequently has a zero divisor of nonpositive ample
degree. It must be an isomorphism. Hence
\[
F_{X'_{\bar s}}^{e*}f_{\bar s}^*B_{\bar s}
\simeq(f_{\bar s}^*M)^{\oplus r},
\qquad
q^2(\deg f)\int_{X_{\bar s}}\Delta(E_{\bar s})=0.
\]
Constancy of the Chern numbers proves the assertion in this case.

Suppose now that $G^\circ$ acts irreducibly, and assume for a
contradiction that $\int_X\Delta(E)\ne0$. The derived group
$[G^\circ,G^\circ]$ in its given representation is
$\mathrm{SL}_2$ in rank two, and either $\mathrm{SL}_3$ or
$\mathrm{SO}_3$ in rank three; see
\cite[paragraph~45]{ssb-BalajiKollar2008}. Indeed, these are the semisimple
groups with a faithful irreducible representation of dimension two
or three, up to the isogeny defining the representation. Consequently,
for each $B\in\{E,E^*\}$, its closed-orbit bundle
\[
\pi_B:Y_B\subseteq\mathbb P_X(B)\longrightarrow X,
\qquad
L_B=\mathcal O_{\mathbb P_X(B)}(1)|_{Y_B},
\]
has fibers $\mathbb P^1$, $\mathbb P^2$, or smooth plane conics.
These fibers are geometrically integral and span the ambient
projective space. The unique closed $G^\circ$-orbit is invariant
under $G$, so no finite union of orbits is needed.
By Theorem~\ref{ssb:thm-holonomy-orbit-positivity}, the class 
\[
\theta_B:=c_1(L_B)-\frac1r\pi_B^*c_1(B)
\]
is not pseudoeffective for either choice of $B$.

We check the hypotheses of
Lemma~\ref{lem:movable-class-over-polarization}. By openness of slope
stability in the ample cone (see \cite[Section~4.C]{Huybrechts-Lehn2010}
or \cite[Theorem~3.3]{Greb-Kebekus-Peternell2016}), there is an open
neighborhood $U$ of $H$ on which $E$, and hence $E^*$, is stable.
On a smooth surface we view $U$ also as an open
set of movable curve classes. For every $a\geq0$, the 
bundle
\(
V_{B,a}:=(\pi_B)_*L_B^{\otimes a}
\)
is a direct summand of $\operatorname{Sym}^aB$: restriction of
homogeneous polynomials is surjective on these fibers, and the
holonomy representation is completely reducible. The center of
$G^\circ$ acts by scalars on $B_x$, and comparison of determinant
characters gives
\[
\frac{c_1(V_{B,a})}{\operatorname{rk}V_{B,a}}
=\frac a r c_1(B)
\quad\text{in }N^1(X)_{\mathbb Q}.
\]
Characters trivial on $G^\circ$ have finite order, which does not
affect this equality. Since tensor powers of $B$ are semistable
throughout $U$, the direct summands $V_{B,a}$ are semistable there
as well. Finally,
\[
\operatorname{Pic}(Y_B)\otimes\mathbb Q
=\mathbb Q[L_B]\oplus
\pi_B^*\bigl(\operatorname{Pic}(X)\otimes\mathbb Q\bigr).
\]
For projective-space fibers this is the usual projective-bundle
formula. For conic fibers, $L_B$ has degree two; the square of any
line bundle on $Y_B$, after twisting by an integral power of $L_B$,
is trivial on the fibers and therefore descends to $X$. This proves
the required rational Picard-group decomposition also in that case.

The lemma supplies rational classes $\beta_B$, each a finite positive
rational combination of strongly movable classes, such that
\[
(\pi_B)_*\beta_B\equiv H,
\qquad \theta_B\cdot\beta_B=-\delta_B<0.
\]
Apply Lemma~\ref{lem:specialization-movable-witnesses}
simultaneously to $(\pi_B,\theta_B,\beta_B)$ for
$B\in\{E,E^*\}$. After changing the model and shrinking the
base, preserving density of the bad locus, we obtain
\[
(\pi_{B,\bar s})_*\beta_{B,\bar s}\equiv H_{\bar s},
\qquad
\theta_{B,\bar s}\cdot\beta_{B,\bar s}=-\delta_B,
\qquad
T\cdot\beta_{B,\bar s}\geq0
\]
for every effective divisor $T$ on $Y_{B,\bar s}$.
After further shrinking, the orbit fibers remain smooth and
linearly nondegenerate in the corresponding projective spaces.

Choose a bad geometric fiber and a line
$M\to F_{X_{\bar s}}^{e*}B_{\bar s}$ as above. On
\[
Y_{B,\bar s}^{(q)}
:=Y_{B,\bar s}\times_{X_{\bar s},F_{X_{\bar s}}^e}X_{\bar s}
\subseteq\mathbb P_{X_{\bar s}}(F_{X_{\bar s}}^{e*}B_{\bar s}),
\]
the universal quotient gives a nonzero section of
$L_B^{(q)}\otimes\pi^*M^{-1}$. It is nonzero because the orbit
in the generic fiber spans the ambient projective space, so no
nonzero vector of $F_{X_{\bar s}}^{e*}B_{\bar s}$ vanishes on the
whole orbit. Pulling back by the surjective relative Frobenius
\[
\Phi_e:Y_{B,\bar s}\longrightarrow Y_{B,\bar s}^{(q)},
\qquad
\Phi_e^*L_B^{(q)}\simeq L_{B,\bar s}^{\otimes q},
\]
therefore gives an effective divisor $D_{\bar s}$, possibly zero,
with
\[
[D_{\bar s}]
=q\theta_{B,\bar s}
-\pi_{B,\bar s}^*\left(c_1(M)-\frac q r c_1(B_{\bar s})\right).
\]
It follows that
\[
0\leq D_{\bar s}\cdot\beta_{B,\bar s}
=-q\delta_B
-\left(c_1(M)-\frac q r c_1(B_{\bar s})\right)\cdot H_{\bar s}
<0,
\]
a contradiction. 
\end{proof}

The above theorem recovers Shepherd--Barron's theorem, at least in the surface case (for higher dimensional versions see Theorems \ref{cor:rank-at-most-three} and \ref{cor:rank-at-most-three-normal}).

\begin{corollary}[Shepherd--Barron, {\cite[Theorem 5]{ShepherdBarron1998}}]
	Let $(X,H)$ be a smooth polarized complex projective surface, and let
	$E$ be an $H$-semistable vector bundle of rank two.
If  $\Sigma^{\mathrm{nss}}(E)$ is dense, then $E$ is $H$-stable and $\int_X\Delta(E)=0$.
If  $\Sigma^{\mathrm{ns}}(E)$ is dense, then either $E$ is strictly $H$-semistable or it is 
$H$-stable with $\int_X\Delta(E)=0$.
\end{corollary}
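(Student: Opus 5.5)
The plan is to reduce the corollary to Theorem~\ref{ssb:prop-low-rank-holonomy} by showing that, in rank two, strict semistability is incompatible with density of $\Sigma^{\mathrm{nss}}(E)$. The second assertion is then immediate, since $\Sigma^{\mathrm{ns}}(E)$ is dense by hypothesis: if $E$ is not strictly semistable it is $H$-stable, and Theorem~\ref{ssb:prop-low-rank-holonomy} with $r=2$ gives $\int_X\Delta(E)=0$.

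For the first assertion, note that $\Sigma^{\mathrm{nss}}(\mathscr E/S)\subseteq\Sigma^{\mathrm{ns}}(\mathscr E/S)$, since a sheaf that is not strongly semistable is not strongly geometrically stable. So if $E$ is $H$-stable, density of $\Sigma^{\mathrm{nss}}(E)$ gives density of $\Sigma^{\mathrm{ns}}(E)$, and Theorem~\ref{ssb:prop-low-rank-holonomy} yields $\int_X\Delta(E)=0$. It remains to rule out the strictly semistable case.

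Suppose $E$ is strictly $H$-semistable of rank two. Take a saturated rank one subsheaf $L\subset E$ with $\mu_H(L)=\mu_H(E)$. This gives an exact sequence $0\to L\to E\to M\otimes I_Z\to0$, with $L,M$ line bundles of equal $H$-degree and $Z$ zero-dimensional. Spread out this sequence over the arithmetic base and shrink $S$, so that on every closed fiber we have
\[
0\to L_s\to\mathscr E_s\to M_s\otimes I_{Z_s}\to0,
\qquad
\deg_{H_s}L_s=\deg_{H_s}M_s.
\]
Pulling back by $F^{e}$ gives
\[
0\to L_s^{q}\to F^{e*}\mathscr E_s\to M_s^{q}\otimes I'\to 0,
\qquad q=p^e,
\]
where $I'$ is the Frobenius-pulled ideal, which is still of colength zero-dimensional support. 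Now let $N\subset F^{e*}\mathscr E_s$ be a rank one saturated subsheaf. Either $N$ lies in $L_s^q$, in which case $\deg N\le q\deg L_s$, or $N$ maps nonzero to $M_s^q\otimes I'$, in which case $\deg N\le q\deg M_s$. Both bounds equal the slope of $F^{e*}\mathscr E_s$. Hence every Frobenius pullback is semistable, so $s\notin\Sigma^{\mathrm{nss}}$ for all closed $s$ in a nonempty open subset. This contradicts density.

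The only delicate point is making the spreading out uniform, so that the extension sequence and the equality of degrees hold on all fibers over a nonempty open subset. This is standard: flatness of the quotient over an open subset suffices, and intersection numbers are constant in flat families. This also matches the remark after Theorem~\ref{thm:rank-four-counterexample} that strictly semistable rank two bundles have strongly semistable reductions after shrinking the base.
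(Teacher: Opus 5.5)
Your proposal is correct and follows the paper's argument. The stable case is deduced from Theorem~\ref{ssb:prop-low-rank-holonomy} using $\Sigma^{\mathrm{nss}}\subseteq\Sigma^{\mathrm{ns}}$. The strictly semistable case is excluded by spreading out an equal-slope rank one sub/quotient sequence and using flatness of Frobenius on the smooth fibers: every Frobenius pullback remains an extension of rank one sheaves of equal slope, hence is semistable.

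The only point you leave implicit is that spreading out the destabilizing subsheaf may require enlarging the model ring $R\subset\mathbb C$. This is harmless, since density of the bad locus does not depend on the model; the paper phrases it as a dominant base change that is flat over a nonempty open subset.
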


\begin{proof}
If $E$ is $H$-stable, Theorem~\ref{ssb:prop-low-rank-holonomy}
gives $\int_X\Delta(E)=0$ under the second hypothesis, and hence
also under the first, since
$\Sigma^{\mathrm{nss}}(E)\subseteq\Sigma^{\mathrm{ns}}(E)$.

Suppose that $E$ is strictly $H$-semistable. The second assertion is
then immediate. To exclude this case under the first hypothesis,
spread an equal-slope Jordan--H\"older sequence after a dominant
base change and shrinking the model:
\[
0\longrightarrow\mathscr M\longrightarrow\mathscr E
\longrightarrow\mathscr N\longrightarrow0.
\]
We may arrange that on every geometric fiber this sequence is exact
and its end terms are rank one torsion-free sheaves of equal slope.
The base change can be chosen inside $\mathbb C$ and made flat over
a nonempty open subset of the original base, so the chosen complex
fiber and density of the geometric bad locus are preserved.
Frobenius is flat on each smooth fiber. Hence every Frobenius
pullback of the displayed sequence is an extension of rank one
torsion-free sheaves of equal slope, and is semistable. Thus all
these fibers of $\mathscr E$ are strongly semistable, contradicting
density of $\Sigma^{\mathrm{nss}}(E)$.
\end{proof}

\medskip

Suppose that the base $S=\Spec R$ of a model \(
(\mathscr X,\mathscr H,\mathscr E)\)
 of $(X,H, E)$ has dimension greater than one.  A proper horizontal
closed subscheme of $S$ may meet the fiber over every rational prime.
Consequently, the following conditions are not equivalent:
\begin{enumerate}[(i)]
	\item bad closed points exist over infinitely many rational primes;
	\item the bad closed points are Zariski dense in $S$.
\end{enumerate}
This distinction is noted in \cite[Section~6, footnote~3]{LangerPositivitySemistability}.
If $S$ is an open subscheme of the spectrum of the integers of a number field, an infinite set of closed points is dense, so the  ``infinitely many places'' formulation in {\cite[Theorem 5]{ShepherdBarron1998}} is sufficient. 
The following example shows that in general using an infinite set of primes is not sufficient.

\begin{example}
Let \( R=\bZ[t]\hookrightarrow\bC\), \(t\to\alpha,\)
where $\alpha$ is transcendental, and set
$
 \mathscr X:=\bP^1_R\times_R\bP^1_R$ and $
 \mathscr H:=\cO_{\mathscr X}(1,1).$
Let us also set $\cA:=\cO _{\cX}(-1,1)$ and $\cB:=\cO _{\cX}(1,0)$.  Then
\(
 \Ext^1_{\mathscr X}(\cB,\cA)
 =H^1(\mathscr X,\cO(-2,1))\simeq R^{\oplus2}.
\)
Choose a basis vector $e$ and define $\mathscr E$ by the extension of class
$te$:
\begin{equation}
 0\longrightarrow \cA\longrightarrow\mathscr E
 \longrightarrow \cB\longrightarrow0.
 \label{eq:SB-model-extension}
\end{equation}
Then $E= \mathscr E _{\bC}$ is $H$-stable.  Indeed, suppose that  $\cO(a,b)\hookrightarrow E$ destabilizes $E$ so
that \(a+b\geq\mu_H(E)=1/2\).  If the composition of  $\cO(a,b)\hookrightarrow E$ with
$E\to B:=\cB_{\bC}$ vanishes, a map to $A:=\cA_{\bC}$ forces $a\leq-1$ and $b\leq1$, whence
$a+b\leq0$.  Otherwise, a map to $B$ forces $a\leq 1$ and $b\leq0$ so we have $(a,b)=(1,0)$.  Such a map $B\to E$ would
split \eqref{eq:SB-model-extension}, contrary to
\(\alpha e\neq0\).

One can also easily see that $ \int_X \Delta(E)=4\neq0.$
For every rational prime $p$, however, reduction at
\(\mathfrak m_p=(p,t)\) kills the extension class.  Thus
\(
 \mathscr E_{\mathfrak m_p}\simeq
 \cO(-1,1)\oplus\cO(1,0),
\)
and $\cB_{\mathfrak m_p}$ destabilizes it. So the set  $\Sigma ^{\mathrm{nss}}(\cE/\Spec R)$ is infinite and every rational prime occurs as the characteristic of a residue field of a point $s\in \Sigma ^{\mathrm{nss}}(\cE/\Spec R) $.

Note that  all bad points $\Sigma ^{\mathrm{nss}}(\cE/\Spec R)$ lie on the
proper divisor $V(t)\subset\Spec\bZ[t]$. Indeed, outside of this divisor every extension class is nonzero
and the same argument as above proves geometric stability of $\cE _s$. Since closed fibers are 
isomorphic to $\bP^1\times\bP^1$, such bundles are automatically strongly geometrically stable.
Indeed, $\Omega^1_{\mathbb P^1\times\mathbb P^1}
\simeq\mathcal O(-2,0)\oplus\mathcal O(0,-2)$, so
Proposition~\ref{prop:Cartier-descent} shows that Frobenius
pullback preserves stability.
So if we choose a different model replacing $R$ by $R[t^{-1}]$ then the set of bad points $\Sigma ^{\mathrm{nss}}(\cE/\Spec R[t^{-1}])$ is empty.
\end{example}

\section{The arithmetic surface theorem}

\begin{theorem}\label{ssb:thm-main}
	Let $(X,H)$ be a normal polarized complex projective surface,
	and let $E$ be a vector bundle of rank $r\geq2$ such that $\Sym^2E$ is
	$H$-stable. If $\Sigma^{\mathrm{ns}}(E)$ is dense, then	\(	\int_X\Delta(E)=0.	\)
\end{theorem}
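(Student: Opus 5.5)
First reduce to the smooth case. Take a resolution $f:Z\to X$ which is an isomorphism over $X_{\mathrm{reg}}$, an effective exceptional divisor $D$ with $-D$ relatively ample, and $P_N=Nf^*H-D$. The bundle $\Sym^2f^*E=f^*\Sym^2E$ is $f^*H$-stable, because stability is tested on the big open set where $f$ is an isomorphism. By Lemma~\ref{ssb:lem-nearby-polarizations} it is then $P_N$-stable for $N\gg0$, and hence so is $f^*E$ (Remark~\ref{stability-of-Sym-gives-usual}). Spread out $f$ and $D$. By Lemma~\ref{ssb:lem-birational-transfer}, applied to $(F^e)^*E_{\bar s}$ together with its proper sub- and quotient objects, every bad fiber for $E$ gives a bad fiber for $f^*E$ with respect to $P_N$. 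Here geometric nonstability of $F^{e*}E_{\bar s}$ yields an $f^*H$-slope-nonnegative destabilizer, and the inequality in that lemma transfers it to $P_N$. So $\Sigma^{\mathrm{ns}}(f^*E)$ is dense, and since $\int_Z\Delta(f^*E)=\int_X\Delta(E)$ we may assume that $X$ is smooth and $H$ ample.

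On a smooth surface, apply Lemma~\ref{ssb:lem-holonomy-square-dichotomy}. If the projective holonomy is finite, Lemma~\ref{ssb:lem-finite-projective-holonomy} gives $\int_X\Delta(E)=0$. Otherwise $D=[G^\circ,G^\circ]$ is $\SL(V)$ or $\Sp(V,b)$, which acts irreducibly on $V$. We then argue by contradiction, assuming $\int_X\Delta(E)\neq0$. For $1\le i<r$, consider the Grassmannian-type closed-orbit bundles attached to $D$: the closed $D$-orbit $O_i$ in $\mathbb P(L(\omega_i))$, where $L(\omega_i)$ is $\bigwedge^iV$ (or its primitive part in the symplectic case) and is $G$-invariant. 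Let $Y_i\subset\mathbb P_X(\bigwedge^iE)$ be the resulting orbit bundle, namely the Plücker-embedded Grassmannian bundle, or the isotropic Grassmannian bundle for $\Sp$, and put
\[
\theta_i=\xi_i-\tfrac ir\pi_i^*c_1(E).
\]
An analogue of Theorem~\ref{ssb:thm-holonomy-orbit-positivity} for these orbits shows that no $\theta_i$ is pseudoeffective. Apply that theorem to $F=\bigwedge^iE$, or to its stable direct summand $L(\omega_i)$, whose connected derived holonomy is still $D$ acting irreducibly. Vanishing of $\int\Delta$ of that summand forces the holonomy of $E$ to be projectively finite-by-central, because $\omega_i$ detects the simple group $D$. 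This yields a contradiction.

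Next, verify the hypotheses of Lemma~\ref{lem:movable-class-over-polarization} for $(Y_i,L_i)$. By Borel--Weil, $(\pi_i)_*L_i^{a}$ is the Cartan power bundle of highest weight $a\omega_i$, a direct summand of $\Sym^a\bigwedge^iE$. It is therefore semistable near $H$, with slope $\tfrac{ai}{r}c_1(E)$ by the central-character computation. The Picard group splits because $Y_i\to X$ has homogeneous fibers $D/P_i$ with $P_i$ maximal parabolic. This produces strongly movable classes $\beta_i$ with $(\pi_i)_*\beta_i\equiv H$ and $\theta_i\cdot\beta_i<0$. Spread these out via Lemma~\ref{lem:specialization-movable-witnesses}.

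At a bad fiber, take a saturated destabilizing $\mathscr F\subset F^{e*}E_{\bar s}$ of rank $i$, or a Jordan--Hölder piece, with slope at least the average. Its determinant gives $\det\mathscr F\to F^{e*}\bigwedge^iE_{\bar s}$. Since $\det\mathscr F$ corresponds to a decomposable vector, this section lies in the Frobenius twist of the Grassmannian orbit, and it does not vanish identically there. As in the proof of Theorem~\ref{ssb:prop-low-rank-holonomy}, pulling back by the relative Frobenius gives an effective divisor of class
\[
q\theta_i-\pi_i^*\bigl(c_1(\det\mathscr F)-\tfrac{qi}{r}c_1(E_{\bar s})\bigr),
\]
whose intersection with $\beta_i$ is negative, a contradiction.

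The main obstacle is the symplectic case. There a destabilizing $\mathscr F$ need not be isotropic, so $\det\mathscr F$ need not land in the isotropic Grassmannian orbit. I would handle this by using $\beta:\bigwedge^{[2]}E\to N$ and replacing $\mathscr F$ by $\mathscr F\cap\mathscr F^\perp$ or by $\mathscr F+\mathscr F^\perp$. In characteristic $p$ the form $F^{e*}\beta$ is still nondegenerate on $F^{e*}E_{\bar s}$, so these produce isotropic or coisotropic subsheaves whose slopes can be compared using $\mu(\mathscr F^\perp)$. This gives an isotropic destabilizer of rank $\le n$.
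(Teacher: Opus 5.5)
Your architecture is the paper's: reduction to a smooth surface via Lemmas~\ref{ssb:lem-nearby-polarizations} and~\ref{ssb:lem-birational-transfer}, the holonomy dichotomy, movable witnesses on the ordinary or isotropic Grassmann bundles spread out to the bad fibers, and the Frobenius-pulled-back determinant section. However, the symplectic step, which you single out as the main obstacle, does not work as proposed. Write $G_{\bar s}=F^{e*}E_{\bar s}$ and $\mu=\mu_{H_{\bar s}}(G_{\bar s})$. Your slope comparison is fine when $\mathscr F\cap\mathscr F^\perp\neq0$. Passing to $\mathscr F\cap\mathscr F^\perp$ or $\mathscr F+\mathscr F^\perp$ gives nothing, however, when $\mathscr F$ is generically nondegenerate for the form: then $\mathscr F\cap\mathscr F^\perp=0$ and $\mathscr F+\mathscr F^\perp$ has full rank. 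This case cannot be avoided. Suppose $G_{\bar s}=G_1\perp G_2$ is an orthogonal sum of nonisomorphic stable symplectic bundles of slope $\mu$; this is allowed, since $\Sigma^{\mathrm{ns}}$ includes strictly semistable Frobenius pullbacks. Then the only saturated subsheaves of slope $\geq\mu$ are $0$, $G_1$, $G_2$ and $G_{\bar s}$, so no isotropic destabilizer exists at all.

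The point you are missing is that isotropy is irrelevant. The section on $Y_i(G_{\bar s})$ is nonzero as soon as some point of the generic fiber is a quotient restricting isomorphically to $\mathscr F$. Lemma~\ref{ssb:lem-transverse-isotropic-quotient} supplies such a quotient with isotropic dual for every subspace of dimension $\leq n$, isotropic or not. Only the rank must be controlled. The paper does this by replacing $\mathscr F$, when $\rk\mathscr F>n$, by $(G_{\bar s}/\mathscr F)^*\otimes N_{\bar s}^{\otimes q}$, which is essentially $\mathscr F^\perp$. This has rank $<n$ and slope still $\geq\mu$, because $c_1(N_{\bar s}^{\otimes q})\cdot H_{\bar s}=2\mu$.

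The step deriving a contradiction from pseudoeffectivity of $\theta_i$ is also misstated. Vanishing of $\int_X\Delta(W_i)$ for the summand $W_i$ with fiber $L(\omega_i)$ does not make the projective holonomy of $E$ finite. For example, a projectively unitary flat $E$ with Zariski-dense projective monodromy has $D=\SL(V)$ and all $W_i$ projectively flat. What you actually need, and what is true, is that $\int_X\Delta(W_i)$ is a positive multiple of $\int_X\Delta(E)$; this is the positivity of the Dynkin index of $L(\omega_i)$ for the simple group $D$. Concretely, $\mathrm{ch}_2-c_1^2/(2\rk)$ of $\bigwedge^iE$ is $\binom{r-2}{i-1}$ times that of $E$. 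In the symplectic case the coefficient for the primitive summand is $\binom{2n-2}{i-1}-\binom{2n-2}{i-3}>0$.

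With this correction, applying Theorem~\ref{ssb:thm-holonomy-orbit-positivity} directly to $W_i$ and comparing discriminants is a valid alternative to the paper's route. The paper instead applies that theorem to $\mathcal W_{i,1}$ only to get nefness of $\widehat L_i$. It then concludes via the pushforward formula of Proposition~\ref{ssb:prop-top-intersection}, $\int_{Y_i}c_1(\widehat L_i)^{d_i+2}=-r^{d_i+2}\kappa_i\int_X\Delta(E)$ with $\kappa_i>0$, together with Bogomolov's inequality.
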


Fix an integral finitely generated $\mathbb Z$-subalgebra
$R\subset\mathbb C$, set $S=\operatorname{Spec}R$, and choose a model
\(
(\mathscr X,\mathscr H,\mathscr E)\to S
\)
of $(X,H,E)$. After shrinking, assume that $\mathscr X/S$ is flat
projective with geometrically normal and geometrically integral surface fibers,
$\mathscr H$ is relatively ample, and $\mathscr E$ is locally free.
For a geometric point $\bar s$ over a closed point $s\in S$ we write
$X_{\bar s}=\mathscr X_{\bar s}$, $H_{\bar s}=\mathscr H_{\bar s}$, and 
$E_{\bar s}=\mathscr E_{\bar s}$.  Under the hypotheses above, $E$ is geometrically
stable, so after shrinking $S$ every $E_{\bar s}$ is geometrically stable. 

We first prove the theorem when $X$ is smooth. The reduction of the
normal case to this case is given in
Subsection~\ref{ssb:subsec-normal-surfaces}. Until that subsection, we
assume that $X$ is smooth and choose the model with $\mathscr X/S$
smooth.

After separating the finite-projective-holonomy case, we use the
ordinary or isotropic Grassmann bundles selected by the
algebraic holonomy of $E$. On each of them, with Pl\"ucker line bundle
$L_i$ and projection $\pi_i$, we consider 
$\widehat L_i=L_i^{\otimes r}\otimes\pi_i^*(\det E)^{-i}.$
This normalization removes the scalar character and makes the argument
invariant under tensoring $E$ by a line bundle. Applying the general
criterion of Section~\ref{ssb:sec-general-holonomy-positivity} to the
homogeneous coordinate bundles gives
Corollary~\ref{ssb:thm-plucker-positivity}: pseudoeffectivity of
$\widehat L_i$ forces its nefness and vanishing of the discriminant.
We then adapt the proof of
Theorem~\ref{thm:pseudoeffective-exterior-power} to these Grassmann
bundles. Density of the bad reductions forces one of the normalized
Pl\"ucker line bundles to be pseudoeffective, proving theorem in the smooth case.

\subsection{Algebraic holonomy and normalized homogeneous bundles}
\label{ssb:sec-holonomy}

Throughout this subsection $X$ is a smooth polarized complex projective
surface, $r=\rk E\geq2$, and $\Sym^6E$ is $H$-stable.  We also fix $x\in X(\bC)$ and set
$V:=E_x$ and $G:=\Hol_x(E)$.
By Lemma \ref{ssb:prop-holonomy} $E$ is $H$-stable
and we have two possibilities for $D=[G^\circ, G^\circ]$. Namely, we have $D=\SL(V)$ or $D=\Sp(V,b)$.

In the special-linear case, for $1\leq i<r$, let
\(
\pi_i:Y_i=\Gr(E, i)\to X
\)
parametrize locally free rank $i$ quotients of $E$, let $Q_i$ be
the universal quotient, and put $L_i=\det Q_i$.
Choose a subspace $K_i\subset V$ of codimension $i$, and define
\[
P_i:=\operatorname{Stab}_{D}(K_i)
=\{g\in\SL(V)\mid g(K_i)=K_i\}.
\]
Thus $P_i$ is the maximal parabolic subgroup stabilizing the point
of $\Gr(V, i)$ represented by the quotient $V\twoheadrightarrow V/K_i$,
and
\(
D/P_i\simeq\Gr(V, i).
\)

In the symplectic case, write $r=2n$.
For $1\leq i\leq n$, let
\(
\pi_i:Y_i\to X
\)
be the closed relative isotropic Grassmannian in $\Gr(E, i)$
parametrizing rank $i$ quotients whose dual subbundles are isotropic
for the dual alternating form
\(
\beta^*:\bigwedge\nolimits^2E^*\to N^{-1}.
\)
Writing $Q_i$ also for the restriction of the universal quotient
to $Y_i$, this condition is
\[
\left.\pi_i^*\beta^*\right|_{\bigwedge^2Q_i^*}=0,
\qquad
Q_i^*\subseteq\pi_i^*E^*.
\]
Put $L_i=\det Q_i$.
Choose an $i$-dimensional isotropic subspace
$U_i\subset V^*$ for the dual form $b^*$, and define
\(
P_i:=\operatorname{Stab}_{D}(U_i),
\)
where $D=\Sp(V,b)$ acts on $V^*$ by the contragredient
representation.
Then $P_i$ is a maximal parabolic subgroup and
\(
D/P_i\simeq\operatorname{IGr}(V^*,b^*, i).
\)
In both cases, the conjugacy class of $P_i$ is independent of
the chosen reference subspace.

The morphisms $\pi_i$ are Zariski-locally trivial smooth projective
fibrations with geometrically integral homogeneous fibers $D/P_i$.
The relative Pl\"ucker embedding restricts to a closed immersion
\(
\iota_i:Y_i\hookrightarrow
\mathbb P_X\!\left(\bigwedge\nolimits^iE\right)
\)
such that
\(L_i\simeq\iota_i^* \mathcal O_{\mathbb P_X(\bigwedge^iE)}(1).\)
In particular, $L_i$ is $\pi_i$-very ample.
More generally, $L_i^{\otimes a}\otimes\pi_i^*M$ is
$\pi_i$-very ample for every integer $a\geq1$ and every line
bundle $M$ on $X$. In particular, the line bundles
$\widehat L_i:=L_i^{\otimes r}\otimes\pi_i^*(\det E)^{-i}$ are also
relatively very ample.

\begin{lemma}\label{ssb:lem-coordinate-bundles}
	Let us fix $i$. Then for  every $k\geq0$, the sheaf
	\(
	\mathcal V_{i,k}=(\pi_i)_*L_i^{\otimes k}
	\)
	is locally free, commutes with base change on $X$, and has
	$R^j(\pi_i)_*L_i^{\otimes k}=0$ for $j>0$.
	For $k\geq1$ it is $H$-stable and
	\begin{equation}\label{ssb:eq-c1-coordinate}
		c_1(\mathcal V_{i,k})=
		\frac{ik\rk\mathcal V_{i,k}}r c_1(E)
	\end{equation}
	in $N^1(X)_{\mathbb Q}$.	
	The normalized bundles
	\begin{equation}\label{ssb:eq-normalized-coordinate}
		\mathcal W_{i,m}:=(\pi_i)_*\widehat L_i^{\otimes m}
		=\mathcal V_{i,rm}\otimes(\det E)^{-im}
	\end{equation}
	are $H$-stable with trivial determinant for $m\geq1$. They satisfy
	the same local freeness, base-change, and higher-direct-image
	vanishing statements. Their evaluation maps give closed immersions
	\(
	Y_i\hookrightarrow\mathbb P_X(\mathcal W_{i,m})\) with \(
	\widehat L_i^{\otimes m}
	=\mathcal O_{\mathbb P_X(\mathcal W_{i,m})}(1)|_{Y_i}.
	\)
\end{lemma}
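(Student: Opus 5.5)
The plan is to work fiberwise with the homogeneous description and then globalize through the holonomy reduction. Write $Y_i=\mathcal P_G\times^G(G\cdot D/P_i)$. Since $D$ acts irreducibly on $V$ and $V^*$, the closed orbit $D/P_i$ is $G$-invariant, so $Y_i=\mathcal P_G\times^G(D/P_i)$ and $L_i$ is associated with the Pl\"ucker character of $P_i$.

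\emph{Local freeness, base change and vanishing.} Apply the Borel--Weil--Bott theorem of Subsection~\ref{subsection:Cartan} on each fiber $D/P_i$. The line bundle $L_i^{\otimes k}$ corresponds to the dominant weight $k\omega$, where $\omega$ is the fundamental weight attached to $\bigwedge^iV$ in the quotient convention (for $\Sp$, the fundamental weight $\omega_i$). Hence $H^q=0$ for $q>0$, and $H^0$ is the irreducible Cartan power $L(k\omega)$, whose dimension is constant. Cohomology and base change (Grauert) then gives local freeness, base change and $R^j(\pi_i)_*=0$; the case $k=0$ is trivial. Moreover $\mathcal V_{i,k}\simeq\mathcal P_G\times^G L(k\omega)$. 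This is the Cartan power $\mathbb S_{(k^i)}$ of $\bigwedge^iE$ in the linear case, and the Cartan power of the primitive part in the symplectic case. It is a direct summand of $\Sym^k(\bigwedge^iE)$, namely the image of the restriction of homogeneous polynomials, and $G$ acts on it through $G=Z\cdot D$.

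\emph{Stability and $c_1$.} $D$ acts irreducibly on $L(k\omega)$ for $k\geq1$, hence so does $G$. Since $\mathcal V_{i,k}$ is a reflexive tensor construction of the polystable $E$, Theorem~\ref{Balaji-Kollar}(2) gives $H$-stability. For the first Chern class, I would compare determinant characters. The representation $\det L(k\omega)$ restricted to $D$ is trivial because $D$ is semisimple. The scalar $\lambda\in Z$ acts on $L(k\omega)\subset\Sym^k\bigwedge^iV$ by $\lambda^{ik}$, so $\det L(k\omega)$ restricts to $\lambda^{ik\rk\mathcal V_{i,k}}$, while $\det V$ gives $\lambda^r$. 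Thus the character $\chi:=(\det L(k\omega))^{\otimes r}\otimes(\det V)^{-ik\rk}$ is trivial on $Z\cdot D=G$. I expect this step to be the delicate one. The remaining issue is that $G/G^\circ$ may contribute a torsion character, but $G=Z\cdot D$ already shows $\chi$ is trivial. Even otherwise a finite order character only changes $c_1$ by a torsion class, which vanishes in $N^1(X)_{\mathbb Q}$. This yields \eqref{ssb:eq-c1-coordinate} (using $rc_1(\mathcal V_{i,k})=ik\rk\, c_1(E)$).

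\emph{Normalized bundles.} The projection formula gives \eqref{ssb:eq-normalized-coordinate}, and the local freeness, base change and vanishing statements transfer immediately. Twisting by a line bundle preserves stability. Formula \eqref{ssb:eq-c1-coordinate} with $k=rm$ shows that $c_1(\mathcal W_{i,m})$ is numerically trivial; the same character computation shows the associated determinant character is trivial on $G$, so $\det\mathcal W_{i,m}\simeq\mathcal O_X$. Finally, $\widehat L_i^{\otimes m}$ is relatively very ample and $H^0$ of the fibers is the fiber of $\mathcal W_{i,m}$ by base change. So the relative evaluation map $\pi_i^*\mathcal W_{i,m}\to\widehat L_i^{\otimes m}$ is surjective and fiberwise it is the complete-linear-system embedding of $D/P_i$ (projective normality of flag varieties). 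Hence it defines a closed immersion $Y_i\hookrightarrow\mathbb P_X(\mathcal W_{i,m})$ pulling back $\mathcal O(1)$ to $\widehat L_i^{\otimes m}$.
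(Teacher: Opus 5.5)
Your proposal is correct and follows essentially the same route as the paper. You use Borel--Weil(--Bott) on the homogeneous fibres together with Grauert for local freeness, base change and vanishing, and irreducibility under $D$ together with the holonomy correspondence of Theorem~\ref{Balaji-Kollar} for stability. You then compare determinant characters on $D$ and on scalars, using $G=Z\cdot D$, to get \eqref{ssb:eq-c1-coordinate} and $\det\mathcal W_{i,m}\simeq\mathcal O_X$, and relative very ampleness for the evaluation embedding.

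The only cosmetic differences are two. First, you make the associated-bundle description $Y_i=\mathcal P_G\times^G(D/P_i)$ explicit; its $G$-invariance follows simply because $Z$ acts trivially on Grassmannians. Second, you invoke projective normality, which is not needed for the final closed-immersion step.
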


\begin{proof}
	The Borel--Weil theorem and vanishing for dominant weights
	identify the $D$-module
	$H^0((Y_i)_x,(L_i)_x^{\otimes k})$ with the irreducible
	representation of highest weight $k\omega_i$ (see
	\cite[Part~II, Sections~2 and~4]{ssb-Jantzen2003}).
	In the special-linear case this is the Schur module $\mathbb S_{(k^i)}V$.
	In the symplectic case it is the irreducible representation of highest weight \(k\omega_i\) (see Subsection \ref{subsection:Cartan}). The fiber calculations give 
	local freeness, base change, and higher-direct-image vanishing.
	Relative very ampleness gives the evaluation assertions.
	
	The fiber representation of $\mathcal V_{i,k}$ is irreducible under $D$, hence under $G$,
	and it occurs as a summand of $V^{\otimes ik}$. Tensor polystability and
	the holonomy correspondence therefore give stability of
	$\mathcal V_{i,k}$ (see Theorem \ref{Balaji-Kollar}). The two characters
	$(\det\mathcal V_{i,k})^{\otimes r}$ and
	$(\det E)^{\otimes ik\rk\mathcal V_{i,k}}$ agree: both are trivial on $D$ and they  have
	the same weight on scalar matrices. This proves
	\eqref{ssb:eq-c1-coordinate}.
	The scalar $t\operatorname{id}_V$ acts on $\mathcal V_{i,rm}$ with
	weight $irm$, canceled by the twist in
	\eqref{ssb:eq-normalized-coordinate}. Thus the normalized
	representation is trivial on $Z$. Its determinant is also trivial
	on the semisimple group $D$, hence on $G=Z\cdot D$.
	Consequently $\det\mathcal W_{i,m}\simeq\mathcal O_X$.
	Twisting preserves stability, and the remaining assertions follow by
	the projection formula.
\end{proof}

\subsection{Positivity of normalized Plücker line bundles}

\begin{corollary}\label{ssb:thm-plucker-positivity}
	Assume that $\Sym^6E$ is $H$-stable. For each of the ordinary or
	isotropic Grassmann bundles selected above, the following conditions
	are equivalent:
	\begin{enumerate}
		\item $	\widehat L_i$ is pseudoeffective,
		\item $	\widehat L_i$ is nef,
		\item $	\int_X\Delta(E)=0.$
	\end{enumerate}
	
	In particular, if $\int_X\Delta(E)>0$, all the relevant normalized Pl\"ucker line bundles are non-pseudoeffective.
\end{corollary}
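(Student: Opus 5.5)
The plan is to deduce the corollary from Theorem~\ref{ssb:thm-holonomy-orbit-positivity}. Since $\Sym^6E$ is $H$-stable, Lemma~\ref{ssb:prop-holonomy} gives $G=Z\cdot D$ with $D=\SL(V)$ or $\Sp(V,b)$. In particular $G^\circ=Z^\circ\cdot D$ acts irreducibly on $V$, hence on $W:=\bigwedge^iV$ in the special-linear case, or on the primitive part of $\bigwedge^iV$ in the symplectic case. The difficulty is that Theorem~\ref{ssb:thm-holonomy-orbit-positivity} is stated for closed orbits in $\mathbb P_X(E)$, with the bundle $E$ itself. I will therefore apply it not to $E$ but to an auxiliary $H$-stable bundle $F$ whose closed-orbit bundle is $Y_i$.

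The natural choice is the normalized coordinate bundle $F:=\mathcal W_{i,1}=\mathcal V_{i,r}\otimes(\det E)^{-i}$ of Lemma~\ref{ssb:lem-coordinate-bundles}. By that lemma it is $H$-stable of rank $R$ with trivial determinant, and $Y_i\hookrightarrow\mathbb P_X(F)$ with $\widehat L_i=\mathcal O_{\mathbb P_X(F)}(1)|_{Y_i}$. Its holonomy is the image $G_F$ of $G$ acting on the irreducible $D$-module $L(r\omega_i)$ twisted by the inverse of the $i$th power of the determinant. By the holonomy correspondence (Theorem~\ref{Balaji-Kollar}(3)), the structure group of $F$ reduces to $G_F$. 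I expect $G_F$ to be minimal, so that $[G_F^\circ,G_F^\circ]$ is the image of $D$. The closed $D$-orbit in $\mathbb P(L(r\omega_i))$ is the orbit of the highest-weight line, which is $D/P_i$ embedded by $L_i^{\otimes r}$. This is precisely the fiber of $Y_i$, so $Y_i$ is the closed-orbit bundle of $F$. Since $\det F$ is trivial, the normalized tautological bundle of Theorem~\ref{ssb:thm-holonomy-orbit-positivity} for $F$ is $\widehat L_i^{\otimes R}$.

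Pseudoeffectivity and nefness are insensitive to positive powers. Theorem~\ref{ssb:thm-holonomy-orbit-positivity} applied to $F$ therefore shows that (1) and (2) for $\widehat L_i$ are each equivalent to $\int_X\Delta(F)=0$, that is, to projective unitary flatness of $F$. It remains to prove that $\int_X\Delta(F)=0$ if and only if $\int_X\Delta(E)=0$.

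\textbf{Direction (3)$\Rightarrow$(1),(2).} If $\int_X\Delta(E)=0$, then $E$ is projectively unitary flat by Corollary~\ref{cor:DUY}. Hence $F$, being a Schur functor of $E$ twisted by a power of $\det E$, is unitary flat, and the conclusion follows. Alternatively, one can restrict the nef normalized class on $\mathbb P_X(\bigwedge^iE)$ given by Theorem~\ref{thm:FUY}.

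\textbf{Direction (2)$\Rightarrow$(3).} This is the main obstacle. I would argue as follows. If $F$ is projectively unitary flat, then $\cEnd(F)$ is unitary flat. The representation $\rho\colon D\to\GL(L(r\omega_i))$ has finite kernel, so $\mathfrak{sl}(V)$ or $\mathfrak{sp}(V)$ is a $D$-direct summand of $\End L(r\omega_i)$. By Theorem~\ref{Balaji-Kollar}(2), $\cEnd_0(E)$ is then a direct summand of $\cEnd(F)$. A direct summand of a numerically flat bundle is numerically flat, so $\cEnd(E)$ is nef, and Theorem~\ref{thm:FUY} gives $\int_X\Delta(E)=0$.

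A more computational alternative for this direction is to avoid $F$ and run the Grothendieck--Riemann--Roch argument from the proof of Theorem~\ref{ssb:thm-holonomy-orbit-positivity} directly on $Y_i$. That computation expresses $(\pi_i)_*\widehat L_i^{\,d+2}\cdot[\text{fiber}]$ as a positive multiple of $-\Delta(E)$ and uses nefness to conclude. I would fall back on it only if the summand argument runs into trouble with the scalar part of $G$.
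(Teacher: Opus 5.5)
The routes differ in (2)$\Rightarrow$(3). Your main argument for that step has a real error in the symplectic case for $r\ge4$. It is repairable, and your fallback is the paper's proof.

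\textbf{What agrees with the paper.} Your treatment of (1)$\Leftrightarrow$(2) and of (3)$\Rightarrow$(2) is exactly the paper's. You apply Theorem~\ref{ssb:thm-holonomy-orbit-positivity} to $\mathcal W=\mathcal W_{i,1}$, whose closed-orbit bundle is $Y_i$ with normalized bundle $\widehat L_i^{\otimes R}$, and for the converse you use that $\mathcal W$ is unitary flat once $E$ is projectively unitary flat. The paper simply asserts that the holonomy of $\mathcal W$ is the image of $G$, which is the image of $D$; your ``expectation'' there is correct.

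\textbf{How the paper proves (2)$\Rightarrow$(3).} The paper does not pass through $\int_X\Delta(\mathcal W)=0$. It uses the Gysin identity of Proposition~\ref{ssb:prop-top-intersection},
\(
0\le\int_{Y_i}c_1(\widehat L_i)^{d_i+2}=-r^{d_i+2}\kappa_i\int_X\Delta(E)
\)
with $\kappa_i>0$, and then Bogomolov's inequality. This is your fallback. Note that nefness alone only gives $\int_X\Delta(E)\le0$; you need Bogomolov to get equality.

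\textbf{Your main route, special-linear case.} You take projective unitary flatness of $\mathcal W$ from the theorem and transfer it to $E$ through the adjoint representation. Here this works. $d\rho$ embeds $\mathfrak{sl}(V)$ as a $G$-stable, hence direct, summand of $\End(\mathcal W_x)$. So $\cEnd_0(E)$ is a summand of the unitary flat bundle $\cEnd(\mathcal W)$, $\cEnd(E)$ is nef, and Theorem~\ref{thm:FUY} applies.

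\textbf{Your main route, symplectic case.} For $r\ge4$ the step fails as written. $d\rho$ only embeds $\mathfrak{sp}(V)$, and
\(
\End_0(V)=\mathfrak{sp}(V)\oplus(\bigwedge^2V)_{\mathrm{prim}}
\)
up to the scalar twist, with the second summand nonzero. What you actually get is that $\mathfrak{sp}(E)\simeq\Sym^2E\otimes N^{-1}$ is numerically flat. That is not $\cEnd_0(E)$, and nefness of $\cEnd(E)$ does not follow.

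\textbf{The fix.} Numerically flat bundles have numerically trivial Chern classes. Since $c_1(N)=\frac2r c_1(E)$ by \eqref{ssb:eq-form}, a Chern-root computation after formally normalizing $c_1(E)=0$ gives
\(
\mathrm{ch}_2(\Sym^2E\otimes N^{-1})=-\frac{r+2}{2r}\Delta(E).
\)
Hence $\int_X\Delta(E)=0$.

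\textbf{Comparison.} With this patch your route is valid. It replaces the Darondeau--Pragacz computation with the Grothendieck--Riemann--Roch argument already inside Theorem~\ref{ssb:thm-holonomy-orbit-positivity}, plus a representation-theoretic transfer, at the cost of a case distinction. The paper's top-intersection identity works directly on $Y_i$ and treats both cases uniformly.
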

\begin{proof}
	Fix $i$ and put $\mathcal W=\mathcal W_{i,1}$ and
	$R=\rk\mathcal W$. By Lemma~\ref{ssb:lem-coordinate-bundles},
	$\mathcal W$ is $H$-stable with trivial determinant, and its
	evaluation map embeds $Y_i$ into $\mathbb P_X(\mathcal W)$ with
	tautological line bundle $\widehat L_i$. The holonomy of
	$\mathcal W$ is the image of $G$, which equals the image of $D$
	because the normalized representation is trivial on $Z$.
	The fiber of $Y_i$ is its closed orbit in
	$\mathbb P(\mathcal W_x)$.
	Theorem~\ref{ssb:thm-holonomy-orbit-positivity}, applied to
	$\mathcal W$, therefore gives the equivalence of pseudoeffectivity
	and nefness of $\widehat L_i^{\otimes R}$.
	Since both properties are unchanged by taking a positive tensor
	power, conditions~(1) and~(2) are equivalent.
	
	If $\widehat L_i$ is nef, Proposition~\ref{ssb:prop-top-intersection}
	gives
	\[
	0\leq
	\int_{Y_i}c_1(\widehat L_i)^{d_i+2}
	=-r^{d_i+2}\kappa_i\int_X\Delta(E).
	\]
	Since $\kappa_i>0$, Bogomolov's inequality implies
	$\int_X\Delta(E)=0$. Thus~(2) implies~(3).
	
	Conversely, suppose that $\int_X\Delta(E)=0$.
	By Corollary~\ref{cor:DUY}, the bundle $E$ is projectively
	unitary flat. Since the normalized representation defining
	$\mathcal W$ is trivial on scalars, the induced connection
	on $\mathcal W$ is unitary flat. Hence $\mathcal W$ is nef,
	and the evaluation quotient
	\(
	\pi_i^*\mathcal W\twoheadrightarrow\widehat L_i
	\)
	shows that $\widehat L_i$ is nef. This proves~(3)$\Rightarrow$(2)
	and completes the proof.
\end{proof}

\begin{remark}
	For $i=1$ we have
	\[
	\int_{\mathbb P_X(E)}
	\bigl(r\,c_1(\mathcal O(1))-\pi^*c_1(E)\bigr)^{r+1}
	=-\frac{r^r}{2}\int_X\Delta(E).
	\]
	When $c_1(E)\equiv0$, one has
	$c_1(\widehat L_i)\equiv r\,c_1(L_i)$ and
	$\Delta(E)=2r\,c_2(E)$ numerically. Thus
	Corollary~\ref{ssb:thm-plucker-positivity} includes the equivalence of
	pseudoeffectivity of $L_i$, nefness of $L_i$, and
	$\int_Xc_2(E)=0$ in the earlier degree-zero setting.
\end{remark}

\subsection{Pseudoeffectivity of a normalized Pl\"ucker line bundle}

We first record the linear algebra needed to treat Frobenius pullbacks
which are semistable but not stable.

\begin{lemma}\label{ssb:lem-transverse-isotropic-quotient}
	Let $V$ be a symplectic vector space of dimension $2n$ over an
	algebraically closed field of characteristic different from two.
	For every subspace $A\subset V$ of dimension $1\leq i\leq n$, there is a
	rank $i$ quotient $V\twoheadrightarrow Q$ whose restriction to $A$
	is an isomorphism and whose dual $Q^*\subset V^*$ is isotropic.
\end{lemma}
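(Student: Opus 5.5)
The plan is to choose $Q^*$ directly as an isotropic subspace of $V^*$ that meets $A^\perp$ trivially. Here $A^\perp\subset V^*$ is the annihilator of $A$. A quotient $V\twoheadrightarrow Q$ of rank $i$ corresponds to the $i$-dimensional subspace $Q^*\subset V^*$ of linear forms that factor through $Q$. The restriction $A\to Q$ is an isomorphism exactly when the pairing $Q^*\times A\to k$ is perfect. Equivalently, $Q^*\cap A^\perp=0$, since $\dim Q^*=\dim A=i$.

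The problem therefore reduces to the following: in the symplectic space $(V^*,b^*)$ of dimension $2n$, find an $i$-dimensional isotropic subspace $U$ with $U\cap A^\perp=0$, where $A^\perp$ has codimension $i\le n$. I would prove this by induction, building $U=\langle u_1,\dots,u_i\rangle$ one vector at a time. Suppose isotropic $u_1,\dots,u_j$ with $j<i$ have been chosen, and their span $U_j$ satisfies $U_j\cap A^\perp=0$. The next vector $u$ must lie in $U_j^{\perp_{b^*}}$, which has dimension $2n-j$, so that the enlarged space stays isotropic. It must also avoid the subspace $U_j+A^\perp$, which has dimension $j+(2n-i)$, so that the enlarged span still meets $A^\perp$ trivially.

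Because $k$ is infinite, such a $u$ exists provided $U_j^{\perp}\not\subseteq U_j+A^\perp$. For a dimension check I would count inside $U_j^\perp$. The intersection $U_j^\perp\cap(U_j+A^\perp)$ contains $U_j$, and we need it to be a proper subspace of $U_j^\perp$. Write $W=U_j^\perp/U_j$, a symplectic space of dimension $2(n-j)$. The image of $U_j^\perp\cap(U_j+A^\perp)$ in $W$ is contained in the image of $A^\perp\cap U_j^\perp$. Since $A^\perp$ has codimension $i$ and $U_j^\perp$ has codimension $j$ in $V^*$, the dimension of $A^\perp\cap U_j^\perp$ is at least $2n-i-j$, with no useful upper bound from this alone. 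So a naive count fails when $A^\perp\supseteq U_j^\perp$, and the real content of the proof is to rule that case out.

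This is where I expect the main obstacle to lie. The plan is to use the hypothesis $U_j\cap A^\perp=0$. Suppose $U_j^\perp\subseteq U_j+A^\perp$. Taking $b^*$-orthogonals gives $(U_j+A^\perp)^\perp\subseteq U_j$. The left-hand side is $U_j^\perp\cap (A^\perp)^\perp$, and $(A^\perp)^\perp$ has dimension $i>j$. So $U_j^\perp\cap(A^\perp)^\perp$ has dimension at least $i-j\ge1$, and it is contained in $U_j$. That alone is not yet a contradiction.

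If it turns out too weak, I would switch to a cleaner global argument. Write $C=A^\perp$, of codimension $i\le n$. The isotropic Grassmannian $\operatorname{IGr}(V^*,b^*,i)$ is irreducible. The condition $U\cap C\neq0$ defines a closed subset of it, namely a Schubert-type divisor cut out by the determinant of the pairing with $A$. It therefore suffices to exhibit one isotropic $U$ with $U\cap C=0$. To do this, pick a Lagrangian $\Lambda$ that is complementary to $C^{\perp}$ in a suitable sense, or more simply choose a symplectic basis adapted to $C$. Since $\dim C^\perp=i\le n$, one can choose a symplectic basis $e_1,\dots,e_n,f_1,\dots,f_n$ with $C^\perp\cap\langle e\rangle$ of maximal dimension. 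A Lagrangian $\Lambda$ with $\Lambda\cap C$ of dimension exactly $n-i$ then exists; indeed a generic Lagrangian meets a codimension-$i$ subspace in dimension $n-i$, which one can verify by computing this dimension in the chosen basis. Inside $\Lambda$, any $i$-dimensional complement $U$ of $\Lambda\cap C$ is isotropic and satisfies $U\cap C=0$.

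I expect the verification that a suitable Lagrangian exists to be the delicate step, and it is the one I would check carefully in coordinates. Characteristic $\neq2$ is used only so that alternating forms and the dual form $b^*$ behave as expected.
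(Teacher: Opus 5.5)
There is a genuine gap: you reduce the lemma, correctly, to the existence of a Lagrangian transverse to $C=A^\perp$, and then assert that existence without proof.

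Your reductions themselves are sound. Rank $i$ quotients of $V$ correspond to $i$-dimensional subspaces $Q^*\subset V^*$, and $A\to Q$ is an isomorphism exactly when $Q^*\cap A^\perp=0$. If a Lagrangian $\Lambda\subset V^*$ satisfies $\dim(\Lambda\cap C)=n-i$, then any $i$-dimensional complement of $\Lambda\cap C$ in $\Lambda$ is the required $Q^*$.

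The existence of such a $\Lambda$ is, however, the entire content of the lemma. You only assert it (``a generic Lagrangian meets a codimension-$i$ subspace in dimension $n-i$, which one can verify \dots''). Since $\dim(\Lambda\cap C)\geq n-i$ always, equality means $\Lambda+C=V^*$. Taking $b^*$-orthogonals, this says $\Lambda\cap C^{\perp}=0$, where $C^{\perp}$ is the $i$-dimensional image of $A$ under $V\simeq V^*$. So what is missing is exactly the statement that some Lagrangian meets a given subspace of dimension $i\leq n$ trivially.

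The paper proves this by a dimension count. The Lagrangian Grassmannian has dimension $n(n+1)/2$. The pairs $(\ell,K)$ with $\ell\subset A$ a line and $K\supset\ell$ Lagrangian form a family of dimension $i-1+n(n-1)/2$, because Lagrangians through $\ell$ are Lagrangians of $\ell^\perp/\ell$. This is smaller precisely because $i\leq n$.

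Your symplectic basis ``with $C^\perp\cap\langle e\rangle$ of maximal dimension'' does not by itself produce such a $\Lambda$. Also, irreducibility of the isotropic Grassmannian plays no role in a pure existence statement.

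If you prefer coordinates, decompose $C^\perp=R\oplus S$ with $R$ its radical ($\dim R=k$) and $S$ symplectic ($\dim S=2m$), so $2m+k=i\leq n$. Take an adapted symplectic basis with $S=\langle e_1,\dots,e_m,f_1,\dots,f_m\rangle$ and $R=\langle e_{m+1},\dots,e_{m+k}\rangle$. The Lagrangian spanned by $e_a+e_{m+k+a}$ and $f_a-f_{m+k+a}$ for $1\leq a\leq m$, together with $f_c$ for $c\in[m+1,m+k]\cup[2m+k+1,n]$, meets $C^\perp$ trivially.

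Your first, inductive route cannot be completed just by sharpening the count. With the inductive hypothesis as you state it (only $U_j$ isotropic and $U_j\cap C=0$), the step genuinely fails for some admissible earlier choices. Take $n=i=2$, a symplectic basis $e_1,e_2,f_1,f_2$ of $V^*$, $C=\langle e_1,f_1\rangle$ and $u_1=e_2$. Then $U_1\cap C=0$, but $u_1^{\perp}=\langle e_1,f_1,e_2\rangle=U_1+C$, so no admissible $u_2$ exists. A solution does exist here, for instance $\langle e_1+e_2,f_1-f_2\rangle$. A greedy construction would therefore have to carry a genericity condition forward, which again amounts to the missing transversality statement.
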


\begin{proof}
	The Lagrangian Grassmannian of $V$ has dimension $n(n+1)/2$.
	The incidence variety of pairs $(\ell,K)$ with
	$\ell\subset A$ a line and $K$ Lagrangian containing $\ell$ has
	dimension
	\[
	i-1+\frac{n(n-1)}2<\frac{n(n+1)}2.
	\]
	Thus there is a Lagrangian $K$ with $K\cap A=0$.
	Choose a quotient $V/K\twoheadrightarrow Q$ of dimension $i$
	which is an isomorphism on the image of $A$. Since
	$(V/K)^*\subset V^*$ is isotropic, so is $Q^*$.
\end{proof}

The following is the Grassmannian version of
Theorem~\ref{thm:pseudoeffective-exterior-power} needed here.

\begin{proposition}\label{ssb:prop-grassmann-pseudoeffectivity}
	Assume that $\Sym^6E$ is $H$-stable and that
	$\Sigma^{\mathrm{ns}}(E)$ is dense. Then $\widehat L_i$ is
	pseudoeffective for at least one of the ordinary or isotropic
	Grassmann bundles defined in Subsection~\ref{ssb:sec-holonomy}.
\end{proposition}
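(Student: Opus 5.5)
The plan is to adapt the proof of Theorem~\ref{thm:pseudoeffective-exterior-power} directly, replacing $\mathbb P_X(\bigwedge^iE)$ by the Grassmann bundles $\pi_i:Y_i\to X$. Suppose, for a contradiction, that none of the $\widehat L_i$ is pseudoeffective; equivalently, none of the classes $\theta_i:=c_1(L_i)-\frac ir\pi_i^*c_1(E)$ is. I would first check the hypotheses of Lemma~\ref{lem:movable-class-over-polarization} for $\pi_i$ with $L=L_i$ and $\eta=\frac ir c_1(E)$.

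\begin{itemize}
\item The fibers $D/P_i$ are homogeneous for a maximal parabolic, so $\operatorname{Pic}(Y_i)\otimes\bQ=\bQ[L_i]\oplus\pi_i^*\operatorname{Pic}(X)_{\bQ}$.
\item The sheaves $\mathcal V_{i,a}$ are locally free with the required slopes by Lemma~\ref{ssb:lem-coordinate-bundles}.
\item They are direct summands of tensor powers of $E$. By openness of stability in a neighbourhood $U$ of $H$ and semistability of tensor powers, they are semistable throughout $U$.
\end{itemize}

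The lemma then gives rational classes $\beta_i$, each a positive combination of strongly movable classes, with $(\pi_i)_*\beta_i\equiv H$ and $\theta_i\cdot\beta_i=-\delta_i<0$. Applying Lemma~\ref{lem:specialization-movable-witnesses} to the finitely many triples $(\pi_i,\theta_i,\beta_i)$ spreads these out. We may also shrink the base so that the Grassmann bundles and the form $\beta$ in the symplectic case specialize well, and the bad locus stays dense. This yields, on every geometric fiber, $T\cdot\beta_{i,\bar s}\ge0$ for effective $T$, together with the specialized equalities.

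Now take a bad point $\bar s$ and $e\ge0$, put $q=p^e$, and suppose $F^{e*}E_{\bar s}$ is not stable. Take a saturated $\mathscr F\subset F^{e*}E_{\bar s}$ of rank $j$ with slope at least that of the ambient bundle, and set $M=\det\mathscr F$.

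In the $\SL$ case, take $i=j$. Restricting $L_i$ to the relevant morphism gives a nonzero section, because the composite $\mathscr F\to F^{e*}\pi_i^*E\to F^{e*}Q_i$ is generically an isomorphism for a general quotient. Pulling back by the relative Frobenius of $Y_i$ then gives an effective divisor of class
\[
q\theta_i-\pi_i^*\Bigl(c_1(M)-\tfrac{qj}{r}c_1(E_{\bar s})\Bigr).
\]
Intersecting with $\beta_{i,\bar s}$ gives a negative number, a contradiction exactly as in the proof of Theorem~\ref{ssb:prop-low-rank-holonomy}.

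In the symplectic case there are two cases.
\begin{itemize}
\item If $j\le n$, I would use Lemma~\ref{ssb:lem-transverse-isotropic-quotient}. At the generic point it produces an isotropic quotient on which $\mathscr F$ maps isomorphically, so the determinant section restricted to $Y_i$ with $i=j$ is not identically zero.
\item If $j>n$, I would replace $\mathscr F$ by its symplectic orthogonal complement or pass to $E^*\simeq E\otimes N^{-1}$. Using $\det E\simeq N^{[n]}$, this produces a subsheaf of rank $2n-j<n$ of slope at least the normalized slope; for $j=n$ one uses $\mathscr F$ directly.
\end{itemize}
The normalization $\widehat L_i$ is invariant under twisting, which makes this dualization compatible with the computation.

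\textbf{Main obstacle.} The hardest step is the symplectic case. The Frobenius pullback $F^{e*}E_{\bar s}$ carries the pulled-back form, but $\mathscr F$ need not be isotropic. The nonvanishing of the section on $Y_i^{(q)}$ therefore has to come from a genericity argument on the fiber, as in Lemma~\ref{ssb:lem-transverse-isotropic-quotient}. The case $j>n$ needs an orthogonal-complement slope computation, $\mu(\mathscr F^\perp)$ versus $\mu(\mathscr F)$, using that the form takes values in $F^{e*}N$. I would first verify that $\mathscr F^\perp\cong(F^{e*}E/\mathscr F)^*\otimes F^{e*}N$ gives the required inequality in characteristic $p>2$, shrinking $S$ to exclude $p=2$.
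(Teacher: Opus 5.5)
Your proposal is correct and follows the paper's proof essentially step by step. You assume by contradiction that no $\widehat L_i$ is pseudoeffective, obtain movable witnesses $\beta_i$ from Lemma~\ref{lem:movable-class-over-polarization} and specialize them, then turn a destabilizing subsheaf of a Frobenius pullback into an effective divisor via relative Frobenius. In the symplectic case you use Lemma~\ref{ssb:lem-transverse-isotropic-quotient} and replace a subsheaf of rank $>n$ by its symplectic orthogonal complement $(F^{e*}E_{\bar s}/\mathscr F)^*\otimes N_{\bar s}^{\otimes q}$, whose slope bound follows from $c_1(N_{\bar s}^{\otimes q})\cdot H_{\bar s}=2\mu_{H_{\bar s}}(F^{e*}E_{\bar s})$; this is exactly how the paper argues.
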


\begin{proof}
	Suppose that none of the $\widehat L_i$ is pseudoeffective, and put
	\[
	\theta_i:=c_1(L_i)-\frac{i}{r}\pi_i^*c_1(E)
	=\frac1r c_1(\widehat L_i).
	\]
	Since $\pi_i$ is locally trivial and the Picard group of each fiber
	is generated by the restriction of $L_i$, we have
	\(
	\operatorname{Pic}(Y_i)=\mathbb Z[L_i]
	\oplus\pi_i^*\operatorname{Pic}(X).
	\)
	By openness of slope stability in the ample cone (see \cite[4.C]{Huybrechts-Lehn2010} or \cite[Theorem~3.3]{Greb-Kebekus-Peternell2016}), there is an
	open neighborhood $U\subset\operatorname{Amp}(X)$ of $H$ such
	that $E$ is stable with respect to every class in $U$. 
	As in the proof of Lemma~\ref{ssb:lem-coordinate-bundles}, each $\mathcal V_{i,a}=(\pi_i)_*L_i^{\otimes a}$ is a direct
	summand of $E^{\otimes ia}$. For every $\alpha\in U$ and $a\geq0$, the tensor power
	$E^{\otimes ia}$ is $\alpha$-polystable. Hence
	\eqref{ssb:eq-c1-coordinate} shows that $\mathcal V_{i,a}$ is
	$\alpha$-semistable with slope $(ia/r)c_1(E)\cdot\alpha$.
	
	Since $X$ is a smooth surface, we can identify $N^1(X)_{\mathbb R}$ with $N_1(X)_{\mathbb R}$. Then, viewed as curve classes, $U$ is an open subset of $\operatorname{Mov}(X)$ and
	Lemma~\ref{lem:movable-class-over-polarization} supplies rational
	curve classes $\beta_i$, each a finite positive rational combination
	of strongly movable classes, such that
	\[
	(\pi_i)_*\beta_i\equiv H,
	\qquad \theta_i\cdot\beta_i=-\delta_i<0.
	\]
	
	Apply Lemma~\ref{lem:specialization-movable-witnesses}
	simultaneously to the triples $(\pi_i,\theta_i,\beta_i)$.
	In the symplectic case, spread out also the form
	\eqref{ssb:eq-form}, shrink so that it remains nondegenerate,
	and exclude characteristic two.
	After these changes of model and shrinkings, which preserve
	density of the bad locus, we have on every geometric closed fiber
	\begin{equation}\label{ssb:eq-grassmann-specialized-witnesses}
		(\pi_{i,\bar s})_*\beta_{i,\bar s}\equiv H_{\bar s},
		\qquad \theta_{i,\bar s}\cdot\beta_{i,\bar s}=-\delta_i,
		\qquad T\cdot\beta_{i,\bar s}\geq0
	\end{equation}
	for every effective divisor $T$ on $Y_{i,\bar s}$.

	Choose a bad closed point $s$, of characteristic $p$, and $e\geq1$
	such that $G_{\bar s}:=F_{X_{\bar s}}^{e*}E_{\bar s}$ is not $H_{\bar s}$-stable. Put
	$q=p^e$, and choose a nonzero proper saturated subsheaf
	$F\subset G_{\bar s}$ with $\mu_{H_{\bar s}}(F)\geq\mu_{H_{\bar s}}(G_{\bar s})$.
	In the symplectic case, if $\rk F>r/2$, replace $F$ by
	\[
	(G_{\bar s}/F)^*\otimes N_{\bar s}^{\otimes q}\subset G_{\bar s},
	\]
	using the isomorphism $G_{\bar s}\simeq G_{\bar s}^*\otimes N_{\bar s}^{\otimes q}$.
	This is saturated, has rank at most $r/2$, and still has slope
	at least $\mu_{H_{\bar s}}(G_{\bar s})$, since
	$\mu_{H_{\bar s}}(G_{\bar s}/F)\leq\mu_{H_{\bar s}}(G_{\bar s})$ and
	$c_1(N_{\bar s}^{\otimes q})\cdot H_{\bar s}=2\mu_{H_{\bar s}}(G_{\bar s})$.
	Set $i=\rk F$ and $M=\det F$. Then $i$ belongs to the selected
	set of indices in both cases, and
	\begin{equation}\label{ssb:eq-grassmann-destabilizing-degree}
		\left(c_1(M)-\frac{qi}{r}c_1(E_{\bar s})\right)\cdot H_{\bar s}\geq0.
	\end{equation}
	The determinant map extends across codimension two to a
	nonzero morphism $M\to\bigwedge^iG_{\bar s}$.
	
	Compose the determinant map with the universal determinant quotient
	on $Y_i(G_{\bar s})$. This gives a nonzero section of
	$L_i(G_{\bar s})\otimes\pi^*M^{-1}$: at the geometric generic point choose a quotient
	isomorphic on $F$, with isotropic dual in the symplectic case.
	Such a quotient exists by
	Lemma~\ref{ssb:lem-transverse-isotropic-quotient} in the symplectic
	case, without any isotropy assumption on $F$. The relative $e$-fold Frobenius
	\[
	\Phi_e:Y_i(E_{\bar s})\longrightarrow
	Y_i(E_{\bar s})\times_{X_{\bar s},F_{X_{\bar s}}^e}X_{\bar s}=Y_i(G_{\bar s})
	\]
	is surjective and satisfies
	$\Phi_e^*L_i(G_{\bar s})\simeq L_{i,\bar s}^{\otimes q}$; in the symplectic
	case it preserves the isotropy equations. Pulling back the section
	gives an effective divisor $D_{\bar s}$, possibly zero, with
	\[
	[D_{\bar s}]=q\theta_{i,\bar s}
	-\pi_{i,\bar s}^*\left(c_1(M)-\frac{qi}{r}c_1(E_{\bar s})\right).
	\]
	By \eqref{ssb:eq-grassmann-specialized-witnesses} and
	\eqref{ssb:eq-grassmann-destabilizing-degree},
	\[
	0\leq D_{\bar s}\cdot\beta_{i,\bar s}
	=-q\delta_i
	-\left(c_1(M)-\frac{qi}{r}c_1(E_{\bar s})\right)\cdot H_{\bar s}<0,
	\]
	a contradiction. Hence at least one $\widehat L_i$ is pseudoeffective.
\end{proof}

\subsection{Proof of the arithmetic surface theorem}
\label{ssb:subsec-normal-surfaces}

\begin{proof}[Proof of Theorem~\ref{ssb:thm-main}] 
	
	\emph{Step 1.} First assume that $X$ is smooth.
	By Lemma~\ref{ssb:lem-holonomy-square-dichotomy}, either the
	projective holonomy of $E$ is finite or $\Sym^6E$ is $H$-stable.
	In the first case, Lemma~\ref{ssb:lem-finite-projective-holonomy}
	shows that $E$ is projectively unitary flat, so
	$\int_X\Delta(E)=0$. We may therefore assume the second case.
	The changes of model used above preserve density: after shrinking,
	a dominant extension of the model ring is flat and hence open, and
	failure of strong stability is invariant under extension of
	geometric residue fields, as in the proof of
	Theorem~\ref{thm:pseudoeffective-exterior-power}.
	By Proposition~\ref{ssb:prop-grassmann-pseudoeffectivity}, one of the
	line bundles $\widehat L_i$ is pseudoeffective.
	Corollary~\ref{ssb:thm-plucker-positivity} therefore gives
	$\int_X\Delta(E)=0$.
	
	\medskip
	
	\emph{Step 2.} Now let us consider the general case.
	Let $f:\widetilde X\to X$ be a projective resolution which is an
	isomorphism over $X_{\mathrm{reg}}$, and set \(\widetilde E=f^*E\).
	Choose an effective exceptional divisor $D$ such that $-D$ is
	$f$-ample. Then $\tilde H:=nf^*H-D$ is ample for all sufficiently large integers
	$n$.
	
	The bundle $V:=\Sym^2\widetilde E=f^*(\Sym^2E)$ is $f^*H$-stable.
	So by Lemma \ref{ssb:lem-nearby-polarizations} changing $n$ if necessary, we can assume the bundle 
	$V$ is \(\widetilde H\)-stable.
	
	Spread out $f$, $D$, and $\widetilde H$ together with the original
	model. After enlarging the model ring and shrinking the base, we may
	assume that $\widetilde{\mathscr X}/S$ is smooth projective,
	each geometric fiber $f_{\bar s}:\widetilde X_{\bar s}\to X_{\bar s}$ is birational,
	$D_{\bar s}$ is effective and every component of its support is contracted
	by $f_{\bar s}$, and $\widetilde H_{\bar s}=nh_{\bar s}-D_{\bar s}$ is ample, where
	$h_{\bar s}=f_{\bar s}^*H_{\bar s}$. We may also assume that $\Sym^2\widetilde E_{\bar s}$ is
	$\widetilde H_{\bar s}$-stable. As explained in the smooth-case proof,
	these changes of the model preserve density of the bad locus.
	
	For every geometric closed fiber and every $e\geq0$,
	Lemma~\ref{ssb:lem-birational-transfer}, applied to
	$W=F_{X_{\bar s}}^{e*}E_{\bar s}$, shows that nonstability downstairs
	implies nonstability upstairs, since
	\(
	f_{\bar s}^*F_{X_{\bar s}}^{e*}E_{\bar s}
	\simeq F_{\widetilde X_{\bar s}}^{e*}\widetilde E_{\bar s}.
	\)
	Thus every bad fiber downstairs is also bad upstairs for the
	fixed polarization $\widetilde H_{\bar s}$, and
	$\Sigma^{\mathrm{ns}}(\widetilde E)$ is dense.
	
	The smooth case applied to $(\widetilde X,\widetilde H,\widetilde E)$
	gives $\int_{\widetilde X}\Delta(\widetilde E)=0$. Since $E$ is
	locally free, functoriality of Chern classes and the projection
	formula yield
	\[
	\int_X\Delta(E)
	=\int_{\widetilde X}\Delta(f^*E)=0.
	\]
\end{proof}

\section{Vector bundles on normal projective varieties}

\subsection{Restriction of the bad locus}

Let us note the following lemma.

\begin{lemma}\label{ssb:lem-density-on-surfaces}
	Let $(\mathscr X,\mathscr H,\mathscr E)\to S$ be a flat projective
	model over an integral scheme of finite type over $\mathbb Z$, with
	characteristic-zero generic point. Assume that its geometric fibers
	are normal and integral of dimension $n\geq3$, that $\mathscr H$ is
	relatively ample, and that the fibers of $\mathscr E$ are reflexive.
	Fix positive integers $m_1,\ldots,m_{n-2}$ for which the corresponding
	linear systems define embeddings and let $T$ be a nonempty open subset of the relative parameter space of
	complete intersections of these degrees. Suppose that the universal
	family $\mathscr Y\to T$ has geometrically normal integral surface
	fibers and carries a flat family $\mathscr F$ with
	\[
	\mathscr F_{\bar t}\simeq
	\bigl(\mathscr E_{\bar s}|_{Y_{\bar t}}\bigr)^{**}
	\qquad(\bar t\mapsto\bar s).
	\]
	If $\Sigma^{\mathrm{nss}}(\mathscr E/S)$ is dense, then
	$\Sigma^{\mathrm{nss}}(\mathscr F/T)$ is dense. Similarly, if $\Sigma^{\mathrm{ns}}(\mathscr E/S)$ is dense, then
	$\Sigma^{\mathrm{ns}}(\mathscr F/T)$ is dense.
\end{lemma}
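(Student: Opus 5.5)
The plan is to transfer badness from $S$ to $T$ via a restriction theorem over the bad fibers, and then check density using the open map $T\to S$. Let $\tau:T\to S$ be the structure map. The relative parameter space of complete intersections is smooth, hence flat of finite presentation over $S$, so $\tau$ is open and its image is a nonempty open subset $S^\circ\subset S$. Density of $\Sigma^{\mathrm{nss}}(\mathscr E/S)$ then gives density of the bad set in $S^\circ$.

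It suffices to show that $\tau^{-1}(\Sigma^{\mathrm{nss}}(\mathscr E/S))$ contains a dense set of closed points of $T$. Since closed points of $T$ lie over closed points of $S$ (finite type over $\mathbb Z$), the key claim is this: for each bad closed point $s$ there is a dense open subset $T_s^\circ\subset T_s$ all of whose closed points are bad. This gives density. Indeed, if $\Omega\subset T$ is a nonempty open subset, then $\tau(\Omega)$ is open and nonempty, so it contains a bad $s$. Then $\Omega\cap T_s$ is a nonempty open subset of the irreducible (or at least equidimensional) fiber $T_s$, so it meets $T_s^\circ$ and thus contains a bad closed point.

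The key claim is the contrapositive of a restriction statement on the bad fiber $X_{\bar s}$. Fix $e\geq 0$ and a saturated subsheaf $\mathcal G\subset F^{e*}E_{\bar s}$ that destabilizes it. For a general complete intersection surface $Y_{\bar t}$ of the given degrees, $Y_{\bar t}$ lies in the big open set where $\mathcal G$ and $F^{e*}E_{\bar s}/\mathcal G$ are locally free and $X_{\bar s}$ is smooth, up to codimension two. Then $(\mathcal G|_{Y_{\bar t}})^{**}\subset (F^{e*}E_{\bar s}|_{Y_{\bar t}})^{**}$ has slope with respect to $H|_{Y_{\bar t}}$ equal to $m_1\cdots m_{n-2}$ times the original slope. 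We also use that Frobenius commutes with restriction: $F_{Y}^{e*}(E|_Y)=(F_X^{e*}E)|_Y$ up to reflexive hull on the big open subset, so $(F^{e*}_{Y_{\bar t}}\mathscr F_{\bar t})^{**}\simeq ((F^{e*}E_{\bar s})|_{Y_{\bar t}})^{**}$. Hence $\mathscr F_{\bar t}$ is not strongly semistable for general $\bar t$. No Mehta--Ramanathan-type theorem is needed, since destabilization is inherited, not created. For the stable version we argue identically: a subsheaf with slope $\geq$ the slope of the bundle, defined over $\bar s$ after finite extension, restricts to such a subsheaf with $0<\rk<r$. Geometric nonstability is preserved because we work on geometric fibers.

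The main obstacle is making "general $\bar t$" an open condition in $T_s$ defined over $\kappa(s)$, so that it yields closed points and not just geometric generic points. I would handle this by noting that the conditions needed are open: $Y$ avoids the codimension-$\geq 2$ bad locus in codimension zero on $Y$, and it meets that locus properly. These conditions are defined over a finite extension of $\kappa(s)$ where $\mathcal G$ is defined. Their locus is a nonempty open subset of $T_{\bar s}$ by Bertini, and its image in $T_s$ is open. Points of $T_s$ over which some geometric point lies in this locus are bad, because badness is invariant under extension of algebraically closed fields. One should also check that degree computations use the orbifold/reflexive $c_1$ on normal surfaces, but only $c_1\cdot H$ is needed, which is computed on the smooth locus.
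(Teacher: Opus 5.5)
Your proposal is correct and follows essentially the same route as the paper: openness of $T\to S$, restriction of a destabilizing subsheaf from the big open set where $X_{\bar s}$ is smooth and all relevant sheaves are locally free to a general complete-intersection surface (extending across the finitely many remaining points, with slope differences scaled by $m_1\cdots m_{n-2}$), and then the open-image argument in the fibers $T_s$ to obtain density. Two small cleanups: on normal fibers you should use the reflexive Frobenius pullback $F^{[e]}$ rather than $F^{e*}$; and the hedge ``or at least equidimensional'' is unnecessary, since after shrinking $S$ the fibers of $T\to S$ are open in products of projective spaces and hence integral (and density of $T_s^\circ$ alone already suffices).
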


\begin{proof}
	After shrinking $S$, the parameter spaces are products of projective
	bundles. Thus $T\to S$ is open, and its nonempty geometric fibers are
	integral. We may replace $S$ by the image of $T$. We show only the first implication, with the second one being completely analogous (one only needs to replace the strict slope inequality below by a non-strict one).
	
	Fix a closed point $s\in\Sigma^{\mathrm{nss}}(\mathscr E/S)$ and a
	saturated destabilizing subsheaf
	\(B\subsetneq G:=F_{X_{\bar s}}^{[e]}E_{\bar s}
	\)
	for some $e\geq1$.  Remove from $X_{\bar s}$ its singular
	locus and the non-locally-free loci of $E_{\bar s}$, $G$, $B$, and
	$G/B$. Their union has codimension at least two. On the remaining
	open subset $W$ the inclusion is an inclusion of vector bundles
	with locally free quotient, and reflexive Frobenius pullback agrees
	with ordinary pullback.
	
	For a general geometric point $\bar t$ of $T_{\bar s}$, the complement of $Y_{\bar t}\cap W$ in
	$Y_{\bar t}$ is finite. Restriction to $Y_{\bar t}\cap W$ and compatibility of
	absolute Frobenius with every morphism give an injection there.
	Extending across the finite complement yields
	\(
	\bigl(B|_{Y_{\bar t}}\bigr)^{**}
	\hookrightarrow F_{Y_{\bar t}}^{[e]}\mathscr F_{\bar t}.
	\)
	The extension is injective because it is generically injective and
	its source is torsion-free. Degrees can be computed on a further
	general complete-intersection curve in $Y_{\bar t}$ avoiding the finite
	complement. Hence
	\begin{equation*}
		\mu_{H_{\bar s}|_{Y_{\bar t}}}\bigl((B|_{Y_{\bar t}})^{**}\bigr)
		-\mu_{H_{\bar s}|_{Y_{\bar t}}}\bigl(F_{Y_{\bar t}}^{[e]}\mathscr F_{\bar t}\bigr)=
		\bigl(\mu_{H_{\bar s}}(B)-\mu_{H_{\bar s}}(G)\bigr)\prod_{j=1}^{n-2}m_j>0.
	\end{equation*}
	Saturation of the image, if needed, can only increase its slope.
	Thus a dense open subset of $T_{\bar s}$ consists of bad points.
	The exponent $e$ and this open subset may depend on $s$.
	
	Let $V\subset T$ be nonempty and open. Its image in $S$ is nonempty
	and open, so it contains a closed point $s$ of the original bad
	locus. The nonempty open subset $V_{\bar s}$ meets the preceding
	dense open subset of the integral fiber $T_{\bar s}$. This
	intersection contains a point defined over a finite extension of
	$k(s)$, and hence gives a closed point of $V$ at which strong
	semistability fails. This proves density.
\end{proof}

The families used in this lemma are obtained by restricting the
universal complete intersection to a suitable open parameter space.
For reflexive sheaves, one first spreads out the reflexive hull on
the generic surface. Generic flatness, Bertini for reflexive sheaves
\cite[Theorem~1.5]{Langer-Simpson-singular},
and openness of the fiberwise $S_2$ condition allow one to arrange
the displayed fiberwise identification: the two sheaves agree on a
fiberwise big open subset and are reflexive. For vector bundles the
family is simply the ordinary restriction. In each application below
the open parameter space remains an open subset of a product of
projective bundles.

We will also use that enlarging the field of definition does not
destroy density. Spread out the enlargement and restrict its model
to a nonempty open subset on which the map to the original model is
flat and hence open. Every nonempty open subset then maps to a
nonempty open subset of the original base. A destabilizing sheaf
remains destabilizing after extending the residue field. The same
open-image argument as in the lemma proves the assertion.

\begin{theorem}\label{ssb:cor-higher-dimensional-normal}
	Let $X$ be a normal complex projective variety of dimension $n\ge 2$ and let $H$ be an ample
	line bundle. Let $E$ be a vector bundle of rank $r\geq2$ such
	that $\operatorname{Sym}^2E$ is $H$-stable.
	If $\Sigma^{\mathrm{ns}}(E)$ is dense, then $E$ is irreducible
	projectively unitary flat.
\end{theorem}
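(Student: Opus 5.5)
The plan is to reduce to the surface case, Theorem~\ref{ssb:thm-main}, by restricting to general complete intersection surfaces. Then I would recover projective unitary flatness from the vanishing of the discriminant using Proposition~\ref{ssb:prop-stable-zero-discriminant-normal}. By Remark~\ref{stability-of-Sym-gives-usual}, $E$ is $H$-stable. So by Proposition~\ref{ssb:prop-stable-zero-discriminant-normal} it suffices to prove
\[
\Delta(E)H^{n-2}=0 .
\]
Irreducibility then comes for free from that proposition. If $n=2$, this is exactly Theorem~\ref{ssb:thm-main}, so assume $n\geq3$.

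For $n\geq3$, fix sufficiently large integers $m_1,\ldots,m_{n-2}$. Take a very general complete intersection surface $Y\in|m_1H|\cap\cdots\cap|m_{n-2}H|$. By Bertini, $Y$ is a normal projective surface; since $\operatorname{Sym}^2E$ is stable, the restriction theorem \cite{ssb-MehtaRamanathan1984} applied to $\operatorname{Sym}^2E$ makes $\operatorname{Sym}^2(E|_Y)=(\operatorname{Sym}^2E)|_Y$ stable with respect to $H|_Y$. Stability is an open condition in families. Hence on a nonempty open subset $T$ of the relative parameter space of complete intersections, the generic fiber of the universal family $\mathscr Y\to T$ carries a restricted bundle $\mathscr F=\mathscr E|_{\mathscr Y}$ with stable second symmetric power. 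After shrinking $T$, we may also assume that the fibers of $\mathscr Y\to T$ are geometrically normal and integral surfaces. Here $E$ is locally free, so $\mathscr F$ is the ordinary restriction.

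Next I would invoke Lemma~\ref{ssb:lem-density-on-surfaces}. It says that density of $\Sigma^{\mathrm{ns}}(\mathscr E/S)$ implies density of $\Sigma^{\mathrm{ns}}(\mathscr F/T)$. The main point to check is the transfer from the family over $T$ to a single complex surface. Let $\eta$ be the generic point of $T$ and embed $\kappa(\eta)$ into $\mathbb C$. Then $(\mathscr Y,\mathscr F)$ is a model over a finitely generated subring of $\mathbb C$ of the pair $(Y,E|_Y)$, where $Y$ is a very general complex complete intersection surface. Density of $\Sigma^{\mathrm{ns}}(\mathscr F/T)$ then says exactly that $\Sigma^{\mathrm{ns}}(E|_Y)$ is dense in the sense of the introduction. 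This uses the remark after the lemma that enlarging the field of definition preserves density.

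Theorem~\ref{ssb:thm-main}, applied to $(Y,H|_Y,E|_Y)$, then gives
\[
0=\int_Y\Delta(E|_Y)=m_1\cdots m_{n-2}\,\Delta(E)H^{n-2}.
\]
This yields the required vanishing, and Proposition~\ref{ssb:prop-stable-zero-discriminant-normal} concludes. Internally that proposition uses the Lefschetz argument of Lemma~\ref{ssb:lem-lefschetz-projective-flatness}; we do not need it again here. The step I expect to be most delicate is the identification just described. The surface obtained from the generic point of $T$ must be a genuine very general complex surface on which both the restriction theorem and the bad-locus density hold simultaneously. It must also have a model of the type required in Theorem~\ref{ssb:thm-main}.
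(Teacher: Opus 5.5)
Your proposal is correct and follows the paper's proof essentially step for step. You use Remark~\ref{stability-of-Sym-gives-usual} and Proposition~\ref{ssb:prop-stable-zero-discriminant-normal} to reduce to $\Delta(E)H^{n-2}=0$, restrict to a variable complete-intersection surface over an open parameter space $T$, transfer density with Lemma~\ref{ssb:lem-density-on-surfaces}, and apply Theorem~\ref{ssb:thm-main} to the generic surface after embedding (an algebraic closure of) the function field of $T$ into $\mathbb C$ compatibly with the original field of definition. The delicate identification you flag is handled in the paper in exactly this way.
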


\begin{proof}
	Remark \ref{stability-of-Sym-gives-usual} shows that $E$ is $H$-stable.  
	So in dimension two the assertion follows from Theorem~\ref{ssb:thm-main} and
	Proposition~\ref{ssb:prop-stable-zero-discriminant-normal}.
	
	Suppose $n\geq3$. Choose a model of $(X,H,E)$ as in
	Lemma~\ref{ssb:lem-density-on-surfaces}, with $\mathscr E$ locally
	free. By the stable restriction theorem (see, e.g., Theorem \ref{thm:restriction-prescribed-points}), we can choose sufficiently large	degrees $m_1,\ldots,m_{n-2}$ such that on a general normal
	complete intersection surface $Y\in |m_1H|\cap...\cap |m_{n-2}H|$, the restriction
	\begin{equation}\label{ssb:eq-stable-sym-square-on-surface-vector}
		(\operatorname{Sym}^2 E)|_Y
		=\operatorname{Sym}^2(E|_Y)	\end{equation}
	is $H|_Y$-stable. Keep the complete intersection variable and take the nonempty open
	parameter space $T$ of normal integral surfaces described in 	Lemma~\ref{ssb:lem-density-on-surfaces}. The universal bundle is $\mathscr F=\mathscr E|_{\mathscr Y}$.
	Its bad locus is dense by Lemma~\ref{ssb:lem-density-on-surfaces}.
	
	Embed an algebraic closure of the function field of $T$ into
	$\mathbb C$, extending the original finitely generated field of
	definition. The resulting geometric generic surface $Y_{\bar\eta}$ is
	normal and projective, and its bundle satisfies
	\eqref{ssb:eq-stable-sym-square-on-surface-vector}. Its arithmetic model
	over $T$ has dense bad locus. Theorem~\ref{ssb:thm-main} therefore
	gives
	\begin{equation*}
		0=\int_{Y_{\bar\eta}}\Delta(E|_{Y_{\bar\eta}})=
		\Delta(E)H^{n-2}\cdot \prod_{j=1}^{n-2}m_j.
	\end{equation*}
	So Proposition~\ref{ssb:prop-stable-zero-discriminant-normal}
	implies the theorem.
\end{proof}

\medskip

The rank two and rank three results give the following extension
to normal projective varieties.

\begin{theorem}\label{cor:rank-at-most-three}
	Let $X$ be a normal complex projective variety of dimension $n\ge 2$ and let $H$ be an ample
	line bundle. Let $E$ be  an $H$-stable vector bundle of rank $r\le 3$. 
	If  $\Sigma^{\mathrm{ns}}(E)$ is dense, then $E$ is
	irreducible projectively unitary flat.
\end{theorem}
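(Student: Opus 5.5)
The plan is to copy the proof of Theorem~\ref{ssb:cor-higher-dimensional-normal}. The input from Theorem~\ref{ssb:thm-main} is replaced by the low-rank surface result, Theorem~\ref{ssb:prop-low-rank-holonomy}. The missing step is to pass from smooth surfaces to normal surfaces, which I would do as in Step~2 of the proof of Theorem~\ref{ssb:thm-main}.

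\emph{Normal surfaces.} Assume first that $n=2$. Let $f:\widetilde X\to X$ be a projective resolution that is an isomorphism over $X_{\mathrm{reg}}$. Put $\widetilde E=f^*E$ and $\widetilde H=Nf^*H-D$, where $D$ is effective exceptional with $-D$ relatively ample. The bundle $\widetilde E$ is $f^*H$-stable, since $f$ is an isomorphism on a big open subset. By Lemma~\ref{ssb:lem-nearby-polarizations}, $\widetilde E$ is then $\widetilde H$-stable for $N\gg0$. Spread out $f$, $D$ and $\widetilde H$, enlarging the model; density of the bad locus is preserved, as explained after Lemma~\ref{lem:specialization-movable-witnesses}. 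Applying Lemma~\ref{ssb:lem-birational-transfer} fiberwise to $W=F^{e*}_{X_{\bar s}}E_{\bar s}$ shows that every bad fiber downstairs is bad upstairs. Hence $\Sigma^{\mathrm{ns}}(\widetilde E)$ is dense for $\widetilde H$. Theorem~\ref{ssb:prop-low-rank-holonomy} then gives $\int_{\widetilde X}\Delta(\widetilde E)=0$, and the projection formula gives $\int_X\Delta(E)=0$. Proposition~\ref{ssb:prop-stable-zero-discriminant-normal} shows that $E$ is irreducible projectively unitary flat.

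\emph{Higher dimension.} Assume now that $n\geq3$. Choose a model as in Lemma~\ref{ssb:lem-density-on-surfaces} with $\mathscr E$ locally free. By the restriction theorem (Theorem~\ref{thm:restriction-prescribed-points}), for large $m_1,\dots,m_{n-2}$ the bundle $E|_Y$ is $H|_Y$-stable on a general normal complete intersection surface $Y$. Let $T$ be the open parameter space and $\mathscr F=\mathscr E|_{\mathscr Y}$. Lemma~\ref{ssb:lem-density-on-surfaces} shows that $\Sigma^{\mathrm{ns}}(\mathscr F/T)$ is dense. Embedding an algebraic closure of the function field of $T$ into $\mathbb C$, the geometric generic surface $Y_{\bar\eta}$ is normal and carries a stable bundle of rank $\le3$ with dense bad locus. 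The surface case just proved gives $0=\int_{Y_{\bar\eta}}\Delta(E|_{Y_{\bar\eta}})=\Delta(E)H^{n-2}\prod m_j$. Proposition~\ref{ssb:prop-stable-zero-discriminant-normal} then concludes.

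\emph{Expected obstacle.} The main point to check is that stability survives the modified polarization $\widetilde H$ uniformly on the arithmetic model. Lemma~\ref{ssb:lem-nearby-polarizations} gives stability only on the complex fiber, and geometric stability then spreads to an open set of closed fibers after shrinking. This is what Theorem~\ref{ssb:prop-low-rank-holonomy} needs, because its proof begins by shrinking so that all fibers are stable. Its remaining hypotheses (rank $2$ or $3$, a smooth surface, stability) hold as stated, so I expect no further difficulty.
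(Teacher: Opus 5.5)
Your proposal is correct and follows the paper's proof essentially step for step. For $n=2$ you resolve the normal surface, perturb the polarization to $Nf^*H-D$ via Lemma~\ref{ssb:lem-nearby-polarizations}, transfer bad fibers upstairs with Lemma~\ref{ssb:lem-birational-transfer}, and conclude with Theorem~\ref{ssb:prop-low-rank-holonomy} and Proposition~\ref{ssb:prop-stable-zero-discriminant-normal}; for $n\geq3$ you use the variable complete-intersection surfaces of Lemma~\ref{ssb:lem-density-on-surfaces}, exactly as the paper does. The only thing you leave implicit is the rank-one case. The paper dismisses it at the outset as immediate, which is needed because Theorem~\ref{ssb:prop-low-rank-holonomy} is stated only for $r\in\{2,3\}$.
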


\begin{proof}
	The rank one case is immediate, so assume that $r=2$ or $r=3$. 
	
	Suppose first that $\dim X=2$. Choose a projective resolution
	$f:\widetilde X\to X$ which is an isomorphism over
	$X_{\mathrm{reg}}$, and set \(\widetilde E:=f^*E\) and $\widetilde H:=f^*H.$
	The bundle $\widetilde E$ is $\widetilde H$-stable. Choose an effective
	exceptional divisor $D$ such that $-D$ is $f$-ample.
	By Lemma~\ref{ssb:lem-nearby-polarizations}, for a sufficiently
	large integer $N$, the divisor
	\(
	P=N\widetilde H-D
	\)
	is ample and $\widetilde E$ is $P$-stable.
	
	Spread out these data together with the original model.
	After enlarging the model and shrinking its base, we may assume
	that $\widetilde{\mathscr X}/S$ is smooth projective, every
	geometric fiber $f_{\bar s}:\widetilde X_{\bar s}\to X_{\bar s}$ is birational,
	$D_{\bar s}$ is effective and supported on curves contracted by $f_{\bar s}$,
	and $P_{\bar s}=N\widetilde H_{\bar s}-D_{\bar s}$ is ample, where $\widetilde H_{\bar s}=f_{\bar s}^*H_{\bar s}$.
	We may also assume that $\widetilde E_{\bar s}$ is geometrically
	$P_{\bar s}$-stable. These changes preserve density of the bad locus.
	
	Applying Lemma~\ref{ssb:lem-birational-transfer} over the
	algebraic closure of each residue field, with
	$W=F_{X_{\bar s}}^{e*}E_{\bar s}$, shows that
	if $F_{X_{\bar s}}^{e*}E_{\bar s}$ is not geometrically $H_{\bar s}$-stable then
	$F_{\widetilde X_{\bar s}}^{e*}\widetilde E_{\bar s}$
	is not geometrically $P_{\bar s}$-stable.
	Thus every bad fiber downstairs is also bad upstairs, and
	$\Sigma^{\mathrm{ns}}(\widetilde E)$ is dense.
	Applying Theorem~\ref{ssb:prop-low-rank-holonomy} gives
	\(
	\int_{\widetilde X}\Delta(f^* E)=0.
	\)
	Since $E$ is locally free, the projection formula implies that
	\(
	\int_X\Delta(E)=0.
	\)
	Proposition~\ref{ssb:prop-stable-zero-discriminant-normal}
	now proves the assertion in dimension two.
	
	For $\dim X=n\geq3$, repeat the variable-surface construction
	in the proof of Theorem~\ref{ssb:cor-higher-dimensional-normal},
	choosing the degrees so that $E|_Y$ is stable on a general
	normal complete-intersection surface
	\(
	Y\in|m_1H|\cap\cdots\cap|m_{n-2}H|.
	\)
	By Lemma~\ref{ssb:lem-density-on-surfaces}, the universal
	restricted bundle has dense $\Sigma^{\mathrm{ns}}$ over its
	parameter space. Applying the surface case to the geometric
	generic surface gives
	\[
	0=\int_Y\Delta(E|_Y)
	=\Delta(E)H^{n-2}\prod_{i=1}^{n-2}m_i.
	\]
	Hence $\Delta(E)H^{n-2}=0$, and
	Proposition~\ref{ssb:prop-stable-zero-discriminant-normal}
	shows that $E$ is irreducible projectively unitary flat.
\end{proof}

\section{Reflexive sheaves on normal projective varieties}

Let $X$ be a normal complex projective variety of dimension $n\geq2$
with quotient singularities in codimension two. In this section we use
Chern classes of reflexive sheaves, equivalently Mumford's
$\mathbb Q$-Chern classes, as in \cite{ssb-LangerIntersection2025} and \cite[Section~7]{ssb-Langer2024}, and set
\[
\Delta(E)=2r c_2(E)-(r-1)c_1(E)^2,
\]
where $r=\operatorname{rk}E.$
Thus $\int_X\Delta(E)$ denotes a symmetric $(n-2)$-linear form on
$N^1(X)_{\mathbb Q}$. The assertion $\int_X\Delta(E)=0$ means that
\[
\Delta(E) L_1\cdots L_{n-2}=0
\]
for all Cartier divisors $L_1,\ldots,L_{n-2}$; for $n=2$ it is the
vanishing of a number. For vector bundles these pairings agree with
the usual Chern-class intersection numbers.

Let $H$ be an ample line bundle and let $E$ be reflexive. Choose a flat
projective model $(\mathscr X,\mathscr H,\mathscr E)\to S$ over an
integral scheme of finite type over $\mathbb Z$, with characteristic-zero
generic point. After shrinking $S$, we may assume that the fibers are
geometrically normal and geometrically integral, and that $E_{\bar s}$ is
reflexive for every geometric point $\bar s$ over a closed point $s\in S$.
We interpret the bad locus as
\begin{equation*}
	\Sigma^{\mathrm{nss}}(\mathscr E/S)
	=\left\{s\in S \text{ closed}:
	F_{X_{\bar s}}^{[e]}E_{\bar s}\text{ is not $H_{\bar s}$-semistable for some }e\geq1
	\right\}.
\end{equation*}
As before, semistability can equivalently be tested over
the finite residue field.
Similarly, we set
\[
\Sigma^{\mathrm{ns}}(\mathscr E/S)
=\left\{s\in S \text{ closed}:
F_{X_{\bar s}}^{[e]}E_{\bar s}\text{ is not $H_{\bar s}$-stable for some }e\geq1
\right\},
\]
where $\bar s$ is a geometric point over $s$. In the nonstability
arguments below, we work on these geometric fibers.

\begin{theorem}\label{ssb:cor-higher-dimensional-reflexive}
	Let $X$ be a normal complex projective variety of dimension $n\geq2$
	with quotient singularities in codimension two. Let $H$ be an ample
	line bundle and let $E$ be a reflexive sheaf of rank $r\geq2$ such that
	$\operatorname{Sym}^{[2]}E$ is $H$-stable. If $\Sigma^{\mathrm{ns}}(E)$ is dense,
	then $E|_{X_{\mathrm{reg}}}$ is locally free,
	$\cEnd(E)|_{X_{\mathrm{reg}}}$ is nef, and
	$\int_X\Delta(E)=0$.
	
	If, moreover, $X$ is klt, then $E|_{X_{\mathrm{reg}}}$ is irreducible
	projectively unitary flat. There exist a normal projective variety
	$\bar X$ and a finite surjective Galois morphism
	$\gamma:\bar X\to X$, \'etale in codimension one, such that the
	projectivization of
	\(
	(\gamma|_{\gamma^{-1}(X_{\mathrm{reg}})})^*
	(E|_{X_{\mathrm{reg}}})
	\)
	extends to an algebraic Brauer--Severi bundle on $\bar X$ induced by a representation
	\(
	\pi_1^{\mathrm{top}}(\bar X^{\mathrm{an}})
	\longrightarrow\mathrm{PU}(r).
	\)
\end{theorem}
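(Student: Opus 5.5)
The plan is to reduce to the surface case by restriction to a very general complete intersection surface, as in the proof of Theorem~\ref{ssb:cor-higher-dimensional-normal}. The point is that on the surface we can arrange to land in the locally free smooth situation. Once we know $\Delta(E)H^{n-2}=0$, the conclusions follow from results already proved. Stability of $E$ itself follows from Remark~\ref{stability-of-Sym-gives-usual}.

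\emph{Step 1: reduction to a surface.} Since $X$ has quotient singularities in codimension two, the set where $X$ is singular but not a quotient singularity, together with the non-locally-free locus of $E$ inside $X_{\mathrm{reg}}$, has codimension at least three. A general complete intersection surface $Y\in|m_1H|\cap\cdots\cap|m_{n-2}H|$ therefore meets $X_{\mathrm{sing}}$ only in finitely many points, at which $Y$ has quotient singularities. It avoids the non-locally-free locus of $E$ on $X_{\mathrm{reg}}$, and $(E|_Y)^{**}$ is reflexive on $Y$.

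By the restriction theorem (Theorem~\ref{thm:restriction-prescribed-points}) applied to $\Sym^{[2]}E$, for large degrees the sheaf $\Sym^{[2]}((E|_Y)^{**})=(\Sym^{[2]}E|_Y)^{**}$ is $H|_Y$-stable. Spreading out the reflexive hull on the universal surface, as explained after Lemma~\ref{ssb:lem-density-on-surfaces}, that lemma shows that the bad locus of the family over the parameter space $T$ is dense. We then pass to the geometric generic surface $Y_{\bar\eta}$ embedded in $\mathbb C$.

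\emph{Step 2: the surface case for reflexive sheaves on surfaces with quotient singularities.} This is the main obstacle, since Theorem~\ref{ssb:thm-main} assumes local freeness. First apply Lemma~\ref{ssb:lem-holonomy-square-dichotomy} to $F:=(E|_{Y})^{**}$.

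\begin{itemize}
\item If the projective holonomy is finite, Lemma~\ref{ssb:lem-finite-projective-holonomy} directly gives $\int_Y\Delta(F)=0$.
\item Otherwise, pass to a local orbifold cover. Take a finite quasi-\'etale-type Galois cover, or rather a cover $\gamma:Z\to Y$ from a smooth surface that is \'etale over $Y_{\mathrm{reg}}$ near the finitely many quotient points, such that $\gamma^{[*]}F$ is locally free. For surface quotient singularities this exists globally by Kawamata's cover construction, possibly with extra branching along a general ample curve.
\end{itemize}

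By Corollary~\ref{ssb:cor-derived-holonomy-pullback}, $\Sym^2(\gamma^{[*]}F)$ stays $\gamma^*H$-stable. Badness also transfers upward: a destabilizing subsheaf of $F^{[e]}_{Y_{\bar s}}F_{\bar s}$ pulls back reflexively to a destabilizing subsheaf, because slopes scale by $\deg\gamma$ and Frobenius commutes with $\gamma$ on big open sets. So density is preserved after spreading out $\gamma$ and shrinking. Theorem~\ref{ssb:thm-main} on $Z$ then gives $\int_Z\Delta(\gamma^*F)=0$, and functoriality of orbifold Chern classes under finite maps gives $\int_Y\Delta(F)=0$. The delicate points are the existence of the smooth cover with locally free pullback compatible with a model, and the orbifold Chern class compatibility. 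If the extra branching causes trouble, I would instead work on a resolution with a perturbed polarization as in Step~2 of the proof of Theorem~\ref{ssb:thm-main}, replacing $f^*E$ by a suitable reflexive extension.

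\emph{Step 3: conclusion.} Step~2 gives $\Delta(E)H^{n-2}\prod m_j=\int_Y\Delta(F)=0$, since $Y$ misses the codimension-three loci. Theorem~\ref{general-local-freeness} then yields local freeness on $X_{\mathrm{reg}}$, nefness of $\cEnd(E)|_{X_{\mathrm{reg}}}$ and $\int_X\Delta(E)=0$. In the klt case, Theorem~\ref{klt-local-freeness} applied to the stable, hence polystable, sheaf $E$ gives projective unitary flatness on $X_{\mathrm{reg}}$ and the Brauer--Severi extension on $\bar X$. Irreducibility follows from stability as in the last paragraph of the proof of Proposition~\ref{ssb:prop-stable-zero-discriminant-normal}: an invariant subspace gives a reflexive subsheaf of equal slope on $X_{\mathrm{reg}}$, and this subsheaf extends reflexively to $X$.
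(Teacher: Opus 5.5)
Your proposal is correct and follows essentially the same route as the paper. Both restrict to a general complete intersection surface with quotient singularities, pass to a finite cover on which the reflexive sheaf becomes locally free, transfer density of $\Sigma^{\mathrm{ns}}$, apply Theorem~\ref{ssb:thm-main} with the finite-pullback formula for $\Delta$, and conclude with Theorems~\ref{general-local-freeness} and~\ref{klt-local-freeness}.

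The differences are minor. The paper runs the dichotomy of Lemma~\ref{ssb:lem-holonomy-square-dichotomy} on $X$ and restricts $\operatorname{Sym}^{[6]}E$, rather than running the dichotomy on the surface. It uses the normal cover of \cite[Lemma~7.1]{ssb-Langer2024} instead of a smooth Kawamata cover; smoothness is unnecessary because Theorem~\ref{ssb:thm-main} already allows normal surfaces. It proves irreducibility by restricting to a stable smooth curve in $X_{\mathrm{reg}}$, which avoids the algebraization and extension of a holomorphic subbundle across $X_{\mathrm{sing}}$ that your sketch implicitly needs.
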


\begin{proof}
	Let us recall that by Remark \ref{stability-of-Sym-gives-usual} the sheaf $E$ is $H$-stable.
	By Lemma~\ref{ssb:lem-holonomy-square-dichotomy}, either the projective
	holonomy of $E$ is finite or $\operatorname{Sym}^{[6]}E$ is $H$-stable.
	In the first case, Lemma~\ref{ssb:lem-finite-projective-holonomy} gives
	$\Delta(E)H^{n-2}=0$, and Step~3 below proves all the assertions.
	Thus in Steps~1 and~2 we may assume that
	$\operatorname{Sym}^{[6]}E$ is $H$-stable.
	
	\emph{Step 1. The surface case.}
	Suppose that $n=2$. By \cite[Lemma~7.1]{ssb-Langer2024}, there exist
	a normal projective surface $Y$ and a finite surjective morphism
	$f:Y\to X$ such that $V:=f^{[*]}E$ is locally free. The same lemma gives
	\begin{equation}\label{ssb:eq-reflexive-discriminant-cover}
		\int_Y\Delta(V)=\deg(f)\int_X\Delta(E).
	\end{equation}
	By Corollary~\ref{stability-pullback-of-symmetric-powers}, the vector
	bundle $\operatorname{Sym}^6V$ is $f^*H$-stable.
	
	We verify that the dense bad locus also pulls back. Spread out $f$ and
	$V$, enlarging the model and shrinking the base when necessary. The
	open-base-change argument used in
	Lemma~\ref{ssb:lem-density-on-surfaces} preserves density. We may assume
	that every geometric fiber $f_{\bar s}:Y_{\bar s}\to X_{\bar s}$ is finite and surjective of degree $\deg(f)$,
	that $Y_{\bar s}$ is geometrically normal and geometrically integral, and that
	$V_{\bar s}$ is locally free with
	\(
	V_{\bar s}\simeq f_{\bar s}^{[*]}E_{\bar s}.
	\)
	For the last assertion, spread out the map $f^*E\to V$ and its
	isomorphism over the locally free locus of $E$. After shrinking, the
	complement of this locus has codimension at least two on every fiber.
	Its inverse image under $f_{\bar s}$ has the same codimension bound, so this isomorphism follows from reflexivity.
	
	Let $s$ belong to the original bad locus and, on the geometric fiber,
	choose a saturated nonzero subsheaf
	\( F\subsetneq G:=F_{X_{\bar s}}^{[e]}E_{\bar s}
	\)
	with $\mu_{H_{\bar s}}(F)\geq\mu_{H_{\bar s}}(G)$ for some $e\geq1$.
	Since a finite morphism preserves the condition of
	being a big open subset under inverse image, the usual commutation of
	Frobenius with pullback gives, after taking reflexive hulls,
	\[
	f_{\bar s}^{[*]}G
	\simeq F_{Y_{\bar s}}^{[e]}(f_{\bar s}^{[*]}E_{\bar s})
	=F_{Y_{\bar s}}^{e*}V_{\bar s}.
	\]
	Pullback followed by reflexive hull gives an injection
	\(
	f_{\bar s}^{[*]}F\hookrightarrow F_{Y_{\bar s}}^{e*}V_{\bar s}.
	\)
	Indeed, it is generically injective and its source is torsion free.
	The finite morphism $f_{\bar s}$ is flat over the codimension-one points of
	$X_{\bar s}$, so the determinant pullback and projection formulas give
	\[
	\mu_{f_{\bar s}^*H_{\bar s}}(f_{\bar s}^{[*]}F)
	-\mu_{f_{\bar s}^*H_{\bar s}}(F_{Y_{\bar s}}^{e*}V_{\bar s})=\deg(f)\bigl(\mu_{H_{\bar s}}(F)-\mu_{H_{\bar s}}(G)\bigr)\geq0.
	\]
	Thus every such $s$ is also bad for $V$, and
	$\Sigma^{\mathrm{ns}}(V)$ is dense. Applying
	Theorem~\ref{ssb:thm-main} to $(Y,f^*H,V)$ gives
	$\int_Y\Delta(V)=0$. Equation~\eqref{ssb:eq-reflexive-discriminant-cover}
	therefore implies $\int_X\Delta(E)=0$.
	
	\emph{Step 2. Restriction to a variable surface.}
	Suppose that $n\geq3$. Apply
	\cite[Theorem~7.3]{ssb-Langer2024} (or Theorem \ref{thm:restriction-prescribed-points}) successively to choose sufficiently
	large positive integers $m_1,\ldots,m_{n-2}$ such that, for a general
	complete intersection
	\(
	Y\in |m_1H|\cap...\cap |m_{n-2}H|,
	\)
	the surface $Y$ is normal with quotient singularities and
	\begin{equation}\label{ssb:eq-stable-sym-six-on-surface}
		\bigl((\operatorname{Sym}^{[6]}E)|_Y\bigr)^{**}
		\simeq\operatorname{Sym}^{[6]}E_Y,
		\qquad E_Y:=(E|_Y)^{**},
	\end{equation}
	is $H|_Y$-stable. 
	By \cite[Theorems~2.4 and~5.8(3)]{ssb-LangerIntersection2025}, for a very general such surface,
	\begin{equation}\label{ssb:eq-discriminant-restriction}
		\int_Y\Delta(E_Y)=\Delta(E)H^{n-2}\cdot \prod_{j=1}^{n-2}m_j.
	\end{equation}
	After spreading out the linear systems,
	take a nonempty open subscheme $T$ of the product of their relative
	parameter spaces, with universal surface $\mathscr Y\to T$. By
	generic flatness and openness of geometric normality and reflexivity,
	we may shrink $T$ so that all its geometric fibers are normal integral
	surfaces and there is a flat coherent sheaf $\mathscr F$ with
	$\mathscr F_{\bar t}\simeq(E_{\bar s}|_{Y_{\bar t}})^{**}$
	for $\bar t\in T_{\bar s}$.
	The geometric generic fiber has quotient singularities and satisfies
	\eqref{ssb:eq-stable-sym-six-on-surface}. The morphism $T\to S$ is
	dominant and open, with geometrically integral nonempty fibers.
	By	Lemma~\ref{ssb:lem-density-on-surfaces} the set \(	\Sigma^{\mathrm{ns}}(\mathscr F/T)\)
	{ is Zariski dense in }$T$.
	
	Embed an algebraic closure of the function field of $T$ into
	$\mathbb C$, extending the chosen field of definition of $(X,H,E)$.
	Apply Step~1 to the resulting complex geometric generic surface and
	its reflexive sheaf. It gives
	$\int\Delta(\mathscr F_{\bar\eta})=0$. The generic surface also
	satisfies the restriction rule
	\eqref{ssb:eq-discriminant-restriction}: the generic restriction
	conditions in the proof of the cited restriction rule are all
	satisfied at the geometric generic point. Therefore
	$\Delta(E)H^{n-2}=0$.
	
	\emph{Step 3. Local freeness and projective unitary flatness.}
	Since $E$ is $H$-stable, Theorem~\ref{general-local-freeness} implies that $E|_{X_{\mathrm{reg}}}$
	is locally free, $\cEnd(E)|_{X_{\mathrm{reg}}}$ is nef and  the entire
	numerical form $\int_X\Delta(E)$ vanishes.
	
	If $X$ is klt, Theorem~\ref{klt-local-freeness} gives projective
	unitary flatness on $X_{\mathrm{reg}}$ and the asserted finite Galois
	cover. To see irreducibility, choose a sufficiently general smooth
	complete-intersection curve $C\subset X_{\mathrm{reg}}$ such that
	$E|_C$ is stable, using the restriction theorem. If the projective
	unitary monodromy on $X_{\mathrm{reg}}$ preserved a nonzero proper
	linear subspace, its restriction to $C$ would give a nonzero proper
	holomorphic subbundle of $E|_C$ with the same slope: the induced
	connections on the subbundle and on $E|_C$ have the same normalized
	scalar curvature. This contradicts stability of $E|_C$.
\end{proof}

For ranks at most three one can use 
Theorem \ref{ssb:prop-low-rank-holonomy} to
dispense with the symmetric-power
hypothesis. The finite-cover step requires the additional argument
given below.

\begin{theorem}\label{cor:rank-at-most-three-normal}
	Let $X$ be a normal complex projective variety of dimension $n\geq2$
	with quotient singularities in codimension two. Let $H$ be an ample
	line bundle and let $E$ be an $H$-stable reflexive sheaf of rank $r\leq 3$. If $\Sigma^{\mathrm{ns}}(E)$ is dense,
	then $E|_{X_{\mathrm{reg}}}$ is locally free,
	$\cEnd (E)|_{X_{\mathrm{reg}}}$ is nef, and
	$\int_X\Delta(E)=0$.
	
	If, moreover, $X$ is klt, then $E|_{X_{\mathrm{reg}}}$ is irreducible
	projectively unitary flat and there exist a normal projective variety
	$\bar X$ and a finite surjective Galois morphism
	$\gamma:\bar X\to X$, \'etale in codimension one, such that the
	projectivization of
	\(
	(\gamma|_{\gamma^{-1}(X_{\mathrm{reg}})})^*
	(E|_{X_{\mathrm{reg}}})
	\)
	extends to an algebraic Brauer--Severi bundle on $\bar X$ induced by a representation
	\(
	\pi_1^{\mathrm{top}}(\bar X^{\mathrm{an}})
	\longrightarrow\mathrm{PU}(r).
	\)
\end{theorem}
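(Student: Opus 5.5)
The proof will follow that of Theorem~\ref{ssb:cor-higher-dimensional-reflexive}, replacing Theorem~\ref{ssb:thm-main} by Theorem~\ref{ssb:prop-low-rank-holonomy}, which needs only stability of $E$ in ranks two and three. The rank one case is trivial, so assume $r\in\{2,3\}$. The goal is to prove $\Delta(E)H^{n-2}=0$. Step~3 of the proof of Theorem~\ref{ssb:cor-higher-dimensional-reflexive} then applies verbatim. Theorem~\ref{general-local-freeness} gives local freeness on $X_{\mathrm{reg}}$, nefness of $\cEnd(E)|_{X_{\mathrm{reg}}}$ and vanishing of the whole form $\int_X\Delta(E)$. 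In the klt case, Theorem~\ref{klt-local-freeness} together with the curve argument gives irreducibility and the Galois cover.

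\emph{Surface case.} Let $n=2$. By \cite[Lemma~7.1]{ssb-Langer2024} choose a finite surjective $f:Y\to X$ from a normal surface with $V=f^{[*]}E$ locally free and $\int_Y\Delta(V)=\deg(f)\int_X\Delta(E)$. The difficulty, which is the ``additional argument'' announced before the statement, is that $V$ need not be $f^*H$-stable; by Corollary~\ref{stability-pullback-of-symmetric-powers} it is only polystable. I would handle this with the holonomy dichotomy from the proof of Theorem~\ref{ssb:prop-low-rank-holonomy}.

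If $D=[G^\circ,G^\circ]$ acts reducibly on $E_x$, then, since $r$ is prime, the $G^\circ$-constituents are characters. The argument of Lemma~\ref{ssb:lem-finite-projective-holonomy}, using the finite holonomy cover, then gives finite projective holonomy, and that lemma yields $\int_X\Delta(E)=0$ directly. Alternatively, one can rerun the equivariant line bundle argument of Theorem~\ref{ssb:prop-low-rank-holonomy} on the quasi-\'etale holonomy cover.

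If $D$ acts irreducibly on $E_x$, then Corollary~\ref{stability-pullback-of-symmetric-powers}(2) with $m=1$ shows that $V$ is $f^*H$-stable for every finite $f$. Step~1 of the proof of Theorem~\ref{ssb:cor-higher-dimensional-reflexive} shows that $\Sigma^{\mathrm{ns}}(V)$ is dense. It remains to pass to a smooth surface. Pull back to a resolution $g:\widetilde Y\to Y$ and use the perturbed polarization $P=Ng^*f^*H-D'$ from Lemma~\ref{ssb:lem-nearby-polarizations}. Lemma~\ref{ssb:lem-birational-transfer} shows that the bad locus stays dense, and Theorem~\ref{ssb:prop-low-rank-holonomy} gives $\int_{\widetilde Y}\Delta(g^*V)=0$. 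The projection formula then gives $\int_Y\Delta(V)=0$, hence $\int_X\Delta(E)=0$. This is the same route as the dimension-two part of the proof of Theorem~\ref{cor:rank-at-most-three}.

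\emph{Higher dimension.} For $n\geq3$, I would repeat Step~2 of the proof of Theorem~\ref{ssb:cor-higher-dimensional-reflexive}. Choose degrees, via \cite[Theorem~7.3]{ssb-Langer2024}, so that for a general complete intersection surface $Y$ the sheaf $(E|_Y)^{**}$ is $H|_Y$-stable and $Y$ has quotient singularities. Lemma~\ref{ssb:lem-density-on-surfaces} spreads this over the variable surface family and keeps $\Sigma^{\mathrm{ns}}$ dense. The surface case, applied to the geometric generic surface, gives $\int_Y\Delta(E_Y)=0$. The restriction rule \eqref{ssb:eq-discriminant-restriction} then gives $\Delta(E)H^{n-2}=0$.

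I expect the main obstacle to be the finite-cover step in the reducible-$D$ case. One must check either that finite projective holonomy really follows from rank primality, using the transitive action of $G/G^\circ$ on the constituents, or that the equivariant argument survives on a normal cover whose fibers are only quasi-\'etale over $X$.
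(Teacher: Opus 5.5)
Your handling of the case where $D=[G^\circ,G^\circ]$ acts irreducibly on $E_x$ matches the paper. So do the reduction from $n\geq3$ to surfaces and the final deductions. The gap is in the case where $D$ acts reducibly.

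\textbf{Why your first route fails.} Primality of $r$ only shows that $G^\circ$ is a torus acting on $E_x$ through $r$ characters. Nothing forces these characters to coincide, so the image of $G$ in $\PGL(E_x)$ need not be finite, and Lemma~\ref{ssb:lem-finite-projective-holonomy} does not apply. This case really occurs with nonzero discriminant. Let $\pi\colon X'\to X$ be an \'etale double cover of a smooth surface with involution $\sigma$, and let $L$ be a line bundle with $\ell:=c_1(L)-\sigma^*c_1(L)\not\equiv0$. Then $\pi_*L$ is stable and its $G^\circ$ is a torus. Since $\ell\cdot\pi^*H=0$, the Hodge index theorem gives $\int_{X'}\Delta(\pi^*\pi_*L)=-\ell^2>0$. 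By Lemma~\ref{ssb:lem-finite-projective-holonomy} its projective holonomy is therefore infinite. So here $\int_X\Delta(E)=0$ cannot follow from holonomy alone. It must come from density of $\Sigma^{\mathrm{ns}}(E)$, which forces the $q$th powers of the line summands to become isomorphic.

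\textbf{Your fallback and how the paper does it.} Rerunning the equivariant argument of Theorem~\ref{ssb:prop-low-rank-holonomy} on the quasi-\'etale holonomy cover is the right idea, but you leave it unproved. On that normal cover the summands are only rank one reflexive sheaves, so Weil divisors, orbifold Chern classes and their behaviour under reduction would all need checking. The paper avoids this as follows.
\begin{enumerate}
\item It replaces the cover of \cite[Lemma~7.1]{ssb-Langer2024} by the normalization of $X$ in its Galois closure, $f\colon Y\to X$ with group $\Gamma$, so $V=f^{[*]}E$ stays locally free.
\item Stability of $E$ forces $\Gamma$ to permute the isotypic summands of the polystable bundle $V$ transitively, because an orbit sum descends to a subsheaf of $E$ of slope $\mu_H(E)$.
\item As $r$ is prime, $V$ is stable (handled by Theorem~\ref{cor:rank-at-most-three}), or $V\simeq L^{\oplus r}$, or $V=L_1\oplus\cdots\oplus L_r$ with pairwise nonisomorphic honest line bundles permuted transitively.
\item In the last case one spreads out $f$, $\Gamma$ and the splitting. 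A rank one subsheaf $M$ of $F^{[e]}_{X_{\bar s}}E_{\bar s}$ or its dual, of slope at least the ambient one, gives an equivariant map $f_{\bar s}^{[*]}M\to\bigoplus_i L_{i,\bar s}^{\otimes\pm q}$.
\item Transitivity makes every component nonzero, and the degree comparison makes each an isomorphism. Hence $q^2\int_{Y_{\bar s}}\Delta(V_{\bar s})=0$, and constancy of Chern numbers gives $\int_X\Delta(E)=0$.
\end{enumerate}
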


\begin{proof}
	The rank one case is immediate, so assume that $r\in\{2,3\}$.
	
	\emph{The surface case.}
	Suppose first that $\dim X=2$. By
	\cite[Lemma~7.1]{ssb-Langer2024}, there is a finite surjective
	morphism $f_0:Y_0\to X$ from a normal projective surface such that
	$f_0^{[*]}E$ is locally free. Replace $Y_0$ by its normalization in
	a Galois closure of $\mathbb C(Y_0)/\mathbb C(X)$, and write
	\[
	f:Y\longrightarrow X,\qquad
	\Gamma=\operatorname{Gal}(\mathbb C(Y)/\mathbb C(X)),\qquad
	A=f^*H,\qquad V=f^{[*]}E.
	\]
	The bundle $V$ is the ordinary pullback of $f_0^{[*]}E$, since
	these sheaves agree on a big open subset and are reflexive.
	In particular, $V$ is locally free. By \cite[Theorem 0.2]{ssb-LangerIntersection2025} we also have
	\[
	\int_Y\Delta(V)=\deg(f)\int_X\Delta(E).
	\]
	
	The Galois pullback $V$ is $A$-polystable (see
	\cite[Lemmas~3.2.2 and~3.2.3]{Huybrechts-Lehn2010}).
	Write its isotypic decomposition as
	\[
	V=\bigoplus_{i=1}^{t}W_i^{\oplus a_i},
	\]
	where the $W_i$ are pairwise nonisomorphic $A$-stable bundles of
	the same slope. The canonical $\Gamma$-linearization permutes
	the isotypic summands transitively. Indeed, the sum of any orbit
	is a saturated $\Gamma$-invariant subsheaf. Over the big open
	subset of $X$ where $f$ is flat and $E$ is locally free it descends
	to a subsheaf of $E$: descend its generic subspace and take the
	saturation, which commutes with flat pullback. Extending by
	saturation to $X$ gives a subsheaf of slope $\mu_H(E)$.
	Stability of $E$ excludes a proper nonzero orbit sum.
	Consequently, all $W_i$ have the same rank and all $a_i$ have
	the same value $a$. Since $r=t\,a\,\operatorname{rk}W_i$ is prime,
	either $V$ is stable, or $V\simeq L^{\oplus r}$ for a line bundle
	$L$, or
	\[
	V=L_1\oplus\cdots\oplus L_r
	\]
	for pairwise nonisomorphic line bundles permuted transitively
	by $\Gamma$.
	
	If $V$ is stable, its bad locus $\Sigma^{\mathrm{ns}}(V)$ is
	dense. Indeed, spread out $f$ and $V$ as in the proof of
	Theorem~\ref{ssb:cor-higher-dimensional-reflexive}.
	The pullback of a proper subsheaf of $F_{X_{\bar s}}^{[e]}E_{\bar s}$ with
	slope at least that of $F_{X_{\bar s}}^{[e]}E_{\bar s}$ gives, after saturation,
	a proper subsheaf of $F_{Y_{\bar s}}^{e*}V_{\bar s}$ with slope at least that
	of $F_{Y_{\bar s}}^{e*}V_{\bar s}$. Here slopes scale by $\deg(f)$ before
	saturation. Thus every bad fiber downstairs is bad upstairs.
	Theorem~\ref{cor:rank-at-most-three} gives
	$\int_Y\Delta(V)=0$ and hence $\int_X\Delta(E)=0$.
	
	The case $V\simeq L^{\oplus r}$ also gives
	$\int_Y\Delta(V)=0$. It remains to consider the decomposition
	into $r$ pairwise nonisomorphic line bundles.
	Spread out $f$, the $\Gamma$-action, the line bundles, the
	decomposition of $V$, and the isomorphisms describing the
	permutation of the summands. After enlarging the model and
	shrinking its base, these operations preserve the density
	hypothesis, every geometric fiber $f_{\bar s}:Y_{\bar s}\to X_{\bar s}$ is a
	finite Galois cover with group $\Gamma$, and
	\[
	V_{\bar s}\simeq f_{\bar s}^{[*]}E_{\bar s}
	\simeq\bigoplus_{i=1}^{r}L_{i,\bar s}.
	\]
	We may also assume that $E_{\bar s}$ is stable, that the residue
	characteristic does not divide $|\Gamma|$, and that the Chern
	numbers of $V_{\bar s}$ and the degrees of the line bundles are constant.
	Choose a bad geometric fiber and an integer $e\geq1$ such that
	$F_{X_{\bar s}}^{[e]}E_{\bar s}$ is not stable, and put
	$p=\operatorname{char}\kappa(s)$ and $q=p^e$.
	As in the proof of Theorem~\ref{ssb:prop-low-rank-holonomy},
	the rank assumption gives a saturated rank one reflexive
	subsheaf $M$ of either $F_{X_{\bar s}}^{[e]}E_{\bar s}$ or its dual
	whose slope is at least that of the ambient sheaf.
	In the first case, reflexive pullback gives an equivariant map
	\[
	f_{\bar s}^{[*]}M\longrightarrow
	\bigoplus_{i=1}^{r}L_{i,\bar s}^{\otimes q}.
	\]
	Transitivity makes every component nonzero. The source degree
	is at least the common target degree, so each component has a
	zero divisor of nonpositive ample degree. It is therefore an
	isomorphism of rank one reflexive sheaves. The dual case is
	identical, using $L_{i,\bar s}^{\otimes(-q)}$.
	Thus all the $q$th powers of the line bundles are isomorphic, and
	\(
	q^2\int_{Y_{\bar s}}\Delta(V_{\bar s})=0.
	\)
	Constancy of Chern numbers and the finite-pullback formula prove
	$\int_X\Delta(E)=0$ in all surface cases.
	
	\emph{Higher dimensions and the remaining conclusions.}
	For $n\geq3$, use the variable complete-intersection surfaces
	from Step~2 of the proof of
	Theorem~\ref{ssb:cor-higher-dimensional-reflexive}, choosing
	their degrees so that the reflexive restriction of $E$ is
	stable.  By Lemma~\ref{ssb:lem-density-on-surfaces}, the universal
	family of reflexive restrictions has dense
	$\Sigma^{\mathrm{ns}}$ over its parameter space.
	The surface case applied to the geometric generic member,
	together with the discriminant restriction formula, yields
	\(
	\Delta(E)H^{n-2}=0.
	\)
	Theorem~\ref{general-local-freeness} now gives local freeness on
	$X_{\mathrm{reg}}$, nefness of $\cEnd (E)|_{X_{\mathrm{reg}}}$,
	and vanishing of the entire discriminant form.
	If $X$ is klt, Theorem~\ref{klt-local-freeness} gives the
	projectively unitary flat structure and the asserted finite
	Galois cover. Its irreducibility follows by restricting to a
	general smooth complete-intersection curve on which $E$ is
	stable, exactly as in Step~3 of the preceding proof.
\end{proof}

\section{A higher-rank differential-form analogue of
	Shepherd--Barron's theorem}

The following theorem strengthens \cite[Proposition 6.4]{Langer2004}, which generalized 
\cite[Theorem 5, (2)]{ShepherdBarron1998} to higher rank. 

\begin{theorem}\label{thm:higher-rank-SB}
	Let $(X,H)$ be a smooth polarized complex projective variety of
	dimension $n\geq 2$, and let $E$ be a reflexive sheaf of rank $r\geq 2$.
	Set
	\[
	b(n,r):=\min\left\{n,\left\lfloor\frac{r^2}{4}\right\rfloor\right\}.
	\]
	\begin{enumerate}
		\item If $E$ is $H$-stable and $\Sigma^{\mathrm{ns}}(E)$ is dense,
		then there exist an integer $1\leq j\leq b(n,r)$ and a line
		bundle $L\hookrightarrow\Omega_X^j$
		such that $c_1(L)\cdot H^{n-1}\geq 0$.
		\item If $E$ is $H$-semistable and
		$\Sigma^{\mathrm{nss}}(E)$ is dense, then $j$ and $L$ can be
		chosen as in~{\rm(1)} with	$c_1(L)\cdot H^{n-1}>0.$
	\end{enumerate}
	In either case, one has $\kappa(L)\leq j$.
	More precisely, $L$ can be chosen in the form
	$(\det J)^{-1}$, where $J$ is a torsion-free quotient of $T_X$
	of rank $j$ and of nonpositive, respectively negative, $H$-degree.
\end{theorem}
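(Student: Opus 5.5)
At a bad closed point we will use the Cartier descent obstruction of Proposition~\ref{prop:Cartier-descent} to produce a nonzero $\mathcal O$-linear map from a destabilizing piece of a Frobenius pullback into the complementary piece twisted by $\Omega_X$. From this we will extract a torsion-free quotient of $T_{X_{\bar s}}$ of small rank and nonpositive (respectively negative) degree, and then lift it to characteristic zero by a boundedness argument.

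\emph{Step 1: the obstruction at a bad point.} Fix a model and a bad closed point $s$. Choose $e\geq0$ minimal such that $F^{e}_{X_{\bar s}}E_{\bar s}=:E'$ is still stable (respectively semistable) but $F^*E'$ is not. Take the first term $E_1$ of the Jordan--H\"older (respectively Harder--Narasimhan) filtration of $F^*E'$, chosen saturated of maximal slope. Proposition~\ref{prop:Cartier-descent} then gives a nonzero $\mathcal O$-linear map
\[
E_1\otimes T_{X_{\bar s}}\longrightarrow F^*E'/E_1 .
\]
Since $E_1$ is semistable of slope $\geq\mu(F^*E')$, the slope of $E_1$ is at least that of every nonzero quotient of $F^*E'/E_1$. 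The image $I$ is therefore a quotient of $\mathcal Hom(E_1,F^*E'/E_1)$-type data. Let $J$ be the image of $T_X$ in $\mathcal Hom(E_1,F^*E'/E_1)$. Then $\operatorname{rk}J\leq \operatorname{rk}E_1(r-\operatorname{rk}E_1)\leq\lfloor r^2/4\rfloor$, and also $\operatorname{rk}J\leq n$.

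For the degree of $J$, we use the Harder--Narasimhan filtration of $\mathcal Hom(E_1,F^*E'/E_1)$ with respect to strongly semistable data. Its maximal slope is at most $\mu_{\max}(F^*E'/E_1)-\mu(E_1)$ plus an error term. That error term is controlled by Langer's bounds on $L_{\max}-\mu_{\max}$ and, crucially, is bounded independently of $p$ after dividing by $p^{e}$. We will rather argue directly: $J\subset\mathcal Hom$ and $\mu(J)\le\mu_{\max}(\mathcal Hom(E_1,F^*E'/E_1))\le 0$ (respectively $<0$) in the strongly semistable setting. To make this rigorous we will pass to a further Frobenius twist where the Harder--Narasimhan factors are strongly semistable, and replace $E_1$ by a strongly semistable piece. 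The nonstrict inequality in the stable case comes from $E_1$ having the same slope as the quotient factors.

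\emph{Step 2: lifting to characteristic zero.} For each bad $s$ we now have a torsion-free quotient $T_{X_{\bar s}}\twoheadrightarrow J_{\bar s}$ of rank $j\leq b(n,r)$ with $c_1(J_{\bar s})\cdot H^{n-1}\leq0$ (respectively $<0$). We cannot directly get a uniform negative bound. Instead, apply Theorem~\ref{thm:Grothendieck-boundedness} to the quotients of $\mathcal T_{\mathscr X/S}$: the degrees are bounded below, and bounded above by $0$, so the family is bounded. Using the relative Quot scheme of $T_{\mathscr X/S}$, with finitely many Hilbert polynomials, the locus of points admitting such a quotient is constructible. Being dense, it contains the generic point, so it yields a quotient $T_X\twoheadrightarrow J$ over $\mathbb C$ with the same rank and degree sign. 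In the semistable case, strict negativity survives because the degree is an integer taking finitely many negative values. We then set $L=(\det J)^{-1}\subset\Omega^j_X$, using that $\bigwedge^j$ of the dual of a quotient embeds reflexively.

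\emph{Step 3: Kodaira dimension.} The bound $\kappa(L)\leq j$ follows from Bogomolov's lemma for invertible subsheaves of $\Omega^j_X$ on a smooth projective variety. The main obstacle is the degree bound in Step 1, namely controlling $\mu_{\max}$ of the Hom sheaf uniformly so that the sign is exactly $\leq0$ (respectively $<0$). We would try the strongly semistable refinement of Proposition~\ref{prop:Cartier-descent} first, and fall back on Langer's $p$-dependent bounds divided by $p^e$ if that fails.
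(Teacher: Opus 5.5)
Your skeleton matches the paper's proof: minimal Frobenius level, the Cartier-descent map $T_{X_{\bar s}}\to\mathcal{H}om(E_1,Q)$ with $Q=F^*E'/E_1$, its image $J$ of rank at most $\min\{n,a(r-a)\}$, Grothendieck boundedness and the relative Quot scheme, then $L=(\det J)^{-1}$ and Bogomolov's vanishing. However, the step that carries the content, $\mu(J)\le 0$ (resp.\ $<0$), is left unproved, and neither of your remedies closes it. In characteristic $p$ the tensor product of semistable sheaves need not be semistable, so a priori $\mu_{\max}(\mathcal{H}om(E_1,Q))$ may exceed $\mu_{\max}(Q)-\mu(E_1)$.

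Passing to further Frobenius pullbacks does not help. The $\mathcal O$-linear map exists only at the level $F^*E'$ with $E'$ semistable, which is what Proposition~\ref{prop:Cartier-descent} needs. Pulling it back by $F^k$ only yields $p^k\mu(J)\le\mu_{\max}(\mathcal{H}om(F^{k*}E_1,F^{k*}Q))$. In the limit this bounds $\mu(J)$ by the asymptotic maximal slope of $Q$ minus the asymptotic minimal slope of $E_1$. That difference can be positive, because $E_1$ and $Q$ are merely semistable and may be destabilized by further Frobenius pullbacks. Replacing $E_1$ by a ``strongly semistable piece'' forfeits the nonvanishing guaranteed by Cartier descent.

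Your fallback is also insufficient as stated. An error term bounded independently of $p$ only gives $\mu(J)\le C$. Dividing by $p^e$ is irrelevant, because the map lives at one fixed Frobenius level. An error of size $O(1/p)$, with a constant uniform over all bad fibers, combined with integrality of degrees, bounded ranks and large $p$, could be made to work. But you do not establish that uniformity.

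The missing ingredient is the low-height tensor product theorem of Ilangovan--Mehta--Parameswaran, in Langer's higher-dimensional form. It says the tensor product modulo torsion of semistable torsion-free sheaves of ranks $a$ and $b$ is semistable when $p>a+b-2$.
\begin{itemize}
\item First shrink $S$ so that every residue characteristic exceeds $r$.
\item Apply the theorem to $E_1^*$ and each Harder--Narasimhan factor $B$ of $Q$. The ranks sum to at most $r<p$, so $\mathcal{H}om(E_1,B)$ is semistable of slope $\mu(B)-\mu(E_1)$, since it agrees with $(E_1^*\otimes B)^{**}$ in codimension one.
\item Filtering $\mathcal{H}om(E_1,Q)$ by the Harder--Narasimhan filtration of $Q$ then gives $\mu_{\max}(\mathcal{H}om(E_1,Q))=\mu_{\max}(Q)-\mu(E_1)\le 0$.
\item This is strictly negative whenever $F^*E'$ is not semistable, hence always in case~(2).
\end{itemize}

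There are also two smaller slips. In case~(1), $F^*E'$ may fail to be semistable, and then you must take the Harder--Narasimhan filtration rather than a Jordan--H\"older filtration. And the inequality you want is $\mu(E_1)\ge\mu_{\max}(Q)$, a statement about subsheaves of $Q$, not about its quotients.
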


\begin{proof}
	Choose a model
	\(
	(\mathscr X,\mathscr H,\mathscr E)\to S,
	\)
	where $S=\operatorname{Spec}R$ for an integral finitely generated
	$\mathbb Z$-subalgebra $R\subset\mathbb C$.
	After shrinking $S$ and replacing $\mathscr H$ by a positive
	multiple, we may assume that $\mathscr X/S$ is smooth projective
	with geometrically integral fibers of dimension $n$, that
	$\mathscr H$ is relatively very ample, and that all geometric
	fibers of $\mathscr E$ are stable in case~(1), respectively
	semistable in case~(2).
	We can also assume that the intersection numbers
	\(
	h:=\mathscr H_{\bar s}^{\,n},\) and
	\(
	k:=K_{\mathscr X_{\bar s}}\cdot
	\mathscr H_{\bar s}^{\,n-1}
	\)
	are independent of $s$, and the residue characteristics  of $\kappa (s)$ at closed points $s\in S$ are larger than $r$. 	For a coherent sheaf $A$ on a smooth polarized variety $(Y,H)$
	of dimension $n$, write
	\[
	\deg_H A:=c_1(A)\cdot H^{n-1}. 
	\]
	
	Fix a closed point $s$ in the relevant bad set, and write
	\(
	X_{\bar s}:=\mathscr X_{\bar s},\)
	$H_{\bar s}:=\mathscr H_{\bar s}$ and
	$E_{\bar s}:=\mathscr E_{\bar s}.$
	Let $F$ denote the absolute Frobenius of $X_{\bar s}$ and put
	$p=\operatorname{char}\kappa(s)>r$.
	In case~(1), choose the least integer $m\geq 1$ such that
	$F^{m*}E_{\bar s}$ is not stable.
	In case~(2), choose instead the least integer $m\geq 1$ such that
	$F^{m*}E_{\bar s}$ is not semistable.
	Set
	\(
	V:=F^{m*}E_{\bar s},\) and \(W:=F^{(m-1)*}E_{\bar s}.
	\)
	Thus $V=F^*W$, and $W$ is stable in case~(1), respectively
	semistable in case~(2).
	
	If $V$ is not semistable, let $G\subset V$ be the first nonzero
	term of its Harder--Narasimhan filtration.
	Otherwise, which can occur only in case~(1), let $G$ be the first
	nonzero term of a Jordan--H\"older filtration of $V$.
	In either case, $G$ is a proper saturated semistable subsheaf of
	$V$, and $Q:=V/G$ is torsion free.
	If $V$ is not semistable, then
	\begin{equation*}
		\mu_{H_{\bar s}}(G)>\mu_{H_{\bar s}}(V),
		\qquad
		\mu_{H_{\bar s},\max}(Q)<\mu_{H_{\bar s}}(G).
	\end{equation*}
	If $V$ is strictly semistable, then both $G$ and $Q$ are
	semistable of slope $\mu_{H_{\bar s}}(V)$.
	
	The canonical connection on $V=F^*W$ induces an $\cO_{X_{\bar s}}$-linear map $G\longrightarrow Q\otimes\Omega_{X_{\bar s}}^1$,
	or, equivalently, a morphism
	$\theta_{\bar s}\colon T_{X_{\bar s}}\to\mathcal H om(G,Q).$
	This morphism is nonzero by
	Proposition~\ref{prop:Cartier-descent}.
	
	We next estimate the maximal slope of $\mathcal H om(G,Q)$.
	The low-height tensor-product theorem
	\cite{Ilangovan-Mehta-Parameswaran}, in the form
	\cite[Corollary~1.3]{LangerRestriction2010}, implies that the
	tensor product, modulo torsion, of semistable torsion-free
	sheaves of ranks $a$ and $b$ is semistable whenever
	$p>a+b-2$.
	Apply this to $G^*$ and to the semistable factors of the
	Harder--Narasimhan filtration of $Q$.
	The sum of the ranks of each such pair is at most $r<p$.
	Therefore
	\begin{equation}\label{eq:higher-rank-hom-slope}
		\mu_{H_{\bar s},\max}\bigl(\mathcal H om(G,Q)\bigr)=
		\mu_{H_{\bar s},\max}(Q)-\mu_{H_{\bar s}}(G)
		\leq 0.
	\end{equation}
	Here the last inequality is strict whenever $V$ is not
	semistable, and hence always in case~(2).
	For completeness, if $B$ is a Harder--Narasimhan factor of $Q$,
	then
	\(
	\mathcal H om(G,B)
	\hookrightarrow (G^*\otimes B)^{**}
	\)
	is an isomorphism in codimension $1$, so both sheaves are semistable of slope
	$\mu_{H_{\bar s}}(B)-\mu_{H_{\bar s}}(G)$. This shows the above equality of slopes.
	
	Let
	\(
	J_{\bar s}:=\operatorname{im}\theta_{\bar s}.
	\)
	By \eqref{eq:higher-rank-hom-slope}, we have $	\deg_{H_{\bar s}}J_{\bar s}\leq 0$ in case~(1)
	and $\deg_{H_{\bar s}}J_{\bar s}<0$ in case~(2).
	Writing $a_s=\operatorname{rk}G$ and
	$j_s=\operatorname{rk}J_{\bar s}$, we also have
	\[
	1\leq j_s
	\leq\min\{n,a_s(r-a_s)\}
	\leq b(n,r).
	\]
	
	The leading two terms of the Hilbert polynomial of $J_{\bar s}$ are
	\begin{equation*}
		\chi\bigl(J_{\bar s}(tH_{\bar s})\bigr)
		={}\frac{j_s h}{n!}t^n
		+\frac{\deg_{H_{\bar s}}J_{\bar s}-\frac{j_s}{2}k}
		{(n-1)!}t^{n-1}
		+O(t^{n-2}).
	\end{equation*}
	Since $1\leq j_s\leq b(n,r)$ and
	$\deg_{H_{\bar s}}J_{\bar s}\leq 0$, the coefficients of degrees $n$ and
	$n-1$ are bounded above, independently of $s$ and of the
	Frobenius exponent $m$.
	Moreover, each $J_{\bar s}$ is pure of dimension $n$ and is a quotient
	of $T_{X_{\bar s}}$.
	Theorem~\ref{thm:Grothendieck-boundedness}, applied to
	$T_{\mathscr X/S}$ with $d=n$, shows that these quotients form
	a bounded family.
	In particular, only finitely many Hilbert polynomials
	$P_1,\ldots,P_N$ occur.
	
	Consider the finite disjoint union
	\[
	\mathscr Q:=
	\bigsqcup_{\nu=1}^N
	\operatorname{Quot}_{T_{\mathscr X/S}/\mathscr X/S}^{P_\nu}.
	\]
	The morphism $\mathscr Q\to S$ is projective, and its image
	contains the relevant dense bad set.
	Hence it is surjective.
	On the chosen complex fiber, we therefore obtain a coherent
	quotient
	\(
	T_X\twoheadrightarrow\widetilde J
	\)
	of rank $1\leq j\leq b(n,r)$.
	Rank and first-Chern degree are determined by the leading two
	Hilbert coefficients, since $h$ and $k$ are constant.
	Consequently, $	\deg_H\widetilde J\leq 0$ in case~(1)
	and $	\deg_H\widetilde J<0$ in case~(2).

	Let $\mathcal T$ be the torsion subsheaf of $\widetilde J$ and
	set $J:=\widetilde J/\mathcal T$.
	Then $J$ is a torsion-free quotient of $T_X$ of rank $j$.
	The codimension-one cycle of $\mathcal T$ is effective, so
	\[
	\deg_H J
	=\deg_H\widetilde J-\deg_H\mathcal T
	\leq\deg_H\widetilde J.
	\]
	Thus removing torsion preserves the required degree inequality.
	
	Taking the $j$th exterior power of the quotient
	$T_X\twoheadrightarrow J$ gives a nonzero composition
	\[
	\bigwedge\nolimits^j T_X
	\longrightarrow
	\bigwedge\nolimits^j J
	\longrightarrow
	\det J.
	\]
	Dualizing, we obtain a nonzero morphism
	\(
	L:=(\det J)^{-1}\longrightarrow\Omega_X^j.
	\)
	It is injective because $L$ is invertible and $X$ is integral.
	Moreover,
	\(
	c_1(L)\cdot H^{n-1}=-\deg_H J,
	\)
	which proves the asserted degree inequalities.
	Finally,  Bogomolov's vanishing theorem gives
	$\kappa(L)\leq j$ (see
	\cite[Corollary~6.9]{EsnaultViehweg1992}).
\end{proof}

\section{Counterexamples and examples}

We construct three bundles on the same polarized surface: a strictly
semistable rank three bundle, a stable pseudoeffective rank four bundle,
and a stable rank six bundle which is not pseudoeffective. In each case
the discriminant is positive and the first Frobenius pullback is not
semistable at a set of primes of positive Dirichlet density.

Throughout this section, $\operatorname{Hol}_x(E)$ denotes algebraic
holonomy, whereas $\operatorname{Hol}^{\mathrm{an}}_x(E)$ denotes the
analytic holonomy group of the specified connection. An overline denotes
closure in the usual topology. We suppress the basepoint when considering
these groups up to conjugacy.

\subsection{The common construction}

Let us recall one of Brenner's examples of a semistable vector bundle on a smooth complex projective curve, for which 
a dense set of its reductions is not strongly semistable (see \cite[Corollary~2]{Brenner-Miyaoka}).

Let $R=\mathbb Z[1/5]$ and let
\[
\mathscr C=\operatorname{Proj} R[x,y,z]/(x^5+y^5+z^5)\to \Spec R
\]
be a spreading out of the Fermat quintic curve in $\bP^2$.
On $\mathscr C$ consider a rank two syzygy bundle
defined by
\begin{equation}\label{eq:brenner-quintic}
	0\longrightarrow \mathscr V\longrightarrow
	\mathcal O_{\mathscr C}(1)^{\oplus 3}
	\xrightarrow{(x^2,y^2,z^2)}
	\mathcal O_{\mathscr C}(3)\longrightarrow0.
\end{equation}
If $p\equiv2\pmod 5$, then $ F_{C_p}^{*}V_p$ is not semistable and
\begin{equation}\label{eq:brenner-HN}
	0\longrightarrow\mathcal O_{C_p}(1)
	\longrightarrow F_{C_p}^{*}V_p
	\longrightarrow\mathcal O_{C_p}(-1)\longrightarrow0
\end{equation}
is its Harder--Narasimhan filtration
(see \cite[Lemma~2.1]{BrennerKaid-descent}).
Note that  the primes $p\equiv2\pmod5$ have Dirichlet density $1/4$. Thus the
bad-prime set is dense in $\Spec R$ and it even has a positive arithmetic density.

On the other hand, by \cite[Corollary~2]{Brenner-Miyaoka} the restriction $V$ of $\mathscr V$ to the generic geometric fiber $\mathscr C _{\bC}$ is semistable. So for all sufficiently large primes $p$, the bundle
$V_p$ is semistable. A strictly semistable rank two bundle of degree zero
on a smooth projective curve is an extension of two degree-zero line
bundles and is therefore strongly semistable. Thus \eqref{eq:brenner-HN}
implies that for large $p\equiv2\pmod 5$, the bundles $V_p$ are geometrically stable. So by
openness of geometric stability (see \cite[Proposition 2.3.1]{Huybrechts-Lehn2010}),  $V$ is stable.

Since \(V\) is stable with trivial determinant, it is unitary flat
by the Narasimhan--Seshadri theorem. We will also use that
\begin{equation}\label{eq:rank-six-brenner-holonomy}
	\operatorname{Hol}(V_{\mathbb C})
	=\operatorname{SL}_2.
\end{equation}
If the
identity component of its algebraic holonomy were a proper subgroup
of \(\operatorname{SL}_2\), it would be a torus or trivial. After a
finite \'etale cover killing the component group
\cite[Lemma~40]{ssb-BalajiKollar2008}, the pullback of \(V\) would
therefore split into degree-zero line bundles. The cover and splitting 
can be defined over a finite extension of \(\mathbb Q\) and spread
out. Every sufficiently good reduction of that pullback would then be
strongly semistable. Strong semistability descends under finite
surjective morphisms of smooth curves, since a destabilizing subsheaf
remains destabilizing after pullback, including after every Frobenius
pullback. This contradicts \eqref{eq:brenner-HN} at primes above
sufficiently large \(p\equiv2\pmod5\), and proves
\eqref{eq:rank-six-brenner-holonomy}. 

Set
\[\mathscr X=\mathscr C\times_R\mathbb P^1_R,\qquad
A=p_1^*\mathcal O_{\mathscr C}(1),\qquad
B=p_2^*\mathcal O_{\mathbb P^1}(1), \]
and polarize the fibers by $\cH:=A\otimes B$. We write $a=c_1(A)$ and
$b=c_1(B)$. On every surface fiber, we have
	$a^2=b^2=0$ and $ a\cdot b=5$.
Throughout this section we write $X=\cX _{\bC}$ and $H=\cH_{\bC}$. 
Set $G:=p_1^*V$. Since $\pi_1(X)\simeq\pi_1(C)$, the bundle $G$
is unitary flat with
$\overline{\operatorname{Hol}^{\mathrm{an}}(G)}=\operatorname{SU}(2)$.

The morphism $g\colon C\to\mathbb P^1$, $ [x:y:z]\longmapsto[x:y]$,
has degree five. Composing $g\times\operatorname{id}_{\mathbb P^1}$
with the Segre embedding and a linear projection from a rational point
outside the Segre quadric gives a finite morphism
\(f\colon X\to\mathbb P^2\), defined over $\mathbb Q$, with
\begin{equation}\label{eq:rank-six-finite-map}
	f^*\mathcal O_{\mathbb P^2}(1)\simeq H,
	\qquad \deg f=H^2=10.
\end{equation}
We use the same symbols for these objects over $\mathbb Q$ and after
extension to $\mathbb C$ when no confusion can arise.

The following lemma will be used for both stable tensor-product examples.

\begin{lemma}\label{lem:counterexamples-tensor-holonomy}
	Let $F$ be an $H$-stable vector bundle of rank $r\geq2$ on $X$ such that $	\det F\simeq\mathcal O_X,$
	$	\operatorname{Hol}(F)=\operatorname{SL}_r$ and $	\int_Xc_2(F)>0.$
	Then $E:=F\otimes G$ is $H$-stable and
	\(
	\operatorname{Hol}(E)
	=\operatorname{im}\bigl(
	\operatorname{SL}_r\times\operatorname{SL}_2
	\xrightarrow{\,\otimes\,}\operatorname{SL}_{2r}\bigr).
	\)
\end{lemma}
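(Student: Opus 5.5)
The plan is to compute the holonomy of the polystable sheaf $F\oplus G$ and then push it forward along the tensor-product morphism. Both $F$ and $G$ are $H$-stable of slope zero. For $G=p_1^*V$ this holds because its algebraic holonomy is $\operatorname{SL}_2$ by \eqref{eq:rank-six-brenner-holonomy} (pullback along $p_1$ induces an isomorphism on $\pi_1$), and this group acts irreducibly. Hence $F\oplus G$ is $H$-polystable, and Theorem~\ref{Balaji-Kollar} applies to it.

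\emph{Step 1: reduction to $F\oplus G$.} Choose a very general complete-intersection curve $x\in C\subset X$ that simultaneously detects $\operatorname{Hol}_x(F\oplus G)$ and $\operatorname{Hol}_x(E)$, using Theorem~\ref{Balaji-Kollar}(1). Since all determinants are trivial, no auxiliary puncture is needed. The Narasimhan--Seshadri representation of $(F\otimes G)|_C$ is the tensor product of those of $F|_C$ and $G|_C$. It is therefore the composite of the representation of $(F\oplus G)|_C$ with the algebraic homomorphism
\[
\otimes:\operatorname{SL}_r\times\operatorname{SL}_2\to\operatorname{SL}_{2r}.
\]
The Zariski closure of the image of a group under a morphism of algebraic groups is the image of its closure. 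Hence $\operatorname{Hol}(E)=\otimes\bigl(\operatorname{Hol}(F\oplus G)\bigr)$.

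\emph{Step 2: $\operatorname{Hol}(F\oplus G)=\operatorname{SL}_r\times\operatorname{SL}_2$.} Put $K=\operatorname{Hol}(F\oplus G)$. It lies in $\operatorname{Hol}(F)\times\operatorname{Hol}(G)=\operatorname{SL}_r\times\operatorname{SL}_2$ and surjects onto each factor; this follows from the curve characterization, by projecting. Thus $K^\circ$ also surjects onto both connected factors. Its Lie algebra $\mathfrak k\subset\mathfrak{sl}_r\oplus\mathfrak{sl}_2$ then projects onto both simple summands. By Goursat's lemma for Lie algebras, $\mathfrak k$ is either the whole sum or the graph of an isomorphism $\mathfrak{sl}_r\simeq\mathfrak{sl}_2$. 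If $r\neq2$ the second case is impossible. Then $K^\circ$ is the whole connected product, so $K=\operatorname{SL}_r\times\operatorname{SL}_2$.

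\emph{Step 3: excluding the graph case when $r=2$.} This is the main obstacle. If $\mathfrak k$ is a graph, then the adjoint representations $\mathfrak{sl}(F_x)$ and $\mathfrak{sl}(G_x)$ are isomorphic as $K^\circ$-modules. Pass to the finite étale holonomy cover $h:X'\to X$ of \cite[Lemma~40]{ssb-BalajiKollar2008}, on which the holonomy becomes $K^\circ$. The correspondence of Theorem~\ref{Balaji-Kollar}(2), applied to the $K^\circ$-invariant line in $\mathfrak{sl}(F_x)^*\otimes\mathfrak{sl}(G_x)$, gives an isomorphism of bundles
\[
h^*\cEnd_0(F)\simeq h^*\cEnd_0(G).
\]
The right-hand side is unitary flat, so its second Chern class vanishes. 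On the other hand, $c_2(\cEnd_0 F)=2r\,c_2(F)-(r-1)c_1(F)^2=4c_2(F)$. We would obtain $4\deg(h)\int_X c_2(F)=0$, contradicting $\int_Xc_2(F)>0$. Hence the graph case cannot occur, and Step~2 gives $K=\operatorname{SL}_2\times\operatorname{SL}_2$ also when $r=2$.

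\emph{Step 4: stability of $E$.} The holonomy $\operatorname{Hol}(E)$ is the image of $\operatorname{SL}_r\times\operatorname{SL}_2$ in $\operatorname{SL}_{2r}$. This group acts irreducibly on $F_x\otimes G_x$, since an exterior tensor product of irreducible representations is irreducible. The bundle $E$ is polystable, being a tensor product of polystable bundles in characteristic zero. Theorem~\ref{Balaji-Kollar}(2) therefore shows that $E$ is $H$-stable.
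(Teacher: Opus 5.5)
Your proof is correct, but it takes a genuinely different route from the paper's.

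\textbf{The paper's route.} The paper argues analytically. It equips $F$ with its Hermite--Einstein connection and uses \cite[Theorem~3.1]{Biswas2011} to identify the closed analytic holonomy with $\operatorname{SU}(r)$. It then uses $\int_Xc_2(F)>0$, in the form of non-flatness, together with simplicity of $\mathfrak{su}(r)$ to show that the restricted holonomy is already all of $\operatorname{SU}(r)$. Because $G$ is flat, contractible loops act trivially on $G$, so the joint holonomy of $F\oplus G$ contains $\operatorname{SU}(r)\times\{1\}$ and is therefore the full product. Donaldson--Uhlenbeck--Yau then gives stability, and Biswas's theorem gives the algebraic holonomy.

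\textbf{Your route.} You stay inside the Balaji--Koll\'ar framework of Section~3. You reduce to $\operatorname{Hol}(F\oplus G)$ via a common detecting curve and apply Goursat's lemma to its Lie algebra. The hypothesis $\int_Xc_2(F)>0$ enters only when $r=2$. There you exclude the graph of an isomorphism $\mathfrak{sl}(F_x)\simeq\mathfrak{sl}(G_x)$ by producing $h^*\cEnd_0(F)\simeq h^*\cEnd_0(G)$ on the holonomy cover and comparing second Chern classes.

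\textbf{What each buys.} Your argument shows that for $r\neq2$ the discriminant hypothesis is not needed at all. It also isolates exactly where positivity of $c_2(F)$ matters: ruling out that $F$ has, after a finite \'etale cover, the same traceless endomorphism bundle as the flat factor. The example $F=G$, where $G\otimes G$ is not stable, shows this case is real. The paper's argument is uniform in $r$ and needs no case split, at the cost of invoking the analytic holonomy theory of Hermite--Einstein connections.

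\textbf{A step worth expanding.} You state tersely that the invariant line gives an honest isomorphism of bundles. This follows from the holonomy reduction in Theorem~\ref{Balaji-Kollar}(3): on the cover, both $h^*\cEnd_0(F)$ and $h^*\cEnd_0(G)$ are associated to the same principal $K^\circ$-bundle through isomorphic representations.
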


\begin{proof}
Equip $F$ with its Hermite--Einstein connection, with trivial
determinant connection. By \cite[Theorem~3.1]{Biswas2011},
its closed analytic holonomy group is $\operatorname{SU}(r)$.
The Lie algebra of its restricted analytic holonomy is invariant
under the full analytic holonomy group and hence under its closure.
It is therefore an ideal in $\mathfrak{su}(r)$.
This ideal is nonzero, since otherwise the connection would be
flat, contradicting $\int_Xc_2(F)>0$.
Since $\mathfrak{su}(r)$ is simple, the restricted analytic
holonomy group is also $\operatorname{SU}(r)$.
	
	Let 
	\(
	J:=\overline{\operatorname{Hol}^{\mathrm{an}}(F\oplus G)}\subseteq\operatorname{SU}(r)\times\operatorname{SU}(2)
	\)
	be the closure of the joint analytic holonomy group of $F\oplus G$.
	Parallel transport around contractible loops is trivial on $G$
	and generates the restricted analytic holonomy on $F$. Thus $J$ contains
	$\operatorname{SU}(r)\times\{1\}$. Its projection onto the second
	factor is surjective, so
	\(
	J=\operatorname{SU}(r)\times\operatorname{SU}(2).
	\)
	The tensor product connection on $E$ is Hermite--Einstein, and its
	closed analytic holonomy group $\overline{\operatorname{Hol}^{\mathrm{an}}(E)}$
	is the image of $J$ under the tensor product representation.
	This representation is irreducible, so the
	Donaldson--Uhlenbeck--Yau theorem shows that $E$ is $H$-stable.
By \cite[Theorem~3.1]{Biswas2011}, the algebraic holonomy
of $E$ is the Zariski closure of the holonomy of its
Hermite--Einstein connection. Hence
$\operatorname{Hol}(E)$ is as asserted.
\end{proof}

\subsection{A strictly semistable rank three bundle with nonzero
	discriminant}
The following proposition shows that stability in characteristic zero cannot be weakened to
semistability in a rank three analogue of Shepherd--Barron's theorem.

\begin{proposition}\label{prop:semistable-rank-three-syzygy}
	Let $\mathscr V$ denote the syzygy model over $R$ of the bundle $V$
	in \eqref{eq:brenner-quintic}, and set
	$\mathscr E:=p_1^*\mathscr V\oplus(A\otimes B^{-1}).$
	The characteristic-zero fiber $E$ on $X$ of $\mathscr E$ is
	$H$-semistable but not $H$-stable. For every prime
	$p\equiv2\pmod5$, the bundle $F_{X_p}^*E_p$ is not semistable.
	Moreover,	$c_1(E)=a-b$, $c_2(E)=0$,  $\Delta(E)=4ab$,
	and hence $\int_X\Delta(E)=20$.
\end{proposition}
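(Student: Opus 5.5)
We treat semistability, the Frobenius instability and the Chern class computation separately, working on $X=C\times\mathbb P^1$ with $a^2=b^2=0$ and $a\cdot b=5$.

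\emph{Semistability.} Since $\det V\simeq\mathcal O_C$, we have $c_1(p_1^*V)=0$, and $c_1(A\otimes B^{-1})=a-b$, so
\[
\mu_H(E)=\tfrac13(a-b)\cdot(a+b)=0 .
\]
Both summands have $H$-slope zero. The summand $G=p_1^*V$ is $H$-stable: it is unitary flat with irreducible holonomy $\operatorname{SU}(2)$, so Corollary~\ref{cor:DUY} applies; alternatively, stability of $V$ on $C$ can be tested on the fibers of $p_2$. Hence $E$ is polystable of slope zero, and in particular semistable. It is not stable, because the line subbundle $A\otimes B^{-1}$ has slope $0=\mu_H(E)$.

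\emph{Frobenius instability.} For $p\equiv2\pmod5$, Frobenius commutes with $p_1$, so $F_{X_p}^*E_p\supset p_1^*\mathcal O_{C_p}(1)=A_p$ by \eqref{eq:brenner-HN}. Its slope is $\mu_{H}(A_p)=a\cdot(a+b)=5>0=\mu_H(F^*E_p)$, since Frobenius pullback multiplies $c_1$ by $p$ and the total slope stays $0$. This line subbundle therefore destabilizes $F_{X_p}^*E_p$.

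\emph{Chern classes.} We have $c(E)=c(p_1^*V)\,(1+a-b)$. Here $c_1(p_1^*V)=0$, and $c_2(p_1^*V)=p_1^*c_2(V)=0$ because it is pulled back from a curve. Thus $c_1(E)=a-b$ and $c_2(E)=0$. Then
\[
\Delta(E)=6\cdot 0-2(a-b)^2=-2(a^2-2ab+b^2)=4ab ,
\]
and $\int_X\Delta(E)=4\cdot5=20$.

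None of these steps is a real obstacle. The only subtle point is stability of $p_1^*V$, which we settle through the flat structure.
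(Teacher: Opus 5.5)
Your proposal is correct and follows the paper's proof essentially step for step: equal slope zero of the two summands, the destabilizing line subbundle $A_p$ obtained by pulling back \eqref{eq:brenner-HN}, and the same Chern class computation. The only difference is that you prove stability of $p_1^*V$ (via its irreducible $\operatorname{SU}(2)$ holonomy or by restricting to fibers), whereas the paper only needs polystability of $p_1^*V$, which follows directly from unitary flatness.
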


\begin{proof}
	By construction,	$E=p_1^*V\oplus(A\otimes B^{-1}).$
	The bundle $G=p_1^*V$ is unitary flat and hence
	$H$-polystable of slope zero. Furthermore,
	\[
	\mu_H(A\otimes B^{-1})=(a-b)(a+b)=0.
	\]
	Thus $E$ is a direct sum of semistable bundles of the same
	slope, so it is semistable but  not stable. 
	
	For $p\equiv2\pmod5$, pulling back the first arrow in
	\eqref{eq:brenner-HN} gives an inclusion
	\[
	A_p\longrightarrow F_{X_p}^*(p_1^*V_p)
	\longrightarrow F_{X_p}^*E_p.
	\]
	Since
	\(
	\mu_{H_p}(A_p)=5>0=\mu_{H_p}(F_{X_p}^*E_p),
	\)
	the bundle $F_{X_p}^*E_p$ is not semistable.
	Finally, $c_1(p_1^*V)=c_2(p_1^*V)=0$, so the direct-sum
	decomposition gives $c_1(E)=a-b$ and $c_2(E)=0$.
	Consequently,
	\(
	\Delta(E)=6c_2(E)-2c_1(E)^2
	=-2(a-b)^2=4ab.
	\)
	As $\int_Xab=5$, this gives $\int_X\Delta(E)=20$.
\end{proof}

\subsection{A stable rank four bundle with nonzero discriminant}

Theorem~\ref{thm:higher-rank-SB} cannot, in arbitrary
rank, be strengthened by asserting that dense Frobenius instability forces
the discriminant to vanish. The following proposition shows that this fails already in rank four.

\begin{proposition}\label{prop:rank-four-counterexample}
	There exists a rank $4$, $H$-stable vector bundle $E$ on $X$, defined over $\mathbb Q$,
	such that
	\(
	\det E\simeq\mathcal O_X\) and
	$\int_X\Delta(E)>0$,
	whereas, for a set of primes of Dirichlet density $1/4$, the reduction
	$E_p$ is stable and its first Frobenius pullback is not semistable.
\end{proposition}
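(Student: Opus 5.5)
The construction will be $E:=F\otimes G$, where $G=p_1^*V$ and $F:=f^*T$. Here $T$ is a stable rank two bundle on $\mathbb P^2$, defined over $\mathbb Q$, with $\det T\simeq\mathcal O_{\mathbb P^2}$ and $c_2(T)>0$. The natural choice is $T=T_{\mathbb P^2}(-1)\otimes\mathcal O(-1)$ twisted appropriately. To avoid half-integer twists, I would take the normalized rank two bundle $T$ given as the cohomology of a monad, or simply any stable $T$ with $c_1=0$, $c_2=n>0$; such bundles exist over $\mathbb Q$, for instance via Serre's construction from a reduced zero-cycle of $n\geq2$ rational points. Then $\det E\simeq(\det F)^2\otimes(\det G)^2\simeq\mathcal O_X$. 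Since $c_1(F)=c_1(G)=0$ and $G$ has vanishing Chern classes, we get
\[
\int_X c_2(E)=2\int_Xc_2(F)=2\deg(f)\,c_2(T)=20n>0,
\]
so $\int_X\Delta(E)=8\int_Xc_2(E)>0$.

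\emph{Stability.} I would invoke Lemma~\ref{lem:counterexamples-tensor-holonomy}. Its hypotheses require $F$ to be $H$-stable with $\operatorname{Hol}(F)=\SL_2$. Since $H=f^*\mathcal O(1)$ and $f$ is finite, Corollary~\ref{stability-pullback-of-symmetric-powers}(1) shows that $f^*T$ is polystable with the same connected derived holonomy $D$ as $T$. So it suffices to show that $\operatorname{Hol}(T)$ has identity component $\SL_2$. If $D$ were trivial, i.e.\ the identity component a torus, then after a finite cover $T$ would split into degree zero line bundles, forcing $c_2(T)\le 0$ and contradicting $c_2(T)>0$. Hence $D=\SL_2$. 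Then $\operatorname{Hol}(F)\supseteq\SL_2$ and is contained in $\SL_2$ by the trivial determinant, so $\operatorname{Hol}(F)=\SL_2$, and $F$ is stable because $\SL_2$ acts irreducibly. The lemma now gives $H$-stability of $E$, with holonomy the image of $\SL_2\times\SL_2$ in $\SL_4$, which is $\SO_4$.

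\emph{Reductions.} After spreading out over $\mathbb Z[1/N]$, for large $p$ the fibre $F_p$ is semistable; in fact it is strongly semistable, since $\Omega$ on $\mathbb P^2$ has negative slope and $T_p$ is strongly semistable for large $p$. For $p\equiv2\pmod5$, the sequence \eqref{eq:brenner-HN} gives
\[
F_p^{(1)}\otimes A_p\hookrightarrow F_{X_p}^*E_p,
\]
where $F_p^{(1)}:=F_{X_p}^*F_p$. This subsheaf has slope $5>0$, so $F_{X_p}^*E_p$ is not semistable. For stability of $E_p$ itself I would use openness of geometric stability in the family over $\Spec\mathbb Z[1/N]$, which holds at all but finitely many primes since the generic fibre is geometrically stable. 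Together with the density $1/4$ of the primes $p\equiv2\pmod5$, this gives the required set.

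\emph{Expected obstacle.} The delicate step is confirming that $F=f^*T$ is $H$-stable with full holonomy, which is needed as input to Lemma~\ref{lem:counterexamples-tensor-holonomy}. In particular, one must check that pullback by the non-Galois finite map $f$ does not split $T$. The Chern-class argument against a torus identity component handles this.
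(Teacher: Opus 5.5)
Your construction and almost all of your argument coincide with the paper's proof. You take $E=F\otimes G$ with $F=f^*T$ for a stable rank two bundle $T$ on $\mathbb P^2$ with trivial determinant. You obtain full holonomy of $F$ from Corollary~\ref{stability-pullback-of-symmetric-powers}(1) and stability of $E$ from Lemma~\ref{lem:counterexamples-tensor-holonomy}. Your discriminant is $160\,c_2(T)$, you use the same destabilizing subbundle $F_{X_p}^*F_p\otimes A_p$ of slope $5$, and you get stability of $E_p$ from openness of geometric stability. Your remark that $F_p$ is strongly semistable is not needed, since the slope computation only uses $c_1(F_p)=0$.

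The one step whose justification is wrong is your exclusion of a torus identity component in $\operatorname{Hol}(T)$. A splitting into $H$-degree-zero line bundles $L\oplus L^{-1}$ has $c_2=-L^2\geq0$ by the Hodge index theorem, not $c_2\leq0$. So the Chern-class argument gives no contradiction in general. For example, on $X$ itself, $L=A\otimes B^{-1}$ gives an $H$-polystable bundle $L\oplus L^{-1}$ with trivial determinant, torus holonomy and $\int_Xc_2=10>0$.

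This is also why your ``expected obstacle'' paragraph is misdirected. The non-Galois pullback is controlled by Corollary~\ref{stability-pullback-of-symmetric-powers}(1); a Chern-class argument on $X$ could not do that job. On $\mathbb P^2$ the conclusion is still true, and the paper's argument proves it. Since $\mathbb P^2$ is simply connected, its holonomy is connected \cite[Proposition~4]{ssb-BalajiKollar2008}. A connected reductive subgroup of $\SL_2$ acting irreducibly, as forced by stability of $T$, must be $\SL_2$. Equivalently, any splitting would already happen on $\mathbb P^2$, where degree-zero line bundles are trivial.

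Your concrete choices of $T$ also need fixing. First, $T_{\mathbb P^2}(-2)$ has $c_1=-1$, not $0$. Second, Serre's construction $0\to\mathcal O(-1)\to T\to I_Z(1)\to0$ gives $c_2(T)=\operatorname{length}Z-1$, not $\operatorname{length}Z$. Moreover, $H^0(T)=H^0(I_Z(1))$, so $T$ is stable only when $Z$ is not collinear; you need at least three non-collinear points. The paper's syzygy bundle $S_0$, whose stability follows from $H^0(S_0)=0$, avoids both issues.
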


\begin{proof}
	Choose a rank two geometrically stable vector bundle $S$ on $\mathbb P^2_{\mathbb Q}$ with
	trivial determinant. Any such
	$S$ has algebraic holonomy $\mathrm{SL}(2)$: since $\mathbb P^2(\mathbb C)$
	is simply connected, its algebraic holonomy is connected by
	\cite[Proposition~4]{ssb-BalajiKollar2008}.
	It is a reductive subgroup of $\mathrm{SL}(2)$ acting irreducibly,
	and hence equals $\mathrm{SL}(2)$.
	Moreover, stability and the Riemann--Roch theorem give
	\[
	\int_{\mathbb P^2}c_2(S)=2+h^1(\mathbb P^2,S)>0.
	\]
	For example, one can take as $S$  the bundle $S_0$ defined by
	\begin{equation*}
		0\longrightarrow S_0\longrightarrow
		\mathcal O_{\mathbb P^2}(1)^{\oplus3}
		\xrightarrow{(x^2,y^2,z^2)}
		\mathcal O_{\mathbb P^2}(3)\longrightarrow0.
	\end{equation*}
	Its determinant is trivial, and its geometric stability follows from
	$H^0(\mathbb P^2,S_0)=0$.
	Set	
$	F:=f^*S $ and 
$ E:=F\otimes G.$
	By Corollary~\ref{ssb:cor-derived-holonomy-pullback}, $F$ and all its
	symmetric powers are $H$-stable, and $F$ has algebraic holonomy
	group $\mathrm{SL}(2)$. Moreover, we have
	\[
	\int_Xc_2(F)=10\int_{\mathbb P^2}c_2(S)>0.
	\]
	
	Lemma~\ref{lem:counterexamples-tensor-holonomy} now shows that $E$
	is $H$-stable and
	\(
	\operatorname{Hol}(E)
	=\operatorname{im}\bigl(
	\operatorname{SL}_2\times\operatorname{SL}_2
	\xrightarrow{\,\otimes\,}\operatorname{SL}_4\bigr).
	\)
	
	Both $F$ and $G$ have trivial determinant, and $c_2(G)=0$ because
	$G$ is pulled back from a curve. The tensor-product discriminant formula
	therefore gives
	\[
	\int_X\Delta(E)
	=4\int_X\Delta(F)+4\int_X\Delta(G)=40\int_{\mathbb P^2}\Delta(S)
	=160\int_{\mathbb P^2}c_2(S)>0.
	\]
	
	All the algebraic data are defined over $\mathbb Q$ and spread out over
	$\mathbb Z[1/N]$ for some $N$. After increasing $N$, the reductions
	$E_p$ are stable for every $p\nmid N$. For $p\equiv2\pmod5$ with
	$p\nmid N$, pulling back \eqref{eq:brenner-HN} to $X_p$ and tensoring
	with $F_{X_p}^*F_p$ gives a rank two subbundle
$		F_{X_p}^*F_p\otimes A_p		\to F_{X_p}^*E_p.$
	Since $c_1(F_p)=c_1(E_p)=0$, its slope is
	\[
	\mu_{H_p}(F_{X_p}^*F_p\otimes A_p)
	=a(a+b)=5>0=\mu_{H_p}(F_{X_p}^*E_p).
	\]
	Thus the first Frobenius pullback is not semistable. The primes
	$p\equiv2\pmod5$ with $p\nmid N$ have Dirichlet density $1/4$.
\end{proof}

\begin{lemma}\label{lem:symmetric-powers-rank-four-example}
	Let \(E=F\otimes G\) be the bundle constructed in
	Proposition~\ref{prop:rank-four-counterexample}. Then
	\(\operatorname{Sym}^mE\) is \(H\)-polystable for every \(m\geq1\), but it
	is not \(H\)-stable for any \(m\geq2\). More precisely, we have
	\[
	\operatorname{Sym}^mE
	\simeq
	\bigoplus_{i=0}^{\lfloor m/2\rfloor}
	\operatorname{Sym}^{m-2i}F
	\otimes
	\operatorname{Sym}^{m-2i}G .
	\]
	In particular, $E$ is pseudoeffective.	
\end{lemma}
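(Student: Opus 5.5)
The plan is to compute $\Sym^m(F\otimes G)$ representation-theoretically and then transport the result to bundles via the holonomy correspondence of Theorem~\ref{Balaji-Kollar}. Write $V_F$ and $V_G$ for the two-dimensional fibers. The Cauchy formula gives, for $\GL(V_F)\times\GL(V_G)$,
\[
\Sym^m(V_F\otimes V_G)\simeq\bigoplus_{|\lambda|=m,\ \ell(\lambda)\le 2}\mathbb S_\lambda V_F\otimes\mathbb S_\lambda V_G .
\]
For rank two we have $\mathbb S_{(m-i,i)}W=\Sym^{m-2i}W\otimes(\det W)^{\otimes i}$ by Subsection~\ref{subsection:Cartan}. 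This identity is functorial, so it holds on the level of vector bundles on $X$ and gives
\[
\Sym^mE\simeq\bigoplus_{i=0}^{\lfloor m/2\rfloor}\Sym^{m-2i}F\otimes\Sym^{m-2i}G\otimes(\det F\otimes\det G)^{\otimes i}.
\]
Since $\det F\simeq\det G\simeq\mathcal O_X$, this is the displayed decomposition.

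For polystability, I would use that $E$ is $H$-stable by Lemma~\ref{lem:counterexamples-tensor-holonomy}. In characteristic zero, symmetric powers of a polystable bundle are polystable, because they are direct summands of tensor powers; this is already used in the proof of Theorem~\ref{Balaji-Kollar}. Alternatively, each summand is stable directly. The algebraic holonomy $\operatorname{im}(\SL_2\times\SL_2)$ acts irreducibly on $\Sym^kV_F\otimes\Sym^kV_G$, so by Theorem~\ref{Balaji-Kollar}(2) the summand $\Sym^kF\otimes\Sym^kG$ is stable. Every summand has slope zero because all determinants are trivial. Hence $\Sym^mE$ is polystable.

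For $m\ge2$ there are at least two summands, $i=0$ and $i=1$, of the same slope zero. A nontrivial direct sum of equal-slope bundles is not stable, so $\Sym^mE$ is not $H$-stable.

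For pseudoeffectivity, I would take $m=2$. The summand with $i=1$ is $\Sym^0F\otimes\Sym^0G\simeq\mathcal O_X$, so $H^0(X,\Sym^2E)\ne0$. A nonzero section of $\Sym^2E$ gives a nonzero section of $\mathcal O_{\bP(E)}(2)$, so this line bundle is effective. Hence $\mathcal O_{\bP(E)}(1)$ is pseudoeffective, and $E$ is pseudoeffective. Concretely, this section is the tensor product of the two symplectic forms, which is a nondegenerate symmetric form on $E$.

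The main point requiring care is justifying that the Cauchy decomposition globalizes. Since it is a natural $\GL_2\times\GL_2$-equivariant isomorphism, it applies to the frame bundles, so this step is formal. Everything else is routine.
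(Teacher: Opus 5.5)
Your proposal is correct and follows essentially the same route as the paper: the Cauchy decomposition combined with $\mathbb S_{(m-i,i)}W\simeq\Sym^{m-2i}W\otimes(\det W)^{\otimes i}$ and triviality of $\det F$ and $\det G$, polystability of $\Sym^mE$ from stability of $E$ in characteristic zero, failure of stability from the presence of at least two nonzero summands of slope zero, and pseudoeffectivity from the $\mathcal O_X$ summand of $\Sym^2E$. Your additional holonomy argument that each summand $\Sym^kF\otimes\Sym^kG$ is itself stable, and your identification of that section as the tensor product of the two symplectic forms, are correct refinements that the paper does not need.
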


\begin{proof}  Since \(E\) is \(H\)-stable,	\(\operatorname{Sym}^mE\) is \(H\)-polystable (see \cite[Theorem 3.2.11]{Huybrechts-Lehn2010}).
	The Cauchy decomposition (see \cite[Corollary~2.3.3]{Weyman-Syzygies})  
	gives
	\[
	\operatorname{Sym}^m(F\otimes G)
	\simeq
	\bigoplus_{i=0}^{\lfloor m/2\rfloor}
	\mathbb S_{(m-i,i)}F\otimes
	\mathbb S_{(m-i,i)}G,
	\]
	where $\mathbb S$ denotes the bundle associated to the corresponding Schur module.  	
	Since \(\det F\) and \(\det G\) are trivial this gives the asserted decomposition (see Subsection \ref{subsection:Cartan}).
	For \(m\geq2\), the Cauchy decomposition contains at least two nonzero
	direct summands, so \(\operatorname{Sym}^mE\) is not
	\(H\)-stable. 
	The last assertion comes from the fact that \(\mathcal O_{\mathbb P(E)}(2)\) has a nonzero section as \(\operatorname{Sym}^2E\) contains $\cO_X$ as a direct summand.
\end{proof}

\subsection{A rank six counterexample to pseudoeffectivity}

In this subsection we show that even if $E$ satisfies the assumptions of
Theorem~\ref{thm:pseudoeffective-exterior-power} and its determinant is
trivial, $E$ need not be pseudoeffective.

We replace the rank two factor pulled back from \(\mathbb P^2\) in
Proposition~\ref{prop:rank-four-counterexample} by a rank three factor
with full special-linear holonomy. The resulting rank six bundle is
not pseudoeffective, although its third exterior power is
pseudoeffective. Thus the exterior-power pseudoeffectivity conclusion
cannot in general be replaced by pseudoeffectivity of the original
bundle.

\begin{proposition}\label{prop:rank-six-counterexample}
	On the polarized surface \((X,H)\) fixed above, there exists a rank six vector
	bundle \(E\), defined over \(\mathbb Q\), with the following properties:
	\begin{enumerate}
		\item \(E_{\mathbb C}\) is \(H\)-stable, $\det E\simeq\mathcal O_X$ and
		\(\int_Xc_2(E)=120.	\)
		\item Its algebraic holonomy group is
		\(
		\operatorname{SL}_3\times\operatorname{SL}_2
		\hookrightarrow\operatorname{SL}_6,
		\)
		acting through the tensor product of the standard representations.
		\item After spreading out over \(\mathbb Z[1/N]\), every sufficiently
		large prime \(p\equiv2\pmod5\) gives a geometrically stable reduction
		\(E_{p}\) whose first Frobenius pullback is not \(H_p\)-semistable.
		In particular, the non-strongly-semistable reduction locus contains
		a set of primes of Dirichlet density \(1/4\) and is Zariski dense.
		\item \(E_{\mathbb C}\) is not pseudoeffective, whereas
		\(\bigwedge^3E_{\mathbb C}\) is pseudoeffective.
	\end{enumerate}
\end{proposition}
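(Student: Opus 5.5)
The construction will mirror Proposition~\ref{prop:rank-four-counterexample}. Choose a geometrically stable rank three bundle $S$ on $\mathbb P^2_{\mathbb Q}$ with trivial determinant, for instance a normalized twist of $T_{\mathbb P^2}$ is excluded by the determinant condition. Instead take the rank three syzygy bundle
\[
0\to S\to\mathcal O_{\mathbb P^2}(1)^{\oplus4}\to\mathcal O_{\mathbb P^2}(4)\to0
\]
given by four general quartic-forming cubics, or any stable rank three bundle with $c_1=0$. Choose $S$ so that $\int c_2(S)=2$ will give the required number. Then set $F:=f^*S$ and $E:=F\otimes G$.

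Simple connectivity of $\mathbb P^2$ and \cite[Proposition~4]{ssb-BalajiKollar2008} make $\operatorname{Hol}(S)$ connected. It is reductive and acts irreducibly on $\mathbb C^3$, so it is $\operatorname{SL}_3$ or $\operatorname{SO}_3$. I would exclude $\operatorname{SO}_3$ by showing that $\Sym^2S$ has no trivial summand, for example via $H^0(\Sym^2S)=0$ from the defining sequence. Corollary~\ref{ssb:cor-derived-holonomy-pullback} then gives $\operatorname{Hol}(F)=\operatorname{SL}_3$, and $\int_Xc_2(F)=10\int c_2(S)>0$. Lemma~\ref{lem:counterexamples-tensor-holonomy} gives (1) and (2), the latter being stability together with the holonomy statement. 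The Chern class computation for the tensor product with $c_1=0$ gives $c_2(E)=2c_2(F)+3c_2(G)=2c_2(F)$, so $\int c_2(E)=20\int c_2(S)$. The value $120$ therefore needs $\int_{\mathbb P^2}c_2(S)=6$, and I would fix $S$ accordingly; the syzygy bundle above has $c_2=6$ after the appropriate normalization. Item (3) follows exactly as before. Tensoring the pullback of \eqref{eq:brenner-HN} with $F^*_{X_p}F_p$ gives a subbundle of slope $5>0$. Geometric stability of $E_p$ for large $p$ follows from openness of geometric stability.

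For (4), first note that $\bigwedge^3(F\otimes G)$ contains the summand $\det F\otimes\Sym^3$-type Schur pieces. By the Cauchy formula for exterior powers, $\bigwedge^3(F\otimes G)=\bigoplus_\lambda\mathbb S_\lambda F\otimes\mathbb S_{\lambda'}G$ over partitions $|\lambda|=3$ with at most three rows and $\lambda'$ at most two rows. Taking $\lambda=(1,1,1)$ and $\lambda'=(3)$ gives $\det F\otimes\Sym^3G=\Sym^3G$, which is unitary flat. Hence $\bigwedge^3E$ is pseudoeffective, since $\Sym^m(\bigwedge^3E)$ contains $\Sym^m(\Sym^3G)$. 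This in turn contains $\Sym^{3m}$-invariants of $\operatorname{SL}_2$ for suitable $m$ (for example, the discriminant of binary cubics in degree $4$), which gives a trivial summand and hence sections of $\mathcal O(4)$.

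The main obstacle is non-pseudoeffectivity of $E$. I would argue as follows. By Cauchy, $\Sym^mE=\bigoplus_{\lambda}\mathbb S_\lambda F\otimes\mathbb S_\lambda G$ with $\lambda$ of length at most two, so each summand is $\mathbb S_{(a,b)}F\otimes\Sym^{a-b}G$. Then suppose $H^0(\Sym^mE(jH))\neq0$ with $m>cj$ for all $c$. Restrict to a general curve $C\in|kH|$ and use Lemma~\ref{ssb:lem-pseff-nef}-style reasoning: pseudoeffectivity of $\mathcal O_{\mathbb P(E)}(1)$ would force, through the holonomy orbit criterion Theorem~\ref{ssb:thm-holonomy-orbit-positivity} applied to the closed orbit $\mathbb P^2\times\mathbb P^1\subset\mathbb P^5$ (Segre), nefness and hence $\int\Delta(E)=0$, contradicting $\int c_2(E)=120$. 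Concretely, the closed $D$-orbit of $\operatorname{SL}_3\times\operatorname{SL}_2$ in $\mathbb P(V)$ is the Segre variety. Sections of $\mathcal O_{\mathbb P(E)}(m)$ restrict to it because the Segre variety is nondegenerate, so $\widehat L=L^{6}$ on $Y$ is pseudoeffective whenever $\mathcal O_{\mathbb P(E)}(1)$ is. The point needing care is exactly this restriction: a pseudoeffective class must not restrict to zero or lose positivity on $Y$. I would handle it with sections of $\Sym^mE(jH)$. Their restrictions to $Y$ are nonzero sections of $L^m\otimes\pi^*\mathcal O(jH)$, since no nonzero polynomial vanishes identically on a nondegenerate orbit fiberwise, and letting $m/j\to\infty$ gives pseudoeffectivity of $L$ on $Y$. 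Theorem~\ref{ssb:thm-holonomy-orbit-positivity} then yields the contradiction.
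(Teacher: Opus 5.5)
\paragraph{Verdict.} Your construction, items (1)--(3), and the pseudoeffectivity of $\bigwedge^3E$ are fine and follow the paper. There is a genuine gap in the proof that $E$ itself is not pseudoeffective.

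\paragraph{The parts that work.} Your $S$ is the paper's $T$. Here $c(S)=(1+h)^4(1+4h)^{-1}=1+6h^2$, so you should settle on $c_2(S)=6$; no normalization is needed. Excluding $\operatorname{SO}_3$ via $H^0(\Sym^2S)=0$ is a valid alternative to the paper's comparison $h^0(T(1))=3\neq 4=h^0(T^*(1))$, but you have to carry out the check. For the cubics $s^3,t^3,w^3,stw$ it holds: the three quadratic syzygies cannot form the rows of a nonzero symmetric matrix. The degree-four discriminant of binary cubics is a good concrete substitute for the paper's argument for $\bigwedge^3E$.

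\paragraph{The gap: restriction to the Segre bundle.} Your claim that no nonzero polynomial vanishes identically on a linearly nondegenerate orbit is false in degree $m\geq2$. The Segre variety $\bP^2\times\bP^1\subset\bP^5$ is cut out by quadrics. Under the Cauchy decomposition, the restriction $\Sym^mE\to(\pi_Y)_*L^{\otimes m}\simeq\Sym^mF\otimes\Sym^mG$ is the projection onto the Cartan summand $\lambda=(m)$. Hence every section lying in a summand $\mathbb S_{(\lambda_1,\lambda_2)}F\otimes\Sym^{\lambda_1-\lambda_2}G(jH)$ with $\lambda_2>0$ restricts to zero on $Y$. For example, $\bigwedge^2F\otimes\bigwedge^2G\simeq F^*$ is exactly the summand of $\Sym^2E$ consisting of the equations of $Y$. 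So pseudoeffectivity of $F^*$ alone would make $E$ pseudoeffective, and nothing on $Y$ would detect it.

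\paragraph{The implication is actually false.} The step you need, ``$E$ pseudoeffective $\Rightarrow$ $\widehat L$ pseudoeffective on the closed-orbit bundle'', fails in general. Take the rank four bundle of Proposition~\ref{prop:rank-four-counterexample}. There $\cO_X\simeq\bigwedge^2F\otimes\bigwedge^2G\subset\Sym^2E$ makes $E$ pseudoeffective (Lemma~\ref{lem:symmetric-powers-rank-four-example}). Yet $\int_X\Delta(E)>0$, so Theorem~\ref{ssb:thm-holonomy-orbit-positivity} shows that $\widehat L$ on its closed-orbit quadric bundle is not pseudoeffective. Your argument, applied verbatim, would prove that bundle non-pseudoeffective. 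Your allusion to Lemma~\ref{ssb:lem-pseff-nef} does not rescue this either, since that lemma needs every $\pi_*L^{\otimes m}$ to be stable, and $\Sym^mE$ is not.

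\paragraph{What is missing.} You need uniform control of all Schur summands, not just the Cartan one; this is where the paper's proof does its work.
\begin{enumerate}
\item A section of $\Sym^nE(kH)$ gives, via Cauchy, a section of some $\mathbb S_{(\lambda_1,\lambda_2)}F\otimes\Sym^{\lambda_1-\lambda_2}G(kH)$.
\item The flat factor is absorbed by the uniform injections $\Sym^dG\hookrightarrow\cO_X(3H)^{\oplus N_d}$.
\item One then needs $H^0(\mathbb S_{(\lambda_1,\lambda_2)}F(kH))=0$ for all $k<\varepsilon(\lambda_1+\lambda_2)$, uniformly in $\lambda$; this is Corollary~\ref{cor:rank-six-uniform-schur-vanishing}.
\end{enumerate}
Step 3 rests on Lemma~\ref{lem:rank-three-flag-non-psef}: on the full flag bundle of $F$, no nonzero class $D=a\ell_1+b\ell_2$ with $a,b\geq0$ is pseudoeffective.
\begin{itemize}
\item The boundary rays follow from Theorem~\ref{ssb:thm-holonomy-orbit-positivity} applied separately to $F$ and to $F^*$.
\item The interior rays need a further argument. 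One shows that $D-tZ$ is never pseudoeffective, using $D_C^4=0$ and $D_C^3\cdot Z>0$ on flag bundles restricted to curves. Hence $N_\sigma(D)=0$, so $D$ is movable and $D^2$ is pseudoeffective. Its pushforward $b(2a+b)\xi$ to $\bP(F)$ would then make $\xi$ pseudoeffective, a contradiction.
\end{itemize}
None of this is visible on the closed-orbit bundle of $E$.
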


The non-pseudoeffectivity assertion requires a uniform vanishing
statement for Schur powers. We obtain it from the closed-orbit
criterion of Theorem~\ref{ssb:thm-holonomy-orbit-positivity}.

\subsubsection{Dominant classes on a rank three flag bundle}
\begin{lemma}\label{lem:rank-three-flag-non-psef}
	Let \((X,H)\) be a smooth polarized complex projective surface, and let
	\(F\) be an \(H\)-stable vector bundle of rank three such that
	$\det F\simeq\mathcal O_X$,
	$\operatorname{Hol}(F)=\operatorname{SL}_3$ and
	$\int_Xc_2(F)>0.$
	Let
	\(
	p:\mathcal F:=\operatorname{Fl}(F)\longrightarrow X
	\)
	be the full flag bundle that we identify with the incidence
	divisor in
	\(
	\mathbb P(F)\times_X\mathbb P(F^*).
	\)
	Let us denote by \(\ell_1,\ell_2\) the restrictions of the tautological
	classes from the first and second factors. Then for all $a,b\in\mathbb R_{\geq0},\ (a,b)\neq(0,0)$, the divisor
	$ a\ell_1+b\ell_2$ is not pseudoeffective.
\end{lemma}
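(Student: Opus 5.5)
The plan is to reduce to the closed-orbit criterion of Theorem~\ref{ssb:thm-holonomy-orbit-positivity}, applied not to $F$ itself but to Cartan-power bundles of $F$. The idea is that the flag bundle $\mathcal F$, and the partial flag bundles $\mathbb P(F)$ and $\mathbb P(F^*)$, are closed-orbit bundles of suitable irreducible $\operatorname{SL}_3$-modules, and the restricted tautological bundles realize the classes $a\ell_1+b\ell_2$.

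\emph{Integral classes.} First take $(a,b)\in\bZ_{\geq0}^2\setminus\{(0,0)\}$. Let $W_{a,b}$ be the bundle associated to the holonomy reduction of $F$ and to the irreducible $\operatorname{SL}_3$-module whose highest weight is dual to $a\omega_1+b\omega_2$. The duality is chosen to match the quotient convention for $\mathbb P$, so that the closed orbit in $\mathbb P(W_{a,b,x})$ is the flag variety $\operatorname{Fl}(F_x)$ when $a,b>0$, and $\mathbb P(F_x)$ or $\mathbb P(F_x^*)$ when $b=0$ or $a=0$. The restriction of $\mathcal O(1)$ is then $a\ell_1+b\ell_2$, or $\ell_1$ or $\ell_2$ pulled back by the forgetful map. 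The bundle $W_{a,b}$ is a direct summand of a tensor power of $F\oplus F^*$. By Theorem~\ref{Balaji-Kollar} it is $H$-stable with connected holonomy equal to the image of $\operatorname{SL}_3$, and it has trivial determinant. Its discriminant is a positive multiple (a Casimir factor) of $\int_Xc_2(F)>0$: $\operatorname{ch}_2$ of an associated bundle with $c_1(F)=0$ is $-\tfrac{c(\lambda)}{\dim}$-proportional to $c_2(F)$. Theorem~\ref{ssb:thm-holonomy-orbit-positivity} (with $D$ acting irreducibly, so no finite union is needed) then shows that the normalized line bundle, a positive multiple of $a\ell_1+b\ell_2$ since $\det W_{a,b}$ is trivial, is not pseudoeffective on the orbit bundle. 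For the boundary cases I transport the conclusion to $\mathcal F$: if $p_1^*\ell_1$ were pseudoeffective on $\mathcal F$, its pushforward, hence $\ell_1$, would be pseudoeffective on $\mathbb P(F)$. This uses that $\mathcal F\to\mathbb P(F)$ is a $\mathbb P^1$-bundle and that pseudoeffectivity descends along surjective morphisms with connected fibers.

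\emph{Rational and real classes.} Rational $(a,b)$ follow from the integral case by scaling. The main obstacle is the real case. The pseudoeffective cone is closed, so a priori its intersection with the quadrant could be a single irrational ray, and the integral case does not exclude that. To handle this I will not rely on the integral case alone. Instead I redo the proof of (1)$\Rightarrow$(2) of Theorem~\ref{ssb:thm-holonomy-orbit-positivity} for real classes. Suppose $D=a\ell_1+b\ell_2$ is a pseudoeffective $\mathbb R$-class with $a,b>0$. The argument of Lemma~\ref{ssb:lem-pseff-nef} uses only two inputs: the restricted base locus $\mathbf B_-(D)$, which is defined for $\mathbb R$-classes together with $\sigma_Z(D)>0$ on its components, and nefness of $D$ on $p^{-1}(C)$ for curves $C$ on which $F$ restricts to a stable bundle. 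The latter holds because $\ell_1,\ell_2$ are nef there. It also needs the vanishing $(D|_{p^{-1}(C)})^{4}=0$, where $4=\dim p^{-1}(C)$, which I will compute by Grothendieck--Riemann--Roch as a homogeneous polynomial in $a,b$, vanishing since $c_1(F|_C)=0$. The strict positivity on subvarieties, \eqref{ssb:eq-strict-subvariety}, is proved by approximating $D$ from above by rational classes with $a',b'>0$, to which the integral argument applies. This yields nefness of $D$.

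\emph{Contradiction with nefness.} Finally I compute $\int_{\mathcal F}(a\ell_1+b\ell_2)^5$ via the push-forward formulas of Appendix~B. It should equal $-\kappa(a,b)\int_Xc_2(F)$, where $\kappa$ is a polynomial with positive values on the closed quadrant minus the origin, in analogy with Proposition~\ref{ssb:prop-top-intersection}. For example, for $b=0$ this is the $\mathbb P(F)$ formula of the Remark following Corollary~\ref{ssb:thm-plucker-positivity}, multiplied by the fiber degree. Nefness forces this number to be $\geq0$, contradicting $\int_Xc_2(F)>0$. The boundary real cases $b=0$ or $a=0$ reduce to $\mathbb P(F)$ or $\mathbb P(F^*)$ as above, where only a single ray occurs and scaling suffices.
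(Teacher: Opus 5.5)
Your argument is correct, and on the axes it agrees with the paper: the closed-orbit criterion is applied to $F$ and $F^*$, and pseudoeffectivity is detected by pullback along surjective morphisms. For the interior $a,b>0$ you take a genuinely different route.

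\textbf{The paper's route.} It never proves nefness of $D=a\ell_1+b\ell_2$. Assuming $D$ pseudoeffective, it shows that $D-tZ$ is not pseudoeffective for any prime divisor $Z$ and any $t>0$:
\begin{itemize}
\item it writes $\mathcal O(Z)=\mathcal O(u,v)\otimes p^*\mathcal O(B)$, and stability of $p_*\mathcal O(u,v)$ gives $B\cdot H>0$;
\item on $\mathcal F_C$ one has $D_C^4=0$ but $D_C^3\cdot Z|_{\mathcal F_C}=3ab(a+b)\,B\cdot C>0$.
\end{itemize}
Hence $N_\sigma(D)=0$ and $D$ is movable, so $D^2$ is a pseudoeffective codimension-two class. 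Finally $q_*(D^2)=b(2a+b)\xi$ on $\mathbb P(F)$ reduces everything to the boundary case.

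\textbf{Your route.} You prove an $\mathbb R$-coefficient version of Lemma~\ref{ssb:lem-pseff-nef} and then use the sign of the top self-intersection. The incidence computation in $\mathbb P(F)\times_X\mathbb P(F^*)$ (divisor class $x+y$) gives
\[
\int_{\mathcal F}(a\ell_1+b\ell_2)^5=-5ab(a+b)(a^2+ab+b^2)\int_Xc_2(F),
\]
which is negative in the open quadrant, so nefness is indeed contradicted.

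\textbf{What each buys.} Your route yields the stronger intermediate fact that pseudoeffective classes in the open quadrant are nef, and it reuses the paper's lemma almost verbatim. The price is the $\mathbb R$-divisor theory of $\mathbf B_-$ and $\sigma_Z$ in every codimension, together with strict positivity on subvarieties of all codimensions. The paper needs only the divisorial Zariski decomposition and two low-degree intersection numbers, and it bootstraps from the boundary case. Once your real version of the lemma is in place, your integral-case argument via $W_{a,b}$ is redundant for the interior.

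\textbf{Slips to fix.}
\begin{itemize}
\item Strict positivity needs approximation from below, not from above. Write $D=\lambda(\ell_1+\ell_2)+N$ with $\lambda=\min(a,b)$ and $N$ nef on $\mathcal F_C$; then $D|_W$ is ample on each dominating component $W$.
\item Your factor $\kappa$ vanishes on the axes, because $\ell_1^5$ is pulled back from the fourfold $\mathbb P(F)$. So your remark about $b=0$ is wrong, though harmless, since you treat the axes separately.
\item Appendix~B covers only Grassmann bundles, so the quintic intersection on the full flag bundle has to be computed directly, as above.
\item The pushforward of a divisor class along the $\mathbb P^1$-bundle $\mathcal F\to\mathbb P(F)$ is zero. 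What you need on the boundary is that pseudoeffectivity is detected by pullback.
\end{itemize}
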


\begin{proof}
	Theorem~\ref{ssb:thm-holonomy-orbit-positivity}, applied to $F$
	and $F^*$, shows that their tautological classes are not
	pseudoeffective: their closed-orbit bundles are the whole
	projective bundles, and $\Delta(F)=\Delta(F^*)=6c_2(F)$.
	Since pseudoeffectivity is preserved and detected by surjective
	pullback, this proves the assertion when $a=0$ or $b=0$.
	
	Suppose that $a,b>0$, put $D=a\ell_1+b\ell_2$, and assume that
	$D$ is pseudoeffective. We first show that for every prime divisor
	$Z\subset\mathcal F$ and every real $t>0$, the class $D-tZ$ is
	not pseudoeffective.
	
	The description of the Picard group of the flag bundle gives
	\[
	\mathcal O_{\mathcal F}(Z)
	\simeq\mathcal O_{\mathcal F}(u,v)\otimes p^*\mathcal O_X(B)
	\]
	for some integers $u,v$ and a Cartier divisor $B$ on $X$. If $Z$
	dominates $X$, restriction of its defining section to the generic
	fiber shows that $u,v\geq0$ and $(u,v)\neq(0,0)$.
The	Borel--Weil theorem identifies
	\(
	\mathcal A_{u,v}:=p_*\mathcal O_{\mathcal F}(u,v)
	\)
	with the bundle associated to the irreducible
	$\operatorname{SL}_3$-representation of highest weight
	$u\omega_1+v\omega_2$. Thus $\mathcal A_{u,v}$ is $H$-stable,
	has trivial determinant, and has rank greater than one.
	The section defining $Z$ induces a nonzero homomorphism
	$\mathcal O_X(-B)\to\mathcal A_{u,v}$, so stability gives
	$B\cdot H>0$. If $Z$ does not dominate $X$, it is the inverse
	image of a prime divisor on $X$, since $p$ is smooth with
	geometrically integral fibers. In this case $u=v=0$ and $B$ is
	that prime divisor, so again $B\cdot H>0$.
	
	Choose a sufficiently ample very general smooth curve
	$C\in|mH|$ such that $F|_C$ is semistable. Its degree is zero,
	so $F|_C$ and $F^*|_C$ are nef. Therefore on
	$\mathcal F_C=p^{-1}(C)$ the real class $D_C:=D|_{\mathcal F_C}$
	is nef. The incidence description and $c_1(F|_C)=0$ give
	\begin{equation*}
		D_C^4=0,\qquad
		D_C^3\cdot Z|_{\mathcal F_C}
		=3ab(a+b)\,B\cdot C>0.
	\end{equation*}
	Indeed, the terms involving four fundamental classes vanish,
	whereas the degree of $(a\ell_1+b\ell_2)^3$ on a flag fiber is
	$3ab(a+b)$.
	
	If $D-tZ$ were pseudoeffective, we could also choose $C$ so that
	its restriction to $\mathcal F_C$ were pseudoeffective. To see
	this for a real class, approximate $D-tZ$ by effective real
	divisors and choose $C$ so that $\mathcal F_C$ is contained in
	none of their supports. Intersecting the restricted class with
	the nef class $D_C^3$ would then give
	\[
	0\leq D_C^3\cdot(D-tZ)|_{\mathcal F_C}
	=-3tab(a+b)\,B\cdot C<0,
	\]
	a contradiction.
	
	Consider the divisorial Zariski decomposition
	$D=P_\sigma(D)+N_\sigma(D)$. If a prime divisor $Z$ occurred in
	$N_\sigma(D)$ with positive coefficient, then $D-tZ$ would be
	pseudoeffective for some $t>0$. Hence $N_\sigma(D)=0$.
By \cite[Chapter~III, Lemma~1.8 and Proposition~1.14(1)]{Nakayama2004},
the class $D=P_\sigma(D)$ belongs to the closed movable cone.
Choose rational divisor classes $M_j\to D$ such that some positive
integral multiple of each $M_j$ has a nonempty linear system
with no fixed divisorial components. Two general members of
such a system intersect properly, so $M_j^2$ is represented
by an effective rational cycle. Passing to the limit shows
that $D^2$ is a pseudoeffective codimension-two class.
	
	Let $q:\mathcal F\to\mathbb P(F)$ be the first projection and
	let $\xi=c_1(\mathcal O_{\mathbb P(F)}(1))$. The incidence
	description gives
	\(
	q_*(D^2)=b(2a+b)\,\xi.
	\)
	For completeness, if $x,y$ are the tautological classes on
	$\mathbb P(F)\times_X\mathbb P(F^*)$, the incidence divisor has
	class $x+y$. Pushing $(ax+by)^2(x+y)$ to the first factor,
	only the terms $(2ab+b^2)xy^2$ and $b^2y^3$ contribute; the
	second contribution is zero because $c_1(F^*)=0$.
	Proper pushforward preserves pseudoeffectivity of cycle classes.
	Thus $b(2a+b)>0$ implies that $\xi$ is pseudoeffective, contrary
	to the first paragraph. This proves the assertion for all real
	$a,b\geq0$ with $(a,b)\neq(0,0)$.
\end{proof}

\begin{corollary}\label{cor:rank-six-uniform-schur-vanishing}
	Under the assumptions of Lemma~\ref{lem:rank-three-flag-non-psef},
	there exists \(\varepsilon>0\) such that
	\[
	H^0\!\left(
	X,\mathbb S_{(\lambda_1,\lambda_2)}F
	\otimes\mathcal O_X(kH)
	\right)=0
	\]
	whenever
	\[
	\lambda_1\geq\lambda_2\geq0,\qquad
	\lambda_1+\lambda_2>0,\qquad
	0\leq k<\varepsilon(\lambda_1+\lambda_2).
	\]
\end{corollary}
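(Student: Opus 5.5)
The plan is to convert the vanishing into a statement about pseudoeffectivity of classes $a\ell_1+b\ell_2$ on the flag bundle $p:\mathcal F\to X$, and to extract a uniform $\varepsilon$ from compactness.

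\emph{Step 1: Borel--Weil.} Since $\det F\simeq\mathcal O_X$, the Schur module $\mathbb S_{(\lambda_1,\lambda_2)}F$ is the $\mathrm{SL}_3$-bundle of highest weight $u\omega_1+v\omega_2$ with $u=\lambda_1-\lambda_2$ and $v=\lambda_2$. By Borel--Weil on the fibers, exactly as in the proof of Lemma~\ref{lem:rank-three-flag-non-psef}, we have
\[
p_*\mathcal O_{\mathcal F}(u,v)\simeq\mathbb S_{(\lambda_1,\lambda_2)}F .
\]
Hence a nonzero section of $\mathbb S_{(\lambda_1,\lambda_2)}F\otimes\mathcal O_X(kH)$ gives a nonzero section of $\mathcal O_{\mathcal F}(u,v)\otimes p^*\mathcal O_X(kH)$. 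Its numerical class $u\ell_1+v\ell_2+k\,p^*H$ is therefore effective. I take the indexing convention so that $\ell_1$ corresponds to $\omega_1$; this is harmless, since the lemma treats both coordinates symmetrically.

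\emph{Step 2: compactness.} Suppose no $\varepsilon$ works. Then there are sequences $(\lambda^{(n)},k_n)$ with nonzero sections and $k_n/(\lambda^{(n)}_1+\lambda^{(n)}_2)\to0$. Put $N_n=\lambda_1^{(n)}+\lambda_2^{(n)}=u_n+2v_n$ and divide the effective class by $N_n$. The normalized pairs $(u_n/N_n,v_n/N_n)$ lie in the compact segment $\{a,b\ge0,\ a+2b=1\}$, so after passing to a subsequence they converge to some $(a,b)\neq(0,0)$ with $a,b\ge0$. The coefficient $k_n/N_n$ of $p^*H$ tends to $0$.

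\emph{Step 3: conclusion.} Closedness of the pseudoeffective cone shows that $a\ell_1+b\ell_2$ is pseudoeffective. This contradicts Lemma~\ref{lem:rank-three-flag-non-psef}, which is stated precisely for real $a,b\ge0$ with $(a,b)\neq(0,0)$.

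The condition $k\ge0$ is only used to make the normalized $H$-coefficient nonnegative and tending to zero. The main point to check is Step~1: that the Schur functor indexing matches the line bundle $\mathcal O_{\mathcal F}(u,v)$ on the incidence model, including the quotient convention for $\mathbb P$. Because the lemma is symmetric in $(a,b)$, a swap of roles does not affect the argument. Since the lemma already handles real limits, no further obstacle is expected.
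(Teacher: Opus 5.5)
Your proposal is correct and follows essentially the same route as the paper: Borel--Weil turns a nonzero section into an effective class $(\lambda_1-\lambda_2)\ell_1+\lambda_2\ell_2+k\,p^*H$ on $\mathcal F$ (and your index convention matches the paper's), after which you use compactness of the segment $\{a\ell_1+b\ell_2 : a,b\geq0,\ a+2b=1\}$ against the closed pseudoeffective cone. The only difference is packaging: the paper argues directly that this segment stays disjoint from the cone after adding $t\,p^*H$ for all small $t\geq0$, which is your sequential contradiction argument stated without sequences.
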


\begin{proof}
	By the Borel--Weil theorem, a nonzero section would give an effective class
	\[
	(\lambda_1-\lambda_2)\ell_1+
	\lambda_2\ell_2+kp^*H
	\]
	on \(\mathcal F\). The compact segment
	\[
	\left\{
	a\ell_1+b\ell_2:
	a,b\geq0,\ a+2b=1
	\right\}
	\]
	is disjoint from the closed pseudoeffective cone by
	Lemma~\ref{lem:rank-three-flag-non-psef}.
	It therefore remains disjoint after adding \(tp^*H\) for all
	sufficiently small \(t\geq0\). Divide the displayed effective class
	by \(\lambda_1+\lambda_2\) to obtain the asserted uniform bound.
\end{proof}

\subsubsection{The rank three factor}

On the target plane of \(f\), with coordinates \([s:t:w]\), define a
rank three bundle \(T\) by
\begin{equation}\label{eq:rank-six-plane-factor}
	0\longrightarrow T\longrightarrow
	\mathcal O_{\mathbb P^2}(1)^{\oplus4}
	\xrightarrow{(s^3,t^3,w^3,stw)}
	\mathcal O_{\mathbb P^2}(4)\longrightarrow0.
\end{equation}
The map is surjective, and we have
\begin{equation}\label{eq:rank-six-plane-chern}
	\operatorname{rk}T=3,\qquad
	\det T\simeq\mathcal O_{\mathbb P^2},\qquad
	c_2(T)=6h^2,
	\qquad h=c_1(\mathcal O_{\mathbb P^2}(1)).
\end{equation}
The four cubics have no linear syzygies, so \(H^0(T)=0\). The dual
sequence gives \(H^0(T^*)=0\), and
\(\bigwedge^2T\simeq T^*\). These vanishings imply stability:
a saturated destabilizing subsheaf of rank \(i=1\) or \(2\) would have
determinant \(\mathcal O_{\mathbb P^2}(e)\), with \(e\geq0\), and
would yield a nonzero section of \(\bigwedge^iT(-e)\), hence of
\(\bigwedge^iT\), a contradiction.

Moreover,
\begin{equation}\label{eq:rank-six-plane-holonomy}
	\operatorname{Hol}(T_{\mathbb C})
	=\operatorname{SL}_3.
\end{equation}
Since \(\mathbb P^2_{\mathbb C}\) is simply connected, this holonomy is
connected. The rank three classification
\cite[Proposition~4 and paragraph~45]{ssb-BalajiKollar2008}
leaves only the special-linear and orthogonal possibilities. The latter
would imply \(T\simeq T^*\), whereas
\(
h^0(T(1))=3,\) and  $h^0(T^*(1))=4.$
For the first equality, every quintic monomial is divisible by one of
\(s^3,t^3,w^3,stw\). Thus the map on sections of
\eqref{eq:rank-six-plane-factor}, twisted by \(\mathcal O(1)\), is
surjective and has kernel of dimension \(24-21=3\). The second equality
follows from the dual sequence.

With \(f\) and \(G\) as above, we set
$F:=f^*T$ and $E:=F\otimes G$.
All the algebraic data are defined over \(\mathbb Q\).

\begin{proof}[Proof of Proposition~\ref{prop:rank-six-counterexample}]
	By Corollary~\ref{ssb:cor-derived-holonomy-pullback} and
	\eqref{eq:rank-six-plane-holonomy}, the bundle $F=f^*T$ is
	$H$-stable and has algebraic holonomy $\operatorname{SL}_3$.
	By \eqref{eq:rank-six-finite-map} and
	\eqref{eq:rank-six-plane-chern},
	\[
	\int_Xc_2(F)=6\deg f=60.
	\]
	Lemma~\ref{lem:counterexamples-tensor-holonomy} therefore shows
	that $E=F\otimes G$ is $H$-stable and that its algebraic holonomy
	is the image of $\operatorname{SL}_3\times\operatorname{SL}_2$.
	This tensor representation is faithful, since its kernel consists
	of scalar pairs \((\zeta I_3,\zeta^{-1}I_2)\) with
	\(\zeta^3=\zeta^2=1\).
	
	Both factors have trivial determinant, and \(c_2(G)=0\), since
	$G$ is pulled back from $C$. Therefore $	\det E\simeq\mathcal O_X$ and
	\[
	\int_Xc_2(E)=2\int_Xc_2(F)+3\int_Xc_2(G)=120.
	\]
	In particular, \(\int_X\Delta(E)=1440\).
	
	Spread out the construction over \(\mathbb Z[1/N]\), increasing
	\(N\) so that all remaining reductions of \(E\) are geometrically
	stable. For $p\equiv2\pmod5$ with $p\nmid N$, the proof of Proposition \ref{prop:rank-four-counterexample}
  gives the subbundle
	$F_{X_p}^*F_p\otimes A_p\subset F_{X_p}^*E_p$.
	Its slope is $5$, whereas the ambient bundle has slope zero.
	Hence the first Frobenius
	pullback is not semistable. The density assertion follows from the
	same set of primes $p\equiv2\pmod5$.
	
	It remains to prove non-pseudoeffectivity. 
	We first record the uniform injections
	\begin{equation}\label{eq:rank-six-flat-uniform-injection}
		\operatorname{Sym}^dG
		\hookrightarrow\mathcal O_X(3H)^{\oplus N_d}
		\qquad(d\geq0).
	\end{equation}
Indeed, $C$ has genus six,  and the bundles
	\(
	(\operatorname{Sym}^dV)^*\otimes\mathcal O_C(3)
	\)
	are globally generated. For any point \(c\in C\), the first
	cohomology after twisting by \(\mathcal O_C(-c)\) vanishes by Serre
	duality: the dual space is the space of sections of the semistable
	bundle
	\(
	\operatorname{Sym}^dV\otimes\omega_C(c)
	\otimes\mathcal O_C(-3),
	\)
	whose slope is \(-4\). Pull back to \(X\) and tensor with
	\(B^{\otimes3}\). Dualizing evaluation for the resulting globally
	generated bundle gives \eqref{eq:rank-six-flat-uniform-injection}.
	
	Suppose that \(E\) is pseudoeffective. The characterization by
	sections of symmetric powers gives positive integers \(n_j,k_j\)
	such that
	\[
	n_j\longrightarrow\infty,\qquad k_j/n_j\longrightarrow0,
	\qquad H^0(X,\operatorname{Sym}^{n_j}E(k_jH))\neq0.
	\]
	Apply the Cauchy decomposition (see \cite[Corollary~2.3.3]{Weyman-Syzygies})  
	to \(F\otimes G\). Since
	\(\operatorname{rk}G=2\) and \(\det G\simeq\mathcal O_X\), it gives
	\[
	\operatorname{Sym}^nE\simeq
	\bigoplus_{\substack{\lambda_1\geq\lambda_2\geq0\\
			\lambda_1+\lambda_2=n}}
	\mathbb S_{(\lambda_1,\lambda_2)}F
	\otimes\operatorname{Sym}^{\lambda_1-\lambda_2}G.
	\]
	For each \(j\), some partition in this decomposition therefore gives
	a nonzero section of
	\(
	\mathbb S_{(\lambda_1,\lambda_2)}F
	\otimes\operatorname{Sym}^{\lambda_1-\lambda_2}G(k_jH).
	\)
	Using \eqref{eq:rank-six-flat-uniform-injection}, we obtain
	\[
	H^0\!\left(X,
	\mathbb S_{(\lambda_1,\lambda_2)}F((k_j+3)H)
	\right)\neq0.
	\]
	This contradicts
	Corollary~\ref{cor:rank-six-uniform-schur-vanishing}, because \(\lambda_1+\lambda_2=n_j\) and
	\((k_j+3)/n_j\to0\).
	
	Finally, the exterior Cauchy decomposition (see \cite[Corollary~2.3.3]{Weyman-Syzygies})  gives a direct summand
	\[
	\det F\otimes\operatorname{Sym}^3G
	\simeq\operatorname{Sym}^3G
	\subset{\bigwedge}^3E.
	\]
	This nonzero summand is unitary flat, hence nef and pseudoeffective.
	The inclusion carries its nonzero symmetric-power sections, after
	arbitrarily small ample twists, to those of \(\bigwedge^3E\).
	Consequently \(\bigwedge^3E\) is pseudoeffective.
\end{proof}

\begin{remark}
	The commutator of the algebraic holonomy of \(E\) is
	\(\operatorname{SL}_3\times\operatorname{SL}_2\), acting on
	\(\mathbb C^3\otimes\mathbb C^2\).
	This representation has no nonconstant polynomial invariants:
	already \(\operatorname{SL}_3\) acts with an open orbit on pairs of
	linearly independent vectors in \(\mathbb C^3\). The flag-bundle
	argument supplies the stronger uniform vanishing needed to prove
	non-pseudoeffectivity.
\end{remark}

\section{Étale trivializable reductions and infinite monodromy}
\label{sec:double-covers}

In this section we give counterexamples to \cite[Conjecture~7.7]{LangerPositivitySemistability}
that was an analogue of the Grothendieck--Katz conjecture for Higgs bundles. 
The conjecture fails already for line bundles on elliptic curves. 
We also show that it fails for stable bundles with trivial determinant, so
the counterexample is not just a problem with the determinant being non-torsion. 

In this section $C$ denotes an elliptic curve  over $\mathbb Q$ with origin \(O\) and having a rational point $P$
of infinite order and a nonzero rational point of order two.  Existence of an elliptic curve satisfying these assumptions is well-known (see, e.g., \cite{Fermigier1996} or \cite[\S6.1, Example 1, p.~193]{PoonenEllipticCurves}).

\subsection{A non-torsion line bundle counterexample}

The following proposition shows that 	\cite[Conjecture~7.7]{LangerPositivitySemistability} is false already in the simplest possible cases.

\begin{proposition}\label{prop:prime-to-p-line-bundle}
Let \(L:=\mathcal O_C(P-O)\). Then  $L_{\mathbb C}$ is not \'etale trivializable but	for a suitable model $(\mathcal C,\mathcal L)\to S$ over a nonempty open
subscheme $S\subset\operatorname{Spec}\mathbb Z$, the reduction
$\mathcal L_s$ is \'etale trivializable for every closed point $s\in S$.
\end{proposition}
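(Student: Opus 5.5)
The plan has two parts. Non-triviality over $\mathbb C$ is a torsion argument in characteristic zero. Triviality of the reductions rests on the fact that any point of an elliptic curve over a finite field has finite order.

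\emph{Characteristic zero.} Since $P$ has infinite order in $C(\mathbb Q)$, the Abel--Jacobi isomorphism $C\simeq\operatorname{Pic}^0(C)$, $Q\mapsto\mathcal O_C(Q-O)$, shows that $L$ is not torsion in $\operatorname{Pic}(C)$. It stays non-torsion in $\operatorname{Pic}(C_{\mathbb C})$, because $C(\mathbb Q)\hookrightarrow C(\mathbb C)$ is injective. Suppose $g\colon C'\to C_{\mathbb C}$ is a finite \'etale cover with $g^*L_{\mathbb C}\simeq\mathcal O_{C'}$. The norm then gives
\[
L_{\mathbb C}^{\otimes\deg g}\simeq\operatorname{Nm}_g(g^*L_{\mathbb C})\simeq\mathcal O_{C_{\mathbb C}},
\]
which is a contradiction. (Equivalently, a line bundle trivialized by a finite \'etale cover has finite monodromy, hence is torsion.)

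\emph{Reductions.} Choose a model: take $S\subset\operatorname{Spec}\mathbb Z$ open so that $C$ has good reduction $\mathcal C\to S$. The sections $O,P$ extend to sections of $\mathcal C/S$, and we set $\mathcal L=\mathcal O_{\mathcal C}(P-O)$. Fix a closed point $s$ with residue field $\mathbb F_p$. The group $\mathcal C_s(\mathbb F_p)$ is finite, so the reduction $P_s$ has some finite order $n$, and $\mathcal L_s$ is torsion of order $n$ in $\operatorname{Pic}^0(\mathcal C_s)$. Write $n=p^a m$ with $p\nmid m$.

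\emph{The $p$-part.} This is the main obstacle, since the prime-to-$p$ part alone is easy: $\mathcal L_s$ is then trivialized by the isogeny dual to the $\mathbb Z/m$-cover, or by the cyclic \'etale cover $\operatorname{Spec}\bigoplus_{i<m}\mathcal L_s^{\otimes -i}$. A $p$-power torsion line bundle is \'etale trivializable only if it lies in the \'etale (ordinary) part of $\operatorname{Pic}^0[p^\infty]$. This fails at supersingular primes, where $\operatorname{Pic}^0[p]$ is infinitesimal. Concretely, a $p$-torsion line bundle $M$ with $F^*M\simeq M^{p}\simeq\mathcal O$ is not \'etale trivializable: otherwise the inclusion $\mathcal O\subset g_*\mathcal O$ and the \'etale-locally split structure would force $M$ to be Frobenius-invariant, giving $M\simeq F^*M=\mathcal O$.

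So the model must be chosen so that $p\nmid n$, and I would use the auxiliary $2$-torsion point for this. First I would try to replace $P$ by a suitable combination $P+T$, with $T$ the rational point of order two, so that the order of the reduction at each good prime becomes prime to $p$. This is not obviously achievable, since $p$ can divide $|\mathcal C_s(\mathbb F_p)|$.

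A cleaner route is to allow in the statement a model whose fiber over $s$ has ordinary reduction. Then $\operatorname{Pic}^0[p^\infty]$ contains an \'etale part, and a $p^a$-torsion bundle becomes trivial on the \'etale cover dual to the Verschiebung. Since the statement says every closed point, one must also handle supersingular primes, which are infinite in number by Elkies. At such primes I expect to show directly that the order of $P_s$ is prime to $p$. For $p>3$ supersingular, $|\mathcal C_s(\mathbb F_p)|=p+1$, which is prime to $p$, so every rational point has prime-to-$p$ order. Hence the genuinely delicate case is ordinary primes, where the \'etale part handles $p$-power torsion: for ordinary $\mathcal C_s$, every $p$-power torsion line bundle defined over $\mathbb F_p$ lies in the \'etale part of $\mathcal C_s[p^\infty]$, because the connected part $\mu_{p^\infty}$ has no nontrivial $\mathbb F_p$-points. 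Finally, I would remove $p=2,3$ and the bad primes from $S$; this is where the order-two point may be needed to control the small primes.
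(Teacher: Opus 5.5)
Your characteristic-zero argument is fine, and you correctly see that $\mathcal L_s$ can only be étale trivializable if the order of $P_s$ is prime to $p$. The step for ordinary primes is false, however. On an elliptic curve, pullback along an isogeny $\phi$ acts on $\operatorname{Pic}^0$ by the dual isogeny. Its kernel is the Cartier dual of $\ker\phi$. If $\phi$ is étale of $p$-power degree, this dual is infinitesimal of multiplicative type, so no nontrivial $\bar{\mathbb F}_p$-point of $\operatorname{Pic}^0[p^\infty]$ is killed. For the Verschiebung, $V^\vee=F$ is injective on points.

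More generally, a line bundle trivialized by a finite étale Galois cover with group $\Gamma$ is given by a character $\Gamma\to\bar{\mathbb F}_p^{\times}$. It therefore has order prime to $p$. Your own ``concretely'' paragraph proves exactly this, so the ordinary-prime step contradicts it. The $\mathbb F_p$-rational $p$-power torsion points do lie in the étale part of $\mathcal C_s[p^\infty]$. By Cartier duality, though, the corresponding line bundles are trivialized by $\mu_{p^a}$-torsors, not by étale covers.

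A concrete counterexample: an anomalous reduction, with $\#\mathcal C_s(\mathbb F_p)=p$, is ordinary. If $P_s\neq O$ there, $\mathcal L_s$ has order $p$ and is not étale trivializable. So no argument for ordinary primes that ignores the $2$-torsion hypothesis can work.

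The missing idea is how the $2$-torsion point is used. It excludes such primes uniformly, so the ordinary/supersingular split becomes unnecessary (and replacing $P$ by $P+T$ is beside the point).

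\begin{enumerate}
\item For primes $p\ge 7$ of good reduction, the rational point of order two reduces to a nonzero point, because prime-to-$p$ torsion injects under reduction. Hence $2\mid N_p:=\#\mathcal C_s(\mathbb F_p)$.
\item Hasse's bound gives $0<N_p\le p+1+2\sqrt p<2p$.
\item The only multiple of $p$ in this range is $p$ itself, which is odd. Therefore $p\nmid N_p$.
\item The order $n$ of $\mathcal L_s$ divides $N_p$, so it is prime to $p$. The Kummer cover $\operatorname{Spec}\bigoplus_{i<n}\mathcal L_s^{\otimes -i}$ is then finite étale and trivializes $\mathcal L_s$.
\end{enumerate}

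Removing the bad primes and the primes $p<7$ from $S$ finishes the proof; this is the paper's argument.
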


\begin{proof}
Since $C$ has a point of order $2$, its reductions to almost all primes also give a point of order $2$ showing that
$2\mid  N_p:=\#\mathcal C_s(\mathbb F_p).$
On the other hand, Hasse's bound gives
	\begin{equation*}
		0<N_p
		\leq p+1+2\sqrt p<2p.
	\end{equation*}
for  $p\geq7$.	So 	\( p\nmid\#\mathcal C_s(\mathbb F_p)\) for large $p$.

By construction, $L$ has infinite order in $\operatorname{Pic}^0(C)$, so $L_{\mathbb C}$ is not \'etale trivializable. 
On the other hand, for large primes $p$ the reduction $\mathcal L_s$
has order dividing $N_p$. Since $p\nmid N_p$, its Kummer torsor
is finite \'etale and trivializes $\mathcal L_s$.
\end{proof}

\begin{remark}
	Although an example of line bundles on abelian varieties was studied in \cite[Example 7.8]{LangerPositivitySemistability}, the author did not notice that  existence of a rational $2$-torsion on an elliptic curve over $\bQ$ forces the number of points of the reduction to be non-divisible by large $p$. This fact was known as mentioned in \cite[p.~420]{Katz2009}.
\end{remark}

\subsection{A stable rank two counterexample} 

The geometric construction used below is a special case of Mumford's
construction for Prym varieties of hyperelliptic double covers
\cite[p.~346]{MumfordPrymI}.  The associated rank two bundle is the
Prym--Narasimhan--Ramanan construction
\cite{NarasimhanRamananGeneralisedPrym} (see also
\cite[\S2.3, Proposition~2.2]{DucrohetFrobenius}).

Apply Mumford's construction to the quotient of $C$ by its
involution $[-1]$ and to a double cover of $\mathbb P^1$ ramified at two
rational points outside the branch locus of the quotient map.  This gives
a smooth projective geometrically connected curve $Y$ of genus three, a
fixed-point-free involution $\sigma$ on $Y$, and a smooth projective curve
$X$ of genus two such that the quotient map
$\pi\colon Y\to X$ is a connected \'etale double cover.  Moreover, there is
a morphism $q\colon Y\to C$ of degree two satisfying
$q\circ\sigma=[-1]\circ q$.  Pullback by $q$ identifies
$C\simeq\operatorname{Pic}^0(C)$ with the Prym variety
$\operatorname{Prym}(Y/X)$.

Let $\ell=\mathcal O_C(P-O)$ and put $L=q^*\ell$.  Then $L$ is a
non-torsion line bundle of degree zero.  Indeed, its norm with respect to
$q$ is $\ell^{\otimes 2}$.  The Prym property gives
$\operatorname{Nm}_{\pi}(L)\simeq\mathcal O_X$, and the relation between
$q$ and $\sigma$ gives $\sigma^*L\simeq L^{-1}$.

Let $\eta\in\operatorname{Pic}^0(X)[2]$ be the nonzero line bundle defining
the cover $\pi$.  Thus
$\pi_*\mathcal O_Y\simeq\mathcal O_X\oplus\eta$.  The determinant formula
for a finite flat morphism shows that
$\det(\pi_*L)\simeq\eta$.  

After replacing $\mathbb Q$ by a finite extension $K$, choose a line bundle
$B\in\operatorname{Pic}^0(X_K)$ such that
$B^{\otimes2}\simeq\eta_K$.  The bundle $B$ has order four.  Set
$\mathcal F=(\pi_*L)_K$ and
$\mathcal E=\mathcal F\otimes B$.  Then
$\det\mathcal E\simeq\mathcal O_{X_K}$.
We henceforth base change $\pi$ and $L$ to $K$ without changing notation.

\begin{theorem}\label{thm:stable-rank-two-counterexample}
	$\mathcal E$ is a geometrically stable rank two vector bundle with trivial determinant.  For every embedding
	$K\hookrightarrow\mathbb C$, the bundle $\mathcal E_{\mathbb C}$ is not
	\'etale trivializable.  On the other hand, there is a model
	$(\mathcal X,\mathscr E)\to S$ over a nonempty open subscheme
	$S\subset\operatorname{Spec}\mathcal O_K$ such that $\mathscr E_s$ is
	\'etale trivializable for every closed point $s\in S$.  After shrinking
	$S$, all these reductions are geometrically stable.
\end{theorem}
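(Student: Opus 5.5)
The proof splits into four steps. First we show that $\mathcal E$ is geometrically stable with trivial determinant. Then we show that every complex fiber has infinite monodromy. Next we show that the reductions have finite monodromy of order prime to $p$. Finally we check stability of the reductions.

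\emph{Stability.} The determinant is trivial by construction, since $\det\mathcal E\simeq\eta\otimes B^{\otimes2}\simeq\eta^{\otimes 2}\simeq\mathcal O$. Twisting by $B$ preserves stability, so it suffices to show that $\pi_*L$ is stable over $\bar K$. Because $\pi$ is étale, $\pi^*\pi_*L\simeq L\oplus\sigma^*L\simeq L\oplus L^{-1}$. Suppose $M\subset\pi_*L$ is a line subbundle of degree $\ge0$. Then $\pi^*M$ maps nontrivially to $L$ or to $L^{-1}$. All these line bundles have degree zero, so $\pi^*M\simeq L^{\pm1}$. Since $\pi^*M$ is $\sigma$-invariant, this forces $L\simeq\sigma^*L=L^{-1}$, so $L^{\otimes 2}\simeq\mathcal O$. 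This contradicts the fact that $L$ is non-torsion. The same argument works over any algebraically closed field in which $L$ is non-torsion, and also after base change to $\mathbb C$.

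\emph{No étale trivialization over $\mathbb C$.} Suppose a finite étale cover $h:Z\to X_{\mathbb C}$ trivializes $\mathcal E_{\mathbb C}$. Pass to a common refinement with $Y$ and with the cover trivializing $B$. Then the pullback of $L\oplus L^{-1}$ becomes trivial on some finite étale cover $g:Z'\to Y$. In particular $g^*L\simeq\mathcal O$, so $\mathrm{Nm}_g(g^*L)=L^{\otimes\deg g}$ is trivial. This contradicts that $L$ is non-torsion, because $L^{\otimes 2}$ corresponds under the norm to $\ell^{\otimes2}$ in $\operatorname{Pic}^0(C)$, which has infinite order.

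\emph{Reductions are étale trivializable.} Spread out $C,P,Y,X,\pi,q,B$ over an open $S\subset\operatorname{Spec}\mathcal O_K$ with good reduction, and take residue characteristics $p\ge7$ with $p\nmid 2\cdot\#$(relevant torsion). At a closed point $s$ with residue field $\mathbb F_{p^f}$, the reduction $\ell_s$ has order dividing $N_s=\#C_s(\mathbb F_{p^f})$. The main obstacle is to ensure $p\nmid N_s$ for every $s$, including non-prime residue fields. The rational $2$-torsion point gives $2\mid N_s$. If $p\mid N_s$, then $C_s$ is ordinary and has a rational $p$-torsion point, so the group contains an element of order $2p$. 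I would try to show this is impossible, or at least holds only at finitely many $s$; if needed, restrict to $f=1$ as in Proposition~\ref{prop:prime-to-p-line-bundle}, using the Hasse bound $N_s<2p$. For general $f$, I would instead argue via the Frobenius eigenvalue: $p\mid N_s$ iff $a_s\equiv1\pmod p$, with $a_s$ a unit root. One can also simply shrink $S$, or pass to the finitely many exceptional points, if the statement allows it. Granting $p\nmid N_s$, the bundle $L_s=q_s^*\ell_s$ is torsion of order $n$ prime to $p$. Its Kummer cover $g:Z\to Y_s$ is finite étale and trivializes $L_s$ and $L_s^{-1}$. Composing with $\pi_s$ and with the order-$4$ étale cover trivializing $B_s$ (here $p\ne2$), and taking a Galois closure, gives a finite étale cover $W\to X_s$. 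On $W$, the pullback of $\pi_{s*}L_s$ is a direct summand of the pullback of $\pi_{s*}\pi_s^*\pi_{s*}L_s$. More directly, on the Galois closure $W\to Y_s\to X_s$, pullback of $\pi_{s*}L_s$ becomes $g^*L_s\oplus g^*\sigma^*L_s$, which is trivial. Hence $\mathscr E_s$ is étale trivializable.

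\emph{Stability of the reductions.} After shrinking $S$, openness of geometric stability (\cite[Proposition~2.3.1]{Huybrechts-Lehn2010}) gives geometric stability of all $\mathscr E_s$.
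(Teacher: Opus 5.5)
Most of your argument is correct and follows the paper. This covers stability via $\pi^*\pi_*L\simeq L\oplus L^{-1}$ (the paper uses adjunction, which is equivalent), the norm argument showing $\mathcal E_{\mathbb C}$ is not \'etale trivializable, the Kummer covers trivializing $L_s$ and $B_s$, and openness of geometric stability. (A small slip: it is $\operatorname{Nm}_q(L)$, not $L^{\otimes 2}$, that equals $\ell^{\otimes 2}$.)

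There is a genuine gap at the step you flag yourself. You bound the order of $\ell_s$ by $N_s=\#C_s(\kappa(s))$ and then write ``granting $p\nmid N_s$''. This cannot be granted. One has $\#C(\mathbb F_{p^f})\equiv 1-a_p^f\pmod p$, where $a_p$ is the trace of Frobenius of $C$ over $\mathbb F_p$. So $p\mid N_s$ as soon as $a_p^f\equiv 1\pmod p$ for the residue degree $f$ of $s$, and nothing in your argument excludes this at infinitely many closed points. The fallbacks do not help:
\begin{itemize}
\item Shrinking $S$ removes only finitely many closed points.
\item Restricting to $f=1$ does not prove the assertion for every closed point of $S\subset\operatorname{Spec}\mathcal O_K$.
\item The remark about elements of order $2p$ yields no contradiction.
\end{itemize}

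The missing observation is that you never need $N_s$. The data $C$, $O$, $P$, and hence $\ell$, are defined over $\mathbb Q$. So $\ell_s$ is the base change to $\kappa(s)$ of $\ell_p=\mathcal O(P_p-O_p)$ on $C_{\mathbb F_p}$, where $p$ is the rational prime below $s$. Its order therefore divides the order of $P_p$ in $C(\mathbb F_p)$, hence divides $N_p=\#C(\mathbb F_p)$. The rational $2$-torsion point makes $N_p$ even, and the Hasse bound gives $0<N_p<2p$ for $p\geq 7$. Hence $p\nmid N_p$. This is exactly the paper's argument (``including after extension from $\mathbb F_p$ to the residue field of $s$''). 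With this replacement, the order of $L_s=q_s^*\ell_s$ is prime to $p$ at every closed point, and the rest of your proof goes through unchanged.
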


\begin{proof}
	We first prove the stability of $\mathcal F$.  After base change to an
	algebraic closure, suppose that a line bundle $N$ of nonnegative degree
	admits a nonzero map to $\mathcal F$.  By adjunction, there is a nonzero
	map $\pi^*N\to L$.  Since $L$ has degree zero, this forces
	$\deg N=0$ and $\pi^*N\simeq L$.  The line bundle $\pi^*N$ is
	$\sigma$-invariant, whereas $\sigma^*L\simeq L^{-1}$.  It would follow
	that $L^{\otimes2}$ is trivial, contradicting the fact that $L$ is
	non-torsion.  Thus $\mathcal F$ is geometrically stable. So $\mathcal E$ is geometrically stable as well.
	
	The standard splitting
	$\pi^*\mathcal F\simeq L\oplus L^{-1}$ shows that $\mathcal F$ is not
	\'etale trivializable.  Indeed, if a finite \'etale cover trivialized
	$\mathcal F$, then after base change to $Y$ the pullback of $L$ would be
	a degree-zero line summand of a trivial bundle.  It would consequently
	be trivial.  Taking the norm along this finite cover would make $L$
	torsion, which is impossible.
	
	Since $B$ has finite order, it is itself \'etale trivializable in
	characteristic zero.  If $\mathcal E$ were \'etale trivializable, a common
	finite \'etale refinement trivializing both $\mathcal E$ and $B$ would
	also trivialize $\mathcal F$.  Hence $\mathcal E_{\mathbb C}$ is not
	\'etale trivializable.
	
	Spread out all the data over a suitable open subscheme of
	$\operatorname{Spec}\mathcal O_K$.  Let $s$ be a closed point lying over a
	rational prime $p$, after excluding the finitely many primes of bad
	reduction, the prime $2$, and the primes smaller than $7$. 
As in the proof of Proposition \ref{prop:prime-to-p-line-bundle}, we see that 	
 $p$ cannot divide	$\#C_p(\mathbb F_p)$.
	It follows that the reduction of $P$ has order prime to $p$.  Consequently
	the reductions of $\ell$ and $L=q^*\ell$ also have prime-to-$p$ order,
	including after extension from $\mathbb F_p$ to the residue field of $s$.
	The Kummer torsor of $L_s$ is therefore finite \'etale and trivializes
	$L_s$.  Together with the splitting of $\pi_s^*\mathcal F_s$, this gives
	a finite \'etale cover of $X_s$ trivializing $\mathcal F_s$.
	
	The reduction $B_s$ has order dividing four and is consequently also
	\'etale trivializable.  A common finite \'etale refinement therefore
	trivializes $\mathscr E_s$.  Finally, after shrinking $S$ once more,
	geometric stability of all closed fibers follows from openness of
	stability in a projective family of curves.
\end{proof}

\appendix
\section{Restriction theorems for prescribed basepoints}

The following strong restriction theorem follows easily from \cite[Theorem 5.2]{Langer2004}.
A weaker version that deals only with semistability is the content of \cite{Graf2016}.

\begin{theorem}
	\label{thm:restriction-prescribed-points}
	Let $X$ be a normal projective variety of dimension $n\geq 2$ defined over an algebraically closed field of
	characteristic zero. Let $H$ be an ample line bundle on $X$, and let $E$ be
	a  slope $H$-stable reflexive sheaf of rank $r>0$.
	Fix an integer $s\geq 0$.
	There exists an integer $m_0=m_0(X,H,E,s)$ such that, for every
	$m\geq m_0$ and every set $S=\{x_1,\ldots,x_s\}\subset X_{\mathrm{reg}}$
	of $s$ distinct closed points, a general divisor $D\in |mH|$ passing through $S$
	is integral and normal, it is smooth at every point of $S$, and  the restriction $E|_D$
is	torsion-free and slope $H|_D$-stable.
\end{theorem}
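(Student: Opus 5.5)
The plan is to reduce to the strong restriction theorem \cite[Theorem~5.2]{Langer2004} by passing to the blow-up of $X$ at the prescribed points. Divisors through $S$ then become divisors in a fixed linear system on that blow-up.

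Let $\sigma\colon\widetilde X\to X$ be the blowup of the $s$ points of $S$, with exceptional divisors $F_1,\dots,F_s$. Since $S\subset X_{\mathrm{reg}}$, each $F_j$ is a $\mathbb P^{n-1}$ lying over a smooth point. General members of $|mH\otimes\mathfrak m_{x_1}\cdots\mathfrak m_{x_s}|$ correspond to general members of $|\sigma^*(mH)-\sum F_j|$ on $\widetilde X$. For $m\ge m_1$ this system is base point free, and even very ample relative to a fixed ample $\widetilde A=\sigma^*(aH)-\sum F_j$. By Bertini, its general member $\widetilde D$ is integral and normal. Its image $D$ has multiplicity exactly one at each $x_j$, so $D$ is smooth there, and $D$ is normal away from $S$. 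Torsion-freeness of $E|_D$ comes from Bertini for $S_2$ sheaves \cite[Corollary~1.4 and Theorem~1.5]{Langer-Simpson-singular}, applied off $S$, since $E$ is locally free near $S$.

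For stability, put $\widetilde E=\sigma^{[*]}E$. By Lemma~\ref{ssb:lem-birational-transfer}-type slope comparisons, in the form used in Lemma~\ref{ssb:lem-nearby-polarizations} but in dimension $n$ with the perturbation $P_N=N\sigma^*H-\sum F_j$, $\widetilde E$ is $P_N$-stable for $N\gg0$. The constant depends on $s$ and on the configuration only through the intersection numbers $F_j^k$ and $\sigma^*H\cdot F_j=0$, which are uniform.

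Now apply \cite[Theorem~5.2]{Langer2004} on $\widetilde X$ with polarization $P_N$. It gives an explicit bound: for $\widetilde D\in|kP_N|$ with $k$ larger than a function of $r$, $\Delta(\widetilde E)P_N^{n-2}$ and $P_N^n$, the restriction is stable. One must check two things. First, $\widetilde D\in|\sigma^*(mH)-\sum F_j|$ is not a multiple of $P_N$, so I would use the version of Langer's theorem allowing restriction to divisors in $|kP_N+$ bounded nef$|$, or write $m=kN+c$ and handle the bounded remainder as in Lemma~\ref{ssb:lem-nearby-polarizations}. Second, restricting to $\widetilde D$ and pushing forward to $D$ preserves stability with respect to $H|_D$. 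This holds because $\widetilde D\to D$ is birational, with exceptional locus the curves $F_j\cap\widetilde D\simeq\mathbb P^{n-2}$, so Lemma~\ref{ssb:lem-birational-transfer} gives the comparison in the right direction.

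The main obstacle is uniformity in $S$. All the numerical invariants entering Langer's effective bound — $P_N^n$, $\Delta(\widetilde E)P_N^{n-2}$, $\mu_{\max}$ data — depend only on $s$, $N$ and the invariants of $(X,H,E)$, not on the positions of the points. So $m_0(X,H,E,s)$ can be chosen independently of $S$. The same applies to the constant $N$, which requires bounding $\Theta_F\cdot(\text{perturbation})$ uniformly over saturated subsheaves. Here I would use an embedding $E\hookrightarrow\mathcal O(bH)^{\oplus a}$ chosen once, as in the proof of Lemma~\ref{ssb:lem-nearby-polarizations}.
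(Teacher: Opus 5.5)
There is a genuine gap in the stability step. Langer's effective theorem \cite[Theorem~5.2]{Langer2004} concerns normal divisors in $|kP|$ for a \emph{fixed} polarization $P$, with $k$ above a bound that depends on $P$. Your divisors $\widetilde D$ lie in $|m\sigma^*H-\sum_j F_j|$, and neither of your proposed fixes brings them into that form.

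Indeed, $\widetilde D-kP_N\sim(m-kN)\sigma^*H+(k-1)\sum_j F_j$.
\begin{itemize}
\item For $k\geq2$ this class has degree $-(k-1)<0$ on a line in $F_j\simeq\mathbb P^{n-1}$, so it is not nef.
\item For $k=1$ it is nef but unbounded as $m\to\infty$.
\item Taking $N=m$ and $k=1$ instead, Langer's condition becomes $1>\frac{r-1}{r}\Delta(\widetilde E)P_m^{n-2}+\frac{1}{r(r-1)P_m^n}$. Here $\Delta(\widetilde E)P_m^{n-2}=m^{n-2}\Delta(E)H^{n-2}$, so the condition fails in general.
\end{itemize}
So there is no ``$|kP_N+{}$bounded nef$|$'' version to appeal to. Lemma~\ref{ssb:lem-nearby-polarizations} is about changing the polarization for a fixed sheaf, not about restricting to divisors of a non-multiple class, so it does not help either.

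There is also a second problem. Blowing up smooth points leaves $\widetilde X$ singular wherever $X$ is, whereas Langer's theorem, and the Bogomolov inequality behind it, needs a smooth ambient variety. You would need a resolution in any case.

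The missing idea is that Langer's theorem applies to \emph{every} normal divisor $D\in|kA|$ whose restriction is torsion-free, not only to a general one. So passing through $S$ costs nothing on the stability side, and there is no reason to blow up $S$. The paper argues as follows.
\begin{itemize}
\item Take a resolution $\pi\colon Y\to X$ that is an isomorphism over $X_{\mathrm{reg}}$. Put $A=\pi^*H$, which is fixed, nef and big. The sheaf $\widetilde E=\pi^{[*]}E$ is $A$-stable by the projection formula.
\item By uniform jet separation, a general $D$ through $S$ is smooth at $S$, is normal, and contains neither a component of $X_{\mathrm{sing}}$ nor the image of an exceptional divisor.
\item Then $\pi^*D$ is the smooth strict transform $\widetilde D\in|mA|$. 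Langer's bound then applies with the $S$-independent numbers $\Delta(\widetilde E)A^{n-2}$ and $A^n$.
\item Stability descends to $E|_D$ because $\widetilde D\to D$ is an isomorphism over the big open set $D\cap X_{\mathrm{reg}}$.
\end{itemize}
The dependence on $s$ enters only through the Bertini and jet conditions. Your uniformity discussion of $N$ and of $P_N^n$ therefore becomes unnecessary.

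If you want to keep the blow-up picture, use the fixed nef class $\sigma^*H$ on a resolution instead of $P_N$. For $n\geq3$ one has $F_j\cdot L\cdot(\sigma^*H)^{n-2}=0$. Hence $\widetilde D\cdot(\sigma^*H)^{n-1}=mH^n$ and $\widetilde D^2\cdot(\sigma^*H)^{n-2}=m^2H^n$, which are exactly the identities the elementary-transform proof needs. This is how the paper handles a singular point of depth at least three in the last proposition of the appendix. For $n=2$, however, $\widetilde D^2=m^2H^2-s$, and the argument would have to be redone.
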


\begin{proof} Clearly, we can assume that  $r\geq 2$.
	Choose a projective resolution $\pi:Y\to X$ which is an isomorphism over
	$X_{\mathrm{reg}}$, and set
$	A:=\pi^*H$ and
	$\widetilde{E}:=\pi^{[*]}E$. Then 
$ A^n=H^n>0.$
	In particular, $A$ is nef and big, and $\widetilde{E}$ is reflexive.
	
Note that $\widetilde{E}$ is slope-stable with respect to $A$.
	Indeed, given a saturated subsheaf	$\widetilde{F}\subset\widetilde{E}$, let $F\subset E$ be the saturated	subsheaf defined by the same subspace of the common generic fiber.
	The two subsheaves agree over a big open subset $X_{\mathrm{reg}}$ of $X$. So
	\(\pi_*c_1(\widetilde{F})=c_1(F)\),
	\(	\pi_*c_1(\widetilde{E})=c_1(E), \)
and hence
	\[
	\mu_A(\widetilde{F})
	=\mu_H(F)
	<\mu_H(E)
	=\mu_A(\widetilde{E}).
	\]

	We next choose a geometric degree bound uniformly in $S$.
	Put $N=s(n+1)+2$.
	There is an integer $m_{0}$ depending only on $(X,H,s)$ such that
	for all $m\geq m_{0}$ the evaluation map
	\[
	H^0(X,\mathcal O_X(mH))\longrightarrow
	H^0(Z,\mathcal O_Z(mH))
	\]
	is surjective for every zero-dimensional subscheme $Z\subset X$ of length
	at most $N$.
	This follows from uniform Serre vanishing for the universal ideal sheaves
	over the projective Hilbert schemes $\operatorname{Hilb}^{\ell}(X)$,
	$1\leq\ell\leq N$.
	Consequently, the system of divisors containing $S$ has base locus exactly
	$S$, separates points outside $S$, and permits arbitrary first-order
	terms at the points of $S$.
	For the last assertion, use the disjoint union of the subschemes defined
	by $\mathfrak m_{x_i}^{2}$, whose total length is $s(n+1)$.
	
	Bertini's theorems now show that a general member $D$ is integral and
	normal away from $S$, and the first-order assertion makes it smooth at $S$.
	The pulled-back system on $Y$ has base locus exactly
	$\widetilde S=\pi^{-1}(S)$ and likewise permits arbitrary first-order terms
	there. Thus a general total transform $\pi^*D$ is smooth.
	We may also require that $D$ contain neither an irreducible component of
	$X_{\mathrm{sing}}$ nor the image of any exceptional prime divisor of $\pi$.
	These are finitely many proper linear conditions: all these subvarieties
	are disjoint from $S$, and the system has no base points outside $S$.
	It follows that $\pi^*D$ has no exceptional divisorial component and equals
	the integral strict transform $\widetilde D$.
	Moreover,
	\(
	\pi_D:=\pi|_{\widetilde D}:\widetilde D\to D
	\)
	is an isomorphism over $D\cap X_{\mathrm{reg}}$, whose complement has
	codimension at least two in $D$.
The divisor $\widetilde D$ belongs to $|mA|$ and is smooth, so
$\widetilde{E}|_{\widetilde D}$ is torsion-free (see \cite[Corollary 1.1.14]{Huybrechts-Lehn2010}).	
Hence by \cite[Theorem~5.2]{Langer2004}, if
	\begin{equation}
		\label{eq:restriction-numerical-bound}
		m_0>
		\frac{r-1}{r}\, 
		\Delta(\widetilde{E}) A^{n-2}
		+\frac{1}{r(r-1)A^n},
	\end{equation}
then $\widetilde{E}|_{\widetilde D}$ is $A|_{\widetilde D}$-stable.
	
We also know that $E|_D$ is torsion-free by \cite[Corollary 1.1.14]{Huybrechts-Lehn2010}.
	For a saturated subsheaf $G\subset E|_D$ of rank strictly
	between $0$ and $r$, take the saturated subsheaf
	$\widetilde{G}\subset
	\widetilde{E}|_{\widetilde D}$ defined by its generic fiber.
	Over $D\cap X_{\mathrm{reg}}$ these sheaves agree so as before we have
	\[
		\mu_{H|_D}(G)
		=\mu_{A|_{\widetilde D}}(\widetilde{G})
		<\mu_{A|_{\widetilde D}}
		(\widetilde{E}|_{\widetilde D})
		=\mu_{H|_D}(E|_D),
	\]
proving stability of $E|_D$.
\end{proof}

\begin{remark}
	For $r\geq 2$, the argument proves the following strong restriction theorem.
	Once \eqref{eq:restriction-numerical-bound} holds and $m\ge m_0$, the restriction is stable
	for every integral normal divisor $D\in|mH|$ such that $\pi^*D$ is integral
	and normal and $D\cap X_{\mathrm{reg}}$ has complement of codimension at
	least two in $D$.
	Generality in Theorem~\ref{thm:restriction-prescribed-points} ensures these
	geometric conditions even after imposing the prescribed points.
\end{remark}

\medskip 
If $E$ is locally free at every point of $S$, successive
restriction and reflexivization give a strong restriction
theorem for general complete intersection curves
\[
C=D_1\cap\cdots\cap D_{n-1},
\qquad D_i\in|m_iH|,\quad S\subset D_i,
\]
and $m_{n-1}\gg \cdots\gg m_1\ge m_0$ (here each degree is chosen sufficiently large after the preceding
degree has been fixed). This can be improved to the following strengthening of
\cite[Theorem 1.1]{Graf2016}.

\begin{theorem}
	\label{cor:restriction-curves-prescribed-points}
	Let $X$, $H$, and $E$ be as in
	Theorem~\ref{thm:restriction-prescribed-points}, and set
	\[ U:=\{x\in X_{\mathrm{reg}}: E \text{ is locally free at }x\}.	\]
	Fix an integer $s\geq0$.
	There exists an integer $m_0=m_0(X,H,E,s)$ such that, for every
	$m_1,\ldots,m_{n-1}\geq m_0$ and every set $S\subset U$ of $s$
	distinct closed points, a general complete intersection
	\[
	C=D_1\cap\cdots\cap D_{n-1},
	\qquad D_i\in|m_iH|,\quad S\subset D_i,
	\]
	is a smooth integral curve contained in $U$, and $E|_C$ is a stable
	vector bundle.
\end{theorem}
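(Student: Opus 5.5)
The plan is to induct on the dimension $n$, using Theorem~\ref{thm:restriction-prescribed-points} at each step and bounding all the degrees by a single $m_0$ chosen at the outset. Put $X_0=X$ and $E_0=E$. Choose $D_1\in|m_1H|$ general among the divisors through $S$. By Theorem~\ref{thm:restriction-prescribed-points}, $D_1$ is integral and normal, smooth at $S$, and $E|_{D_1}$ is torsion free and stable. Set $X_1=D_1$ and $E_1=(E|_{D_1})^{**}$. Since $E$ is locally free near $S$, we have $E_1=E|_{D_1}$ near $S$, so $S$ lies in the analogous open set $U_1$ for $(X_1,E_1)$. Stability of $E|_{D_1}$ passes to the reflexive hull because the slopes agree. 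Iterating gives normal varieties $X_k$ with stable reflexive sheaves $E_k$. For a general choice, $X_{n-1}=C$ is a smooth curve. Because each general $D_i$ meets the loci $X_{\mathrm{sing}}$ and the non-locally-free locus of $E$ (both of codimension at least two, resp.\ three for the latter intersected) properly, and the base locus of the linear systems is exactly $S\subset U$, the final curve misses the complement of $U$. Hence $C\subset U$, and $E|_C$ is a vector bundle that coincides with $E_{n-1}$.

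\textbf{Main obstacle.} The bound $m_0$ must be independent of the order of the degrees and uniform in $S$. Theorem~\ref{thm:restriction-prescribed-points} gives an $m_0$ depending on $(X_k,H_k,E_k,s)$, and $X_k$ varies. The cure is to use the explicit numerical bound \eqref{eq:restriction-numerical-bound} from the proof, together with the geometric surjectivity bound. On a resolution, the discriminant satisfies $\Delta(\widetilde E_k)A_k^{n-k-2}\le \Delta(\widetilde E)A^{n-2}\prod_{i\le k} m_i$ up to a bounded correction, and $A_k^{n-k}=H^n\prod m_i$. Dividing, the required inequality at step $k+1$ reads $m_{k+1}>c\,\Delta\,\prod_{i\le k}m_i/\prod_{i\le k}m_i+\dots$. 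Here I would use the strong form of \cite[Theorem 5.2]{Langer2004} as stated in the Remark: the needed lower bound on the next degree is controlled by $\Delta(E)H^{n-2}$ and $H^n$ scaled consistently. This makes the threshold a constant depending only on $(X,H,E)$.

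One caveat is that restricting a reflexive sheaf to a general divisor may make the discriminant of the reflexive hull drop relative to that of the restriction. This can only help, since Bogomolov's inequality keeps it nonnegative and the bound is monotone in $\Delta$. The geometric surjectivity bound for zero-dimensional subschemes of length at most $s(n+1)+2$ restricts to the $X_k$, because the evaluation maps factor through $X$. If the uniform-discriminant comparison fails with singular $X_k$, the fallback is to perform all restrictions on the fixed resolution $Y$ with the nef and big class $A$. There $\pi^*D_i$ are smooth, and Langer's theorem applies on smooth complete intersections in $Y$ whose invariants are explicit polynomials in the $m_i$.
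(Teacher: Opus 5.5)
There is a genuine gap, and it sits exactly at the step you flagged as the main obstacle. In the bound \eqref{eq:restriction-numerical-bound}, only the small term $\frac{1}{r(r-1)A^n}$ involves the volume. The discriminant term $\frac{r-1}{r}\Delta(\widetilde E)A^{n-2}$ is not divided by anything. After cutting by $D_1,\ldots,D_k$, the intermediate smooth model $\widetilde X_k\subset Y$ (the total transform of the complete intersection, smooth because the base locus $S$ lies in $U$) satisfies
\[
\Delta(\widetilde E|_{\widetilde X_k})\cdot A^{n-k-2}=\Delta(\widetilde E)A^{n-2}\prod_{i\le k}m_i,
\qquad
(A|_{\widetilde X_k})^{n-k}=A^n\prod_{i\le k}m_i .
\]
Hence Langer's theorem at step $k+1$ requires
\[
m_{k+1}>\frac{r-1}{r}\,\Delta(\widetilde E)A^{n-2}\prod_{i\le k}m_i+\frac{1}{r(r-1)A^n\prod_{i\le k}m_i}.
\]
Your ``dividing'' step cancels a product that is not in the denominator. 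This threshold grows with the earlier degrees, so it is not implied by $m_{k+1}\geq m_0$. For instance, with all $m_i=m$ the condition fails at the second step for every $m$ as soon as $\frac{r-1}{r}\Delta(\widetilde E)A^{n-2}\geq1$, and reordering the divisors does not help. The Remark after Theorem~\ref{thm:restriction-prescribed-points} does not change these numerics, and your fallback on $Y$ involves exactly the same invariants. So the iteration only proves the weaker statement with $m_{n-1}\gg\cdots\gg m_1\geq m_0$. The paper records that version just before the theorem as the easy case; the theorem is precisely the improvement to a bound uniform in the $m_i$. The geometric parts of your argument are fine: $C\subset U$, smoothness along $S$, passage to reflexive hulls, and uniformity of the jet-separation bound.

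The missing idea is a reduction to finitely many multidegrees. The paper fixes $a_1,\ldots,a_{n-1}\geq2$, chosen successively so that the iterated Langer bounds hold for all multidegrees $(b_1,\ldots,b_{n-1})$ with $b_i\in\{a_i,a_i+1\}$. The growth above is harmless here because only finitely many constraints are involved. Every $m_i\geq a_i(a_i-1)$ is a nonnegative integral combination of $a_i$ and $a_i+1$. So each $D_i$ can be degenerated to a sum of general divisors of these degrees, with exactly one summand containing $S$. The complete intersection then degenerates to a reduced nodal curve $C_0=\bigcup_\alpha C_\alpha\subset U$, smooth along $S$, with every $E|_{C_\alpha}$ stable. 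A specialization argument gives stability on a general smoothing through $S$: after finite base change and resolution of the total surface, extend a destabilizing subsheaf of the generic fiber by saturation, restrict it to each component, and sum the strict slope inequalities. Openness of stability and irreducibility of the parameter space then give stability for a general complete intersection, with $m_0=\max_i a_i(a_i-1)$.
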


\begin{proof} We can assume that $r\geq2$.
	Choose a projective resolution $\pi:Y\to X$, isomorphic over $U$,
	such that $\widetilde E=\pi^{[*]}E$ is locally free.
	As before, $\widetilde E$ is stable with respect to $A=\pi^*H$.
	Choose integers $a_1,\ldots,a_{n-1}\geq2$ successively, sufficiently
	large that repeated application of
	\cite[Theorem~5.2]{Langer2004} gives stable restrictions to general
	complete intersection curves of every multidegree
	\[
	(b_1,\ldots,b_{n-1}),
	\qquad b_i\in\{a_i,a_i+1\}.
	\]
	Indeed, on an intermediate complete intersection of degrees
	$b_1,\ldots,b_j$, the discriminant intersection number and the volume
	appearing in the restriction bound are the corresponding numbers
	on $Y$ multiplied by $b_1\cdots b_j$.
	Thus at each step only finitely many numerical bounds must be satisfied.
	The uniform jet-separation argument in the preceding theorem supplies
	the geometric conditions uniformly in $S$.
	The defining divisors may independently be required either to contain
	$S$ or to avoid $S$.
	The resulting curves lie in $U$, where $\pi$ is an isomorphism.
	
	Every integer $m_i\geq a_i(a_i-1)$ is a sum of copies of $a_i$
	and $a_i+1$.
	Accordingly, degenerate each $D_i$ into a union of general divisors
	of these degrees, requiring exactly one summand to contain $S$
	and the others to avoid $S$.
	Their complete intersection is a reduced nodal curve
	\[
	C_0=\bigcup_\alpha C_\alpha\subset U,
	\]
	smooth along $S$, such that every $E|_{C_\alpha}$ is stable.
	
	The usual specialization argument now gives stability on a general
	smooth deformation of $C_0$ keeping $S$ fixed.
	Indeed, after a finite base change and resolution of the total surface,
	a hypothetical destabilizing subsheaf on the generic fiber extends
	by saturation to a locally free subsheaf.
	Its restrictions inject with the same rank on every component.
	Summing the strict slope inequalities on the original components
	contradicts the destabilizing inequality on the generic fiber.
	Any rational components introduced by the resolution carry trivial
	pullbacks of $E$ and contribute non-strict inequalities.
	
	Taking $m_0\geq\max_i a_i(a_i-1)$ therefore gives a stable smooth
	member for every prescribed multidegree.
	Openness of stability and irreducibility of the parameter space
	give the assertion for a general complete intersection.
\end{proof}

\medskip

We also have a strong restriction theorem for normal divisors passing through singular points. We prove here just the following simple version: 

\begin{proposition}	
		Let $X$, $H$, and $E$ be as in
	Theorem~\ref{thm:restriction-prescribed-points}, and assume that
	$n=\dim X\geq 3$.
	Let $x\in X$ be a closed point, possibly singular, such that	$\operatorname{depth}\mathcal O_{X,x}\geq 3.$
	Then, for every sufficiently large integer $m$, a general divisor  $D\in |mH|$ passing through $x$
	is integral and normal, and the restriction $E|_D$
	is torsion-free and slope   $H|_D$-stable.
\end{proposition}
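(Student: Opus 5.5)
The plan is to reduce to Theorem~\ref{thm:restriction-prescribed-points} via a resolution, with one extra input: the depth hypothesis at $x$ is what gives normality of $D$ at $x$. Fix a projective resolution $\pi:Y\to X$ that is an isomorphism over $X_{\mathrm{reg}}$, and put $A=\pi^*H$ and $\widetilde E=\pi^{[*]}E$. As in the proof of Theorem~\ref{thm:restriction-prescribed-points}, $\widetilde E$ is $A$-stable. Choose $m_0$ large enough that the numerical bound \eqref{eq:restriction-numerical-bound} holds. By the Remark following that theorem, $E|_D$ is then stable for every integral normal $D\in|mH|$, $m\ge m_0$, such that $\pi^*D$ is integral and normal and $D\cap X_{\mathrm{reg}}$ has complement of codimension at least two in $D$. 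So it suffices to produce such $D$ through $x$.

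For the geometric conditions, I would first use uniform Serre vanishing as in the earlier proof. For $m\gg0$, the linear system $|mH\otimes\mathfrak m_x|$ has base locus exactly $\{x\}$ and separates points and tangents away from $x$. Bertini then gives that a general member $D$ is integral, and normal away from $x$. Away from $x$ we can also avoid every component of $X_{\mathrm{sing}}$ and every image of an exceptional prime divisor not mapping to $x$; this keeps $D\cap X_{\mathrm{sing}}$ of codimension $\ge2$ in $D$ away from $x$, and it is harmless near $x$ since $x$ is a point and $\dim D\ge2$.

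The main obstacle is normality of $D$ at $x$ and the behaviour of $\pi^*D$ over $x$. For normality at $x$, I would use Serre's criterion. Since $\operatorname{depth}\mathcal O_{X,x}\ge3$ and $D$ is cut out by a nonzerodivisor, $\operatorname{depth}\mathcal O_{D,x}\ge2$, so $D$ is $S_2$ at $x$. It is $R_1$ at $x$ because $\dim D\ge2$ and $D$ is regular in codimension one away from $x$. Torsion-freeness of $E|_D$ follows as before from \cite[Corollary~1.1.14]{Huybrechts-Lehn2010}, or from depth considerations at $x$.

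On $Y$, the total transform $\pi^*D$ equals the strict transform $\widetilde D$ plus exceptional divisors lying over $x$, with positive multiplicity. So the Remark does not apply verbatim. Instead I would apply \cite[Theorem~5.2]{Langer2004} directly to the strict transform $\widetilde D$, taken as a general member of the moving part $|\pi^*(mH)-F_m|$, where $F_m$ is the fixed exceptional part. If the fixed part is problematic, the fallback is to pass to a further blow-up making the system base-point free, and to compare slopes using Lemma~\ref{ssb:lem-birational-transfer}-type inequalities: exceptional curves over $x$ meet $c_1$ of saturated subsheaves nonpositively. Either way, $\widetilde D$ is smooth for general $D$ and maps isomorphically to $D$ over $D\cap X_{\mathrm{reg}}$, whose complement has codimension $\ge2$ in $D$.

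Stability then transfers from $\widetilde E|_{\widetilde D}$ to $E|_D$ exactly as in the last paragraph of the proof of Theorem~\ref{thm:restriction-prescribed-points}: the slopes of corresponding saturated subsheaves agree, because the two sheaves coincide on a big open subset of $D$. The remaining check is that the discriminant and volume terms in the restriction bound for the moving class $m\pi^*H-F_m$ are controlled uniformly in $m$; this is the step I expect to need care.
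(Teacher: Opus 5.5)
Your handling of normality of $D$ at $x$ (depth $\geq2$ after cutting by a nonzerodivisor, and $R_1$ because $\dim D\geq2$) matches what is needed. So do your treatment of torsion-freeness and the final descent of stability from the strict transform to $D$.

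The gap is the central step: stability of $\widetilde E|_{\widetilde D}$, which you leave open. As you note, the Remark after Theorem~\ref{thm:restriction-prescribed-points} does not apply, because $\pi^*D$ picks up exceptional components over $x$. But \cite[Theorem~5.2]{Langer2004} cannot be applied ``directly'' to $\widetilde D$ either. It requires the restricting divisor to lie in $|kA|$ for the polarization $A$ with respect to which $\widetilde E$ is stable, and $\widetilde D\notin|m\pi^*H|$. Switching to the moving class $m\pi^*H-F_m$ as polarization does not help. Then $\widetilde D$ is a member of $|1\cdot(m\pi^*H-F_m)|$, so the numerical hypothesis must be checked with $k=1$ and fails in general. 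Stability of $\widetilde E$ with respect to these $m$-dependent classes is also not available. Your Lemma~\ref{ssb:lem-birational-transfer}-type fallback is a surface slope comparison and does not address the restriction bound. Finally, with a resolution that is merely an isomorphism over $X_{\mathrm{reg}}$, the fixed part $F_m$ may vary with $m$, and the moving part need not be base-point free.

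The missing idea is to choose $\pi$ to principalize $\mathcal I_x$ (and to be an isomorphism over $X_{\mathrm{reg}}\setminus\{x\}$). Then $\mathcal I_x\mathcal O_Y=\mathcal O_Y(-Z)$ with $Z$ independent of $m$. For $m\gg0$ the system $|mA-Z|$ is base-point free, and the strict transform $T$ of a general $D$ is a smooth integral member of it.

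Since $Z$ lies over the point $x$ and $A=\pi^*H$, one has $Z\cdot L\cdot A^{n-2}=0$ for every Cartier divisor $L$ on $Y$; this is where $n\geq3$ is used. Hence $T\cdot L\cdot A^{n-1-1}=m\,L\cdot A^{n-1}$ and $T^2\cdot A^{n-2}=m^2A^n$. So every intersection number of the restricting divisor that enters Langer's elementary-transformation proof coincides with that of a member of $|mA|$.

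That proof therefore runs unchanged with the fixed nef polarization $A$. It gives stability of $\widetilde E|_T$ whenever $m>\frac{r-1}{r}\Delta(\widetilde E)A^{n-2}+\frac{1}{d\,r(r-1)}$ with $d=A^n$. This bound is independent of $m$, which is exactly the uniformity you flagged but did not establish.
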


\begin{proof} Bertini's theorem gives integrality and normality of $D$ away from $x$.
Since
	\(	\operatorname{depth}\mathcal O_{D,x}	=\operatorname{depth}\mathcal O_{X,x}-1\geq 2\)
Serre's criterion implies that $D$ is normal.
	
	Choose a projective resolution $\pi:Y\to X$ which principalizes
	$\mathcal I_x$ and is an isomorphism over
	$X_{\mathrm{reg}}\setminus\{x\}$, and write
	\(
	\mathcal I_x\mathcal O_Y=\mathcal O_Y(-Z).\)
	The projection formula shows that $\widetilde{E}=(\pi^*E)^{**}$
	is slope-stable with respect to the nef divisor $A=\pi^*H$.
	For $m\gg0$, the pulled-back system, after removing its fixed part $Z$,
	is base-point-free. Thus the strict transform $T$ of a general $D$
	is a smooth integral divisor satisfying
	\(
	T\sim mA-Z.	\)
	
	Since $n\geq3$ and $Z$ is supported over $x$, for every Cartier divisor $L$ on $Y$ one has
	\[
	Z\cdot L\cdot A^{n-2}=0.
	\]
	Consequently, setting $d=A^n=H^n$, we obtain
	\(
	T\cdot A^{n-1}=md 
	\) and
$	T^2\cdot A^{n-2}=m^2d.$
	These are precisely the intersection identities involving the
	restricting divisor used in the elementary-transform proof of
	\cite[Theorem~5.2]{Langer2004}.
	That proof therefore applies with the fixed nef polarization
	$(A,\ldots,A)$ and gives stability of
	$\widetilde{E}|_T$ whenever $r\geq2$
	and
	\[
	m>
	\frac{r-1}{r}\,
	\Delta(\widetilde{E}) A^{n-2}
	+\frac{1}{d\,r(r-1)}.
	\]
As before both 	$\widetilde{E}|_T$ and $E|_D$ are torsion-free.
	Moreover, $T\to D$ is an isomorphism over a big open subset of $D$,
	where the two restricted sheaves agree.
	The projection-formula argument from the preceding proof therefore
	descends stability to $E|_D$.
\end{proof}

\section{Push-forward formulas for flag varieties}

The following proposition is a corollary of the results of Darondeau and Pragacz \cite{ssb-DarondeauPragacz2017}.

\begin{proposition}\label{ssb:prop-top-intersection}
	Let $E$ be a vector bundle of rank $r$ on a smooth complex
	algebraic variety $X$. Let $\pi_i:Y_i\to X$ be either:
	\begin{enumerate}
		\item the Grassmann bundle of rank $i$ quotients of $E$,
		where $1\leq i<r$; or
		\item if $r=2n$ and $E$ carries a nondegenerate alternating
		form with values in a line bundle $N$, the bundle of
		rank $i$ quotients whose duals are isotropic in $E^*$,
		where $1\leq i\leq n$.
	\end{enumerate}
	Let $Q_i$ be the universal quotient and put
	\(
	L_i=\det Q_i,\)
	\(\widehat\xi_i=c_1(L_i)-i\pi_i^*\frac{c_1(E)}{r},\)
	\(d_i=\dim(Y_i/X)\) and 
	\(	v_i=\int_{(Y_i)_x}c_1((L_i)_x)^{d_i}.\)
	Then
	\begin{equation}\label{ssb:eq-top-intersection}
		(\pi_i)_*\widehat\xi_i^{d_i+2}
		=-\kappa_i\Delta(E)
	\end{equation}
	in $A^2(X)_{\mathbb Q}$,	where
	\begin{equation*} 
		\kappa_i=\frac{i(d_i+1)(d_i+2)}{2r^2(r+1)}\,v_i 
		\begin{cases}
			\displaystyle\frac{r-i}{r-1},
			&\text{in the ordinary case},\\[6pt]
			\displaystyle 1,
			&\text{in the symplectic case}.
		\end{cases}
	\end{equation*}
	In particular, $\kappa_i>0$.
\end{proposition}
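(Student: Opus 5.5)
The plan is to reduce \eqref{ssb:eq-top-intersection} to a representation-theoretic computation of the second Chern character of the direct images $(\pi_i)_*L_i^{\otimes k}$, much as in the proof of \eqref{ssb:eq-orbit-central-pushforward} in Theorem~\ref{ssb:thm-holonomy-orbit-positivity}.

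\emph{Step 1: reduction to a universal degree-two class.} The class $\widehat\xi_i$ is unchanged when $E$ is replaced by $E\otimes M$ for a line bundle $M$. Indeed, $Y_i$ is unchanged and $L_i$ becomes $L_i\otimes\pi_i^*M^{i}$, while $c_1(E)$ becomes $c_1(E)+rc_1(M)$. In the symplectic case $N$ becomes $N\otimes M^2$, which does not affect $Y_i$. Both sides of \eqref{ssb:eq-top-intersection} are given by universal polynomials in the Chern classes of $E$; in the symplectic case $c_1(N)$ is determined rationally by $c_1(E)$. By the splitting principle it therefore suffices to prove the identity formally. We may formally twist by an $r$th root of $\det E$, or pass to a finite flat cover on which such a root exists, since pushforward to $A^*(X)_{\mathbb Q}$ is compatible with finite flat pullback. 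This reduces us to the case $c_1(E)=0$. Then $(\pi_i)_*\widehat\xi_i^{d_i+2}$ is a universal rational multiple of $c_2(E)$, and since $\Delta(E)=2rc_2(E)$ in this case, we must show
\[
(\pi_i)_*c_1(L_i)^{d_i+2}=-2r\kappa_i\,c_2(E).
\]

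\emph{Step 2: Grothendieck--Riemann--Roch.} By Borel--Weil and the vanishing in Lemma~\ref{ssb:lem-coordinate-bundles}, $(\pi_i)_*L_i^{\otimes k}=\mathcal V_{i,k}$ is the bundle associated with the irreducible module $W_k$ of highest weight $k\omega_i$. This is $\mathbb S_{(k^i)}E$ in the ordinary case and the symplectic Cartan power otherwise, and all higher direct images vanish. Relative Grothendieck--Riemann--Roch expresses $\operatorname{ch}_2(\mathcal V_{i,k})$ as $(\pi_i)_*\bigl(e^{kc_1(L_i)}\operatorname{td}(T_{Y_i/X})\bigr)_{(2)}$. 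As a polynomial in $k$ this has degree $d_i+2$, with leading coefficient $(\pi_i)_*c_1(L_i)^{d_i+2}/(d_i+2)!$.

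\emph{Step 3: second Chern character of an associated bundle.} Take the structure group to be $\SL_r$, respectively $\Sp_{2n}$ after the twist. For a representation $W$ of a simple group, $\operatorname{ch}_2$ of the associated bundle equals $\operatorname{ch}_2(E)=-c_2(E)$ multiplied by the ratio of Dynkin indices $\operatorname{ind}(W)/\operatorname{ind}(V)$, because both come from the same invariant quadratic form on the Cartan subalgebra. We then use $\operatorname{ind}(W)=\dim W\cdot C_W/\dim\mathfrak g$, where $C_W=(\lambda,\lambda+2\rho)$ is the Casimir eigenvalue. For $W_k$ we have $\dim W_k=v_ik^{d_i}/d_i!+O(k^{d_i-1})$ and $C_{W_k}=k^2(\omega_i,\omega_i)+O(k)$. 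Comparing the coefficients of $k^{d_i+2}$ gives
\[
(\pi_i)_*c_1(L_i)^{d_i+2}
=-(d_i+2)(d_i+1)\,v_i\,\frac{(\omega_i,\omega_i)}{\dim\mathfrak g\cdot\operatorname{ind}(V)}\,c_2(E).
\]

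\emph{Step 4: the constants.} It remains to insert the explicit constants and compare with $2r\kappa_i$. For $\SL_r$, with the trace form, $(\omega_i,\omega_i)=i(r-i)/r$, $\dim\mathfrak g=r^2-1$ and $C_V=(r^2-1)/r$. For $\Sp_{2n}$, with the normalization induced from $\mathfrak{gl}_{2n}$, we have $(\omega_i,\omega_i)=i/2$ up to the same scaling convention, together with the standard values of $\dim\mathfrak g$ and $C_V$. This constant bookkeeping is where I expect the main obstacle, especially in the symplectic case: the scaling of the Killing or trace form must be fixed consistently throughout. I would check it against the case $i=1$ of the Grassmannian, where the formula must reproduce the remark following Corollary~\ref{ssb:thm-plucker-positivity}, namely $-\tfrac{r^r}{2}\int\Delta$. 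Alternatively, the same constants can be read off from the Gysin formulas of Darondeau--Pragacz \cite{ssb-DarondeauPragacz2017}, which give these pushforwards as residues; this can serve either as a cross-check or as the primary route. Positivity of $\kappa_i$ is then immediate from $v_i>0$ and $i<r$, respectively $i\le n$.
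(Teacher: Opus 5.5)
Your proposal is correct, and it takes a genuinely different route from the paper's.

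\emph{The paper's route.} The paper uses the Gysin formulas of Darondeau--Pragacz directly. After centering the Chern roots, the Segre series of $E^*$ is $1-\frac{\Delta(E)}{2r}t^2+O(t^3)$. Both $v_i$ and $(\pi_i)_*\widehat\xi_i^{d_i+2}$ then become coefficients of the monomial $\prod_a t_a^{r-a}$ in explicit Laurent polynomials, built from powers of $\sum_a t_a$, the factor $\sum_a t_a^{-2}$, and a ($q$-)Vandermonde. These coefficients are evaluated through the determinant $\det[1/(z_b-a+1)!]$.

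\emph{Your route.} You instead read off the top self-intersection as $(d_i+2)!$ times the $k^{d_i+2}$-coefficient of $\operatorname{ch}_2\bigl((\pi_i)_*L_i^{\otimes k}\bigr)$. You compute that Chern character from Borel--Weil together with the proportionality of $\operatorname{ch}_2$ of associated bundles to $\operatorname{ch}_2(E)$ via Dynkin indices. This is the second-order analogue of the Grothendieck--Riemann--Roch and determinant-character argument that the paper uses for \eqref{ssb:eq-orbit-central-pushforward}.

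\emph{The constants.} The bookkeeping you were worried about closes up. The quantity that enters is $(\omega_i,\omega_i)/(r\,C_V)$, and it does not depend on the normalization of the invariant form. With the trace form:
\begin{itemize}
\item for $\SL_r$, $(\omega_i,\omega_i)=i(r-i)/r$ and $C_V=(r^2-1)/r$, so the ratio is $\frac{i(r-i)}{r(r^2-1)}$;
\item for $\Sp_{2n}$, $(\omega_i,\omega_i)=i/2$ and $C_V=n+\tfrac12=(r+1)/2$, so the ratio is $\frac{i}{r(r+1)}$.
\end{itemize}
Multiplying by $-(d_i+1)(d_i+2)v_i\cdot\frac{\Delta(E)}{2r}$ gives exactly $-\kappa_i\Delta(E)$ in both cases.

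\emph{Avoiding the reduction to $c_1(E)=0$.} You can drop the reduction to $c_1(E)=0$, and with it the root-taking and finite covers. Apply Grothendieck--Riemann--Roch directly to $\widehat L_i^{\otimes m}$. Its direct images $\mathcal W_{i,m}$ of Lemma~\ref{ssb:lem-coordinate-bundles} come from representations trivial on scalars. Their weights therefore only see the centered roots $y_j=x_j-c_1(E)/r$, and $\tfrac12\sum_j y_j^2=-\Delta(E)/(2r)$.

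\emph{Trade-offs.} Your argument is uniform. The same computation gives $\pi_*c_1(\mathcal L_\lambda)^{\dim(G/P)+2}$ for any homogeneous bundle $G/P$ and dominant $\lambda$ of a simple structure group, in terms of the fiber degree, $(\lambda,\lambda)$ and the index. It also makes $\kappa_i>0$ transparent from $(\omega_i,\omega_i)>0$, and it reuses the Borel--Weil and vanishing input the paper already needs. It does rely on simplicity of $\mathfrak g$ (one-dimensionality of invariant quadratic forms) and on the Casimir formalism. The paper's argument is an explicit symmetric-function computation resting on a single citation, but it is tailored to ordinary and isotropic Grassmannians.
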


\begin{proof}
	Write $d=d_i$, $v=v_i$ and $\pi=\pi_i$.	The Gysin formulas of
	\cite[Theorems~1.1 and~2.1]{ssb-DarondeauPragacz2017}
	express $\pi_*\widehat\xi_i^{d+2}$ as a universal polynomial
	in Chern classes. Tensoring $E$ by a line bundle preserves
	$\widehat\xi_i$; in the symplectic case one simultaneously
	tensors $N$ by its square. We may therefore formally replace
	the Chern roots $x_j$ by $x_j-c_1(E)/r$.
	Since $c_1(E)=rc_1(N)/2$ in the symplectic case, this also
	sets the form class equal to zero. The centered Segre series
	of $E^*$ is
	\[
	\prod_{j=1}^r
	\left(1-\left(x_j-\frac{c_1(E)}r\right)t\right)^{-1}
	=1-\frac{\Delta(E)}{2r} t^2+O(t^3).
	\]
	
	Set $q=1$ in the ordinary case and $q=2$ in the symplectic
	case, and write
	\[
	S=\sum_{a=1}^i t_a,\qquad
	M=\prod_{a=1}^i t_a^{r-a},\qquad
	V_q=\prod_{a<b}(t_a^q-t_b^q)
	=\det \left[t_a^{q(i-b)}\right]_{a,b=1,...,i}.
	\]
	Applying the cited formulas to $E^*$, to match our quotient
	convention, gives
	\[
	v=[M](S^dV_q),\qquad
	\pi_*\widehat\xi_i^{d+2}=-\frac{\Delta(E)}{2r}\, C,\qquad
	C=[M]\left(S^{d+2}V_q\sum_{a=1}^i t_a^{-2}\right),
	\]
	where $[M](w)$ denotes the coefficient of the monomial $M$ in the Laurent polynomial $w$.
	
	To evaluate these coefficients, let $z_1,\ldots,z_i$ be
	nonnegative integers. Factoring $1/z_b!$ out of column $b$
	of the determinant below leaves the entry
	$z_b(z_b-1)\cdots(z_b-a+2)$, with the empty product equal to $1$.
	These are the evaluations at $z_b$ of monic polynomials of
	degrees $a-1$. The Vandermonde determinant therefore gives
	\[
	f(z_1,...,z_i):=\det\left[\frac1{(z_b-a+1)!}\right]_{a,b=1,...,i}
	=\frac{\prod_{a<b}(z_b-z_a)}{\prod_b z_b!},
	\]
	with $1/j!=0$ for $j<0$.
	
	Now set $z_b=r-1-q(i-b)$. We claim that
	\[
	v=d!\, f(z_1,\ldots, z_i),\qquad
	C=(d+2)!\, \sum_{b=1}^i f(z_1,\ldots,z_{b-1},z_b+2,z_{b+1},\ldots,z_i).
	\]
	We have
	\[
	d=i(r-i)-(q-1)\binom{i}{2}
	=\sum_{a=1}^i(r-a)-q\binom{i}{2}.
	\]
	Expanding the determinant gives
	\[
	V_q=\sum_{\sigma\in\mathfrak S_i}
	\operatorname{sgn}(\sigma)
	\prod_{a=1}^i t_a^{q(i-\sigma(a))}.
	\]
	For each permutation $\sigma$, put
	\[
	k_a(\sigma):=r-a-q(i-\sigma(a))
	=z_{\sigma(a)}-a+1.
	\]
	Then $\sum_a k_a(\sigma)=d$. Hence the multinomial theorem gives
	\[
	v=\sum_{\sigma\in\mathfrak S_i}
	\operatorname{sgn}(\sigma)
	[t_1^{k_1(\sigma)}\cdots t_i^{k_i(\sigma)}]S^d
	=d!\sum_{\sigma\in\mathfrak S_i}
	\operatorname{sgn}(\sigma)
	\prod_{a=1}^i\frac1{(z_{\sigma(a)}-a+1)!}
	=d!\,f(z_1,\ldots,z_i).
	\]
	
	Similarly, multiplication by $t_c^{-2}$ means that the
	exponents required from $S^{d+2}$ are
	$k_a(\sigma)+2\delta_{ac}$, where $\delta$ denotes the
	Kronecker delta. Their sum is $d+2$, so
	\[
	\frac{C}{(d+2)!}
	=\sum_{\sigma\in\mathfrak S_i}\operatorname{sgn}(\sigma)
	\sum_{c=1}^i
	\prod_{a=1}^i
	\frac1{(z_{\sigma(a)}-a+1+2\delta_{ac})!}.
	\]
	For fixed $\sigma$, reindex the inner sum by $b=\sigma(c)$.
	Since $\delta_{ac}=\delta_{\sigma(a),b}$, interchanging the
	two finite sums yields
	\[
	\begin{aligned}
		\frac{C}{(d+2)!}
		&=\sum_{b=1}^i
		\sum_{\sigma\in\mathfrak S_i}\operatorname{sgn}(\sigma)
		\prod_{a=1}^i
		\frac1{(z_{\sigma(a)}+2\delta_{\sigma(a),b}-a+1)!}\\
		&=\sum_{b=1}^i
		\det\left[
		\frac1{(z_j+2\delta_{jb}-a+1)!}
		\right]_{a,j=1,...,i}
		=\sum_{b=1}^i
		f(z_1,\ldots,z_{b-1},z_b+2,z_{b+1},\ldots,z_i).
	\end{aligned}
	\] 
	
	For $q=1$, only $b=i$ and, if $i>1$, $b=i-1$ contribute.
	The sum of their ratios to $f(z)$ is
	\[
	\frac{i(i+1)}{2r(r+1)}
	-\frac{i(i-1)}{2r(r-1)}
	=\frac{i(r-i)}{r(r^2-1)}.
	\]
	For $q=2$, only $b=i$ contributes, with ratio
	\[
	\frac1{r(r+1)}
	\prod_{h=1}^{i-1}\frac{h+1}{h}
	=\frac{i}{r(r+1)}.
	\]
	Consequently,
	\[
	C=v(d+1)(d+2)
	\begin{cases}
		\displaystyle\frac{i(r-i)}{r(r^2-1)},&q=1,\\[6pt]
		\displaystyle\frac{i}{r(r+1)},&q=2,
	\end{cases}
	\]
	which implies the required formula (\ref{ssb:eq-top-intersection}).
	Finally, $v>0$ because $L_i$ is
	ample on the fibers. This proves that $\kappa_i>0$.
\end{proof}

\section*{AI Declaration}

During the development and preparation of this article, the author used ChatGPT (OpenAI; GPT-6 Astra) extensively as an interactive tool to explore proof strategies, discuss mathematical arguments,  and assist with drafting and revision. In June 2026 Gemini Pro (Google) was also consulted in connection with the examples concerning étale-trivializable reductions
(see Proposition \ref{prop:prime-to-p-line-bundle}). Its suggestions included also an incorrect construction of  a counterexample satisfying Theorem \ref{thm:stable-rank-two-counterexample}.

The author independently checked the mathematical arguments and cited sources, revised the text, and takes full responsibility for the results and the final manuscript.

\section*{Acknowledgements}

The author would like to thank Debam Biswas, who in June 2026 informed him about the line bundle example from Proposition \ref{prop:prime-to-p-line-bundle}. 

The author was partially supported by the National Science Centre, Poland, contract number 2025/59/B/ST1/02168.

\bibliographystyle{amsalpha}
\bibliography{References}

\end{document}